\theoremstyle{theorem}
\newtheorem{theorem}{Theorem}[section]
\newtheorem{proposition}[theorem]{Proposition}
\newtheorem{lemma}[theorem]{Lemma}
\newtheorem{corollary}[theorem]{Corollary}
\newtheorem{conjecture}[theorem]{Conjecture}
\newtheorem*{rep@theorem}{\rep@title}
\newcommand{\newreptheorem}[2]{%
\newenvironment{rep#1}[1]{%
 \def\rep@title{#2 \ref{##1}}%
 \begin{rep@theorem}}%
 {\end{rep@theorem}}}
\theoremstyle{definition}
\newtheorem{definition}[theorem]{Definition}
\newtheorem{example}[theorem]{Example}
\newtheorem{question}[theorem]{Question}
\newtheorem{remark}[theorem]{Remark}
\newtheorem{remarks}[theorem]{Remarks}
\newcommand{\Z}{\mathbb{Z}}
\newcommand{\C}{\mathbb{C}}
\newcommand{\N}{\mathbb{N}}
\newcommand{\R}{\mathbb{R}}
\newcommand{\T}{\mathbb T}
\newcommand{\PP}{\mathbb P}
\newcommand{\RP}{\mathbb{RP}}
\newcommand{\CP}{\mathbb{CP}}
\newcommand{\Int}{\text{Int}}
\newcommand{\Aa}{\mathcal A}
\newcommand{\Dd}{\mathcal D}
\newcommand{\Ee}{\mathcal E}
\newcommand{\Tt}{\mathcal T}
\newcommand{\Ll}{\mathcal L}
\newcommand{\Mm}{\mathcal M}
\newcommand{\Ff}{\mathcal F}
\newcommand{\Hh}{\mathcal H}
\newcommand{\DD}{\mathfrak D}
\newcommand{\Id}{\text{Id}}
\newcommand{\wh}{\widehat}
\newcounter{saveenum}
\begin{document}

\rhead{\thepage}
\lhead{\author}
\thispagestyle{empty}


\raggedbottom
\pagenumbering{arabic}
\setcounter{section}{0}

\renewcommand\thesubfigure{(\Alph{subfigure})}


\title{Filling braided links with trisected surfaces}

\author{Jeffrey Meier}
\address{Department of Mathematics, Western Washington University}
\email{jeffrey.meier@wwu.edu}
\urladdr{\href{http://jeffreymeier.org}{http://jeffreymeier.org}}


\begin{abstract}
	We introduce the concept of a bridge trisection of a neatly embedded surface in a compact four-manifold, generalizing previous work with Alexander Zupan in the setting of closed surfaces in closed four-manifolds.  Our main result states that any neatly embedded surface $\Ff$ in a compact four-manifold $X$ can be isotoped to lie in bridge trisected position with respect to any trisection $\T$ of $X$.  A bridge trisection of $\Ff$ induces a braiding of the link $\partial\Ff$ with respect to the open-book decomposition of $\partial X$ induced by $\T$, and we show that the bridge trisection of $\Ff$ can be assumed to induce any such braiding.
	
	We work in the general setting in which $\partial X$ may be disconnected, and we describe how to encode bridge trisected surface diagrammatically using shadow diagrams.  We use shadow diagrams to show how bridge trisected surfaces can be glued along portions of their boundary, and we explain how the data of the braiding of the boundary link can be recovered from a shadow diagram. Throughout, numerous examples and illustrations are given.  We give a set of moves that we conjecture suffice to relate any two shadow diagrams corresponding to a given surface.
	
	We devote extra attention to the setting of surfaces in $B^4$, where we give an independent proof of the existence of bridge trisections and develop a second diagrammatic approach using tri-plane diagrams.  We characterize bridge trisections of ribbon surfaces in terms of their complexity parameters.  The process of passing between bridge trisections and band presentations for surfaces in $B^4$ is addressed in detail and presented with many examples.
\end{abstract}

\maketitle

\section{Introduction}\label{sec:Intro}

The philosophy underlying the theory of trisections is that four-dimensional objects can be decomposed into three simple pieces whose intersections are well-enough controlled that all of the four-dimensional data can be encoded on the two-dimensional intersection of the three pieces, leading to new diagrammatic approaches to four-manifold topology. Trisections were first introduced for four-manifolds by Gay and Kirby in 2016~\cite{GayKir_16_Trisecting-4-manifolds}.  A few years later, the theory was adapted to the setting of closed surfaces in four-manifolds by the author and Zupan~\cite{MeiZup_17_Bridge-trisections-of-knotted,MeiZup_18_Bridge-trisections-of-knotted}. The present article extends the theory to the general setting of neatly embedded surfaces in compact four-manifolds, yielding two diagrammatic approaches to the study of these objects: one that applies in general and one that applies when we restrict attention to surfaces in~$B^4$.

To introduce bridge trisections of surfaces in $B^4$, we must establish some terminology.  First, let $H$ be a three-ball $D^2\times I$, equipped with a critical-point-free Morse function $D^2\times I\to I$.  Let $\Tt\subset H$ be a neatly embedded one-manifold such that the restriction of the Morse function to each component of $\Tt$ has either one critical point or none.  If there are $b$ components with one critical point and $v$ with none, we call $(H,\Tt)$ a \emph{$(b,v)$--tangle}.
Next, let $Z$ be a four-ball $B^3\times I$, equipped with a critical-point-free Morse function $B^3\times I\to I$.  Let $\Dd\subset Z$ be a collection of neatly embedded disks such that the restriction of the Morse function to each component of $\Dd$ has either one critical point or none.  If there are $c$ components with one critical point and $v$ with none, we call $(Z,\Dd)$ a \emph{$(c,v)$--disk-tangle}.
Finally, let $\T_0$ denote the \emph{standard trisection} of $B^4$ -- i.e., the decomposition $B^4 = Z_1\cup Z_2\cup Z_3$ in which, for each $i\in\Z_3$, the $Z_i$ are four-balls, the pairwise intersections $H_i = Z_{i-1}\cap Z_i$ are three-balls, and the common intersection $\Sigma = Z_1\cap Z_2\cap Z_3$ is a disk.

A neatly embedded surface $\Ff\subset B^4$ is in \emph{$(b,\bold c,v)$--bridge position} with respect to $\T_0$ if the following hold for each $i\in\Z_3$:
\begin{enumerate}
	\item $\Ff\cap Z_i$ is a $(c_i,v)$--disk-tangle, where $\bold c = (c_1,c_2,c_3)$; and
	\item $\Ff\cap H_i$ is a $(b,v)$--tangle.
\end{enumerate}
A definition very similar to this one was introduced independently in~\cite{BlaCamTay_20_Kirby-Thompson-distance-for-trisections}.

The trisection $\T_0$ induces the open-book decomposition of $S^3 =\partial B^4$ whose pages are the disks and whose binding is $\partial\Sigma$.
Let $\Ll = \partial\Ff$, and let $\beta_i = S^3\cap\Dd$.  Then $\Ll = \beta_1\cup\beta_2\cup\beta_3$ is braided about $\partial \Sigma$ with index $v$.  Having outlined the requisite structures, we can state our existence result for bridge trisections of surfaces in the four-ball.

\begin{reptheorem}
{thm:four-ball}
	Let $\T_0$ be the standard trisection of $B^4$, and let $\Ff\subset B^4$ be a neatly embedded surface with $\Ll = \partial \Ff$.  Fix an index $v$ braiding $\wh\beta$ of $\Ll$.  Suppose $\Ff$ has a handle decomposition with $c_1$ cups, $n$ bands, and $c_3$ caps.  Then, for some $b\in\N_0$, $\Ff$ can be isotoped to be in $(b,\bold c;v)$--bridge trisected position with respect to $\T_0$, such that $\partial\Ff = \wh\beta$, where $c_2=b-n$.
\end{reptheorem}

Explicit in the above statement is a connection between the complexity parameters of a bridge trisected surface and the numbers of each type of handle in a Morse decomposition of the surface.  An immediate consequence of this correspondence is the fact that a ribbon surface admits a bridge trisection where $c_3=0$.  It turns out that this observation can be strengthened to give the following characterization of ribbon surfaces in $B^4$. Again, $\bold c = (c_1,c_2,c_3)$, and we set $c=c_1+c_2+c_3$.

\begin{reptheorem}{thm:ribbon}
	Let $\T_0$ be the standard trisection of $B^4$, and let $\Ff\subset B^4$ be a neatly embedded surface with $\Ll = \partial \Ff$.  Let $\wh\beta$ be an index $v$ braiding $\Ll$.
	Then, the following are equivalent.
	\begin{enumerate}
		\item $\Ff$ is ribbon.
		\item $\Ff$ admits a $(b,\bold c;v)$--bridge trisection filling $\wh\beta$ with $c_i=0$ for some $i$.
		\item $\Ff$ admits a $(b,0;v+c)$--bridge trisection filling a Markov perturbation $\wh\beta^+$ of $\wh\beta$.
	\end{enumerate}
\end{reptheorem}

A bridge trisection turns out to be determined by its spine -- i.e., the union $(H_1,\Tt_1)\cup(H_2,\Tt_2)\cup(H_3,\Tt_3)$, and each tangle $(H_i,\Tt_i)$ can be faithfully encoded by a planar diagram.  It follows that any surface in $B^4$ can be encoded by a triple of planar diagrams whose pairwise unions are planar diagrams for split unions of geometric braids and unlinks.  We call such triples \emph{tri-plane diagrams}.

\begin{repcorollary}
{coro:tri-plane}
	Every neatly embedded surface in $B^4$ can be described by a tri-plane diagram.
\end{repcorollary}

In Section~\ref{sec:tri-plane}, we show how to read off the data of the braiding of $\Ll$ induced by a bridge trisection from a tri-plane for the bridge trisection, and we describe a collection of moves that suffice to relate any two tri-plane diagrams corresponding to a given bridge trisection.  The reader concerned mainly with surfaces in $B^4$ can focus their attention on Sections~\ref{sec:four-ball} and~\ref{sec:tri-plane}, referring to the more general development of the preliminary material given in Section~\ref{sec:general} when needed.

Having summarized the results of the paper that pertain to the setting of $B^4$, we now describe the more general setting in which $X$ is a compact four-manifold with (possibly disconnected) boundary and $\Ff\subset X$ is a neatly embedded surface.
To account for this added generality, we must expand the definitions given earlier for the basic building blocks of a bridge trisection.  For ease of exposition, we will not record the complexity parameters, which are numerous in this setting; Section~\ref{sec:general} contains compete details.

Let $H$ be a compressionbody $(\Sigma\times I)\cup(\text{3--dimensional 2--handles})$, where $\Sigma=\partial_+H$ is connected and may have nonempty boundary, while $P = \partial_-H$ is allowed to be disconnected but cannot contain two-sphere components.  We work relative to the obvious Morse function on $H$.  Let $\Tt\subset H$ be a neatly embedded one-manifold such that the restriction of the Morse function to each component of $\Tt$ has either one critical point or none.  We call $(H,\Tt)$ a \emph{trivial tangle}.
Let $Z$ be a four-dimensional compressionbody $(P\times I\times I)\cup(\text{4--dimensional 1--handles})$, where $P$ is as above.  We work relative to the obvious Morse function on $Z$.  Let $\Dd\subset Z$ be a collection of neatly embedded disks such that the restriction of the Morse function to each component of $\Dd$ has either one critical point or none.  We call $(Z,\Dd)$ a \emph{trivial disk-tangle}.

Let $X$ be a compact four-manifold, and let $\Ff\subset X$ be a neatly embedded surface.  A \emph{bridge trisection} of $(X,\Ff)$ is a decomposition
$$(X,\Ff) = (Z_1,\Dd_1)\cup(Z_2,\Dd_2)\cup(Z_3,\Dd_3)$$
such that, for each $i\in\Z_3$,
\begin{enumerate}
	\item $(Z_i,\Dd_i)$ is a trivial disk-tangle.
	\item $(H_i,\Tt_i) = (Z_{i-1},\Dd_{i-1})\cap(Z_i,\Dd_i)$ is a trivial tangle.
\end{enumerate}
We let $(\Sigma,\bold x) = \partial(H_i,\Tt_i)$.
The underlying trisection $X = Z_1\cup Z_2\cup Z_3$ induces an open-book decomposition on each component of $Y=\partial X$, and we find that the bridge trisection of $\Ff$ induces a braiding of $\Ll = \partial \Ff$ with respect to these open-book decompositions.  Given this set-up, our general existence result can now be stated.

\begin{reptheorem}{thm:general}
	Let $\T$ be a trisection of a four-manifold $X$ with $\partial X = Y$, and let $(B,\pi)$ denote the open-book decomposition of $Y$ induced by $\T$.  Let $\Ff$ be a neatly embedded surface in $X$; let $\Ll = \partial \Ff$; and fix a braiding $\wh\beta$ of $\Ll$ about $(B,\pi)$.  Then, $\Ff$ can be isotoped to be in bridge trisected position with respect to $\T$ such that $\partial \Ff = \wh\beta$.  If $\Ll$ already coincides with the braiding $\beta$, then this isotopy can be assumed to restrict to the identity on $Y$.
\end{reptheorem}

If $H$ is not a three-ball, then $(H,\Tt)$ cannot be encoded as a planar diagram, as before.  However, $H$ is determined by a collection of curves $\alpha\subset \Sigma\setminus\nu(\bold x)$, and $\Tt$ is determined by a collection of arcs $\Tt^*$ in $\Sigma$, where the arcs of $\Tt^*$ connect pairs of points of $\bold x$.  We call the data $(\Sigma,\alpha,\Tt^*)$, which determine the trivial tangle $(H,\Tt)$, a \emph{tangle shadow}.
A triple of tangle shadows that satisfies certain pairwise-standardness conditions is called a \emph{shadow diagram}.  Because bridge trisections are determined by their spines, we obtain the following corollary.

\begin{repcorollary}{coro:shadow_describe}
	Let $X$ be a smooth, orientable, compact, connected four-manifold, and let $\Ff$ be a neatly embedded surface in $X$.  Then, $(X,\Ff)$ can be described by a shadow diagram.
\end{repcorollary}

A detailed development of shadow diagrams is given in Section~\ref{sec:shadow}, where it is described how to read off the data of the braiding of $\Ll$ induced by a bridge trisection from a shadow diagram corresponding to the bridge trisection.  Moves relating shadow diagrams corresponding to a fixed bridge trisection are given.  Section~\ref{sec:gluing} discusses how to glue two bridge trisected surfaces so that the result is bridge trisected, as well as how these gluings can be carried out with shadow diagrams.

Section~\ref{sec:class} gives some basic classification results, as well as a handful of examples to add to the many examples included throughout Sections~\ref{sec:four-ball}--\ref{sec:gluing}.  The proof of the main existence result, Theorem~\ref{thm:general}, is delayed until Section~\ref{sec:gen_proof}, though it requires only the content of Section~\ref{sec:general} to be accessible.  In Section~\ref{sec:stabilize}, we discuss stabilization and perturbation operations that we conjecture are sufficient to relate any two bridge trisections of a fixed surface.  A positive resolution of this conjecture would give complete diagrammatic calculi for studying surfaces via tri-plane diagrams and shadow diagrams.

\subsection*{Acknowledgements}

The author is deeply grateful to David Gay and Alexander Zupan for innumerable provocative and enlightening discussions about trisections over the last few years.  The author would like to thank Juanita Pinz\'on-Caicedo and Maggie Miller for helpful suggestions and thoughts throughout this project.  This work was supported in part by NSF grant DMS-1933019.

\section{Preliminaries}\label{sec:general}

In this section, we give a detailed development of the ingredients required throughout the paper, establishing notation conventions as we go.  This section should probably be considered as prerequisite for all the following sections, save for Sections~\ref{sec:four-ball} and~\ref{sec:tri-plane}, which pertain to the consideration of surfaces in the four-ball.  The reader interested only in this setting may be able to skip ahead, referring back to this section only as needed.

\subsection{Some conventions}
\label{subsec:conventions}
\ 

Unless otherwise noted, all manifolds and maps between manifolds are assumed to be smooth, and manifolds are compact.
The central objects of study here all have the form of a \emph{manifold pair} $(M,N)$, by which we mean that $N$ is \emph{neatly embedded} in $M$ in the sense that $\partial N\subset\partial M$ and $N\pitchfork \partial M$.  Throughout, $N$ will usually have codimension two in $M$.
In any event, we let $\nu(N)$ denote the interior of a tubular neighborhood of $N$ in $M$.
If $M$ is oriented, we let $\overline{(M,N)}$ denote the pair $(M,N)$ with the opposite orientation and we call it the \emph{mirror} of $(M,N)$.
We use the symbol $\sqcup$ to denote either the disjoint union or the split union, depending on the context.
For example, writing $(M_1,N_1)\sqcup(M_2,N_2)$ indicates $M_1\cap M_2 = \emptyset$.
On the other hand, $(M,N_1\sqcup N_2)$ indicates that $N_1$ and $N_2$ are \emph{split} in $M$, by which we usually mean there are disjoint, codimension zero balls $B_1$ and $B_2$ in $M$ (not necessarily neatly embedded) such that $N_i\subset\Int B_i$ for each $i\in\{1,2\}$.

\subsection{Lensed cobordisms}
\label{subsec:Lensed}
\ 

Given compact manifold pairs $(M_1,N_1)$ and $(M_2,N_2)$ with $\partial(M_1,N_1)\cong\partial(M_2,N_2)$ nonempty, we normally think of a cobordism from $(M_1,N_1)$ to $(M_2,N_2)$ as a manifold pair $(W,Z)$, where 
$$\partial(W,Z) = \left((M_1,N_1)\sqcup\overline{(M_2,N_2)}\right)\cup\left(\partial(M_1,N_1)\times I\right).$$
Thus, there is a cylindrical portion of the boundary.  Consider the quotient space $(W',Z')$ of $(W,Z)$ obtained via the identification $(x,t)\sim(x,t')$ for all $x\in\partial M_1$ and $t,t'\in I$.  The space $(W',Z')$ is diffeomorphic to $(W,Z)$, but we have
$$\partial(W',Z') = (M_1,N_1)\cup_{\partial(M_1,N_1)}\overline{(M_2,N_2)}.$$
We refer to $(W',Z')$ as a \emph{lensed cobordism}.  An example of a lensed cobordism is the submanifold $W'$ co-bounded by two Seifert surfaces for a knot $K$ in $S^3$ that are disjoint in their interior.  If $W = M_1\times I$, then we call $W'$ a \emph{product lensed cobordism}.  An example of a product lensed cobordism is the submanifold $W'$ co-bounded by two pages of an open-book decomposition on an ambient manifold $X$.  See Figure~\ref{fig:lensed_tangle} below for examples of lensed cobordisms between surfaces that contain 1--dimensional cobordisms as neat submanifolds.

We offer the following two important remarks regarding our use of lensed cobordisms.  

\begin{remark}
\label{rmk:lensed}
	Throughout this article, we will be interested in cobordisms between manifolds with boundary.  For this reason, lensed cobordisms are naturally well-suited for our purposes.  However at times, we will be discussing cobordisms between closed manifolds (e.g. null-cobordisms).  In this case, lensed cobordisms do not make sense.  We request that the reader remember to drop the adjective `lensed', upon consideration of such cases.  For example, if $(M,N)$ is any manifold pair with $N\subset\Int(M)$ closed, then for the product lensed cobordism $(M,N)\times I$, we have that $M\times I$ is lensed, but $N\times I$ is not.
\end{remark}

\begin{remark}
\label{rmk:no_Morse}
	Lensed cobordisms do not admit Morse functions where $(M_1,N_1)$ and $(M_2,N_2)$ represent distinct level sets, since $(M_1,N_1)\cap(M_2,N_2)\not=\emptyset$.  However, the manifold pair
	$$(W'',Z'') = (W',Z')\setminus\nu(\partial(M_1,N_1))$$
	does admit such a function and is trivially diffeomorphic to $(W',Z')$: We think of $(W'',Z'')$ as being formed by `indenting' $(W',Z')$ by removing $\nu(\partial (M_1,N_1))$.  Note that there is a natural identification of $(W'',Z'')$ with the original (ordinary) cobordism $(W,Z)$.  Since a generic Morse function on the cobordism $W''$ will not have critical points on its boundary, there is no loss of information here. We will have this modification in mind when we consider Morse functions on lensed cobordisms $(W',Z')$, which we will do throughout the paper.
	This subtlety illustrates that lensed cobordisms are unnatural in a Morse-theoretic approach to manifold theory, but we believe they are more natural in a trisection-theoretic approach.
\end{remark}

\subsection{Compressionbodies}
\label{subsec:Compression}
\ 

Given a surface $\Sigma$ and a collection $\alpha$ of simple closed curves on $\Sigma$, let $\Sigma^\alpha$ denote the surface obtained by surgering $\Sigma$ along $\alpha$.  Let $H$ denote the three-manifold obtained by attaching a collection $\frak h_\alpha$ of three-dimensional 2--handles to $\Sigma\times[-1,1]$ along $\alpha\times\{1\}$, before filling in any resulting sphere components with balls.  As discussed in Remark~\ref{rmk:lensed}, in the case that $\Sigma$ has nonempty boundary, we quotient out by the vertical portion of the boundary and view $H$ as a lensed cobordism from $\partial_+H = \Sigma$ to $\partial_-H=\Sigma^\alpha$.  Considering $H$ as an oriented manifold yields the following decomposition:
$$\partial H_\alpha = \partial_+H\cup_{\partial(\partial_+H)}\overline{\partial_-H}.$$

The manifold $H_\alpha$ is called a \emph{(lensed) compressionbody}.  A collection $\Dd$ of disjoint, neatly embedded disks in a compressionbody $H$ is called a \emph{cut system} for $H$ if $H\setminus\nu(\Dd)\cong (\partial_-H)\times I$ or $H\setminus\nu(\Dd)\cong B^3$, according with whether $\partial(\partial_+H)=\partial(\partial_-H)$ is nonempty or empty.
A collection of essential, simple closed curves on $\partial_+H$ is called a \emph{defining set of curves} for $H$ if it is the boundary of a cut system for $H$.

In order to efficiently discuss compressionbodies $H$ for which $\partial_-H$ is disconnected, we will introduce the following terminology.

\begin{definition}
	Given $m\in\N_0$, an \emph{ordered partition} of $m$ is a sequence $\bold m = (m_1,\ldots, m_n)$ such that $m_j\in\N_0$ and $\sum m_j = m$.  We say that such an ordered partition is of type $(m,n)$.  If $m_j>0$ for all $j$, then the ordered partition is called \emph{positive} and is said to be of type $(m,n)^+$.  If $m_j = m'$ for all $j$, then the ordered partition is called \emph{balanced}.
\end{definition}

Let $\Sigma_g$ denote the closed surface of genus $g$, and let $\Sigma_{g,f}$ denote the result of removing $f$ disjoint, open disks from $\Sigma_g$.   A surface $\Sigma$ with $n>1$ connected components is called \emph{ordered} if there is an ordered partition  $\bold p = (p_1,\ldots, p_n)$ of $p\in\N_0$ and a positive ordered partition $\bold f = (f_1,\ldots, f_n)$ of $f\in\N$ such that 
$$\Sigma = \Sigma_{p_1,f_1}\sqcup\cdots\sqcup\Sigma_{p_n,f_n}.$$
We denote such an ordered surface by $\Sigma_{\bold p, \bold f}$, and we consider each $\Sigma_{p_j,f_j}\subset\Sigma_{\bold p,\bold f}$ to come equipped with an ordering of its $f_j$ boundary components, when necessary.  Note that we are requiring each component of the \emph{disconnected} surface $\Sigma_{\bold p,\bold f}$ to have boundary.

\begin{figure}[h!]
\begin{subfigure}{.45\textwidth}
  \centering
  \includegraphics[width=.8\linewidth]{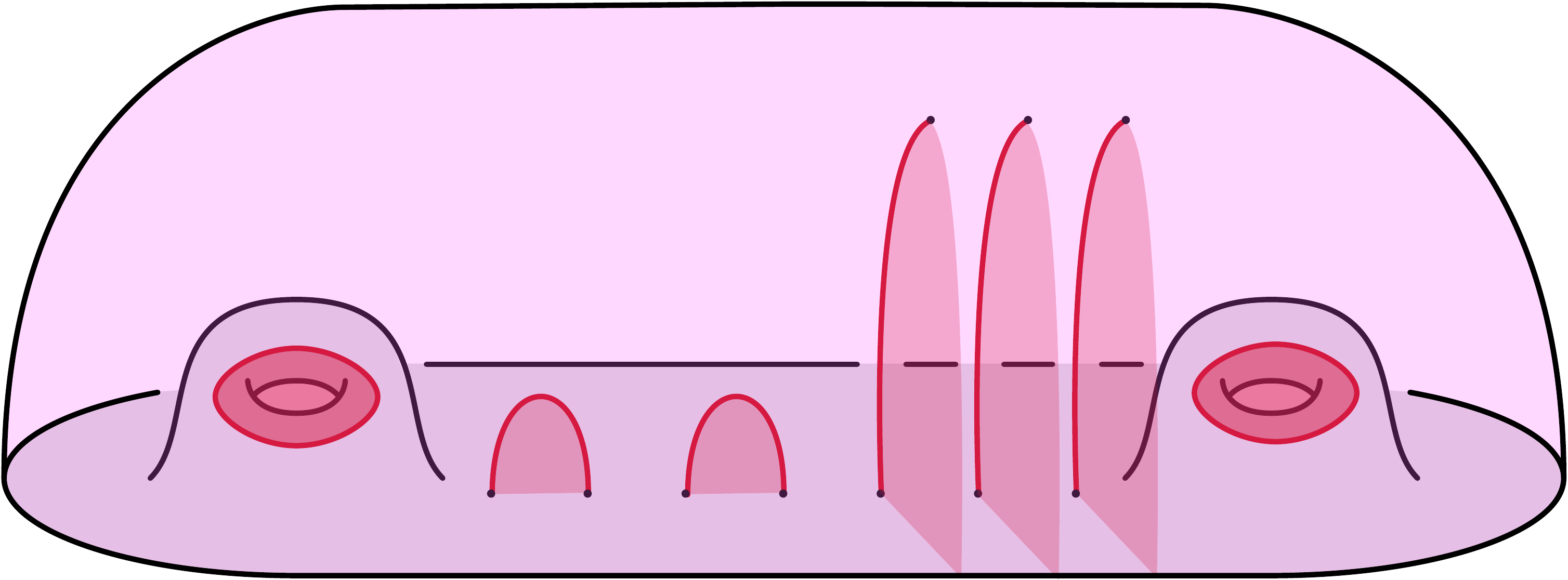}
  \caption{$(H_{2,0,1},\Tt_{2,3})$}
  \label{fig:lensed_tangle1}
\end{subfigure}%
\begin{subfigure}{.275\textwidth}
  \centering
  \includegraphics[width=.8\linewidth]{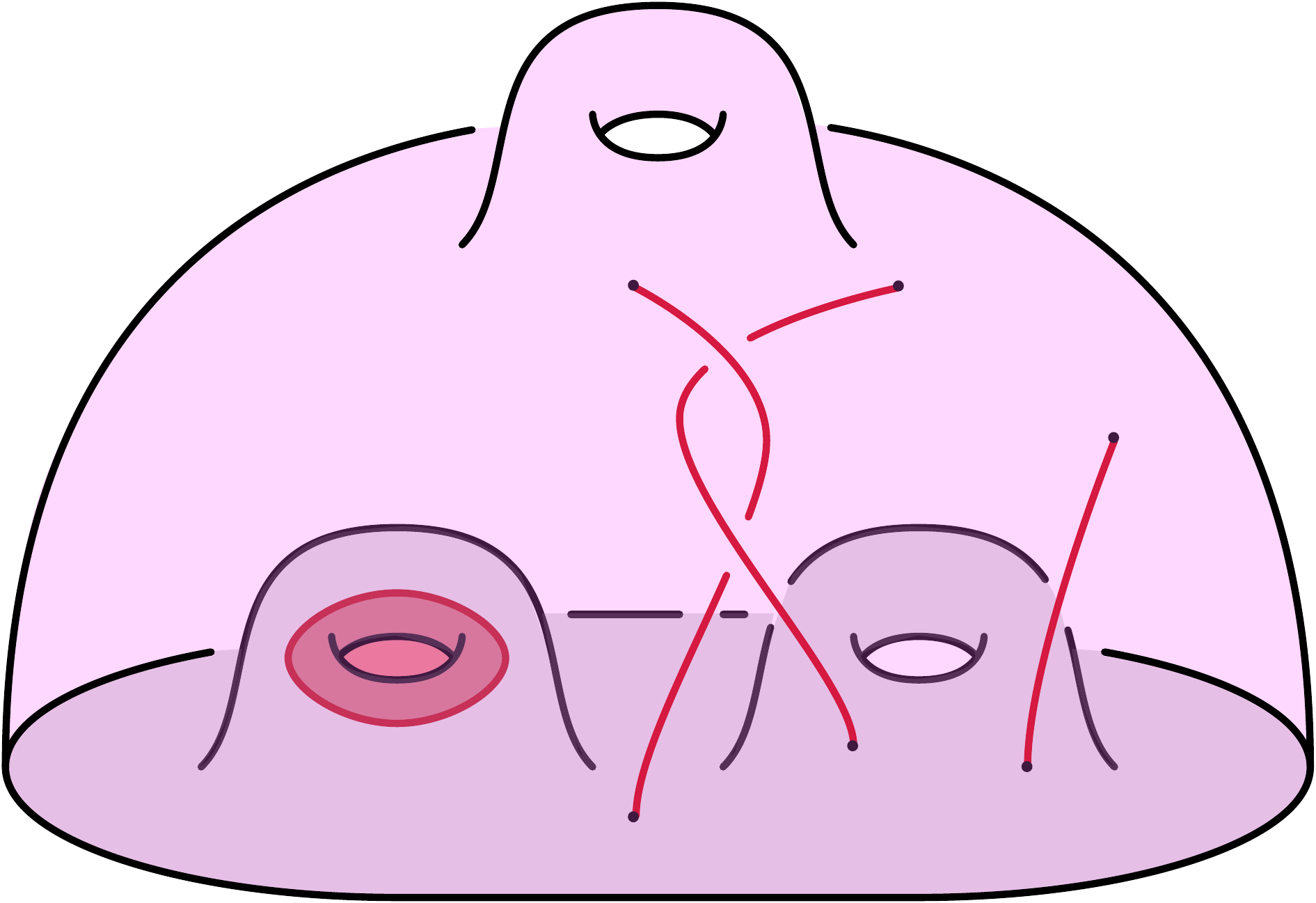}
  \caption{$(H_{2,1,1},\Tt_{0,3})$}
  \label{fig:lensed_tangle2}
\end{subfigure}%
\begin{subfigure}{.275\textwidth}
  \centering
  \includegraphics[width=.8\linewidth]{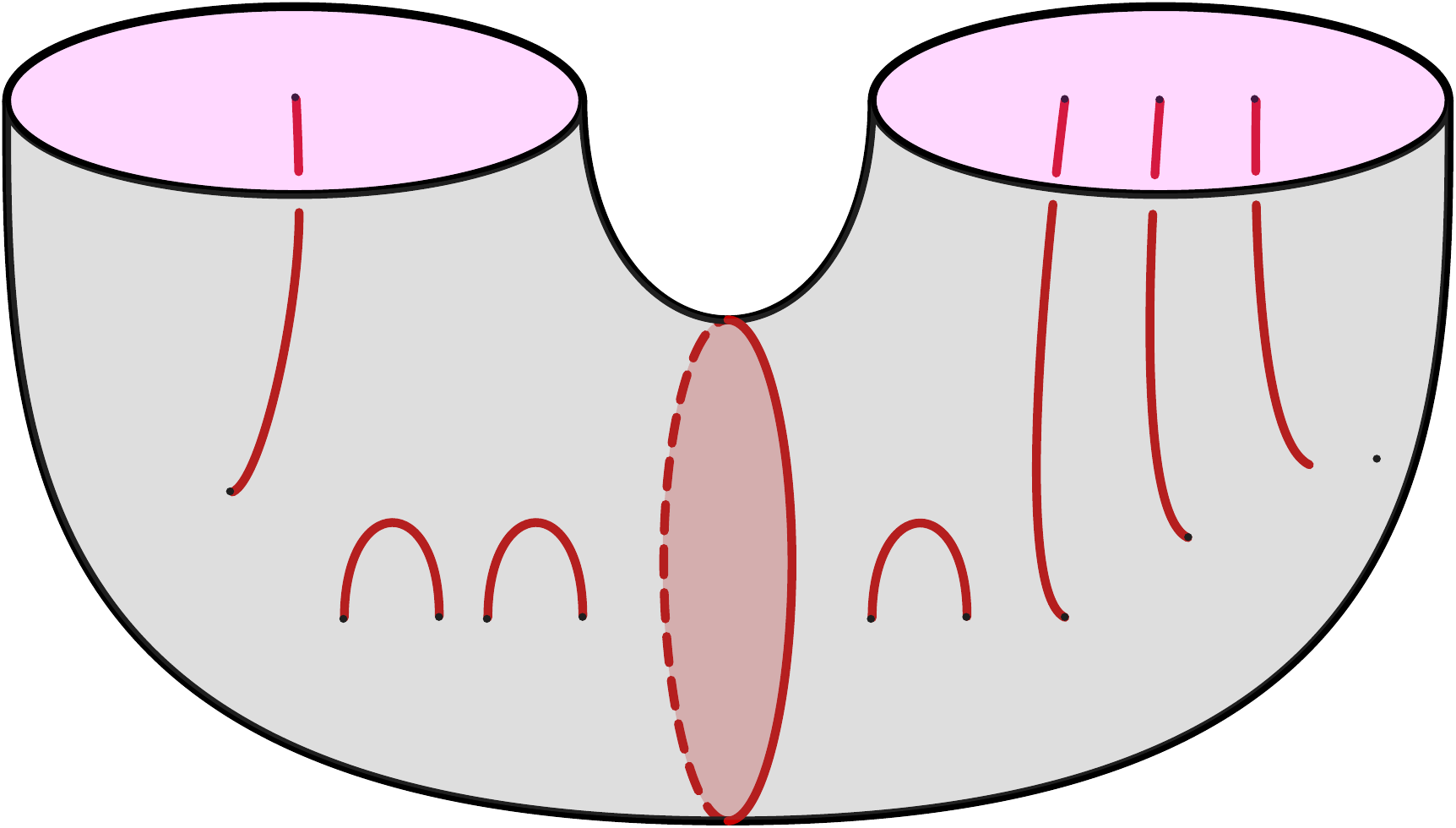}
  \caption{$(H_{0,(0,0),(1,1)},\Tt_{3,(1,3)})$}
  \label{fig:lensed_tangle3}
\end{subfigure}%
\caption{Three examples of trivial tangles inside lensed compressionbodies.}
\label{fig:lensed_tangle}
\end{figure}

Let $H_{g,\bold p,\bold f}$ denote the lensed compressionbody satisfying 
	\begin{enumerate}
		\item $\partial_+ H_{g,\bold p,\bold f} = \Sigma_{g,f}$, and
		\item $\partial_- H_{g,\bold p,\bold f} = \Sigma_{\bold p,\bold f}$.
	\end{enumerate}
If $\alpha$ is a defining set for such a compressionbody, then $\alpha$ consists of  $(n-1)$ separating curves and $(g-p)$ non-separating curves.  See Figure~\ref{fig:lensed_tangle} for three examples of lensed compressionbodies, ignoring for now the submanifolds.
Let $H_{p_j,f_j}$ denote the product lensed cobordism from $\Sigma_{p_j,f_j}$ to itself, and let
$$H_{\bold p,\bold f} = \sqcup_{j=1}^\infty H_{p_j,f_j}.$$
We refer to $H_{\bold p,\bold f}$ as a \emph{spread}.

A lensed compressionbody $H$ admits a Morse function $\Phi\colon H\to [-1,3]$, which, as discussed in Remark~\ref{rmk:no_Morse}, is defined on $H\setminus\nu(\partial(\partial_+H))$, such that $\Phi(\partial_+H)=-1$, $\Phi(\partial_-H) = 3$, and $\Phi$ has $(n-1)+(g-p)$ critical points, all of index two, and all lying in $\Phi^{-1}(2)$.  We call such a $\Phi$ a \emph{standard} Morse function for $H$.

For a positive natural number $I$, we let $\bold x_I\subset \Sigma_{g,f}$ denote a fixed collection of $I$ marked points.

\subsection{Heegaard splittings and Heegaard-page splittings}
\label{subsec:Heegaard}
\ 

Let $M$ be an orientable three-manifold.  A \emph{Heegaard splitting} of $M$ is a decomposition 
$$M = H_1\cup_\Sigma\overline H_2,$$
where $\Sigma\subset M$ is a neatly embedded surface $\Sigma_{g,f}$, and each $H_i$ is a lensed compressionbody $H_{g,\bold p, \bold f}$ with $\partial_+H_i = \Sigma$.  It follows that 
$$\partial M = \overline{\partial_-H_1}\cup_{\partial\Sigma}\partial_-H_2.$$
We denote the Heegaard splitting by $(\Sigma; H_1, H_2)$, and we call it a $(g;\bold p, \bold f)$--splitting, in reference to the relevant parameters.  Note that our notion of Heegaard splitting restricts to the usual notion when $M$ is closed, but is different from the usual notion when $M$ has boundary.  Our Heegaard splittings are a special type of sutured manifold decomposition.  Since each of the $H_i$ is determined by a collection $\alpha_i$ of curve on $\Sigma$, the Heegaard splitting, including $M$ itself, is determined by the triple $(\Sigma; \alpha_1, \alpha_2)$, which is called a \emph{Heegaard diagram} for $M$.

\begin{remark}
\label{rmk:ordering1}
	Note that we have defined Heegaard splittings so that the two compressionbodies are homeomorphic, since this is the only case we will be interested in.  Implicit in the set-up are matching orderings of the components of the $\partial_-H_i$ in the case that $|\partial_-H_i|>1$.  This will be important when we derive a Heegaard-page structure from a Heegaard splitting below.  See also Remark~\ref{rmk:ordering2}
\end{remark}

A Heegaard splitting $(\Sigma; H_1, H_2)$ with $H_i\cong H_{g, \bold p, \bold f}$ is called \emph{$(m,n)$--standard}  if there are cut systems $\Dd_i = \{D_i^l\}_{l=1}^{n-1+g-p}$ for the $H_i$ such that 
\begin{enumerate}
	\item For $1\leq l\leq n-1$, we have $\partial D_1^l = \partial D_2^l$, and this curve is separating;
	\item For $n\leq l\leq m+n-1$,  we have $\partial D_1^l = \partial D_2^l$, and this curve is non-separating; and
	\item For $m+n\leq l,l'  \leq g-p$, we have $|\partial D_1^l \cap \partial D_2^{l'}|$ given by the Kronecker delta $\delta_{l,l'}$, and the curves $\partial D_1^l$ and $\partial D_2^l$ are non-separating.
\end{enumerate}
A Heegaard diagram $(\Sigma; \alpha_1, \alpha_2)$ is called \emph{$(m,n)$--standard} if $\alpha_i = \partial\Dd_i$ for cut systems $\Dd_i$ satisfying these three properties.  See Figure~\ref{fig:bridge_double1} for an example. In a sense, a standard Heegaard splitting is a ``stabilized double''.  The following lemma makes this precise.

\begin{lemma}\label{lemma:std_Heeg}
	Let $(\Sigma; H_1, H_2)$ be a $(m,n)$--standard Heegaard splitting with $H_i\cong H_{g,\bold p,\bold f}$.  Then, 
	$$(\Sigma; H_1, H_2) = \left(\mathop\#\limits_{j=1}^n((\Sigma')^j; (H'_1)^j, (H'_2)^j)\right)\#(\Sigma''; H''_1, H''_2),$$
	where $(H'_1)^j\cong(H'_2)^j\cong H_{p_j,f_j}$, for each $j = 1, \ldots, n$, and $(\Sigma'';H_1'',H_2'')$ is the standard genus $g-p$ Heegaard surface for $\#^m(S^1\times S^2)$.
\end{lemma}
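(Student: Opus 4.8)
The plan is to proceed by constructing the connected-sum decomposition explicitly from the cut systems $\Dd_i$ guaranteed by the $(m,n)$--standardness hypothesis, separating out the pieces according to the three types of curves enumerated in the definition. First I would fix the cut systems $\Dd_i = \{D_i^l\}$ and, for each index $l$, perform a ``tubing'' or ``connected sum splitting'' operation along the curves $\partial D_1^l = \partial D_2^l$ (for $1 \le l \le m+n-1$), which are identical in $\Sigma$. Since each such curve bounds disks on both sides, it can be used to split the Heegaard splitting as a connected sum: cutting $\Sigma$ along such a curve and capping off with disks realizes $M$ (together with its splitting) as a connected sum of two Heegaard-split pieces along the corresponding $S^2$ (in the relevant lensed sense). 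I would carry this out first for the $(n-1)$ separating curves, which peel off the summands $((\Sigma')^j; (H'_1)^j, (H'_2)^j)$ one at a time --- each separating curve disconnects one component $\Sigma_{p_j, f_j}$-worth of topology into its own summand, and because the curve is a common boundary of disks in both compressionbodies, the resulting summand is a \emph{product} lensed cobordism $H_{p_j, f_j}$ on each side (no index-two critical points remain on that piece). This accounts for the first $n$ factors in the claimed decomposition.

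Next I would handle the $(g-p) - (n-1)$ non-separating curves. The $m$ curves of item (2), with $\partial D_1^l = \partial D_2^l$ non-separating, each contribute a connected summand which is a genus-one Heegaard splitting of $S^1 \times S^2$ (the standard ``parallel curves'' situation), and splitting along all $m$ of them yields the $\#^m(S^1 \times S^2)$ summand. The remaining $(g - p) - (m + n - 1)$ curves come in dual pairs by item (3) --- $|\partial D_1^l \cap \partial D_2^{l'}| = \delta_{l,l'}$ --- and each such transverse pair is exactly a genus-one splitting of $S^3$, contributing nothing to the connected sum. Keeping careful track of genus: the $S^1 \times S^2$ summands contribute $m$ to the genus of $\Sigma''$ and the $S^3$ summands contribute $(g-p) - (m+n-1)$ more, which together with what is absorbed into the $(\Sigma')^j$ should total $g - p$ for $\Sigma''$, matching the stated ``standard genus $g-p$ Heegaard surface for $\#^m(S^1 \times S^2)$.'' I would verify the bookkeeping of boundary components using the positive ordered partition $\bold f$ and the ordered partition $\bold p$, invoking the matching orderings of components of $\partial_- H_i$ flagged in Remark~\ref{rmk:ordering1}.

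I would organize the argument so that the ``connected sum along a curve bounding disks on both sides'' step is stated once as a lemma-like observation (it is the standard fact that a reducing sphere, or its lensed analogue, splits a Heegaard splitting as a connected sum of Heegaard splittings), and then applied iteratively. The main technical point requiring care is the lensed/sutured setting: the curves here live on $\Sigma_{g,f}$ with nonempty boundary, so ``connected sum'' must be interpreted as connected sum of manifold pairs relative to the boundary pattern, and one must check that the pieces $H_{p_j,f_j}$ genuinely arise as product lensed cobordisms rather than as nontrivial compressionbodies --- this is where the separating-versus-non-separating distinction and the fact that the $D_1^l, D_2^l$ are \emph{parallel} (bounding an annulus region, or an $I$-bundle, between them) does the work. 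I expect this verification --- that after splitting off a separating curve the remaining compressionbody on each side has no index-two critical points and hence is $\Sigma_{p_j,f_j}\times I$ --- to be the main obstacle, since it requires tracking how the cut system restricts to each summand and confirming the count ``$(n-1)$ separating $+$ $(g-p)$ non-separating'' curves distributes correctly. Everything else is standard connected-sum-of-Heegaard-splittings bookkeeping, which I would not grind through in detail.
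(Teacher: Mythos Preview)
Your overall strategy is sound and close to the paper's, but there is a genuine gap at the step you yourself flag as ``the main obstacle.'' When you cut along one of the $n-1$ separating curves $\partial D_1^l = \partial D_2^l$, the region you peel off need \emph{not} be a product lensed cobordism $H_{p_j,f_j}$: a priori, some of the non-separating curves (of types (2) and (3)) may live in that region, so the piece you obtain still has index-two critical points. Your assertion that ``no index-two critical points remain on that piece'' is exactly what needs to be \emph{arranged}, not assumed, and your plan to verify it by ``tracking how the cut system restricts'' does not work --- the distribution of the non-separating curves among the $n$ regions is simply not constrained by the definition of $(m,n)$--standard.

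The paper resolves this with one clean move you are missing: before splitting anything, perform handleslides of the non-separating curves (over the separating ones and each other) so that \emph{all} of the $g-p$ non-separating curves lie in a single one of the $n$ regions cut out by the separating curves. Once this is done, a new separating curve $\delta$ (not one of the original $\partial D_i^l$) can be chosen that cuts off a one-holed genus-$(g-p)$ subsurface $\Sigma''$ containing all the non-separating curves; this $\delta$ gives the $\#^m(S^1\times S^2)$ summand in one shot, and the remaining regions are now genuinely product pieces. Your alternative route --- decomposing further along the non-separating curves into $m$ genus-one $S^1\times S^2$ summands and $g-p-m$ genus-one $S^3$ summands, then reassembling via associativity of connected sum --- can also be made to work, but you would have to abandon the premature claim that step~1 already yields products and instead interleave the two kinds of cuts. (Also, check your counts: there are $g-p$ non-separating curves, $m$ of type~(2) and $g-p-m$ of type~(3); the paper's stated range ``$m+n\le l\le g-p$'' has an off-by-$(n-1)$ typo in its upper bound.)
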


\begin{proof}
	Consider the $n$ regions of $\Sigma$ cut out by the $n-1$ separating curves that bound in each compressionbody.  After a sequence of handleslides, we can assume that all of the non-separating curves of the $\alpha_i$ are contained in one of these regions.  Once this is arranged, there is a separating curve $\delta$ in $\Sigma\setminus\nu(\alpha_1\cup\alpha_2)$ that cuts off a subsurface $\Sigma''$ such that $\Sigma''$ has only one boundary component (the curve $\delta$) and $g(\Sigma'') = g-p$.  Since $\delta$ bounds in each of $H_1$ and $H_2$, we have that $(\Sigma;H_1,H_2) = (\Sigma';H_1',H_2')\#_\delta(\Sigma'';H_1'',H_2'')$, such that the latter summand is the standard splitting of $\#^m(S^1\times S^2)$, as claimed.  The fact that the regions of $\Sigma'$ cut out by the separating curves that bound in both handlebodies contain no other curves of the $\alpha_i$ means that these curves give the connected sum decomposition
	$$(\Sigma'; H_1', H_2') = \left(\mathop\#\limits_{j=1}^n((\Sigma')^j; (H'_1)^j, (H'_2)^j)\right)$$
	that is claimed.
\end{proof}

Let $H_1$ and $H_2$ be two copies of $H_{g, \bold p, \bold f}$, and let $h\colon\partial_+H_1\to \partial_+H_2$ be a diffeomorphism.  Let $Y$ be the closed three-manifold obtained as the union of $H_1$ and $H_2$ along their boundaries such that $\partial_+H_1$ and $\partial_+H_2$ are identified via $h$ and $\partial_-H_1$ and $\partial_-H_2$ are identified via the identity on $\partial_-H_{g, \bold p, \bold f}$. The manifold $Y$ is called a \emph{Heegaard double} of $H_{g, \bold p, \bold f}$ along $h$.  
We say that a Heegaard double $Y$ is \emph{$(m,n)$--standard} if the Heegaard splitting $(\Sigma; H_1, H_2)$ is $(m,n)$--standard. 
Let $Y_{g,\bold p,\bold f}$ denote the Heegaard double of a standard Heegaard splitting whose compressionbodies are $H_{g,\bold p, \bold f}$.  The uniqueness of $Y_{g,\bold p,\bold f}$ is justified by the following lemma, which is proved with slightly different terminology as Corollary~14 of~\cite{CasGayPin_18_Diagrams-for-relative-trisections}.

\begin{lemma}
\label{lem:HeegDouble}
	Let $M = H_1\cup_\Sigma\overline H_2$ be a standard Heegaard splitting with $H_i\cong H_{g,\bold p,\bold f}$.  Then there is a unique (up to isotopy rel-$\partial$) diffeomorphism $\Id_{(M,\Sigma)}\colon \partial_-H_1\to\partial_-H_2$ such that the identification space $M/_{x\sim\Id_{(M,\Sigma)}(x)}$, where $x\in\partial_-H_1$, is diffeomorphic to the standard Heegaard double $Y_{g,\bold p,\bold f}$.
\end{lemma}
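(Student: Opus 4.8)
The plan is to reduce the statement to the decomposition provided by Lemma~\ref{lemma:std_Heeg} and then argue about each summand separately. First I would observe that the content of the lemma is really two separate assertions: existence of \emph{a} gluing diffeomorphism $\Id_{(M,\Sigma)}\colon\partial_-H_1\to\partial_-H_2$ producing $Y_{g,\bold p,\bold f}$, and its uniqueness up to isotopy rel boundary. Existence follows almost by definition: the standard Heegaard double $Y_{g,\bold p,\bold f}$ is built by gluing two copies of $H_{g,\bold p,\bold f}$ along a standard Heegaard splitting using the identity on $\partial_-H_{g,\bold p,\bold f}$, so transporting this identification through the (fixed, up to isotopy) identifications $H_i\cong H_{g,\bold p,\bold f}$ gives the desired $\Id_{(M,\Sigma)}$. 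The real work is uniqueness.

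For uniqueness I would apply Lemma~\ref{lemma:std_Heeg} to write $(\Sigma;H_1,H_2)$ as a connected sum of the $n$ product pieces $((\Sigma')^j;(H_1')^j,(H_2')^j)$ with $(H_i')^j\cong H_{p_j,f_j}$ and one piece $(\Sigma'';H_1'',H_2'')$ that is the standard genus-$(g-p)$ splitting of $\#^m(S^1\times S^2)$. A diffeomorphism $\partial_-H_1\to\partial_-H_2$ is a diffeomorphism between disjoint unions of surfaces $\Sigma_{p_j,f_j}$, and since the orderings of the components and of the boundary circles are fixed (Remarks~\ref{rmk:ordering1} and~\ref{rmk:no_Morse}), such a map is determined componentwise. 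Thus it suffices to check that on each product summand $H_{p_j,f_j}$ there is a unique rel-$\partial$ isotopy class of gluing $\partial_-\to\partial_-$ yielding the appropriate closed-up piece, and that the genus-$(g-p)$ stabilizing summand $(\Sigma'';H_1'',H_2'')$ contributes nothing since it is closed and glued trivially. For the product summand, $\partial_-H_{p_j,f_j}=\Sigma_{p_j,f_j}$ has nonempty boundary, so we are asking for uniqueness up to isotopy of a self-diffeomorphism of a compact surface with boundary that fixes $\partial$ pointwise \emph{and} is compatible with the product structure; the standard gluing is the identity, and any other gluing differs from it by an element of the mapping class group $\mathrm{Mod}(\Sigma_{p_j,f_j},\partial)$ — but the constraint that the resulting manifold be the \emph{standard} Heegaard double forces this element to be isotopic to the identity, because changing the gluing by a nontrivial mapping class changes the ambient three-manifold (or at least changes it as a Heegaard double of $H_{p_j,f_j}$ with the given parametrization).

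The main obstacle will be making the last sentence precise: ruling out that two non-isotopic gluing maps of $\partial_-H_1$ could both produce something diffeomorphic to $Y_{g,\bold p,\bold f}$ \emph{as a parametrized Heegaard double}. The cleanest route is the one indicated in the statement: quote Corollary~14 of~\cite{CasGayPin_18_Diagrams-for-relative-trisections}, which establishes exactly this uniqueness for the building blocks in their (equivalent) language, and then explain the dictionary. Concretely, I would note that a product lensed compressionbody $H_{p_j,f_j}$ together with its boundary parametrization is the analogue of a trivially-fibered piece in the relative-trisection setup, the Heegaard double operation corresponds to their doubling construction, and "standard" on both sides means the same thing; Casson--Gay--Pinzón's result then gives the unique rel-$\partial$ isotopy class of the doubling map. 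Assembling these componentwise uniqueness statements, together with the observation that the stabilizing $\#^m(S^1\times S^2)$ summand is closed (so contributes no boundary and no choice), yields the uniqueness of $\Id_{(M,\Sigma)}$ on all of $\partial_-H_1$, completing the proof.

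\begin{proof}
Existence of a gluing map is immediate from the definition of the standard Heegaard double $Y_{g,\bold p,\bold f}$: transport the identity identification of $\partial_-H_{g,\bold p,\bold f}$ with itself through the (fixed up to isotopy) identifications $H_i\cong H_{g,\bold p,\bold f}$ to obtain a diffeomorphism $\Id_{(M,\Sigma)}\colon\partial_-H_1\to\partial_-H_2$ for which $M/_{x\sim\Id_{(M,\Sigma)}(x)}\cong Y_{g,\bold p,\bold f}$.

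For uniqueness, apply Lemma~\ref{lemma:std_Heeg} to write
$$(\Sigma;H_1,H_2) = \left(\mathop\#\limits_{j=1}^n((\Sigma')^j;(H'_1)^j,(H'_2)^j)\right)\#(\Sigma'';H''_1,H''_2),$$
with $(H'_i)^j\cong H_{p_j,f_j}$ and $(\Sigma'';H''_1,H''_2)$ the standard genus $g-p$ splitting of $\#^m(S^1\times S^2)$. Since $\partial_-H''_i=\emptyset$, the stabilizing summand imposes no conditions on, and contributes no choices to, a gluing of $\partial_-H_1$ with $\partial_-H_2$. Thus a gluing map is a diffeomorphism between the disjoint unions $\bigsqcup_j\Sigma_{p_j,f_j}$, and by the fixed orderings of components and of boundary circles recorded in Remarks~\ref{rmk:ordering1} and~\ref{rmk:no_Morse} it is determined by its restriction to each $\Sigma_{p_j,f_j}$. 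It therefore suffices to prove uniqueness up to isotopy rel-$\partial$ of the gluing $\partial_-(H'_1)^j\to\partial_-(H'_2)^j$ of the product piece $H_{p_j,f_j}$ for which the closed-up manifold is the corresponding summand of $Y_{g,\bold p,\bold f}$.

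For each fixed $j$, the pair $((\Sigma')^j;(H'_1)^j,(H'_2)^j)$ with $(H'_i)^j\cong H_{p_j,f_j}$ a product lensed compressionbody, together with its boundary parametrization, is precisely the building block treated in~\cite{CasGayPin_18_Diagrams-for-relative-trisections}, where the Heegaard double construction corresponds to their doubling operation and the word ``standard'' has the same meaning. Corollary~14 of~\cite{CasGayPin_18_Diagrams-for-relative-trisections} states that there is a unique isotopy class rel-$\partial$ of doubling map producing the standard double; translating through this dictionary gives the unique rel-$\partial$ isotopy class of gluing $\partial_-(H'_1)^j\to\partial_-(H'_2)^j$ yielding the standard double of $H_{p_j,f_j}$. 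Assembling these componentwise uniqueness statements over $j=1,\dots,n$, and recalling that the $\#^m(S^1\times S^2)$ summand contributes nothing, we conclude that the diffeomorphism $\Id_{(M,\Sigma)}\colon\partial_-H_1\to\partial_-H_2$ of the statement is unique up to isotopy rel-$\partial$.
\end{proof}
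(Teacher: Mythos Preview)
Your proposal is correct and relies on the same key input as the paper: the paper gives no independent argument for this lemma but simply records that it ``is proved with slightly different terminology as Corollary~14 of~\cite{CasGayPin_18_Diagrams-for-relative-trisections}.'' You invoke that same corollary, but first pass through the connected-sum decomposition of Lemma~\ref{lemma:std_Heeg} to reduce to the product pieces $H_{p_j,f_j}$ before citing; this extra reduction is harmless but unnecessary, since the cited result already covers the general compressionbody (and the paper treats it that way). One small correction: your appeal to Remark~\ref{rmk:no_Morse} for the ordering of components is misplaced---that remark concerns Morse functions on lensed cobordisms; Remark~\ref{rmk:ordering1} alone carries the ordering information you need.
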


We now identify the total space of a standard Heegaard double. Let $\Id_{p_j,f_j}\colon\Sigma_{p_j,f_j}\to\Sigma_{p_j,f_j}$ be the identity map, and let $M_{\Id_{p_j,f_j}}$ be the total space of the abstract open-book $(\Sigma_{p_j,f_j},\Id_{p_j,f_j})$.
See Subsection~\ref{subsec:OBD}, especially Example~\ref{ex:Id_obd}, for definitions and details regarding open-book decompositions.

\begin{lemma}\label{lemma:k-value}
	There is a decomposition
	$$ Y_{g,\bold p, \bold f} = \left(\mathop\#_{j=1}^nM_{\Id_{p_j,f_j}}\right)\#(\#^m(S^1\times S^2)),$$
	such that $\Sigma$ restricts to a page in each of the first $n$ summands and to a Heegaard surface in the last summand. Moreover,
	$$M_{\Id_{p_j,f_j}}\cong\#^{2p_j+f_j-1}(S^1\times S^2),$$
	so $Y_{g,\bold p, \bold f}\cong \#^k(S^1\times S^2)$, with $k = 2p+f-n+m$.
\end{lemma}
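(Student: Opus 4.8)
The plan is to prove Lemma~\ref{lemma:k-value} in two stages: first establish the connected-sum decomposition of $Y_{g,\bold p,\bold f}$, then compute each summand and sum the $S^1\times S^2$--counts.

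First I would obtain the decomposition from the already-established structure of standard Heegaard doubles. By definition, $Y_{g,\bold p,\bold f}$ is the Heegaard double of a $(m,n)$--standard Heegaard splitting $(\Sigma;H_1,H_2)$ with $H_i\cong H_{g,\bold p,\bold f}$, glued via the canonical identification $\Id_{(M,\Sigma)}$ of Lemma~\ref{lem:HeegDouble}. Apply Lemma~\ref{lemma:std_Heeg} to write this Heegaard splitting as a connected sum of the $n$ product splittings $((\Sigma')^j;(H'_1)^j,(H'_2)^j)$ with $(H'_i)^j\cong H_{p_j,f_j}$, together with the standard genus $g-p$ Heegaard splitting of $\#^m(S^1\times S^2)$. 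The point is that the connected-sum decomposition of a Heegaard splitting passes to the Heegaard double: the connect-sum spheres (and the separating curve $\delta$) survive, and the canonical gluing $\Id_{(M,\Sigma)}$ respects the summands because it is the unique gluing producing the standard double, hence must restrict on each summand to the corresponding canonical gluing. Doubling the $j$-th product summand $H_{p_j,f_j}$ along the identity map on $\partial_+$ gives, by the very definition preceding the lemma, the total space $M_{\Id_{p_j,f_j}}$ of the abstract open-book $(\Sigma_{p_j,f_j},\Id_{p_j,f_j})$, with $\Sigma$ restricting to a page; doubling the $\#^m(S^1\times S^2)$ summand along its (closed) Heegaard surface returns $\#^m(S^1\times S^2)$ with $\Sigma$ a Heegaard surface. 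This yields the claimed decomposition.

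Next I would identify $M_{\Id_{p_j,f_j}}$. The total space of the open-book with page $\Sigma_{p_j,f_j}$ and trivial (identity) monodromy is $(\Sigma_{p_j,f_j}\times S^1)\cup(\text{2--handles along }\partial\Sigma_{p_j,f_j}\times\{pt\})$, equivalently the double along the binding; a direct computation (e.g.\ via a handle decomposition, or by recognizing it as $\partial(\Sigma_{p_j,f_j}\times D^2)$ with the binding solid tori filled) shows $M_{\Id_{p_j,f_j}}\cong\#^{2p_j+f_j-1}(S^1\times S^2)$. I would either cite this as standard or verify it quickly: $\Sigma_{p_j,f_j}\times S^1$ fibers over $S^1$, and gluing in the $f_j$ solid tori along the binding components yields a manifold with $b_1 = 2p_j+f_j-1$ that is a connected sum of $S^1\times S^2$'s (for instance because $\Sigma_{p_j,f_j}\times S^1$ is built from $(S^1\times S^2)$--pieces by the standard handle argument, or by induction on $p_j$ and $f_j$ using that the open-book of a plumbing is a connected sum).

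Finally I would tally the counts. Each of the $n$ open-book summands contributes $2p_j+f_j-1$ copies of $S^1\times S^2$; summing over $j$ gives $2\sum p_j + \sum f_j - n = 2p+f-n$. The last summand contributes $m$ more, so
$$Y_{g,\bold p,\bold f}\cong\#^k(S^1\times S^2),\qquad k = 2p+f-n+m,$$
as claimed. The main obstacle I expect is the first stage: carefully justifying that the connected-sum decomposition of the Heegaard splitting from Lemma~\ref{lemma:std_Heeg} is compatible with the \emph{canonical} gluing $\Id_{(M,\Sigma)}$ of the negative boundaries, so that doubling commutes with taking connected sums and the summands are exactly the open-books and the $\#^m(S^1\times S^2)$ piece. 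Once that compatibility is in hand — which should follow from the uniqueness clause in Lemma~\ref{lem:HeegDouble} applied summand-by-summand — the identification of $M_{\Id_{p_j,f_j}}$ and the arithmetic are routine.
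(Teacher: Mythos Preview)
Your proposal is correct and follows essentially the same route as the paper: both derive the connected-sum decomposition directly from Lemma~\ref{lemma:std_Heeg} and then identify each summand. The only notable difference is in how $M_{\Id_{p_j,f_j}}$ is computed: rather than appealing to a handle decomposition or to $\partial(\Sigma_{p_j,f_j}\times D^2)$, the paper simply observes that any two pages of the open-book split $M_{\Id_{p_j,f_j}}$ into two copies of the lensed product $H_{p_j,f_j}$, each a genus $2p_j+f_j-1$ handlebody, and that a system of arcs cutting the page to a disk yields cut systems with \emph{identical} boundary curves on both sides (since the monodromy is the identity); this is the symmetric genus $2p_j+f_j-1$ Heegaard splitting of $\#^{2p_j+f_j-1}(S^1\times S^2)$. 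This is a bit cleaner than your sketch and avoids any auxiliary computation. As for your worry about compatibility of the canonical gluing with the connected-sum decomposition, the paper does not belabor this point either; it simply invokes Lemma~\ref{lemma:std_Heeg} and leaves the passage to the double implicit.
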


\begin{proof}
	Consider the abstract open-book $(\Sigma_{p_j,f_j}, \Id_{p_j,f_j})$, and let $M_{\Id_{p_j,f_j}}$ denote the total space of this abstract open-book.  Pick two pages, $P_1$ and $P_2$, of the open-book decomposition of $M_{\Id_{p_j,f_j}}$, and consider the two lensed cobordisms co-bounded thereby.
	Each of these pieces is a handlebody of genus $2p_j+f_j-1$, since it is diffeomorphic to $H_{p_j,f_j}$.  A collection of arcs decomposing the page into a disk give rise to a cut system for either handlebody, but these cut systems have the same boundary.  The object described is a genus $2p_j+f_j-1$ (symmetric) Heegaard splitting for $\#^{2p_j+f_j-1}(S^1\times S^2)$.
	The rest of the proof follows from Lemma~\ref{lemma:std_Heeg}.
\end{proof}

\begin{figure}[h!]
\begin{subfigure}{.5\textwidth}
  \centering
  \includegraphics[width=.8\linewidth]{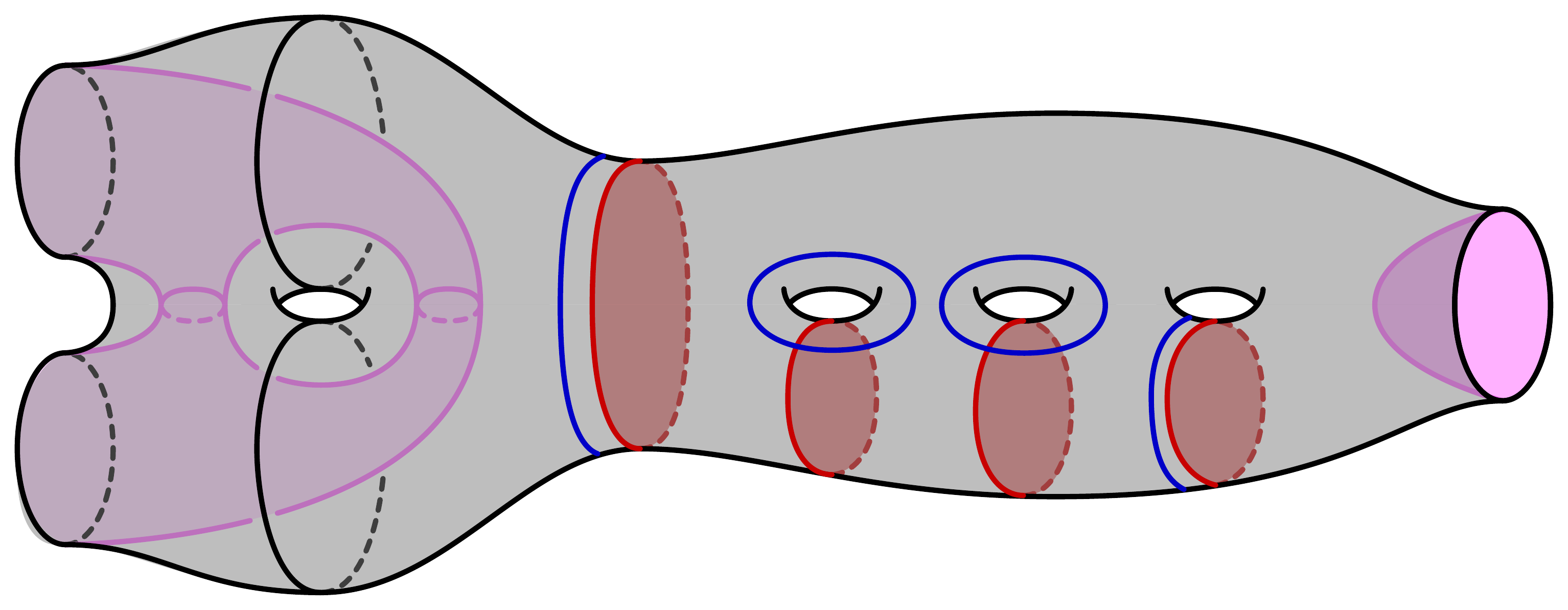}
  \caption{}
  \label{fig:bridge_double1}
\end{subfigure}%
\begin{subfigure}{.5\textwidth}
  \centering
  \includegraphics[width=.8\linewidth]{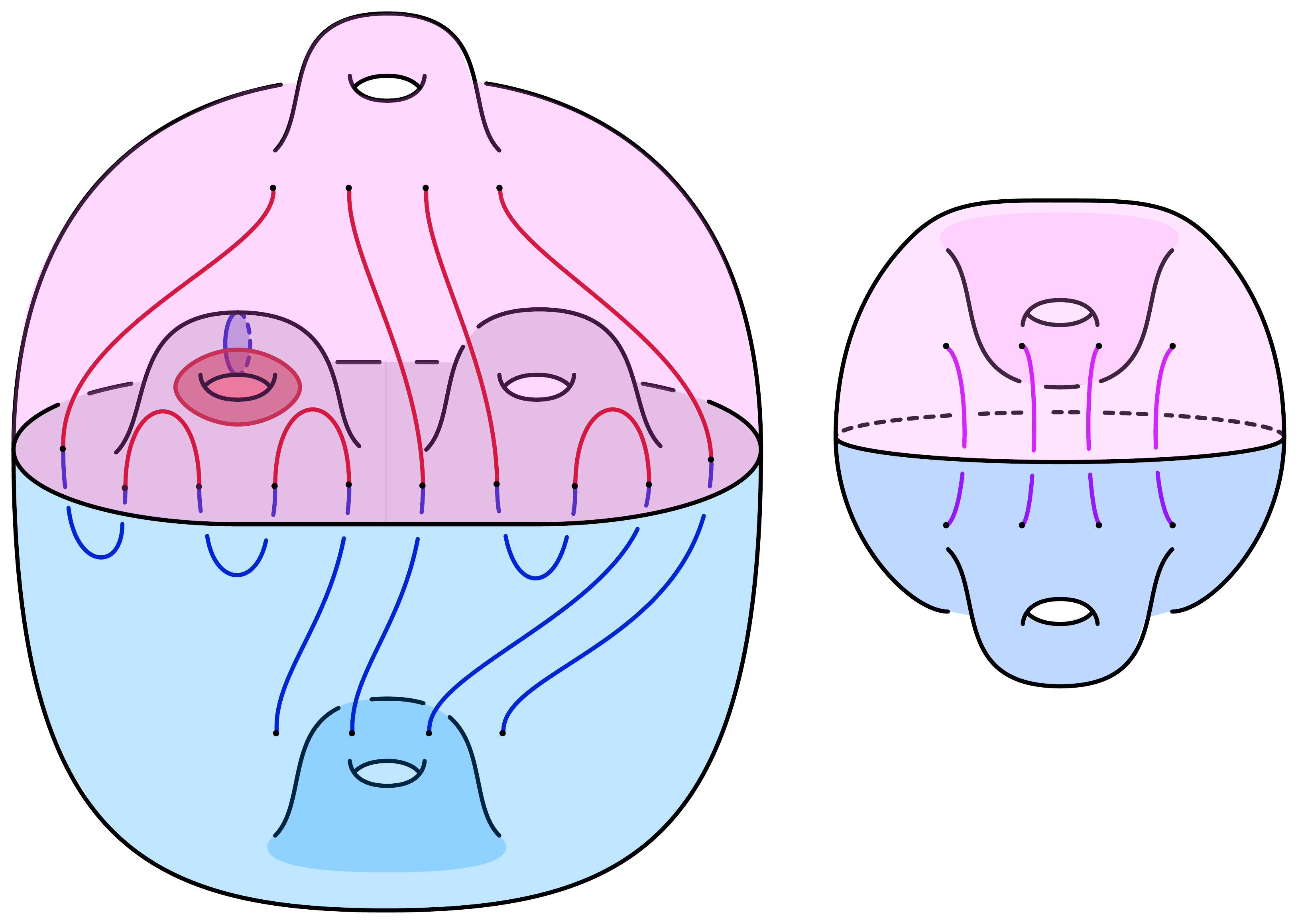}
  \caption{}
  \label{fig:bridge_double2}
\end{subfigure}%
\caption{(A) A $(1,2)$--standard Heegaard diagram for the standard Heegaard double $Y_{4,(1,0),(2,1)}$.  (B) A schematic showing the standard Heegaard double $Y_{2,1,1}$, containing a $(3,4)$--bridge splitting for an unlink.  The unlink has no flat components and four vertical components.}
\label{fig:bridge_double}
\end{figure}

Let $Y$ be a standard Heegaard double. We consider the lensed compressionbodies $H_1$ and $H_2$ as embedded submanifolds of $Y$ in the following way, which is a slight deviation from the way they naturally embed in the Heegaard double.  For $i=1,2$, let $P_i^j$ denote the result of a slight isotopy of $\partial_-H_i^j$ into $H_i$ along the product structure induced locally by the lensed cobordism structure of $H_i$.  Let $Y_1^j$ denote lensed product cobordism co-bounded by $P_1^j$ and $P_2^j$.  In this way, we think of the Heegaard double $Y$ as divided into three regions: $H_1$, $H_2$, and $\sqcup_jY_1^j$, each of whose connected components is a lensed compressionbody.  The union of $H_1$ and $H_2$ along their common, southern boundary, which we denote by $\Sigma$ is a standard Heegaard splitting, and each $Y_1^j$ is the product lensed cobordism $H_{p_j,f_j}$.  See Figure~\ref{fig:bridge_double2}, as well as Figure~\ref{fig:ortn_conv}, for a schematic illustration of this structure. We call this decomposition a \emph{(standard) Heegaard-page structure} and note that it is determined by the Heegaard splitting data $(\Sigma,H_1,H_2)$, by Lemma~\ref{lem:HeegDouble}.

\subsection{Trivial tangles}
\label{subsec:Tangles}
\ 

A \emph{tangle} is a pair $(H,\Tt)$, where $H$ is a compressionbody and $\Tt$ is a collection of neatly embedded arcs in $H$, called \emph{strands}. Let $\Phi$ be a standard Morse function for $H$.  After an ambient isotopy of $\Tt$ rel-$\partial$, we can assume that $\Phi$ restricts to $\Tt$ to give a Morse function $\Phi|_\Tt\colon\Tt\to[-1,3]$ such that each local maximum of $\Tt$ maps to $1\in[-1,3]$ and each local minimum maps to $0\in[-1,3]$.  We have arranged that $\Phi$ be self-indexing on $H$ and when restricted to $\Tt$.

A strand $\tau\subset \Tt$ is called \emph{vertical} if $\tau$ has no local minimum or maximum with respect to $\Phi|_\Tt$ and is called \emph{flat strand} if $\tau$ has a single local extremum, which is a maximum.  Note that vertical strands have one boundary point in each of $\partial_+H$ and $\partial_-H$, while flat strands have both boundary points in $\partial_+H$.  A tangle $\Tt$ is called \emph{trivial} if it is isotopic rel-$\partial$ to a tangle all of whose strands are vertical or flat.
Such a tangle with $b$ flat strands and $v$ vertical strands is called an \emph{$(b,v)$--tangle}, with the condition that it be trivial implicit in the terminology.
More precisely, if $H\cong H_{g,\bold p,\bold f}$, then we have an ordered partition of the vertical strands determined by which component $\Sigma_{p_j,b_j}$ of $\partial_-H\cong\Sigma_{\bold p,\bold f}$ contains the top-most endpoint of each vertical strand, and we can more meticulously describe $\Tt$ as an \emph{$(b,\bold v)$--tangle}.
See Figure~\ref{fig:lensed_tangle} for three examples of trivial tangles in lensed compressionbodies.

\begin{remark}
\label{rmk:br_strand}
	In this paper, any tangle $(H,\Tt)$ with $\partial_+ H$ disconnected will not contain flat strands.  Moreover, such an $H$ will always be a spread $H\cong H_{\bold p,\bold v}$.  Therefore, we will never partition the flat strands of $\Tt$.
\end{remark}

There is an obvious model tangle $(H_{g,\bold p,\bold f},\Tt_{b,\bold v})$ that is a lensed cobordism from $(\Sigma_{g,\bold f},\bold x_{2b+v})$ to $(\Sigma_{\bold p,\bold f},\bold y_{\bold v})$ in which the first $2b$ points of $\bold x_{2b+v}$ are connected by slight push-ins of arcs in $\Sigma_{g,\bold f}$, and the final $v$ rise vertically to $\Sigma_{\bold p,\bold f}$, as prescribed by the standard height function on $H_{g,\bold p,\bold f}$ and the ordered partitions.  The points $\bold x_{2b+v}$ are called \emph{bridge points}. A pair $(H,\Tt)$ is determined up to diffeomorphism by the parameters $g$, $b$, $\bold p$, $\bold f$, and $\bold v$, and we refer to any tangle with these parameters as a $(g,b;\bold p,\bold f,\bold v)$--tangle.  Note that this diffeomorphism can be assumed to be supported near $\partial_+H$ and can be understood as a braiding of the bridge points $\bold x_{2b+v}$.  For this reason, we consider trivial tangles up to isotopy rel-$\partial$, and we think of each such tangle as having a fixed identification of the subsurface $(\Sigma_{g,b},\bold x_{2b+v})$ of its boundary.

Let $\tau$ be a strand of a trivial tangle $(H,\Tt)$.  Suppose first that $\tau$ is flat.  A \emph{bridge semi-disk for $\tau$} is an embedded disk $D_\tau\subset H$ satisfying $\partial D_\tau = \tau\cup\tau^*$, where $\tau^*$ is an arc in $\partial_+H$ with $\partial\tau^* = \partial \tau$, and $D_\tau\cap\Tt = \tau$.  The arc $\tau^*$ is called a \emph{shadow} for $\tau$.  Now suppose that $\tau$ is vertical.  A \emph{bridge triangle for $\tau$} is an embedded disk $D_\tau\subset H$ satisfying $\partial D_\tau = \tau\cup\tau^*\cup\tau^-$, where $\tau^*$ (respectively, $\tau^-$) is an arc in $\partial_+H$ (respectively, $\partial_-H$) with one endpoint coinciding with an endpoint of $\tau$ and the other endpoint on $\partial(\partial_+H)$, coinciding with the other endpoint of $\tau^-$ (respectively, $\tau^*$), and $D_\tau\cap\Tt =\tau$. 

\begin{remark}
	Note that the existence of a bridge triangle for a vertical strand $\tau$ requires that $\partial_-H$ have boundary; there is no notion of a bridge disk for a vertical strand in a compressionbody co-bounded by closed surfaces.  In this paper, if $\partial_+H$ is ever closed, $H$ will be a handlebody and will not contain vertical strands, so bridge semi-disks and triangles will always exist for trivial tangles that we consider.
\end{remark}

Given a trivial tangle $(H,\Tt)$, a \emph{bridge disk system for $\Tt$} is a collection $\Delta$ of disjoint disks in $H$, each component of which is a bridge semi-disk or triangle for a strand of $\Tt$, such that $\Delta$ contains precisely one bridge semi-disk or triangle for each strand of $\Tt$.

\begin{lemma}
	Let $(H,\Tt)$ be a trivial tangle such that either $\partial_+H$ has nonempty boundary or $\Tt$ contains no vertical strands.  Then, there is a bridge disk system $\Delta$ for $\Tt$.
\end{lemma}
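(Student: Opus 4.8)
The plan is to reduce to the model tangle $(H_{g,\bold p,\bold f},\Tt_{b,\bold v})$ and to construct a bridge disk system there explicitly. Since $(H,\Tt)$ is trivial, after an isotopy of $H$ rel-$\partial$ we may take $\Tt$ to be the model tangle $\Tt_{b,\bold v}$, possibly post-composed with a braiding of the bridge points $\bold x_{2b+v}$; such a braiding is realized by a diffeomorphism supported in a collar of $\partial_+H$, and any diffeomorphism of the triple $(H,\partial_+H,\partial_-H)$ carries a bridge disk system to a bridge disk system, since the defining properties --- that each disk meet $\Tt$ exactly in its strand, that its remaining boundary arcs lie in $\partial_+H$ and (for triangles) $\partial_-H$, and that the disks be pairwise disjoint --- are all preserved. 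Likewise the isotopy pulls back a bridge disk system for the model to one for $\Tt$. So it suffices to exhibit $\Delta$ for $(H_{g,\bold p,\bold f},\Tt_{b,\bold v})$.

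For the model, the first $2b$ bridge points are joined in pairs by arcs $a_1,\dots,a_b$ in $\Sigma_{g,f}=\partial_+H$, with the flat strand $\tau_k$ a slight push-in of $a_k$; the product region $\Phi^{-1}([-1,1])$ swept out by that push-in is a bridge semi-disk $D_{\tau_k}$ with shadow $a_k$. Each vertical strand $\sigma_l$ rises monotonically, via the standard Morse function $\Phi$ and away from the index-two critical points, from its bridge point to an endpoint $y_l\in\partial_-H$. Choose an embedded arc $\sigma_l^*\subset\partial_+H$ from that bridge point to $\partial(\partial_+H)$ and an embedded arc $\sigma_l^-\subset\partial_-H$ from $y_l$ to the same point of $\partial(\partial_+H)=\partial(\partial_-H)$; sweeping $\sigma_l^*$ up through the product structure of $H$ and closing up along $\sigma_l^-$ gives a bridge triangle $D_{\sigma_l}$. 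This is where the hypothesis enters: if $\Tt$ has no vertical strands then only the $D_{\tau_k}$ are needed, while if $\partial_+H$ has nonempty boundary then so does $\partial_-H$ and the arcs $\sigma_l^*,\sigma_l^-$ exist.

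It remains to make the disks pairwise disjoint, which is the substantive point. Pick the $a_k$ pairwise disjoint; pick $\sigma_1^*,\dots,\sigma_v^*$ pairwise disjoint, disjoint from the $a_k$, and disjoint from a defining set of curves $\alpha$ for $H$; and pick $\sigma_1^-,\dots,\sigma_v^-$ pairwise disjoint in $\partial_-H$. All of this is possible since $\Sigma_{g,f}$ and $\partial_-H=\Sigma_{\bold p,\bold f}$ carry only finitely many marked points and (in the case where vertical strands are present) have nonempty boundary. Then the $D_{\tau_k}$ lie in the product region $\Phi^{-1}([-1,1])$ over disjoint arcs, while each $D_{\sigma_l}$ is built over $\sigma_l^*$ near $\partial_+H$, over a product region avoiding the $\alpha$-curves in the middle, and over $\sigma_l^-$ near $\partial_-H$; after a small general-position perturbation the $D_{\sigma_l}$ are then disjoint from one another and from the $D_{\tau_k}$. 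Taking $\Delta$ to be the union of these disks (and pulling back along the isotopy of the first step) completes the proof.

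I expect the main obstacle to be the disjointness bookkeeping for the vertical strands: one must route each bridge triangle past the $2$--handles of the compressionbody (equivalently, around the defining curves $\alpha$) and past the flat strands' bridge semi-disks simultaneously, all while respecting the matching of endpoints on $\partial(\partial_+H)=\partial(\partial_-H)$. The first reduction is routine given the definitions, but it should be stated with care, since triviality is precisely what guarantees that every strand bounds a bridge semi-disk or triangle in the first place --- it is what rules out ``winding'' flat strands, for which no such disk exists.
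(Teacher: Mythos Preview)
Your proposal is correct and follows essentially the same approach as the paper: reduce to the model tangle via a diffeomorphism, build the bridge semi-disks as traces of the push-ins of the flat strands and the bridge triangles as vertical traces of disjoint arcs in $\partial_+H$ connecting the vertical bridge points to $\partial\Sigma$, then pull back. The paper's version is terser and omits your disjointness bookkeeping (it simply asserts the arcs can be chosen disjoint and their vertical traces are then disjoint), but the argument is the same.
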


\begin{proof}
	There is a diffeomorphism from $(H,\Tt)$ to $(H_{g,\bold p,\bold f},\Tt_{b,\bold v})$, as discussed above.  This latter tangle has an obvious bridge disk system:  The `slight push-in' of each flat strand sweeps out a disjoint collection of bridge semi-disks for these strands, while the points $x\in\bold x_{2b+v}$ corresponding to vertical strands can be connected to $\partial \Sigma_{g,\bold f}$ via disjoint arcs, the vertical traces of which are disjoint bridge triangles for the vertical strands.  Pulling back this bridge system to $(H,\Tt)$ using the inverse diffeomorphism completes the proof.
\end{proof}

We will refer to a $(0,\bold v)$--tangle as a \emph{vertical $\bold v$--tangle} and to a $(b,0)$--tangle as a \emph{flat $b$--tangle}.  In the case that $\Tt$ is a vertical tangle in a spread $H\cong H_{\bold p,\bold f}$, we call $\Tt$ a \emph{$\bold v$--thread} and call the pair $(H,\Tt)$ a \emph{$(\bold p,\bold f,\bold v)$--spread}.  Note that a $(p,f,v)$--spread is simply a lensed geometric (surface) braid; in particular, a $(0,1,v)$--spread is a lensed geometric braid $(D^2\times I,\beta)$.

\subsection{Bridge splittings}
\label{subsec:Bridge}
\ 

Let $K$ be a neatly embedded, one-manifold in a three-manifold $M$.  A \emph{bridge splitting} of $K$ is a decomposition 
$$(M,K) = (H_1,\Tt_1)\cup_{(\Sigma,\bold x)}\overline{(H_2,\Tt_2)},$$
where $(\Sigma; H_1, H_2)$ is a Heegaard splitting for $M$ and $\Tt_i\subset H_i$ is a trivial tangle.  If $\Tt_1$ is a trivial $(b,\bold v)$--tangle, then we require that $\Tt_2$ be a trivial $(b,\bold v)$--tangle, and we call the decomposition a \emph{$(g,\bold p,\bold f; b, \bold v)$--bridge splitting}. A one-manifold $K\subset M$ is in \emph{$(b,\bold v)$--bridge position} with respect to a Heegaard splitting of $M$ if $K$ intersects the compressionbodies $H_i$ as a  $(b,\bold v)$--tangle.

\begin{remark}
\label{rmk:ordering2}
	As  we have assumed a correspondence between the components of the $\partial_-H_i$ (see Remark~\ref{rmk:ordering1}, we can require that the partitions of the vertical strands of the $\Tt_i$ respect this correspondence.  This is the sense in which both $\Tt_i$ are $(b,\bold v)$--tangles.  This will be important when we turn a bridge splitting into a bridge-braid decomposition below.
\end{remark}

Consider the special case that $M$ is the trivial lensed cobordism between $\partial_-H_1$ and $\partial_-H_2$ and $K\subset M$ is a $v$--braid -- i.e., isotopic rel-$\partial$ so that it intersects each level surface of the trivial lensed cobordism transversely.  (Note that the $\partial_-H_i$ are necessarily connected, since $\Sigma$ is.)  If each of $\Tt_i=H_i\cap K$ is a trivial $(g,b;p,f,v)$--tangle, we call the union $(H_1,\Tt_1)\cup_{\Sigma,\bold x}\overline{(H_2,\Tt_2)}$ an \emph{$b$--perturbing of a $v$--braid}.

More generally, we say that a bridge splitting is \emph{standard} if the underlying Heegaard splitting $M = H_1\cup_\Sigma\overline{H_2}$ is standard (as defined in Subsection~\ref{subsec:Heegaard} above) and there are collections of bridge semi-disks $\Delta_i$ for the flat strands of the tangles $\Tt_i$ whose corresponding shadows $\Tt_i^*$ have the property that $\Tt_1^*\cup_\bold x\Tt_2^*$ is an embedded collection of polygonal arcs and curves.  As a consequence, if $(M,K)$ admits a standard bridge splitting, then $K$ is the split union of a an unlink (with one component corresponding to each polygonal curve of shadow arcs) with a braid (with one strand corresponding to each polygonal arc of shadows arcs).  As described in Lemma~\ref{lemma:std_Heeg}, the ambient manifold $M$ is a connected sum of copies of surfaces cross intervals and copies of $S^1\times S^2$.

Let $(H_1,\Tt_1)$ and $(H_2,\Tt_2)$ be two copies of the model tangle $(H_{g,\bold p,\bold f},\Tt_{b,\bold v})$, and let
$$h\colon\partial_+(H_1,\Tt_1)\to\partial_+(H_2,\Tt_2)$$
be a diffeomorphism.  Let $(Y,L)$ be the pair obtained as the union of $(H_1,\Tt_1)$ and $(H_2,\Tt_2)$, where the boundaries $\partial_+(H_i,\Tt_i)$ are identified via $h$ and the boundaries $\partial_-(H_i,\Tt_i)$ are identified via the identity map of $\partial_-(H_{g,\bold p,\bold f},\Tt_{b,\bold v})$.  We call the pair $(Y,L)$ a \emph{bridge double} of $(H_{g,\bold p,\bold f},\Tt_{b,\bold v})$ along $h$. Note that a component of $L$ can be referred to as \emph{flat} or \emph{vertical} depending on whether or not is is disjoint from $\partial_-H_i$.  We say that the bridge double is \emph{standard} if
\begin{enumerate}
	\item the bridge splitting $(H_1,\Tt_1)\cup_{(\Sigma,\bold x)}\overline{(H_2,\Tt_2)}$ is standard, and
	\item $L$ has exactly $v$ vertical components.  In other words, each component of $L$ hits $\partial_-H_i$ exactly once or not at all.
\end{enumerate}
Let $(Y_{g,\bold p,\bold f},L_{b,\bold v})$ denote the bridge double of a standard bridge splitting with $(H_i,\Tt_i)\cong (H_{g,\bold p,\bold f},\Tt_{b,\bold v})$.  The uniqueness of the \emph{standard bridge double} $(Y_{g,\bold p,\bold f},L_{b,\bold v})$ is given by the following lemma, which generalizes Lemma~\ref{lem:HeegDouble} above.

\begin{lemma}
\label{lem:BridgeDouble}
	Let $(M,K)=(H_1,\Tt_1)\cup_{(\Sigma,\bold x)}\overline{(H_2,\Tt_2)}$ be a standard bridge splitting with $(H_i,\Tt_i)\cong (H_{g,\bold p,\bold f},\Tt_{b,\bold v})$.  Then there is a unique (up to isotopy rel-$\partial$) diffeomorphism $\Id_{(M,K,\Sigma)}\colon \partial_-(H_1,\Tt_1)\to\partial_-(H_2,\Tt_2)$ such that the identification space $(M,K)/_{x\sim\Id_{(M,K,\Sigma)}(x)}$, where $x\in\partial_-(H_1,\Tt_1)$, is diffeomorphic to the standard bridge double $(Y_{g,\bold p,\bold f},L_{b,\bold v})$.
\end{lemma}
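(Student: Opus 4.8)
\emph{Overall plan.} The idea is to bootstrap from the ambient statement, Lemma~\ref{lem:HeegDouble}, and then upgrade the resulting gluing diffeomorphism so that it additionally carries the endpoints of the vertical strands to one another. Write $\partial_-(H_i,\Tt_i) = (\Sigma_{\bold p,\bold f},\bold y_i)$, where $\bold y_i = \partial_-\Tt_i$ is a set of $v$ marked points whose distribution among the components of $\Sigma_{\bold p,\bold f}$ is recorded by $\bold v$ (this is the sense, per Remark~\ref{rmk:ordering2}, in which both $\Tt_i$ are $(b,\bold v)$--tangles). By Lemma~\ref{lem:HeegDouble}, since the underlying Heegaard splitting $M = H_1\cup_\Sigma\overline{H_2}$ is standard, there is a diffeomorphism $\Id_{(M,\Sigma)}\colon\partial_-H_1\to\partial_-H_2$, unique up to isotopy rel-$\partial$, whose identification space is $Y_{g,\bold p,\bold f}$. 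Because $\Id_{(M,\Sigma)}(\bold y_1)$ and $\bold y_2$ are both $v$--point subsets of $\Sigma_{\bold p,\bold f}$ with the same number of points in each component, there is an ambient isotopy of $\Sigma_{\bold p,\bold f}$, supported away from $\partial\Sigma_{\bold p,\bold f}$, carrying the first set onto the second; composing $\Id_{(M,\Sigma)}$ with the terminal diffeomorphism of that isotopy yields a diffeomorphism of pairs $\Id_{(M,K,\Sigma)}\colon\partial_-(H_1,\Tt_1)\to\partial_-(H_2,\Tt_2)$ which, as a map of surfaces, is still isotopic rel-$\partial$ to $\Id_{(M,\Sigma)}$ and hence still produces the ambient manifold $Y_{g,\bold p,\bold f}$.

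\emph{Identifying the link.} It remains to check that the image $L$ of $K$ in this identification space is $L_{b,\bold v}$. As noted when standard bridge splittings were defined, the standardness of the bridge splitting $(H_1,\Tt_1)\cup_{(\Sigma,\bold x)}\overline{(H_2,\Tt_2)}$ forces $K\subset M$ to be the split union of an unlink (one component per polygonal shadow curve) with a braid (one strand per polygonal shadow arc), lying in standard position with respect to the standard Heegaard splitting. Performing the $\partial_-$ identification closes up the braided part, so $L$ is the split union of an unlink with the closure of a trivial braid with $v$ strands distributed according to $\bold v$. By the definition of the standard bridge double — which requires precisely $v$ vertical components together with a standard underlying bridge splitting — $L_{b,\bold v}$ is a configuration of exactly this type, and any two such configurations in $Y_{g,\bold p,\bold f}\cong\#^k(S^1\times S^2)$ are ambiently isotopic (the unlink summand bounds disjoint disks, and the braided summand lies as the trivial braid in a product region of the Heegaard--page structure). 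Hence the identification space obtained from $\Id_{(M,K,\Sigma)}$ is diffeomorphic to $(Y_{g,\bold p,\bold f},L_{b,\bold v})$, which establishes existence.

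\emph{Uniqueness, and the main obstacle.} Suppose $\phi,\phi'\colon\partial_-(H_1,\Tt_1)\to\partial_-(H_2,\Tt_2)$ are two diffeomorphisms of pairs whose identification spaces are both $(Y_{g,\bold p,\bold f},L_{b,\bold v})$. Forgetting the marked points and invoking the uniqueness clause of Lemma~\ref{lem:HeegDouble}, $\phi$ and $\phi'$ are isotopic rel-$\partial$ as diffeomorphisms of $\Sigma_{\bold p,\bold f}$, so $\psi\colonequals\phi'\circ\phi^{-1}$ is a self-diffeomorphism of $(\Sigma_{\bold p,\bold f},\bold y_2)$ that becomes isotopic to the identity rel $\partial\Sigma_{\bold p,\bold f}$ after the marked points are forgotten. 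The expected main obstacle is precisely here: a priori such a $\psi$ need not be isotopic to the identity \emph{through} diffeomorphisms of the pair, the discrepancy being a product of point-pushes of the points of $\bold y_2$ along loops in $\Sigma_{\bold p,\bold f}$. The mechanism to kill this ambiguity is the rigidity carried by the bridge-triangle structure of a standard bridge double: each vertical strand of $\Tt_i$ is completed to a bridge triangle whose arc $\tau^-\subset\partial_-H_i$ tethers the corresponding point of $\bold y_i$ to $\partial(\partial_-H_i)$, and since both $\phi$ and $\phi'$ yield the standard bridge double, they must carry the tethering system on the $H_1$--side to the one on the $H_2$--side, up to isotopy of $(\Sigma_{\bold p,\bold f},\bold y_2)$ rel $\partial$. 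A surface diffeomorphism fixing a tethered marked-point set up to isotopy has trivial point-pushing part, so $\psi$ is isotopic to the identity through pair diffeomorphisms rel $\partial\Sigma_{\bold p,\bold f}$; this promotes the surface-level isotopy between $\phi$ and $\phi'$ to an isotopy rel-$\partial$ of diffeomorphisms of pairs, giving uniqueness. Everything outside of this last rigidity point is routine bookkeeping on top of Lemma~\ref{lem:HeegDouble} and the structural description of standard bridge splittings.
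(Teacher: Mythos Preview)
Your overall plan---bootstrap from Lemma~\ref{lem:HeegDouble} and then control the marked points---is the paper's plan too, but both halves of your execution have gaps.

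For existence, composing $\Id_{(M,\Sigma)}$ with an \emph{arbitrary} isotopy carrying $\Id_{(M,\Sigma)}(\bold y_1)$ to $\bold y_2$ need not produce the standard bridge double. The isotopy may induce the wrong bijection $\bold y_1\to\bold y_2$, so that two distinct vertical arcs of $K$ close up into a single component (violating condition~(2) of the definition), or it may carry nontrivial point-pushing, so that a closed-up vertical component is knotted rather than the unknot appearing in $L_{b,\bold v}$. Your assertion that the closure is that of a ``trivial braid'' presumes exactly what must be arranged. For uniqueness, your tethering argument asserts that $\phi$ and $\phi'$ both carry a fixed system of bridge-triangle arcs on $\partial_-H_1$ to a fixed system on $\partial_-H_2$. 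But bridge triangles are not canonical, and the standard bridge double carries no distinguished tethering, so ``both yield the standard bridge double'' does not by itself force agreement on any particular arc system; you would need to show that a nontrivial point-push changes the ambient isotopy type of the closed-up link, and you have not.

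The paper handles both issues with a single clean trick. First, condition~(2) of the standard bridge double (exactly $v$ vertical components) forces the bijection $\bold y_1\to\bold y_2$ to match the two ends of each vertical arc of $K$. Then one deperturbs the vertical arcs of $K$ to products and \emph{drills out} open tubular neighbourhoods of them, converting each marked point of $\bold y_i$ into a new boundary circle of $\partial_-H_i$ and producing a standard bridge splitting $(M,K)^\circ$ of type $(g,\bold p,\bold f';b',0)$ with no vertical strands. On this drilled surface the gluing is a plain surface diffeomorphism rel-$\partial$, so Lemma~\ref{lem:HeegDouble} applies directly to give uniqueness; the unique extension across the removed disks is the desired map of pairs. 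This drilling is exactly the rigorous replacement for your tethering heuristic: rather than arguing that tethers are preserved, one makes the tethers into boundary and invokes the ambient result.
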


\begin{proof}
	Let $(M,K)$ be a standard bridge splitting.  Suppose $(Y,L)$ is the bridge double obtained via the gluing map $\Id_{(M,\Sigma)}\colon\partial_-H_1\to\partial_-H_2$, which is determined uniquely up to isotopy rel-$\partial$ by Lemma~\ref{lem:HeegDouble}.  The claim that must be justified is that $\Id_{(M,\Sigma)}$ is unique up to isotopy rel-$\partial$ when considered as a map of pairs $\partial_-(H_1,\bold y_1)\to\partial_-(H_2,\bold y_2)$
	
	Criterion (2) of a standard bridge double above states that $K$ must close up to have $v$ vertical components, where $v$ is the number of vertical strands in the splitting $(M,K)$.  It follows that $\Id_{(M,\Sigma)}$ restricts to the identity permutation as a map $\bold y_1\to\bold y_2$ -- i.e. the end of a vertical strand in $\bold y_1$ must get matched with the end of the same strand in $\bold y_2$.
	
	Let $(M,K)^\circ$ denote the pair obtained by deperturbing the vertical arcs of $K$ so that they have no local extrema, then removing tubular neighborhoods of them.
	Note that $(M,K)^\circ$ is a standard bridge splitting (of the flat components of $K$) of type $(g,\bold p,\bold f';b',0)$.
	The restriction $\Id_{(M,\Sigma)}^\circ$ to $(\partial_-H_1)^\circ$ is the identity on $\partial(\partial_-H_1)^\circ$, so we can apply Lemma~\ref{lem:HeegDouble} to conclude that $\Id_{(M,\Sigma)}^\circ$ is unique up to isotopy rel-$\partial$.
	Since $\Id_{(M,\Sigma)}^\circ$ extends uniquely to a map $\Id_{(M,\Sigma,K)}$ of pairs, as desired, we are done.	
\end{proof}

Finally, consider a standard bridge double $(Y_{g,\bold p,\bold f},L_{b,\bold v})$, and recall the Heegaard-page structure on $Y_{g,\bold p,\bold f}$.  This induces a structure on $L$ that we call a \emph{bridge-braid structure}.  In particular, we have
\begin{enumerate}
	\item $\Tt_i = L\cap H_i$ is a $(b,\bold v)$--tangle, and
	\item $\beta_1^j = L\cap Y_1^j$ is a $v_j$--braid.
\end{enumerate}

\subsection{Disk-tangles}
\label{subsec:DiskTangles}
\ 

Let $Z_k$ denote the four-dimensional 1--handlebody $\natural^k(S^1\times B^3)$.  Given nonnegative integers $p$, $f$, $m$, and $n$ such that $k=2p+f-1+m$ and ordered partitions $\bold p$ and $\bold f$ of $p$ and $f$ of length $n$, there is a natural way to think of $Z_k$ as a lensed cobordism from the spread $Y_1 = H_{\bold p,\bold f}$ to the $(m,n)$--standard Heegaard splitting $(\Sigma;H_1,H_2) = (\Sigma_{g,\bold f}; H_{g,\bold f},H_{g,\bold f}$).  Starting with $Y_1\times[0,1]$, attach $m+n-1$ four-dimensional 1--handles to $Y_1\times\{1\}$ so that the resulting four-manifold is connected.  The three-manifold resulting from this surgery on $Y_1\times\{1\}$ is $H_1\cup_\Sigma\overline{H_2}$, and the induced structure on $\partial Z_k$ is that of the standard Heegaard-page structure on $Y_{g;\bold p,\bold f}$. With this extra structure in mind, we denote this distinguished copy by $Z_k$ by $Z_{g,k;\bold p,\bold f}$.

A \emph{disk-tangle} is a pair $(Z,\Dd)$ where $Z\cong Z_k$ and $\Dd$ is a collection of neatly embedded disks.  A disk-tangle is called \emph{trivial} if $\Dd$ can be isotoped rel-$\partial$ to lie in $\partial Z$.

\begin{proposition}\label{prop:triv_disks}
	Let $\Dd$ and $\Dd'$ be trivial disk-tangles in $Z$.  If $\partial \Dd =\partial \Dd'$, then $\Dd$ and $\Dd'$ are isotopic rel-$\partial$ in $Z$.
\end{proposition}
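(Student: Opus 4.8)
The plan is to reduce the statement, using the collar structure near $\partial Z$, to the special case $Z = B^4$, and to settle that case by an innermost‑disk argument combined with an induction on the number of disks. First I would use triviality to isotope $\Dd$ and $\Dd'$ rel‑$\partial$ into a fixed collar $\partial Z\times[0,1]$ of $\partial Z = \partial Z\times\{0\}$, so that each component is the push‑in of a disk lying in $\partial Z$; write $\Dd^\flat,(\Dd')^\flat\subset\partial Z$ for the resulting shadow disk systems, which satisfy $\partial\Dd^\flat = \partial(\Dd')^\flat = \partial\Dd$. Since the push‑in of a disk system into a collar is determined up to isotopy rel‑$\partial$ by its shadow system together with a height ordering of the components, it suffices to produce an ambient isotopy of $\partial Z$ fixing $\partial\Dd$ pointwise that carries $\Dd^\flat$ to $(\Dd')^\flat$; a further isotopy supported in the collar then matches the height orderings.

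To reach the four‑ball, write $Z = Z_k$ with its standard handle decomposition and let $\Bb = \{B_1,\dots,B_k\}$ be the cocores of the one‑handles, so that $Z\setminus\nu(\Bb)\cong B^4$. A standard innermost‑disk argument lets me isotope $\Dd$, and then (with $\Bb$ held fixed) $\Dd'$, rel‑$\partial$ to be disjoint from $\Bb$, since the intersection circles are inessential on the disks and the extra fourth dimension inside the collar lets one push across the bounding sub‑disks; this is where one uses that $\Dd,\Dd'$ are trivial, not merely properly embedded. After cutting along $\Bb$ we are left with two trivial disk‑tangles in $B^4$ with the same boundary, and I would induct on the number of components. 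The single‑disk case is easy: a boundary‑parallel disk $D$ in $B^4$ is isotopic rel‑$\partial$ to the push‑in of a spanning disk of the unknot $\partial D$ in $S^3$; any two spanning disks of an unknot in $S^3$ are isotopic rel‑$\partial$ (by innermost‑circle moves and Alexander's theorem, using irreducibility of $S^3$); and push‑ins of isotopic disks are isotopic rel‑$\partial$. For the inductive step I would use the single‑disk case to make one component of $\Dd^\flat$ agree with the matching component of $(\Dd')^\flat$, then excise a neighborhood of this common disk together with a standard spanning $3$--ball, reducing the number of components while staying inside a space diffeomorphic to $B^4$.

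The step I expect to be the main obstacle is the bookkeeping forced by the reducibility of $\partial Z\cong\#^k(S^1\times S^2)$ and by the fact that the shadow systems need not be pairwise disjoint — distinct shadows $D_i^\flat$ may overlap one another even though the push‑ins $D_i$ are disjoint. Having the same boundary does not by itself force two disks in a reducible three‑manifold to be isotopic rel‑$\partial$ (their union could be an essential $2$--sphere), so triviality has to be invoked at precisely the points where one needs an intervening $2$--sphere to bound a $3$--ball; and every innermost‑disk simplification — both those used to clear the cocores and those used in the induction — must be arranged to fix $\partial\Dd$. Controlling a multi‑component, a priori self‑overlapping shadow system while keeping all moves rel‑$\partial$ is the delicate part, and the induction on the number of components is what keeps it tractable.
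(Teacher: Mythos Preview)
The paper does not give a self-contained argument: it simply cites Livingston and Kamada for the case $Z\cong B^4$, and defers the general $Z\cong Z_k$ to \cite{MeiZup_18_Bridge-trisections-of-knotted}. Your attempt at a direct proof is therefore genuinely different, but the reduction you propose does not go through as stated.

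The gap is in the sentence ``it suffices to produce an ambient isotopy of $\partial Z$ fixing $\partial\Dd$ pointwise that carries $\Dd^\flat$ to $(\Dd')^\flat$.'' Such an isotopy need not exist, even in the simplest case $Z=B^4$ with $\partial\Dd$ a two-component unlink $L_1\cup L_2\subset S^3$. Take $D_1^\flat$ to be a spanning disk for $L_1$ disjoint from $L_2$, and take $(D_1')^\flat$ to be the result of a finger move on $D_1^\flat$ pushing a tongue of disk through $L_2$, creating two algebraically cancelling intersection points; set $D_2^\flat=(D_2')^\flat$. Any ambient isotopy of $S^3$ fixing $L_1\cup L_2$ pointwise preserves the geometric intersection number $|D_1^\flat\cap L_2|$, so no such isotopy carries $D_1^\flat$ to $(D_1')^\flat$. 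Nevertheless, the push-ins of these two shadow systems \emph{are} isotopic rel-$\partial$ in the collar --- that is precisely what the proposition asserts. The three-dimensional invariant you are trying to match is strictly finer than the four-dimensional isotopy class, so your induction cannot get started without first using the extra dimension in the collar to simplify the shadows, and that step is essentially the content of the proposition itself. The same obstruction breaks the inductive step: isotoping one shadow disk onto its partner while fixing all of $\partial\Dd$ is exactly what the counterexample shows can fail. Livingston's argument for $B^4$ avoids this by working directly with a Morse function on the pair rather than through three-dimensional shadows.
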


\begin{proof}
	Then case when $Z\cong B^4$ is a special case of a more general result of Livingston~\cite{Liv_82_Surfaces-bounding-the-unlink}, and is also proved in~\cite{Kam_02_Braid-and-knot}.  See~\cite{MeiZup_18_Bridge-trisections-of-knotted} for the general case.
\end{proof}

A trivial disk-tangle $(Z,\Dd)$ inherits extra structure along with $Z_{g,k;\bold p,\bold f}$,  since we can identify $\partial\Dd$ with an unlink $L$ in standard $(b,\bold v)$--bridge position in $Y_{g;\bold p,\bold f}$.  In this case,  a disk $D\subset\Dd$ is called \emph{vertical} (resp., \emph{flat}) if it corresponds to a vertical (resp., flat) component of $L$.   With this extra structure in mind, we call a trivial disk-tangle a \emph{$(c,\bold v)$--disk-tangle} and denote it by $\Dd_{c,\bold v}$.  Note that $\Dd_{c,\bold v}$ is a tangle of $c+v$ disks. We call the pair $(Z_{g,k;\bold p,\bold f},\Dd_{c;\bold v})$ a \emph{$(g,k,c;\bold p,\bold f,\bold v)$--disk-tangle}.  Note that Proposition~\ref{prop:triv_disks} respects this extra structure, since part of the hypothesis was that the two disk systems have the same boundary.  See Figure~\ref{fig:disk_tangle} for a schematic illustration.

\begin{figure}[h!]
	\centering
	\includegraphics[width=.4\textwidth]{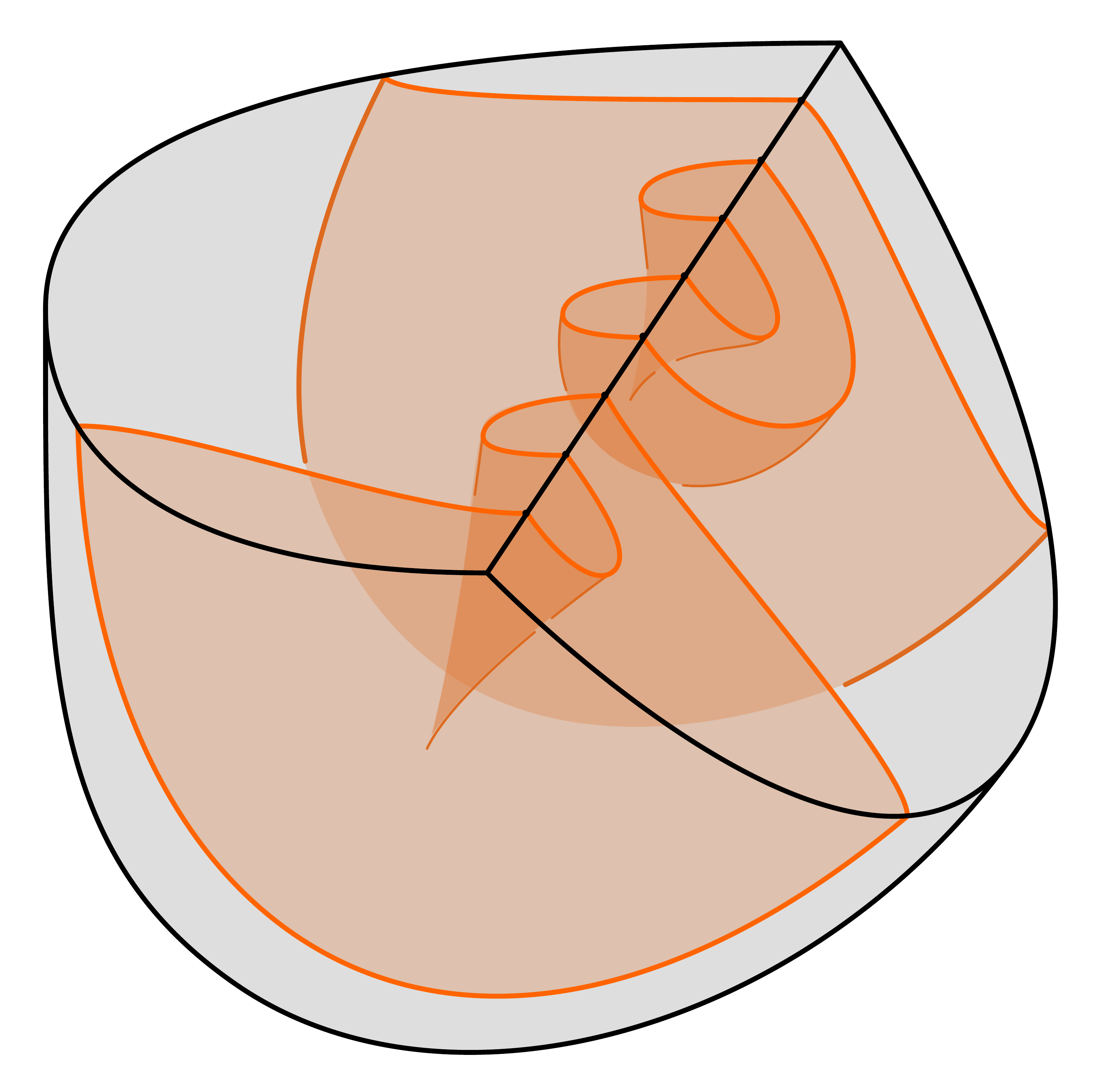}
	\caption{A schematic of the disk-tangle $\Dd_{1,2}$, which contains one flat component and two vertical components.  Note that the 3--component unlink on the boundary is in $(3,2)$--bridge position with respect to the standard Heeggaard double $Y_{0,0,1}$ for the 3--sphere.}
	\label{fig:disk_tangle}
\end{figure}

The special structure on $Z_{g,k;\bold p, \bold f}$ described above induces a special Morse function $\Phi\colon Z \to \R$ with $m+n-1$ critical points, all of which are index one.  The next lemma characterizes trivial disk-tangles with respect to this standard Morse function.

\begin{lemma}\label{lem:one_min}
	Let $Z = Z_{g,k;\bold p, \bold f}$, and let $\Dd\subset Z$ be a collection of neatly embedded disks with $\partial\Dd\cap Y_1$ a $\bold v$--thread.  Suppose the restriction $\Phi_\Dd$ of $\Phi$ to $\Dd$ has $c$ critical points, each of which is index zero.  Then $\Dd$ is a $(c,\bold v)$--disk-tangle for some ordered partition $\bold v$ of $v=|\Dd|-c$.
\end{lemma}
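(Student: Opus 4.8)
The plan is to recover the conclusion from the Morse function $\Phi$ in two stages: a bookkeeping stage pinning down the combinatorial type of $\Dd$, and an isotopy stage that, exploiting the absence of saddles and maxima of $\Phi_\Dd$, shows $\Dd$ is trivial. For the first stage I would normalize $\Phi_\Dd$: after an ambient isotopy of $\Dd$ rel-$\partial$, assume $\Phi$ restricts to a Morse function on $\Int(\Dd)$ with exactly the $c$ given index--$0$ critical points, and that $\Phi|_{\partial\Dd}$ agrees with the Morse function on $L = \partial\Dd$ induced by the bridge--braid structure on $\partial Z = Y_{g;\bold p,\bold f}$, so that the only minima of $\Phi|_{\partial\Dd}$ are the bottoms of the $v$ strands of the thread $\partial\Dd\cap Y_1$. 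Since $\Phi_\Dd$ has no saddles or maxima, the sublevel sets of $\Phi_\Dd$ on any component $D\subseteq\Dd$ grow by finitely many births with no fusions; as $D$ is a disk and $\partial\Dd$ meets $\partial_-H_i$ at most once along each strand of the thread, each component is forced to be either a \emph{flat} disk (one interior minimum, boundary disjoint from $Y_1$) or a \emph{vertical} disk (no interior critical point, boundary meeting $Y_1$ in a single strand). So $\Dd$ has $c$ flat and $v = |\Dd|-c$ vertical disks, and reading off which component of $Y_1 \cong H_{\bold p,\bold f}$ each vertical strand traverses yields an ordered partition $\bold v$ of $v$ --- exactly the data that $\partial\Dd\cap Y_1$ is a $\bold v$--thread. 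It remains to show $\Dd$ is a \emph{trivial} disk-tangle, i.e.\ isotopic rel-$\partial$ into $\partial Z$.

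For this, the key input is a dimension count: the descending manifold of each of the $m+n-1$ index--$1$ critical points of $\Phi$ is an arc in $Z$, and a neatly embedded surface generically misses an arc in a $4$--manifold. So, after a further isotopy rel-$\partial$, I may take $\Dd$ disjoint from all these descending arcs, push it off the cocores of the $4$--dimensional $1$--handles of $Z$, and thereafter argue as though $Z$ were the product $Y_1\times I$. Now sweep $\Phi$ downward from the Heegaard--page end: since $\Phi_\Dd$ has no saddle, the slice curves never merge, so immediately before each index--$0$ critical value (seen from the top) one is capping off a slice curve that is a component of the unlink $L$ carried along by a product isotopy --- hence an unknotted component split from the rest --- while the $v$ vertical disks, carrying no critical points, sweep out products of arcs down to the thread in $Y_1$. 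Inducting on $c$, each such cap is boundary--parallel and can be isotoped rel-$\partial$ into $\partial Z$, disjointly from the disks already placed there; when the $c$ minima are exhausted, all of $\Dd$ has been isotoped rel-$\partial$ into $\partial Z$. Thus $\Dd$ is trivial, so together with the first stage it is a $(c,\bold v)$--disk-tangle, and Proposition~\ref{prop:triv_disks} identifies its rel-$\partial$ isotopy class with the standard one.

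The step I expect to be the main obstacle is the inductive cap-removal in the second stage, carried out genuinely rel-$\partial$: one must check that when the $i$--th cap appears its slice curve is not only unknotted but also unlinked from --- and can be slid past --- the parts of $\Dd$ already pushed into $\partial Z$, together with the braided arcs $\partial\Dd\cap Y_1$. This is precisely where one uses that $\Phi_\Dd$ has \emph{no} positive-index critical points, not merely few of them (so the family of slice curves never fuses and no curve is ever recycled). One clean way to package it is to cut $Z$ along the cocore $3$--balls of its $1$--handles to reduce to a $4$--ball and then invoke a relative, boundary-fixing version of the Livingston--Kamada unknotting argument cited in the proof of Proposition~\ref{prop:triv_disks}, the only extra care being the tracking of the braided boundary $\partial\Dd\cap Y_1$ throughout.
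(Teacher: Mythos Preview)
Your proposal is essentially correct and follows the same overall architecture as the paper's proof: both use a codimension count to push $\Dd$ off the cores of the $1$--handles of $Z$, then split into the flat and vertical cases and exploit the absence of saddles. The differences are in execution. For the flat disks, the paper runs the movie \emph{bottom-up} rather than top-down: immediately after the simultaneous births at the index--$0$ level one has obvious spanning disks $E_\epsilon$ for the newborn unlink, and since the subsequent evolution is a single ambient isotopy (no saddles), the traces of $E_\epsilon$ stay disjoint and furnish the boundary parallelism in one stroke. This sidesteps entirely the inductive disjointness worry you flagged as the main obstacle. For the vertical disks, the paper is more concrete than your ``sweep out products of arcs'': it chooses a collection $\Delta$ of bridge triangles for the thread $\beta$ in $Y_1$ (which exist precisely because $\beta$ is a thread), and the trace of $\Delta$ under the ambient isotopy gives the boundary parallelism. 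Finally, your closing appeal to the Livingston--Kamada argument is misplaced: Proposition~\ref{prop:triv_disks} gives uniqueness of trivial disk-tangles with a given boundary, not a criterion for triviality, so it does not help with the step you are worried about---but as noted, the paper's bottom-up construction makes that step unnecessary.
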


\begin{proof}
	We parameterize $\Phi\colon Z\to\R$ so that $\Phi(Z) = [0,1.5]$, $\Phi^{-1}(0) = Y_1\setminus\nu(P_1\cup P_2)$, $\Phi^{-1}(1.5) = (H_1\cup_\Sigma\overline{H_2})\setminus\nu(\overline{P_1}\cup P_2)$, and $\Phi(x) = 0.5$ for each critical point $x\in Z$ of $\Phi$.

	Let $\Gamma$ denote the cores of the 1--handles of $Z$.  By a codimension argument, we can assume, after a small perturbation of $\Phi$ that doesn't introduce any new critical points, that $\Dd$ is disjoint from a neighborhood $\nu(\Gamma)\cup Y_1\times[0,1]$.  Thus, we can assume that $\Phi_\Dd(x) = 1.0$ for any critical point $x\in\Dd$ of $\Phi_\Dd$.	

	First, note that $0\leq c\leq |\Dd|$; each connected component of $\Dd$ can have at most one minimum, since $\Phi_\Dd$ has no higher-index critical points. Let $\{D_i\}_{i=1}^c\subset\Dd$ denote the sub-collection of disks in $\Dd$ that contain the index zero critical points of $\Phi_\Dd$.  We claim that $D=\cup_{i=1}^cD_i$ is a $(c,0)$--disk-tangle.  We will now proceed to construct the required boundary-parallelism.

	Consider the moving picture of the intersection $D_{\{t\}}$ of $D$ with the cross-section $Z_{\{t\}} = \Phi^{-1}(1+t)$ for $t\in[0,0.5]$.  This movie shows the birth of a $c$--component unlink $L$ from $c$ points at time $t=0$, followed by an ambient isotopy of $L$ as $t$ increases.  Immediately after the birth, say $t=\epsilon$, we have that the sub-disks $D_{[1,1+\epsilon]} = D\cap\Phi^{-1}([1,1+\epsilon])$ of $D$ are clearly boundary-parallel to a spanning collection of disks $E_\epsilon$ for $L_\epsilon = D_{\{1+\epsilon\}}$.  Now, we simply push this spanning collection of disks $E_\epsilon$ along through the isotopy taking $L_\epsilon$ to $\partial D$.  Because this isotopy is ambient, the traces of the disks of $E_\epsilon$ are disjoint, thus they provide a boundary parallelism for $D$, as desired.

	It remains to see that the collection $D''$ of disks in $\Dd$ containing no critical points of $\Phi_\Dd$ are also boundary parallel.  Note however, that they will not be boundary parallel into $\Phi^{-1}(1.5)$, as before.
	
	Let $\beta = D''\cap Y_1$; by hypothesis, $(Y_1,\beta)$ is a $(\bold p, \bold f, \bold v)$--spread, i.e., $Y_1$ is a product lensed bordism (a spread) $H_{\bold p,\bold f}$ and $\beta$ is a vertical $\bold v$--tangle (a $\bold v$--thread) therein. Similar to before, we can assume that $D''$ is disjoint from a small neighborhood of the cores of the 1--handles.
	
	Since $D''$ contains no critical points, it is vertical in the sense that we can think of it as the trace of an ambient isotopy of $\beta$ in $Y_1$ as $t$ increases from $t=0$ to $t=0.5$, followed by the trace of an ambient isotopy of $\beta$ in $H_1\cup_\Sigma\overline{H_2}$ between $t=0.5$ and $t=1.5$.  The change in the ambient space is not a problem, since $D''$ is disjoint form the cores $\Gamma$ of the 1--handles, hence these isotopies are supported away from the four-dimensional critical points.
	
	If $\Delta$ is any choice of bridge triangles for $\beta$ in $Y_1$, then the trace of $\Delta$ under this isotopy gives a boundary-parallelism of $D''$, as was argued above.  We omit the details in this case.  
\end{proof}

Note that the assumption that $\beta$ be a thread was vital in the proof, as it gave the existence of~$\Delta$.  If $\beta$ contained knotted arcs, the vertical disk sitting over such an arc would not be boundary parallel.  Similarly, if $\beta$ contained closed components, the vertical trace would be an annulus, not a disk.  The converse to the lemma is immediate, hence it provides a characterization of trivial disk-tangles.

We next show how a standard bridge splitting can be uniquely extended to a disk-tangle. The following lemma builds on portions of~\cite[Section~4]{CasGayPin_18_Diagrams-for-relative-trisections}.

\begin{lemma}
\label{lem:LP}
	Let $(M,K) = (H_1,\Tt_1)\cup_{(\Sigma,\bold x)}\overline{(H_2,\Tt_2)}$ be a standard $(g,\bold p, \bold f;b, \bold v)$--bridge splitting.  There is a unique (up to diffeomorphism rel-$\partial$) pair $(Z,\Dd)$, diffeomorphic to $(Z_{g,k;\bold p,\bold f},\Dd_{c,\bold v})$, such that the bridge double structure on $\partial(Z,\Dd)$ is the bridge double of $(M,K)$.
\end{lemma}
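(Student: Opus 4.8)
The plan is to realize $(Z,\Dd)$ as the model pair $(Z_{g,k;\bold p,\bold f},\Dd_{c,\bold v})$, building the disk system by hand so that the Morse-theoretic characterization of Lemma~\ref{lem:one_min} applies, and then to deduce uniqueness from Proposition~\ref{prop:triv_disks} together with the theorem of Laudenbach and Po\'enaru that every diffeomorphism of $\#^k(S^1\times S^2)$ extends across $\natural^k(S^1\times B^3)$ (the source of the lemma's label).

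For existence, take $Z=Z_{g,k;\bold p,\bold f}$ with its standard Morse function $\Phi$, parameterized as in the proof of Lemma~\ref{lem:one_min} so that $\Phi^{-1}(0)=Y_1\setminus\nu(P_1\cup P_2)$ carries the $\bold v$-thread $\beta$ that is the braided part of the bridge-braid structure on $L_{b,\bold v}$. By construction $\partial Z=Y_{g;\bold p,\bold f}$ with its standard Heegaard-page structure, and by Lemmas~\ref{lem:HeegDouble} and~\ref{lem:BridgeDouble} this is precisely the three-manifold and Heegaard-page structure underlying the bridge double $(Y_{g,\bold p,\bold f},L_{b,\bold v})$ of $(M,K)$; so it suffices to produce $\Dd\subset Z$ with $\partial\Dd=L_{b,\bold v}$ realizing the given bridge-braid structure. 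I would cap the $v$ vertical components of $L$ by the traces, pushed into $Z$ off the cores of the $1$-handles, of a choice of bridge triangles for $\beta$, exactly as in the last paragraph of the proof of Lemma~\ref{lem:one_min}; these disks carry no critical points of $\Phi_\Dd$. Since the bridge splitting is standard, the flat components of $L$ form an unlink split from $\beta$, hence contained in a ball $B\subset\partial Z$ disjoint from the vertical disks; pushing $B$ slightly into $Z$ and spanning this unlink by $c$ disjoint disks, each with a single index-zero critical point of $\Phi_\Dd$, completes a disk system $\Dd$ with $\Phi_\Dd$ having exactly $c$ critical points, all of index zero. Lemma~\ref{lem:one_min} then identifies $(Z,\Dd)$ with $(Z_{g,k;\bold p,\bold f},\Dd_{c,\bold v})$.

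For uniqueness, suppose $(Z',\Dd')$ is a second such pair. Then $Z'\cong\natural^k(S^1\times B^3)\cong Z$, and the identifications of $\partial Z$ and $\partial Z'$ with the standard bridge double determine a diffeomorphism $\partial Z\to\partial Z'$ taking $L$ to $L'$; by Laudenbach--Po\'enaru it extends to a diffeomorphism $F\colon Z\to Z'$ agreeing with this identification on the boundary. Then $F(\Dd)$ and $\Dd'$ are trivial disk-tangles in $Z'$ with $\partial F(\Dd)=L'=\partial\Dd'$, so Proposition~\ref{prop:triv_disks} provides an ambient isotopy of $Z'$ fixing $\partial Z'$ and carrying $F(\Dd)$ onto $\Dd'$. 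Composing with $F$ yields the required rel-$\partial$ diffeomorphism $(Z,\Dd)\to(Z',\Dd')$.

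The step I expect to need the most care is the uniqueness of the four-dimensional piece rel-$\partial$: one must check that the Laudenbach--Po\'enaru extension of the boundary identification across the $1$-handlebody can be arranged compatibly with the prescribed Heegaard-page structure, and that no extra data is seen by the relevant diffeomorphism classes. The construction of $\Dd$ and its uniqueness are then essentially bookkeeping on top of Lemmas~\ref{lem:one_min} and~\ref{lem:BridgeDouble} and Proposition~\ref{prop:triv_disks}.
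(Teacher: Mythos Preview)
Your proposal is correct and follows essentially the same approach as the paper: both arguments combine Lemma~\ref{lem:BridgeDouble} (to obtain the bridge double), Laudenbach--Po\'enaru (to cap off $Y\cong\#^k(S^1\times S^2)$ uniquely with $Z_k$), and Proposition~\ref{prop:triv_disks} (to cap off $L$ uniquely with trivial disks). The paper's proof is terser---it simply strings these three uniqueness statements together---whereas you spell out the existence step more explicitly via Lemma~\ref{lem:one_min}, but the substance is the same.
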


\begin{proof}
	By Lemma~\ref{lem:BridgeDouble}, there is a unique way to close $(M,K)$ up and obtain its bridge double $(Y,L)$.  By Laudenbach-Poenaru~\cite{LauPoe_72_A-note-on-4-dimensional}, there is a unique way to cap off $Y\cong\#^k(S^1\times S^2)$ with a copy of $Z$ of $Z_k$.  By Proposition~\ref{prop:triv_disks}, there is a unique way to cap off $L$ with a collection $\Dd$ of trivial disks.  Since these choice are unique (up to diffeomorphism rel-$\partial$ and isotopy rel-$\partial$, respectively), the pair $(Z,\Dd)$ inherit the correct bridge double structure on its boundary, as desired.
\end{proof}

\subsection{Open-book decompositions and braidings of links}
\label{subsec:OBD}
\ 

We follow Etnyre's lecture notes~\cite{Etn_04_Lectures-on-open} to formulate the definitions of this subsection.
Let $Y$ be a closed, orientable three-manifold.  An \emph{open-book decomposition} of $Y$ is a pair $(B,\pi)$, where $B$ is a link in $M$ (called the \emph{binding}) and $\pi\colon Y\setminus B \to S^1$ is a fibration such that $P_\theta = \pi^{-1}(\theta)$ is a non-compact surface (called the \emph{page}) with $\partial P_\theta = B$.  Note that it is possible for a given link $B$ to be the binding of non-isotopic (even non-diffeomorphic) open-book decomposition of $Y$, so the projection data $\pi$ is essential in determining the decomposition.

An \emph{abstract open-book} is a pair $(P,\phi)$, where $P$ is an oriented, compact surface with boundary, and $\phi\colon P \to P$ is a diffeomorphism (called the \emph{monodromy}) that is the identity on a collar neighborhood of $\partial P$.
An abstract open-book $(P,\phi)$ gives rise to a closed three-manifold, called the \emph{model manifold}, with an open-book decomposition in a straight-forward way.  Define
$$Y_\phi = (P\times_\phi S^1) \cup \left(\bigsqcup_{|\partial P|} S^1\times D^2\right),$$
where $P\times_\phi S^1$ denotes the mapping torus of $\phi$, and $Y_\phi$ is formed from this mapping torus by capping off each torus boundary component with a solid torus such that each $p\times_\phi S^1$ gets capped off with a meridional disk for each $p\in\partial P$.
(Note that $p\times_\phi S^1 = p\times S^1$ by the condition on $\phi$ near the boundary of $P$.)
Our convention is that $P\times_\phi S^1 = P\times[0,1]/_{(x,1)\sim(\phi(x),0)}$ for all $x\in P$.

If we let $B_\phi$ denote the cores of the solid tori used to form $Y_\phi$, then we see that $Y_\phi\setminus B_\phi$ fibers over $S^1$, so we get an open-book decomposition $(B_\phi,\pi_\phi)$ for $Y_\phi$. 
Conversely, an open-book decomposition $(B,\pi)$ of a three-manifold $M$ gives rise to an abstract open-book $(P_\pi,\phi_\pi)$ in the obvious way such that $(Y_{\phi_\pi},B_{\phi_\pi})$ is diffeomorphic to $(M,B)$.

We now recall an important example which appeared in Lemma~\ref{lemma:k-value}.

\begin{example}
\label{ex:Id_obd}
	Consider the abstract open-book $(P,\phi)$, where $P = \Sigma_{p,f}$ is a compact surface of genus $p$ with $f$ boundary components and $\phi\colon P\to P$ is the identity map.  the total space $Y_\phi$ of this abstract open-book is diffeomorphic to $\#^{2p+f-1}(S^1\times S^2)$.  To see this, simply note that the union of half of the pages gives a handlebody of genus $2p+f-1$; since the monodromy is the identity, $Y_\phi$ is the symmetric double of this handlebody.
\end{example}

Harer described a set of moves that suffice to pass between open-book decompositions on a fixed three-manifold~\cite{Har_82_How-to-construct-all-fibered-knots}.  These include Hopf stabilization and destabilization, as well as a certain double-twisting operation, which was known to be necessary in order to change the homotopy class of the associated plane field.  (Harer's calculus was recently refined in~\cite{PieZud_18_Special-moves-for-open}.)
In fact, Giroux and Goodman proved that two open-book decompositions on a fixed three-manifold have a common Hopf stabilization if and only if the associated plane fields are homotopic~\cite{GirGoo_06_On-the-stable-equivalence-of-open}.  For a trisection-theoretic account of this story, see~\cite{CasIslMil_19_The-relative-L-invariant-of-a-compact}.

Having introduced open-book decompositions, we now turn our attention to braided links.
Suppose that $\Ll\subset Y$ is a link and $(B,\pi)$ is an open-book decomposition on $Y$.  We say that $\Ll$ is \emph{braided with respect to $(B,\pi)$} if $\Ll$ intersects each page of the open-book transversely. We say that $(Y,\Ll)$ is equipped with the structure of an \emph{open-book braiding}. The \emph{index} of the braiding is the number of times that $\Ll$ hits a given page.  By the Alexander Theorem~\cite{Ale_20_Note-on-Riemann-spaces} and the generalization due to Rudolph~\cite{Rud_83_Constructions-of-quasipositive-knots}, any link can be braided with respect to any open-book in any three-manifold.

An \emph{abstract open-book braiding} is a triple $(P,\bold y,\phi)$, where $P$ is an oriented, compact surface with boundary, $\bold y\subset P$ is a collection of points, and $\phi\colon (P,\bold y)\to (P,\bold y)$ is a diffeomorphism.  As with abstract open-books, this data gives rise to a manifold pair $(Y_\phi,\Ll_\phi)$, called the \emph{model open-book braiding} of the abstract open-book braiding, where $Y_\phi$ has an open-book structure with binding $B_\phi$ and projection $\pi_\phi$ and $\Ll_\phi$ is braided with respect to $(B_\phi,\pi_\phi)$.  More precisely,
$$(Y_\phi,\Ll_\phi) = (P,\bold y)\times_\phi S^1 = (P,\bold y)\times[0,1]/_{(x,0)\sim(\phi(x),1)}$$
for all $x\in P$.
Conversely, a braiding of $\Ll$ about $(B,\pi)$ gives rise in the obvious way to an abstract open-book braiding $(P_\pi,\phi_\pi)$ such that $(Y_{\phi_\pi},\Ll_{\phi_\pi})$ is diffeomorphic to $(Y,\Ll)$.

By the Markov Theorem~\cite{Mar_35_Uber-die-freie-Aquivalenz} or its generalization to closed 3--manifolds~\cite{Sko_92_Closed-braids-in-3-manifolds,Sun_93_The-Alexander-and-Markov-theorems}, any two braidings of $\Ll$  with respect to a fixed open-book decomposition of $Y$ can be related by an isotopy that preserves the braided structure, except at finitely many points in time at which the braiding is changed by a \emph{Markov stabilization or destabilization}.  We think of a Markov stabilization in the following way.
Let $J$ be a meridian for a component of the binding $B$ of the open-book decomposition on $Y$, and let $\frak b$ be a band connecting $\Ll$ to $J$ such that the core of $\frak b$ is contained in a page of the open-book decomposition and such that the link $\Ll' = \Ll_\frak b$ resulting from the resolution of the band is braided about $(B,\pi)$.  We say that $\Ll'$ is obtained from $\Ll$ via a \emph{Markov stabilization}, and we call the inverse operation \emph{Markov destabilization}.  (Markov destabilization can be thought of as attaching a vertical band to $\Ll'$ such that resolving the band has the effect of splitting off from $\Ll'$ a meridian for a binding component.) See Figure~\ref{fig:Markov}.

\begin{figure}[h!]
	\centering
	\includegraphics[width=.8\textwidth]{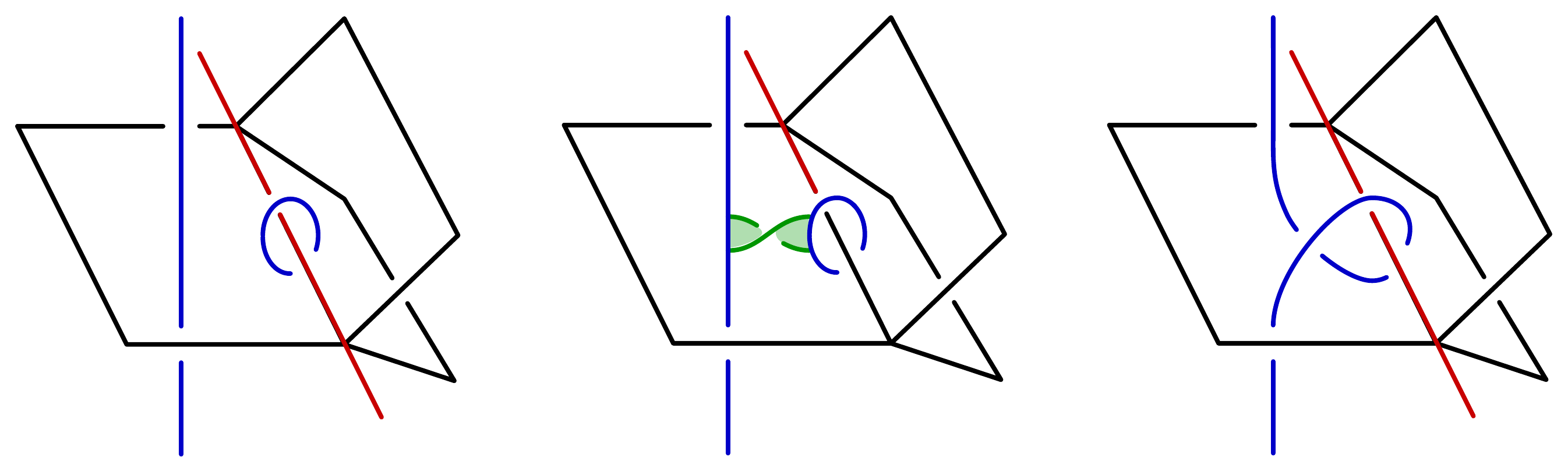}
	\caption{Markov stabilization, depicted as the banding of a braid to a meridian of the binding.}
	\label{fig:Markov}
\end{figure}

Suppose that $Y = Y^1\sqcup\cdots\sqcup Y^n$ is the disjoint union of closed three-manifolds such that each $Y^j$ is equipped with an open-book decomposition $(B^j,\pi^j)$.  Suppose that $\Ll = \Ll^1\sqcup\cdots\Ll^n$ is a link such that $Y^j\subset Y^j$ is braided about $(B^j,\pi^j)$.  We say that $\Ll$ has \emph{multi-index} $\bold v = (v^1,\ldots,v^n)$ if $\Ll^j$ has index $v^j$.  We allows the possibility that $\Ll^j = \emptyset$ for any given $j$.

\begin{remark}
\label{rmk:braid_ortn}
	If $Y$ is oriented, and we pick orientations on $\Ll$ and on a page $P$ of $(B,\pi)$, then we can associate a sign to each point of $\Ll\cap P$.  By definition, if $\Ll$ is a knot, then each such point will have identical sign; more generally, connected components of $\Ll$ have this property.  If the orientations of the points $\Ll\cap P$ all agree, then we say that the braiding is \emph{coherently oriented}.  If the orientation of these points disagree across components of $\Ll$, then we say that the braiding is \emph{incoherently oriented}.
	
	Our reason for considering incoherently oriented braidings is that sometimes a bridge trisection of a surface will induce a braiding of the boundary link that is incoherently oriented once the surface is oriented.  A simple example of this, the annulus bounded by the $(2,2n)$--torus link, will be explored in Examples~\ref{ex:2-braid} and~\ref{ex:T24coherent}. Even though some bridge trisections induce incoherently oriented braidings on the boundary link, it is always possible to find a bridge trisection of a surface so that the induced braiding is coherently oriented.
\end{remark}

\subsection{Formal definitions}
\label{subsec:Formal}
\ 

Finally, we draw on the conventions laid out above to give formal definitions.

\begin{definition}\label{def:Trisection}
	Let $X$ be an orientable, connected four-manifold, and let
	$$Y = \partial X = Y^1\sqcup\cdots\sqcup Y^n,$$
	where $Y^j$ is a connected component of $\partial X$ for each $j=1,\ldots, n$. Let $g$, $k^*$, $p$, and $f$ be non-negative integers, and let $\bold k$, $\bold p$, and $\bold f$ be ordered partitions of type $(k^*,3)$, $(p,n)$, and $(b,n)^+$, respectively.
	
	A \emph{$(g,\bold k;\bold p,\bold f)$--trisection} $\T$ of $X$ is a decomposition $X = Z_1\cup Z_2\cup Z_3$ such that, for all $j=1,\ldots, n$ and all $i\in\Z_3$,
	\begin{enumerate}
		\item $Z_i \cong Z_{g,k_i;\bold p,\bold f}$, 
		\item $Z_i\cap Z_{i+1} \cong H_{g;\bold p,\bold f}$,
		\item $Z_1\cap Z_2\cap Z_3 \cong \Sigma_{g,b}$, and
		\item $Z_i\cap Y^j \cong H_{\bold p,\bold f}$.
	\end{enumerate}
	
	The four-dimensional pieces $Z_i$ are called \emph{sectors}, the three-dimensional pieces $H_i = Z_i\cap Z_{i-1}$ are called \emph{arms}, and the central surface $\Sigma = Z_1\cap Z_2\cap Z_3$ is called the \emph{core}.  
	If $k_1 = k_2 = k_3 = k$, then $\T$ is described as a \emph{$(g,k;\bold p,\bold f)$--trisection} and is called \emph{balanced}.  Otherwise, $\T$ is called \emph{unbalanced}. Similarly, if either of the ordered partitions $\bold p$ and $\bold f$ are balanced, we replace these parameters with the integers $p/n$ and/or $f/n$, respectively.
	The parameter $g$ is called the \emph{genus} of $\T$.
	The surfaces $P_i^j = H_i\cap Y^j$ are called \emph{pages}, and their union is denoted $P_i$.  The lensed product cobordisms $Y_i^j = Z_i\cap Y^j$ are called \emph{spreads}, and their union is denoted $Y_i$.  The links $B^j = \Sigma\cap Y^j$ are called \emph{bindings}, and their union is $B=\partial \Sigma$.
	
	If $X$ is oriented, we require that the orientation on $Z_i$ induces the oriented decompositions
	$$\partial Z_i = H_i\cup Y_i\cup \overline H_{i+1}, \hspace{.25in} \partial H_i=\Sigma\cup_B\overline{P_i}, \hspace{.25in} \text{\ and\ } \hspace{.25in} \partial Y_i=P_i\cup_B\overline{P_{i+1}}.$$
	See Figure~\ref{fig:ortn_conv} (below) for a schematic illustrating these conventions.

\end{definition}

\begin{remarks}\ 
	\begin{enumerate}
		\item If $X$ is closed, then $n=0$, $Y = \emptyset$, and $\T$ is a trisection as originally introduced by Gay and Kirby~\cite{GayKir_16_Trisecting-4-manifolds} and generalized slightly in~\cite{MeiSchZup_16_Classification-of-trisections-and-the-Generalized}.
		\item If $X$ has a single boundary component, then $n=1$, and $\T$ is a relative trisection as first described  in~\cite{GayKir_16_Trisecting-4-manifolds} and later developed in~\cite{Cas_16_Relative-trisections-of-smooth}, where gluing of such objects was studied, and in~\cite{CasGayPin_18_Diagrams-for-relative-trisections}, where the diagrammatic aspect to the theory was introduced. The general case of multiple boundary components was recently developed in~\cite{Cas_17_Trisecting-smooth-4--dimensional}.
		\item Since $Y^j = Y^j_1\cup Y^j_2\cup Y^j_3$, with each $Y^j_i\cong H_{p_j,b_j}$, it follows that $Y^j$ admits an open-book decomposition where $P_i^j$ is a page for each $i\in\Z_3$ and $B^j$ is the binding.  This open-book decomposition is determined by $\T$, and the monodromy can be explicitly calculated from a relative trisection diagram~\cite{CasGayPin_18_Diagrams-for-relative-trisections}.
		\item Note that the triple $(\Sigma, P_i, P_{i+1})$ defines the standard Heegaard double structure on $\partial Z_i\cong Y_{g;\bold p,\bold f}$.  It follows from Lemma~\ref{lemma:k-value} that $k_i = 2p+f-n+m_i$, where $(\Sigma;H_i,H_{i+1})$ is an $(m_i,n)$--standard Heegaard splitting.  We call $m_i$ the \emph{interior} complexity of $Z_i$. Notice that $g$ is bounded below by $m_i$ and $p$, but not by $f$ nor $k_i$.
	\end{enumerate}
\end{remarks}

\begin{definition}\label{def:Bridge}
	Let $\T$ be a trisection of a four-manifold $X$. Let $\Ff$ be a neatly embedded surface in $X$.  Let $b$, $c^*$, and $v$ be non-negative integers, and let $\bold c$ and $\bold v$ be ordered partitions of type $(c^*,3)$ and $(v,n)$, respectively.  The surface $\Ff$ is in \emph{$(b,\bold c;\bold v)$--bridge trisected position} with respect to $\T$ (or is \emph{$(b,\bold c; \bold v)$--bridge trisected} with respect to $\T$) if, for all $i\in\Z_3$,
	\begin{enumerate}
		\item $\Dd_i = Z_i\cap \Ff$ is a trivial $(c_i;\bold v)$--disk-tangle in $Z_i$, and 
		\item $\Tt_i = H_i\cap\Ff$ is a trivial $(b;\bold v)$--tangle in $H_i$.
	\end{enumerate}
	
	The disk components of the $\Dd_i$ are called \emph{patches}, and the $\Tt_i$ are called \emph{seams}.  Let
	$$\Ll = \partial \Ff = \Ll^1\sqcup\cdots\sqcup\Ll^n,$$
	where $\Ll^j = \Ll\cap Y^j$ is the link representing the boundary components of $\Ff$ that lie in $Y^j$.  The pieces $\beta^j_i = \Ll^j\cap Z_i$ comprising the $\Ll_i$ are called \emph{threads}.
	
	If $\Ff$ is oriented, we require that the induced orientation of $\Dd_i$ induces the oriented decomposition
	$$\partial \Dd_i = \Tt_i\cup\beta_i\cup\overline\Tt_{i+1}.$$
	See Figure~\ref{fig:ortn_conv} (below) for a schematic illustrating these conventions.
	
	The induced decomposition $\T_\Ff$ given by
	$$(X,\Ff) =  (Z_1, \Dd_1)\cup(Z_2, \Dd_2)\cup(Z_3, \Dd_3),$$
	is called a \emph{$(g,\bold k,b,\bold c;\bold p, \bold f, \bold v)$--bridge trisection} of $\Ff$ (or of the pair $(X,\Ff)$).
	If $\T$ is balanced and $c_i= c$ for each $i\in\Z_3$, then $\T_\Ff$ is described as a \emph{$(g,k,b,c;\bold p,\bold f,\bold v)$--bridge trisection} and is called \emph{balanced}.  Otherwise, $\T_\Ff$ is called \emph{unbalanced}.  Similarly, if the partition $\bold v$ is balanced, we replace this parameter with the integer $v/n$.
	The parameter $b$ is called the \emph{bridge number} of $\T_\Ff$.
\end{definition}

\begin{remarks} \ 
	\begin{enumerate}
		\item If $X$ is a closed four-manifold, then $n=0$, $\Ll = \emptyset$, and $\Ff$ is a closed surface in $X$. If $g=0$, we recover the notion of bridge trisections originally introduced in~\cite{MeiZup_17_Bridge-trisections-of-knotted}, while the more general case of arbitrary $g$ is treated in in~\cite{MeiZup_18_Bridge-trisections-of-knotted}.
		\item Note that if $\Ll\cap Y^j = \emptyset$ for some $j=1,\ldots, n$, then $\Ll^j = \emptyset$.  Equivalently, $v_j = 0$.  If $\Ll^j$ is not empty, then we have
		$$\Ll^j = \beta^j_1\cup\beta^j_2\cup\beta^j_3.$$
		If follows that $\Ll^j$ is braided with index $v_j$ with respect to the open-book decomposition $(B^j,P^j_i)$ on $Y^j$ induced by $\T$.
		\item The link $L_i = \partial\Dd_i$ is in $(b,\bold v)$--bridge position with respect to the standard Heegaard double structure on $\partial Z_i$.
		\item The surface $\Ff$ has a cellular decomposition consisting of $(2b+4v)$ 0--cells, $3v$ of which lie in the pages of $\partial X$; $(3b+6v)$ 1--cells, $3v$ of which lie in the spreads of $\partial X$; and $(c_1+c_2+c_3+3v)$ 2--cells, $3v$ of which are vertical patches.  It follows that the Euler characteristic of $\Ff$ is given as
		$$\chi(\Ff) = c_1 + c_2 + c_3 + v - b.$$
		\item Note that $c_i\geq b$, but that $v$ is independent of $b$ and the $c_i$.
	\end{enumerate}
\end{remarks}

We conclude this section with a key fact about bridge trisections.  We refer to the union
$$(H_1,\Tt_2)\cup(H_2,\Tt_2)\cup(H_3,\Tt_3)$$
as the \emph{spine} of the bridge trisection $\T$.  Two bridge trisections $\T$ and $\T'$ for pairs $(X,\Ff)$ and $(X,\Ff')$ \emph{diffeomorphic} if there is a diffeomorphism $\Psi\colon (X,\Ff)\to (X',\Ff')$ such that $\psi(Z_i,\Dd_i) = (Z_i',Dd_i')$ for all $i\in\Z_3$.

\begin{proposition}
\label{prop:spine}
	Two bridge trisections are diffeomorphic if and only if their spines are diffeomorphic.
\end{proposition}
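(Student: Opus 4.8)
The plan is to handle the forward implication by restriction and the reverse implication by reconstructing a bridge trisection canonically from its spine, leaning on the uniqueness results proved in this section. For the forward direction: if $\Psi\colon(X,\Ff)\to(X',\Ff')$ is a diffeomorphism with $\Psi(Z_i,\Dd_i)=(Z_i',\Dd_i')$ for each $i\in\Z_3$, then $\Psi$ carries each arm $H_i = Z_{i-1}\cap Z_i$ onto $H_i'$ and each seam $\Tt_i = H_i\cap\Ff$ onto $\Tt_i'$, so the restriction of $\Psi$ to the spine is the desired diffeomorphism of spines.

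For the reverse direction, suppose $\psi$ carries the spine of $\T$ to that of $\T'$ with $\psi(H_i,\Tt_i)=(H_i',\Tt_i')$; necessarily $\psi$ restricts to a diffeomorphism $(\Sigma,\bold x)\to(\Sigma',\bold x')$ of cores. First I would observe that, for each $i\in\Z_3$, the two consecutive arms assemble along the core to a \emph{standard} bridge splitting
$$(M_i,K_i) = (H_i,\Tt_i)\cup_{(\Sigma,\bold x)}\overline{(H_{i+1},\Tt_{i+1})},$$
standardness following from the facts that $(\Sigma;H_i,H_{i+1})$ is an $(m_i,n)$--standard Heegaard splitting and that $\partial\Dd_i = \Tt_i\cup\beta_i\cup\overline\Tt_{i+1}$ is an unlink in standard bridge position in $\partial Z_i$ (both recorded in the remarks following Definitions~\ref{def:Trisection} and~\ref{def:Bridge}). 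Since $\psi$ respects the core identifications, it glues to a diffeomorphism $(M_i,K_i)\to(M_i',K_i')$ for each $i$.

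Next I would invoke Lemma~\ref{lem:LP}: the standard bridge splitting $(M_i,K_i)$ determines, uniquely up to diffeomorphism rel-$\partial$, a pair $(Z_i,\Dd_i)\cong(Z_{g,k_i;\bold p,\bold f},\Dd_{c_i,\bold v})$ whose boundary carries the bridge double structure of $(M_i,K_i)$; in particular the spread decomposition $\partial Z_i = H_i\cup Y_i\cup\overline{H_{i+1}}$ is recovered from spine data alone. Transporting the structure of $(Z_i',\Dd_i')$ across $\psi|_{\partial Z_i}$ and then applying the rel-$\partial$ uniqueness in Lemma~\ref{lem:LP} produces a diffeomorphism $\psi_i\colon(Z_i,\Dd_i)\to(Z_i',\Dd_i')$ extending $\psi|_{\partial Z_i}$. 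Since $\psi_{i-1}$ and $\psi_i$ both restrict to $\psi$ on the common arm $H_i = Z_{i-1}\cap Z_i$ (hence agree on $P_i = H_i\cap Y$ as well), the three maps agree on all overlaps and glue, after a standard smoothing near the arms, to a diffeomorphism $\Psi\colon(X,\Ff)\to(X',\Ff')$ with $\Psi(Z_i,\Dd_i)=(Z_i',\Dd_i')$, completing the argument.

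The step I expect to require the most care is the assertion that the passage from $(M_i,K_i)$ to $(Z_i,\Dd_i)$, and in particular the spread $Y_i$, depends only on the spine; this is precisely what Lemmas~\ref{lem:HeegDouble}, \ref{lem:BridgeDouble}, and~\ref{lem:LP} are set up to deliver, so once they are invoked the remaining verification — that the sector extensions $\psi_i$ are mutually compatible along the arms — is bookkeeping.
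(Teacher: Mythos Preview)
Your proposal is correct and follows essentially the same approach as the paper: restrict for the forward direction, and for the converse invoke Lemma~\ref{lem:LP} to extend the spine diffeomorphism uniquely (up to isotopy rel-$\partial$) across each sector $(Z_i,\Dd_i)$, then glue. You have in fact spelled out more carefully than the paper does the point that the spread $(Y_i,\beta_i)$ must first be recovered from the spine data via Lemmas~\ref{lem:HeegDouble} and~\ref{lem:BridgeDouble} before Lemma~\ref{lem:LP} applies, and the compatibility of the sector extensions along the arms; the paper's proof compresses all of this into a single appeal to Lemma~\ref{lem:LP}.
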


\begin{proof}
	If $\Psi$ is a diffeomorphism of bridge trisections $\T$ and $\T'$, then the restriction of $\Psi$ to the spine of $\T$ is a diffeomorphism onto the spine of $\T'$.  Conversely, suppose $\Psi$ is a diffeomorphism from the spine of $\T$ to the spine of $\T'$ -- i.e., $\Psi(H_i,\Tt_i) = (H_i',\Tt_i')$ for all $i\in\Z_3$.  By Lemma~\ref{lem:LP}, $\Psi$ there is an extension of $\Psi$ across $(Z_i,\Dd_i)$ that is uniquely determined up to isotopy fixing $(H_1,\Tt_i)\cup_{(\Sigma,\bold x)}\overline{(H_{i+1},\Tt_{i+1})}$ for each $i\in\Z_3$.  It follows that that $\Psi$ extends to a diffeomorphism bridge trisections, as desired.
\end{proof}

In light of this, we find that the four-dimensional data of a bridge trisection is determined by the three-dimensional data of its spine, a fact that will allow for the diagrammatic development of the theory in Sections~\ref{sec:tri-plane} and~\ref{sec:shadow}.

\begin{corollary}
\label{coro:spine}
	A bridge trisection is determined uniquely by its spine.
\end{corollary}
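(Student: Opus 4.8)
The plan is to derive this immediately from Proposition~\ref{prop:spine}, which establishes the stronger statement that two bridge trisections are diffeomorphic if and only if their spines are diffeomorphic. The corollary is essentially a restatement of the "only if" direction made canonical: given a spine, there is (up to the appropriate notion of equivalence) exactly one bridge trisection it comes from. So the first step is simply to observe that Proposition~\ref{prop:spine} already does the work, and the task reduces to phrasing the uniqueness correctly.

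Concretely, I would argue as follows. Suppose $\T$ and $\T'$ are two bridge trisections of pairs $(X,\Ff)$ and $(X',\Ff')$ whose spines $(H_1,\Tt_1)\cup(H_2,\Tt_2)\cup(H_3,\Tt_3)$ and $(H_1',\Tt_1')\cup(H_2',\Tt_2')\cup(H_3',\Tt_3')$ are diffeomorphic as the gluings of three trivial tangles along their common boundary $(\Sigma,\bold x)$. Then by Proposition~\ref{prop:spine}, $\T$ and $\T'$ are diffeomorphic bridge trisections; in particular $(X,\Ff)\cong(X',\Ff')$ and the four-dimensional sectors $(Z_i,\Dd_i)$ are matched up. The content being extracted is that nothing about the four-dimensional pieces was a free choice: by Lemma~\ref{lem:LP}, each trivial disk-tangle $(Z_i,\Dd_i)$ is the unique (up to diffeomorphism rel boundary) filling of the standard bridge splitting $(H_{i-1},\Tt_{i-1})\cup_{(\Sigma,\bold x)}\overline{(H_i,\Tt_i)}$ on its boundary. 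Hence the assignment from a spine to its bridge trisection is well-defined, and this is exactly the assertion of the corollary.

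The only genuine point requiring care — and hence the main (mild) obstacle — is bookkeeping the hypotheses of Lemma~\ref{lem:LP}: that lemma applies to \emph{standard} bridge splittings, so one must note that the boundary $\partial(H_i,\Tt_i)\cup_{(\Sigma,\bold x)}\overline{\partial(H_{i+1},\Tt_{i+1})}$ of a spine of a bridge trisection is indeed a standard bridge double (equivalently, that the Heegaard-page and bridge-braid structures on $\partial Z_i\cong (Y_{g;\bold p,\bold f},L_{b,\bold v})$ are the standard ones). This is built into Definition~\ref{def:Trisection} and Definition~\ref{def:Bridge}, where each $Z_i\cong Z_{g,k_i;\bold p,\bold f}$ and each $\Dd_i$ is a trivial $(c_i;\bold v)$--disk-tangle, so it is a matter of citing those definitions rather than proving anything new. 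Once that observation is in place, the corollary follows with no further argument.

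\begin{proof}
	This is an immediate consequence of Proposition~\ref{prop:spine}.  Indeed, by the definition of a bridge trisection, the boundary of the spine $(H_1,\Tt_1)\cup(H_2,\Tt_2)\cup(H_3,\Tt_3)$, when restricted to any arm-pair $(H_{i-1},\Tt_{i-1})\cup_{(\Sigma,\bold x)}\overline{(H_i,\Tt_i)}$, is a standard bridge splitting, so Lemma~\ref{lem:LP} produces a unique (up to diffeomorphism rel-$\partial$) trivial disk-tangle $(Z_i,\Dd_i)$ filling it.  Thus the spine determines the pieces $(Z_i,\Dd_i)$, and hence the entire bridge trisection, up to diffeomorphism.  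Equivalently, if two bridge trisections have diffeomorphic spines, then Proposition~\ref{prop:spine} shows they are diffeomorphic, so the spine is a complete invariant of a bridge trisection.
\end{proof}
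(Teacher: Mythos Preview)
Your proposal is correct and matches the paper's own treatment: the paper states Corollary~\ref{coro:spine} immediately after Proposition~\ref{prop:spine} without a separate proof, treating it as the evident consequence you describe. One minor indexing slip: with the paper's conventions $\partial Z_i = H_i\cup Y_i\cup\overline H_{i+1}$, so the arm-pair bounding $(Z_i,\Dd_i)$ is $(H_i,\Tt_i)\cup_{(\Sigma,\bold x)}\overline{(H_{i+1},\Tt_{i+1})}$ rather than $(H_{i-1},\Tt_{i-1})\cup_{(\Sigma,\bold x)}\overline{(H_i,\Tt_i)}$, but this does not affect the argument.
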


\section{The four-ball setting}\label{sec:four-ball}

In this section, we restrict our attention to the study of surfaces in the four-ball.  Moreover, we work relative to the standard genus zero trisection.  These restrictions allow for a cleaner exposition than the general framework of Section~\ref{sec:general} and give rise to a new diagrammatic theory for surfaces in this important setting.

\subsection{Preliminaries and a precise definition}\label{subsec:Special}\ 

Here, we revisit the objects and notation introduced in Section~\ref	{sec:general} with the setting of $B^4$ in mind, culminating in a precise definition of a bridge trisection of a surface in $B^4$.

Let $H$ denote the three-ball, and let $B$ denote an equatorial curve on $\partial H$, which induces the decomposition
$$\partial H = \partial_+ H \cup_B \partial_- H$$
of the boundary sphere into two hemispheres. We think of $H$ as being swept out by disks: smoothly isotope $\partial_+ H$ through $H$ to $\partial_- H$.
(Compare this description of $H$ with the notion of a lensed cobordism from Subsection~\ref{subsec:Lensed} and the development for a general compressionbody in Subsection~\ref{subsec:Compression}.)

A trivial tangle is a pair $(H,\Tt)$ such that $H$ is a three-ball and $\Tt\subset H$ is a neatly embedded 1--manifold with the property that $\Tt$ can be isotoped until the restriction $\Phi_\Tt$ of the above Morse function to $\Tt$ has no minimum and at most one maximum on each component of $\Tt$.  In other words, each component of $\Tt$ is a neatly embedded arc in $H$ that is either \emph{vertical} (with respect to the fibering of $H$ by disks) or parallel into $\partial_+ H$.  The latter arcs are called \emph{flat}.  We consider trivial tangles up to isotopy rel-$\partial$.  If $\Tt$ has $v$ vertical strands and $b$ flat strands, we call the pair $(H,\Tt)$ an $(b,v)$--tangle.  This is a special case of the trivial tangles discussed in Subsection~\ref{subsec:Tangles}.

Let $H_1$ and $H_2$ be three-balls, and consider the union $H_1\cup_\Sigma \overline H_2$, where $\Sigma = \partial_+ H_1 = \partial_+ \overline H_2$. We consider this union of as a subset of the three-sphere $Y$ so that $B = \partial\Sigma$ is an unknot and $\Sigma$, $\partial_- H_1$, and $\partial_- H_2$ are all disjoint disk fibers meeting at $B$.  Let $Y_1$ denote 
$$Y\setminus\Int(H_1\cup_\Sigma \overline H_2),$$
and notice that $Y_1$ is simply an interval's worth of disk fibers for $B$, just like the $H_i$.  We let $Y$ denote the three-sphere with this extra structure, which we call the \emph{standard Heegaard double} (cf. Subsection~\ref{subsec:Heegaard}).

\begin{figure}[h!]
	\centering
	\includegraphics[width=.25\textwidth]{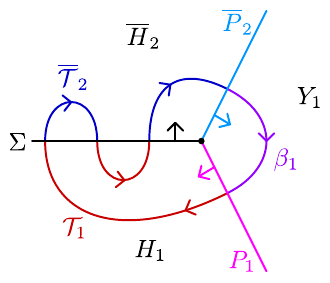}
	\caption{A schematic illustration of a standard Heegaard double, with orientation conventions for the constituent pieces of $\partial Z_1$ indicated.}
	\label{fig:ortn_conv}
\end{figure}

An unlink $L\subset Y$ is in \emph{$(b,v)$--bridge position} with respect the standard Heegaard double structure if $L\cap H_i$ is a $(b,v)$--tangle, $L$ is transverse to the disk fibers of $Y_1$, and each component of $L$ intersects $Y_1$ in at most one arc.  The $v$ components of $L$ that intersect $Y_1$ are called \emph{vertical}, while the other $b$ components are called \emph{flat}. 

Let $Z$ denote the four-ball, with $\partial Z = Y$ regarded as the standard Heegaard double.  A trivial disk-tangle is a pair $(Z,\Dd)$ such that $Z$ is a four-ball and $\Dd$ is a collection of neatly embedded disks, each of which is parallel into $\partial Z$.  Note that the boundary $\partial \Dd$ is an unlink.  If $\partial \Dd$ is in $(b,v)$--bridge position in $Y = \partial Z$, then the disk components of $\Dd$ are called \emph{vertical} and \emph{flat} in accordance with their boundaries.  A \emph{$(c,v)$--disk-tangle} is a trivial disk-tangle with $c$ flat components and $v$ vertical components.

\begin{definition}\label{def:Bridge2}
	Let $\Ff$ be a neatly embedded surface in $B^4$, and let $\T_0$ be the standard genus zero trisection of $B^4$.  Let $b$ and $v$ be non-negative integers, and let $\bold c = (c_1, c_2, c_3)$ be an ordered triple of non-negative integers.  The surface $\Ff$ is in \emph{$(b,\bold c; v)$--bridge trisected position} with respect to $\T_0$ (or is \emph{$(b,\bold c; v)$--bridge trisected} with respect to $\T_0$) if, for all $i\in\Z_3$,
	\begin{enumerate}
		\item $\Dd_i = Z_i\cap \Ff$ is a trivial $(c_i, v)$--disk-tangle in the four-ball $Z_i$, and 
		\item $\Tt_i = H_i\cap\Ff$ is a trivial $(b, v)$--tangle in the three-ball $H_i$.
	\end{enumerate}
	
	The disk components of the $\Dd_i$ are called \emph{patches}, and the $\Tt_i$ are called \emph{seams}.  Let $\Ll = \partial \Ff$.  The braid pieces $\beta_i = \Ll\cap Z_i$ are called \emph{threads}.
	
	If $\Ff$ is oriented, we require that the induced orientation of $\Dd_i$ induces the oriented decomposition
	$$\partial \Dd_i = \Tt_i\cup\beta_i\cup\overline \Tt_{i+1}.$$
	
	The induced decomposition $\T_\Ff$ given by
	$$(X,\Ff) =  (Z_1, \Dd_1)\cup(Z_2, \Dd_2)\cup(Z_3, \Dd_3),$$
	is called a \emph{$(b,\bold c, v)$--bridge trisection} of $\Ff$ (or of the pair $(X,\Ff)$).
	If $\T_\Ff$ is balanced and $c_1 = c_2 = c_3 = c$, then $\T_\Ff$ is a \emph{$(b,c,v)$--bridge trisection} and is called \emph{balanced}.  Otherwise, $\T_\Ff$ is called \emph{unbalanced}. 
\end{definition}

\subsection{Band presentations}\label{subsec:band_pres}\ 

Let $M$ be a three-manifold, and let $J$ be a neatly embedded one-manifold in $M$.  Let $\frak b$ be a copy of $I\times I$ embedded in $M$, and denote by $\partial_1\frak b$ and $\partial_2\frak b$ the portions of $\partial \frak b$ corresponding to $I\times\{-1,1\}$ and $\{-1,1\}\times I$, respectively.  We call such a $\frak b$ a \emph{band} for $J$ if $\Int(\frak b)\subset M\setminus J$ and $\partial \frak b\cap J = \partial_1\frak b$.  The arc of $\frak b$ corresponding to $\{0\}\times I$ is called the \emph{core} of $\frak b$.

Let $J_\frak b$ denote the one-manifold obtained by \emph{resolving} the band $\frak b$:
$$J_\frak b = (J\setminus\partial_1\frak b)\cup\partial_2\frak b.$$
The band $\frak b$ for $J$ gives rise to a \emph{dual band} $\frak b^*$ that is a band for $J_\frak b$, so $\partial_1 \frak b^* = \partial_2 \frak b$ and $\partial_2\frak b^* = \partial_1\frak b$.  Note that, as embedded squares in $M$, we have $\frak b = \frak b^*$, though their cores are perpendicular.
More generally, given a collection $\frak b$ of disjoint bands for $J$, we denote by $J_\frak b$ the \emph{resolution} of all the bands in $\frak b$.  As above, the collection $\frak b^*$ of dual bands is a collection of bands for $J_\frak b$.

\begin{definition}[\textbf{\emph{band presentation}}]
\label{def:band_pres}
	A \emph{band presentation} is a 2--complex in $S^3$ defined by a triple $(\Ll,U,\frak b)$ as follows:
	\begin{enumerate}
		\item $\Ll\subset S^3$ is a link;
		\item $U$ is a split unlink in $S^3\setminus\nu(\Ll)$; and
		\item $\frak b$ is a collection of bands for $\Ll\sqcup U$ such that $U'=(\Ll\sqcup U)_\frak b$ is an unlink.
	\end{enumerate}
	If $U$ is the empty link, then we write $(\Ll,\frak b)$ and call the encoded 2--complex in $S^3$ a \emph{ribbon presentation}.
		
	We consider two band presentations to be \emph{equivalent} if they are ambient isotopic as 2--complexes in $S^3$.  Given a fixed link $\Ll\subset S^3$, two band presentations $(\Ll,U_1,\frak b_1)$ and $(\Ll,U_2,\frak b_2)$ are \emph{equivalent rel-$\Ll$} if they are equivalent via an ambient isotopy that preserves $\Ll$ set-wise.  (In other words, $\Ll$ is fixed, although the attaching regions of $\frak b$ are allowed to move along $\Ll$.)
\end{definition}

Band presentations encode smooth, compact, neatly embedded surfaces in $B^4$ in a standard way.  Before explaining this, we first fix some conventions that will be useful later.  (Here, we follow standard conventions, as in~\cite{KawShiSuz_82_Descriptions-on-surfaces-in-four-space.,Kaw_96_A-survey-of-knot-theory,MeiZup_17_Bridge-trisections-of-knotted,MeiZup_18_Bridge-trisections-of-knotted}.)

Let $h\colon B^4\to [0,4]$ be a standard Morse function on $B^4$ -- i.e., $h$ has a single critical point, which is definite of index zero and given by $h^{-1}(0)$, while $h^{-1}(4) = \partial B^4 = S^3$.  For any compact submanifold $X$ of $B^4$ and any $0\leq t < s\leq 4$, let $X_{[t,s]}$ denote $X\cap h^{-1}\left([t,s]\right)$ and let $X_{\{t\}} = X\cap h^{-1}(t)$.  For example, $B^4_{[t,s]} = h^{-1}[t,s]$.  Similarly, for any compact submanifold $Y$ of $B^4_{\{t\}}$ and any $0\leq r<s\leq 4$, let $Y[r,s]$ denote the vertical cylinder obtained by pushing $Y$ along the gradient flow across the height interval $[r,s]$, which we call a \emph{gradient product}.  We extend these notions in the obvious way to open intervals and singletons in $[0,4]$.

Now we will show how, given a band presentation $(\Ll,U,\frak b)$, we can construct the \emph{realizing surface} $\Ff_{(\Ll,U,\frak b)}$: a neatly embedded surface in $B^4$ with boundary $\Ll$.  Start by considering $(\Ll,U,\frak b)$ as 2--complex in $B^4_{\{2\}}\cong S^3$, and consider the surface $\Ff$ with the following properties:
\begin{enumerate}
	\item $\Ff_{(3,4]} = \Ll(3,4]$;
	\item $\Ff_{\{3\}} = \Ll\{3\}\sqcup D$, where $D$ is a collection of spanning disks for the unlink $U\{3\}\subset B^4_{\{3\}}\cong S^3$;
	\item $\Ff_{(2,3)} = (\Ll\sqcup U)(2,3)$;
	\item $\Ff_{\{2\}} = (\Ll\sqcup U)\cup\frak b$;
	\item $\Ff_{(1,2)} = U'(1,2)$;
	\item $\Ff_{\{1\}} = D'$, where $D'$ is a collection of spanning disks for the unlink $U'\subset B^4_{\{1\}}\cong S^3$; and
	\item $\Ff_{[0,1)} = \emptyset$.
\end{enumerate}
Note that the choices of spanning disks $D$ and $D'$ are unique up to perturbation into $B^4_{(3,3+\epsilon)}$ and $B^4_{(1,1-\epsilon)}$, respectively, by Proposition~\ref{prop:triv_disks}.  Note also that $\partial \Ff = \Ff\cap B^4_{\{4\}} = \Ll\{4\}$.  

\begin{proposition}\label{prop:band_pres}
	Every neatly embedded surface $\Ff$ with $\partial \Ff = \Ll$ is isotopic rel-$\partial$ to a realizing surface $\Ff_{(\Ll,U,\frak b)}$ for some band presentation $(\Ll,U,\frak b)$.  If $\Ff$ has a handle-decomposition with respect to the standard Morse function on $B^4$ consisting of $c_1$ cups, $n$ bands, and $c_3$ caps, then $(\Ll,U,\frak b)$ can be assumed to satisfy $|U|=c_3$, $|\frak b|=n$, and $|U'|=c_1$.
\end{proposition}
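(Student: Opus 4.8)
The plan is to analyze a generic handle decomposition of $\Ff$ coming from the standard Morse function $h$ on $B^4$ and rearrange the critical points by height so that all minima (cups) occur at one level, all saddles (bands) at a middle level, and all maxima (caps) at a higher level; this rearrangement is standard Morse-theoretic surgery on the handle decomposition, using that the ambient $B^4$ has a single index-zero critical point. After rearranging, reading the movie of cross-sections $\Ff_{\{t\}}$ from top ($t=4$) to bottom ($t=0$) will produce exactly the data of a band presentation: near the top one sees $\Ll$; passing the caps corresponds to capping off an unlink $U$ (the descending links of the maxima, which bound disjoint disks and hence form a split unlink away from $\Ll$); passing the saddles is the resolution of a band collection $\frak b$; and passing the minima kills the remaining components, which must therefore be an unlink $U'$. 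The isotopy used to rearrange the critical points fixes $\partial B^4$ pointwise and hence is rel-$\partial$.

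The key steps, in order: (1) Put $h|_\Ff$ in Morse position with distinct critical values and rearrange so cups lie in $h^{-1}(1)$, bands in $h^{-1}(2)$, caps in $h^{-1}(3)$ — an application of the rearrangement lemma for handle decompositions, carried out ambiently in $B^4$. (2) Identify the cross-section $\Ff_{\{3+\epsilon\}}$ with $\Ll$ and the cross-section just below the caps, $\Ff_{\{3-\epsilon\}}$, with $\Ll\sqcup U$ where $U$ is the $c_3$-component link of small meridian circles of the maximum points; since the caps are disjoint disks bounding these circles and disjoint from the rest of the surface below, $U$ is a split unlink in $S^3\setminus\nu(\Ll)$, giving property~(2) of Definition~\ref{def:band_pres}. (3) Identify the $n$ bands as a disjoint collection $\frak b$ of bands for $\Ll\sqcup U$, with $\Ff_{\{2-\epsilon\}} = (\Ll\sqcup U)_\frak b$. (4) Observe $\Ff_{\{2-\epsilon\}}$ must bound the $c_1$ disjoint disks coming from the cups, so $(\Ll\sqcup U)_\frak b = U'$ is an unlink with $|U'| = c_1$, giving property~(3). (5) Check the reconstruction recipe for $\Ff_{(\Ll,U,\frak b)}$ reproduces $\Ff$ up to isotopy rel-$\partial$, using Proposition~\ref{prop:triv_disks} to see that the spanning disks $D$ and $D'$ are unique up to perturbation, so no information is lost in the gradient-product regions.

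The main obstacle is step~(4): one must verify that after resolving the bands the resulting link is genuinely an \emph{unlink} with the correct number of components, rather than merely a link bounding disks in $B^4$. This follows because the sublevel set $\Ff_{[0,2-\epsilon]}$ is precisely the collection of $c_1$ disjoint spanning disks introduced by the minima (each minimum contributes one disk, and below $h^{-1}(2)$ there are no further critical points), so $(\Ll\sqcup U)_\frak b$ bounds $c_1$ disjoint disks in the ball $B^4_{[0,2-\epsilon]}$ and lies in its boundary sphere $B^4_{\{2-\epsilon\}}$; a link in $S^3$ bounding disjoint disks in a ball it bounds is an unlink. Once this is in hand, the count $|U| = c_3$, $|\frak b| = n$, $|U'| = c_1$ is immediate from the bookkeeping of critical points, and the rel-$\partial$ statement follows since all rearranging isotopies are supported in the interior of $B^4$.
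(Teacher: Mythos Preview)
Your proposal is correct and follows essentially the same approach as the paper: rearrange the critical points of $h|_\Ff$ to three levels, then read off $(\Ll,U,\frak b)$ from the cross-sections. The one technical point the paper makes explicit that you elide is a \emph{combing-out} step: after the critical points have been moved to levels $1$, $2$, $3$, the surface $\Ff$ is still not a gradient product on the intervals $(1,2)$, $(2,3)$, $(3,4]$, so the cross-sections $\Ff_{\{t\}}$ in those ranges are only \emph{isotopic} to $\Ll$, $\Ll\sqcup U$, $U'$ rather than equal to $\Ll\{t\}$, etc.; the paper fixes this with a smoothly-varying family of ambient isotopies $G_t\colon B^4_{\{t\}}\to B^4_{\{t\}}$ that straightens each interval into a genuine gradient product, and only then reads the triple $(\Ll,U,\frak b)$ off the single level set $B^4_{\{2\}}$.
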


\begin{proof}
	Given $\Ff$, we can assume after a minor perturbation that the restriction $h_\Ff$ of a standard height function $h\colon B^4\to [0,4]$ is Morse.  After re-parametrizing the codomain of $h$, we can assume that the critical points of $h_\Ff$ are contained in $h^{-1}\left((1.5,2.5)\right)$.  For each index zero critical point $x$ of $h_\Ff$, we choose a vertical strand $\omega$ connecting $x$ to $B^4_{\{1\}}$.  (Here, vertical means that $\omega_{\{t\}}$ is a point or empty for each $t\in[1,2.5]$.)  By a codimension count, $\omega$ is disjoint from $\Ff$, except at $x$.  We can use a small regular neighborhood of $\omega$ to pull $x$ down to $B^4_{\{1\}}$.  Repeating, we can assume that the index zero critical points of $h_\Ff$ lie in $B^4_{\{1\}}$.  By a similar argument, we achieve that the index two critical points of $h_\Ff$ lie in $B^4_{\{3\}}$ and that the index one critical points of $h_\Ff$ lie in $B^4_{\{2\}}$.
	
	Next, we perform the standard flattening of the critical points:  For each critical point $x$ of index~$i$, find a small disk neighborhood $N$ of $x$ in $\Ff$, and isotope $\Ff$ so that $N$ lies flat in $B^4_{\{i+1\}}$.  Near critical points of index zero or two, $\Ff$ now resembles a flat-topped or flat-bottomed cylinder; for index one critical points, $N$ is now a flat square.  Let $\frak b'$ denote the union of the flat, square neighborhoods of the index one critical points in $B^4_{\{2\}}$. 
	
	So far, we have achieved properties (2), (4), (6), and (7) of a realizing surface.  Properties (1), (3), and (5) say that $\Ff$ should be a gradient product on the intervals $(3,4]$, $(2,3)$, and $(1,2)$, respectively. The products $\Ff_{(3,4]}$ and $\Ll(3,4]$ (for example) agree at $\Ff_{\{4\}} = \Ll\{4\}$, but may disagree in $B^4_{\{t\}}$ for $t\in(3,4)$.  This issue can be addressed by a ``combing-out'' process.
	
	For each $t\in[1,4]$, we can choose ambient isotopies $G_t\colon [0,1]\times B^4_{\{t\}}\to B^4_{\{t\}}$ such that
	\begin{enumerate}
		\item $G_4(s,x) = x$ for all $s\in[0,1]$ and $x\in B^4_{\{4\}}$;
		\item $G_t(0,x) = x$ for all $t\in[1,4]$ and $x\in B^4_{\{t\}}$;
		\item $G_t(1,\Ff_{\{t\}}) = \Ll\{t\}$ for all $t\in(3,4]$, where we now let $\Ll = \Ff_{\{4\}}$;
		\item $G_t(1,\Ff_{\{t\}}) = (\Ll\sqcup U)\{t\}$ for all $t\in(2,3)$, where we now let $\Ll\sqcup U = G_3(\Ff_{\{3\}}\setminus\Int D)$;
		\item $G_t(1,\Ff_{\{t\}}) = U'\{t\}$ for all $t\in(1,2)$, where we now let $U' = G_2(\partial\Ff_{[0,2)})$; and
		\item $G_t$ is smoothly varying in $t$.
	\end{enumerate}
	
	After applying the family $G_t$ of ambient isotopies to $\Ff_{[1,4]}$, we have properties (1), (3), and (5), as desired. However, the ambient isotopies $G_t$ have now altered $\Ff_{\{t\}}$ for $t = 1, 2, 3$.  For example, the disks $D$ and $D'$ have been isotoped around in their respective level sets; but, clearly, properties (2), (4), (6), and (7) are still satisfied.  We remark that, if desired, we can choose $G_t$ so that (a) the disks of $D$ end up contained in small, disjoint 3--balls and either (b) the disks of $D'$ have the same property or (c) the bands $\frak b$ have the same property.  However, we cannot always arrange (a), (b), \emph{and} (c) if we want $\Ff_{(1,2)}$ to be a gradient product.
	
	With a slight abuse of notation, we now let $\Ll = \Ll\{2\}$, $U = U\{2\}$, and $\frak b = G_2(\frak b')$. (The only abuse is which level set of the now-gradient-product portion $\Ll[2,4]$ of $\Ff$ should be denoted by $\Ll$.) In the end, we have that $\Ff$ is the realizing surface of the band presentation $(\Ll,U,\frak b)$.
	
	With regards to the second claim of the proposition, assume that $\Ff$ has $c_1$ cups, $n$ bands, and $c_3$ caps once it is in Morse position.  Each cap gives rise to a component of $U$, while each cup gives rise to a component of $U'$.  The numbers of bands, cups, and caps are constant throughout the proof.
\end{proof}

Examples of a band presentations are shown below in Figures~\ref{fig:f81},~\ref{fig:steve1}, and~\ref{fig:square7}. However, each of these is a ribbon presentation.  Throughout the rest of the paper, we will work almost exclusively with ribbon presentation.  To emphasize the generality of Definition~\ref{def:band_pres}, we give in Figure~\ref{fig:band_pres}  a non-ribbon band presentation, where the black unknot is $\Ll$ and the orange unknot is $U$.  Note that a non-ribbon band presentation $(\Ll,U,\frak b)$ for a surface $\Ff$ can always be converted to a ribbon presentation $(\Ll',\frak b)$ for a surface $\Ff'$ by setting $\Ll'=\Ll\sqcup U$.  The ribbon surface $\Ff'$ is obtained from the non-ribbon surface $\Ff$ by puncturing at each maxima and dragging the resulting unlink to the boundary. 

\begin{figure}[h!]
	\centering
	\includegraphics[width=.4\textwidth]{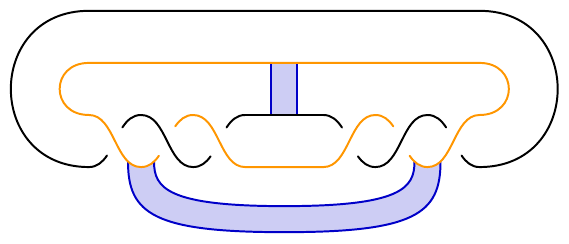}
	\caption{A band presentation for the punctured spun trefoil, considered as a neatly embedded disk in $B^4$ with unknotted boundary.}
	\label{fig:band_pres}
\end{figure}

\subsection{Bridge-braiding band presentations}\label{subsec:braiding_presentations}\ 

Recall the standard Heegaard-double decomposition $Y=Y_{0,0,1}$ of $S^3$ that was introduced in Subsection~\ref{subsec:Heegaard} and revisited in Subsection~\ref{subsec:Special}, which is a decomposition of $S^3$ into three trivial lensed cobordisms (three-balls), $H_1$, $H_3$, and $Y_3$, which meet along disk pages  $H_1\cap \overline H_3 = \Sigma$ and $H_i\cap Y_3 = P_i$ whose boundary is the unknotted braid axis $B$ in $S^3$. The choice to use $H_3$ instead of $H_2$ will ensure that the labelings of our pieces agree with our conventions for the labeling of the pieces of a bridge trisection; cf. the proof of Proposition~\ref{prop:band_to_bridge} below.

\begin{definition}[\textbf{\emph{bridge-braided}}]
\label{def:bridge-braided}
	A band presentation $(\Ll,U,\frak b)$, considered with respect to the standard Heegaard-page decomposition $Y_{0,0,1}$ of $S^3$, is called \emph{$(b,\bold c; v)$--bridge-braided} if the following conditions hold.
	\begin{enumerate}
		\item $\beta_3 = \Ll\cap Y_3$ is a $v$--braid;
		\item $\Ll\cap(H_1\cup_\Sigma\overline H_3)$ is an $b'$--perturbing of a $v$--braid;
		\item $U$ is in $b''$--bridge position with respect to $\Sigma$;
		\item $\frak b\cap\Sigma$ is precisely the cores $y_*$ of $\frak b$, which are embedded in $\Sigma$;
		\item There is a bridge system $\Delta$ for the trivial tangle $\Tt_3 = H_3\cap(\Ll\cup U)$ whose shadows $\Delta_*$ have the property that $\Delta_*\cup y_*$ is a collection of embedded arcs in $\Sigma$; and
		\item $U' = (\Ll\cup U)_\frak b$ is a $(c_1+v)$--component unlink that is in standard $(b,v)$--bridge position with respect to $Y_{0,0,1}$. (Hence, $U'$ consists of $c_1$ flat components and $v$ vertical components.)
	\end{enumerate}
	Here, $b = b' + b''$, $c_3 = |U|$, $c_2 =b-|\frak b|$, and $c_1 = |U'|-v$.  Let $\wh\beta$ denote the index $v$ braiding of $\Ll$ given by $\beta_3\cup\Tt_1\cup\overline\Tt_3$.  In reference to this added structure, we denote the bridge-braided band presentation by $(\wh\beta,U,\frak b)$. If $U=\emptyset$, so $(\Ll,\frak b)$ is a ribbon presentation, we denote the corresponding bridge-braiding by $(\wh\beta,\frak b)$.
\end{definition}

\begin{proposition}\label{prop:to_BBB_realizing}
	Let $\Ff\subset B^4$ be a surface with $\partial\Ff = \Ll$, and let $\wh\beta$ be an index $v$ braiding of $\Ll$.  There is a bridge-braided band presentation $(\wh\beta,U,\frak b)$ such that $\Ff = \Ff_{(\wh\beta,U,\frak b)}$. If $\Ff$ has a handle-decomposition with respect to the standard Morse function on $B^4$ consisting of $c_1$ cups, $n$ bands, and $c_3$ caps, then $(\wh\beta,U,\frak b)$ can be assumed to be $(b,(c_1,b-(n+v),c_3);v)$--bridge-braided, for some $b\in\N$.
\end{proposition}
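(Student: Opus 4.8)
The plan is to manufacture the required bridge-braided band presentation from the band presentation supplied by Proposition~\ref{prop:band_pres}, working throughout in the level three-sphere $B^4_{\{2\}}\cong S^3$ equipped with the standard Heegaard-page decomposition $Y_{0,0,1}$, with binding $B=\partial\Sigma$, pages $\Sigma$, $P_1$, $P_3$, and regions $H_1$, $H_3$, $Y_3$. First I would invoke Proposition~\ref{prop:band_pres} to obtain a band presentation $(\Ll,U,\frak b_0)$ with $\Ff$ isotopic rel-$\partial$ to $\Ff_{(\Ll,U,\frak b_0)}$, where $|U|=c_3$, $|\frak b_0|=n$, and $|(\Ll\sqcup U)_{\frak b_0}|=c_1$. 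Since $\wh\beta$ is an index $v$ braiding of the link type of $\Ll$, there is an ambient isotopy of $S^3$ carrying $\Ll$ to $\wh\beta$; after an ambient isotopy of $B^4$ (not rel-$\partial$), so that now $\partial\Ff=\wh\beta$, I may assume --- re-combing the gradient-product portion of $\Ff$ as in the proof of Proposition~\ref{prop:band_pres} --- that the band presentation has boundary $\wh\beta$, braided about $B$. Restricting the braid to the three regions then gives conditions (1) and (2) of Definition~\ref{def:bridge-braided}: $\beta_3=\Ll\cap Y_3$ is a $v$-braid, and $\Ll\cap(H_1\cup_\Sigma\overline H_3)$ is a $b'$-perturbing of a $v$-braid for some $b'\ge 0$ (initially $b'=0$).

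Next I would build in the $v$ vertical patches that a bridge trisection of index $v$ must have. To that end I augment $(\Ll,U,\frak b_0)$ by $v$ trivial band stabilizations --- each the introduction of a cancelling cup/band pair whose new cup component is a small unknotted meridian of $B$ --- which leaves the realizing surface unchanged up to isotopy but raises the band count to $|\frak b|=n+v$ and makes the cup-unlink $U'=(\Ll\sqcup U)_{\frak b}$ have $c_1+v$ components, the $v$ new ones to become the vertical components. I then put the cap-unlink $U$, which is split from $\Ll$, into $b''$-bridge position with respect to $\Sigma$ for some $b''\ge 0$ (condition (3)), and isotope the bands $\frak b$ until $\frak b\cap\Sigma$ is exactly the collection $y_*$ of their cores, embedded and disjoint in $\Sigma$ (condition (4)): general position makes $\frak b\cap\Sigma$ a one-manifold, and innermost-disk and outermost-arc surgeries, together with small isotopies of the now-braided $\Ll$ that can be absorbed into $b'$ and $b''$, remove the extraneous components.

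The core of the argument is the joint standardization demanded by conditions (5) and (6). For (5) I must exhibit a bridge system $\Delta$ for the trivial tangle $\Tt_3=H_3\cap(\Ll\cup U)$ --- bridge triangles for the $v$ vertical strands, bridge semi-disks for the flat strands coming from $U$ --- whose shadows $\Delta_*$, together with the cores $y_*$, form an embedded collection of arcs in $\Sigma$; for (6), the resolved link $U'=(\Ll\cup U)_{\frak b}$, necessarily an unlink, must moreover lie in standard $(b,v)$-bridge position with respect to $Y_{0,0,1}$, with $c_1$ flat and $v$ vertical components. These are dual requirements coupled through band resolution: since the bands now meet $\Sigma$ in their cores, standardizing the ``cap side'' and then resolving the bands produces a bridge splitting of $U'$ whose standardness must also be controlled. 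The approach is to follow the standardization scheme of~\cite{MeiZup_18_Bridge-trisections-of-knotted}: iterated bridge perturbations --- each replacing a strand by a trivially perturbed one, chosen so as to unknot and unlink the relevant shadow arcs --- combined with handleslides of the defining curves of the Heegaard-page splitting as in Lemma~\ref{lemma:std_Heeg}, until both the shadows of $\Delta$ and those of the dual bridge system for $U'$ are embedded and the underlying Heegaard splitting is standard. I expect this to be the main obstacle: one has to choreograph the perturbations on the cap side so that, after resolving the bands, they also trivialize the cup side, and it is precisely the freedom to perturb arbitrarily often --- so that the eventual bridge number $b$ may be taken as large as necessary --- that makes this possible.

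Finally I would read off the complexity parameters, with $b=b'+b''$. By construction $c_3=|U|$ is unchanged; $|\frak b|=n+v$, so Definition~\ref{def:bridge-braided} gives $c_2=b-|\frak b|=b-(n+v)$; and the flat part of $U'$ has $c_1$ components. Hence $(\wh\beta,U,\frak b)$ is $(b,(c_1,b-(n+v),c_3);v)$-bridge-braided and satisfies $\Ff=\Ff_{(\wh\beta,U,\frak b)}$ up to isotopy, as claimed; Proposition~\ref{prop:triv_disks} guarantees that the auxiliary choices of spanning disks in the realizing-surface construction do not affect the isotopy class of the realizing surface.
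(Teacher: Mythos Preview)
Your overall strategy tracks the paper's --- start from the band presentation of Proposition~\ref{prop:band_pres}, braid the boundary, put $U$ in bridge position, level and dualize the bands, and introduce $v$ auxiliary ``helper'' bands --- and your parameter bookkeeping is correct. But there is a genuine gap in how you achieve condition~(6) of Definition~\ref{def:bridge-braided}. After $\Ll$ has been braided and the original bands $\frak b_0$ resolved, the $c_1$--component unlink $U''=(\wh\beta\sqcup U)_{\frak b_0}$ can have components that meet the spread $Y_3$ in \emph{several} strands. Condition~(6) requires the $c_1$ flat components of $U'$ to miss $Y_3$ entirely and the $v$ vertical components each to meet it once. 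Your helper bands, as described --- generic cancelling cup/band pairs whose new cup is a meridian of $B$, added before any bridge structure is in place --- supply $v$ new vertical components but do nothing to pull the original $c_1$ components out of $Y_3$. The ``iterated bridge perturbations'' you invoke cannot repair this: perturbation of $\Sigma$ is supported in $H_1\cup_\Sigma\overline H_3$ and leaves the braid $\beta_3\subset Y_3$ untouched. (The appeal to Lemma~\ref{lemma:std_Heeg} is also empty here, since for the genus-zero trisection the core $\Sigma$ is a disk and there are no defining curves to slide.)

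The paper closes this gap by postponing the helper bands until \emph{after} conditions~(1)--(5) hold for $\frak b_0$, and then constructing them from a specific geometric datum: bridge triangles $\Delta$ for the $v$--braid $\beta_3$ inside $Y_3$. Setting $\omega=\Delta\cap(P_1\cup_B\overline P_3)$ and taking $\frak b''$ to be the bands with cores $\omega$, framed by that sphere, guarantees that resolving $\frak b''$ converts each strand of $\beta_3$ into a meridional unknot (these become the $v$ vertical components $J$ of $U'$), while the remainder $K=U'\setminus J$, though isotopic to $U''$, now lies entirely in $H_1\cup_\Sigma\overline H_3$ and is therefore flat. One then re-perturbs to level the new bands and recover~(4)--(5) for the enlarged collection $\frak b=\frak b_0\cup\frak b''$. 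The missing idea is precisely this placement of the helper bands --- dual to bridge triangles for $\beta_3$ --- so that they sever every vertical strand; merely arranging for the new cups to be meridians is not enough.
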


\begin{proof}
	Consider $\Ff\subset B^4$ with $\partial\Ff = \Ll$. By Proposition~\ref{prop:band_pres}, we can assume (after an isotopy rel-$\partial$) that $\Ff = \Ff_{(\Ll,U,\frak b)}$ for some band presentation $(\Ll,U,\frak b')$.   We assume that $|U|=c_3$, $|\frak b'|=n$, and $|(\Ll\sqcup U)_{\frak b'}|=c_1$. By Alexander's Theorem~\cite{Ale_20_Note-on-Riemann-spaces}, there is an ambient isotopy $G_4\colon I\times B^4_{\{4\}}\to B^4_{\{4\}}$ taking $\partial \Ff$ to $\wh\beta$.  As in the proof of Proposition~\ref{prop:band_pres}, there is a family $G_t$ of ambient isotopies extending $G_4$ across $B^4$.  This results in the ``combing-out'' of Alexander's isotopy $G_4$, with the final effect that $\Ff$ is the realizing surface of the (not-yet-bridge-braided) band presentation $(\wh\beta,U,\frak b')$.  Henceforth, we consider the 2--complex corresponding to $(\wh\beta,U,\frak b')$ to be living in $B^4_{\{2\}}$, as in Proposition~\ref{prop:band_pres}.
	
	We have already obtained properties (1) and (2) towards a bridge-braided band presentation; although, presently $b'=0$.  (This will change automatically once we begin perturbing the bridge surface $\Sigma$ relative to $\wh\beta$ and $U$.)  By an ambient isotopy of $B^4_{\{2\}}$ that is the identity in a neighborhood of $\wh\beta$, we can move $U$ to lie in bridge position with respect to $\Sigma$, realizing property (3).  (Again, the bridge index $b''$ of this unlink will change during what follows.)  Since this ambient isotopy was supported away from $\wh\beta$ it can be combed-out (above and below) via a family of isotopies that are supported away from the gradient product $\wh\beta[2,4]$; so $\Ff$ is still the realizing surface.
	
	Next, after an ambient isotopy that fixes $\wh\beta\sqcup U$ set-wise (and point-wise near $\Sigma$), we can arrange that $\frak b'$ lies in $H_1\cup_\Sigma\overline H_3$. (Think of the necessity of sliding the ends of $\frak b'$ along $\beta_3$ to extract it from $Y_3$, while isotoping freely the unattached portion of $\frak b'$ to the same end.) This time, we need only comb-out towards $h^{-1}(0)$.  Using the obvious Morse function associated to $(H_1\cup_\Sigma \overline H_3)\setminus\nu(B)$, we can flow $\frak b'$, in the complement of $\wh\beta\sqcup U$, so that the cores of the bands lie as an immersed collection of arcs $y$ in $\Sigma\setminus\nu(\bold x)$.  At this point, we can perturb the bridge surface $\Sigma$ relative to $\wh\beta\sqcup U$ to arrange that the cores $y$ be embedded in $\Sigma$.  For details as to how this is achieved, we refer the reader to Figure~10 (and the corresponding discussion starting on page~17) of~\cite{MeiZup_17_Bridge-trisections-of-knotted}.  Now that the cores $y_*$ of $\frak b'$ are embedded in $\Sigma$, we can further perturb $\Sigma$ relative to $\wh\beta\sqcup U$ (as in Figure~11 of~\cite{MeiZup_17_Bridge-trisections-of-knotted}) to achieve that $\frak b'\cap\Sigma$ is precisely the cores of $\frak b'$.  Thus, we have that the bands $\frak b'$ satisfy property (4).  A further perturbation of $\Sigma$ relative to $\wh\beta\sqcup U$ produces, for each band $\upsilon$ of $\frak b'$, a dualizing bridge disk $\Delta_\upsilon$, as required by property (5). (See Figure~12 of~\cite{MeiZup_17_Bridge-trisections-of-knotted}.)
	
	However, at this point it is possible that the $c_1$--component unlink $U'' = (\widehat\beta\sqcup U)_{\frak b'}$ is \emph{not} in standard $(b,v)$--bridge position; more precisely, it is possible that components of $U''$ intersect $Y_3$ is more than one strand.
	On the other hand, we automatically have that $U''\cap Y_3$ is a $v$--braid, since the band resolutions changing $\Ll\cup U$ into $U''$ were supported away from $Y_3$.  Moreover, we know that $U''\cap H_i$ is a $(b,v)$--tangle; this follows from the proof of Lemma~3.1 of~\cite{MeiZup_17_Bridge-trisections-of-knotted}.
		
	Thus, we must modify $U''$ in order to obtain an unlink in standard position.
	To do so, we will produce a new collection $\frak b''$ of bands such that $U' = U''_{\frak b''}$ is a $(c_1+v)$--component unlink in $(b,v)$--bridge position. We call the bands $\frak b''$ \emph{helper bands}. We will then let $\frak b = \frak b'\sqcup\frak b''$, and the proof will be complete.
	
	Since $(Y_3,\beta_3)$ is a $v$--braid, there is a collection of bridge triangles $\Delta$ for $\beta_3$.  Let $\omega = \Delta\cap(P_1\cup_B\overline P_3)$.  Let $\frak b''$ denote the collection of $v$ bands whose core are the arcs $\omega$ and that are framed by the two-sphere $P_1\cup_B\overline P_3$.  By a minor isotopy that fixes $U''$ set-wise (and point-wise away from a neighborhood of $\partial\omega$), we consider $\frak b''$ as lying in the interior of $H_1\cup_\Sigma\overline H_3$.  Thus, $\frak b''$ is a collection of bands for $\Tt_1\cup_\bold x\overline\Tt_3$.  See Figure~\ref{fig:helpers} for two simple examples.		

\begin{figure}[h!]
\begin{subfigure}{.5\textwidth}
  \centering
  \includegraphics[width=.8\linewidth]{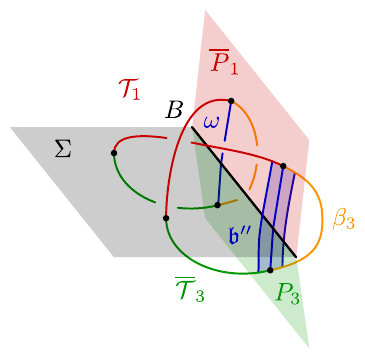}
  \caption{}
  \label{fig:helpers1}
\end{subfigure}%
\begin{subfigure}{.5\textwidth}
  \centering
  \includegraphics[width=.8\linewidth]{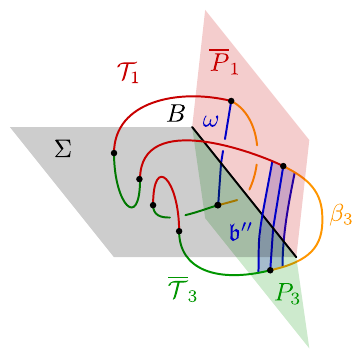}
  \caption{}
  \label{fig:helpers2}
\end{subfigure}
\caption{Adding extra bands to ensure that $U'$ is in standard $(b,v)$--bridge position.}
\label{fig:helpers}
\end{figure}

	Let $U' = U''_{\frak b''}$.  Let $J$ denote the components of $U'$ containing the strands of $\beta_3$.  Since the helper bands $\frak b''$ were created from the bridge triangles of $\Delta$, we find that $J$ bound a collection of $v$ disjoint meridional disks for $B$.  In particular, $J$ is a $v$--component unlink in $v$--braid position with respect to $B$.  Let $K = U'\setminus J$, and note that $K$ is isotopic (disregarding the Heegaard double structure) to the unlink $U''$.  It follows that $K$ is a $c_1$--component unlink in bridge position with respect to $\Sigma$.  Therefore, $U'$ is a $(c_1+v)$--component unlink in standard $(b,v)$--bridge position, as required by property (6) of Definition~\ref{def:bridge-braided}.
	
	Now, to wrap up the construction, we let $\frak b = \frak b'\cup\frak b''$.  While we have arranged the bands of $\frak b'$ are in the right position with respect to the Heegaard splitting, we must now repeat the process of perturbing the bridge splitting in order to level the helper bands $\frak b''$.  The end result is that the bands of $\frak b$ satisfy properties (4) and (5) of Definition~\ref{def:bridge-braided}.  In the process, we have not changed the fact that properties (1)--(3) and (6) are satisfied, though we may have further increased the parameters $b'$ and $b''$ (and, thus, $b=b'+b''$) during this latest bout of perturbing.
	
	We complete the proof by noting that $|U|=c_3$, $|U'|=c_1+v$, and $|\frak b|=n+v$.
\end{proof}

\begin{remark}
\label{rmk:helpers}
	A key technical step in the proof of Proposition~\ref{prop:to_BBB_realizing} was the addition of the so-called \emph{helper bands} $\frak b''$ to the original set $\frak b'$ of bands that were necessary to ensure that $U'$ was in standard position. In the proof, $\frak b''$ consisted of $v$ bands; in practice, one can make do with a subset of these $v$ bands.
	This can be seen in the two simple examples of Figure~\ref{fig:helpers}, where the addition of only one band (in each example) suffices to achieve standard bridge position.  In Figure~\ref{fig:helpers1}, the addition of the single band shown transforms an unknot component of $U''$ that is in 2--braid position into a pair of 1--braids (one of which is perturbed) in the link $U'$.  In Figure~\ref{fig:helpers2}, an unknot component that is not braided at all is transformed to the same result.  In each of these examples, the addition of a second band corresponding to the second arc of $\omega$ would be superfluous.
	
	From a Morse-theoretic-perspective, the helper bands correspond to cancelling  pairs of minima and saddles: the minima are the meridional disks bounded by $J$. Using more bands from $\frak b''$ than is strictly necessary results in a surface with more minima (and bands) than are actually required to achieve the desired bridge-braided band presentation.  Below, when we convert the bridge-braided band presentation to a bridge trisection, we will see that the superfluous bands and minima have the effect that the bridge trisection produced is perturbed -- see Section~\ref{sec:stabilize}.  Another way of thinking about the helper bands is that they ensure that the trivial disk-tangle $\Dd_1$ in the resulting bridge trisection has enough vertical patches.
\end{remark}

Before proving that a bridge-braided band presentation can be converted to a bridge trisection, we pause to give a few examples illustrating the process of converting a band presentation into a bridge-braided band presentation.

\begin{example}
\label{ex:F81}
	\textbf{(Figure-8 knot Seifert surface)} Figure~\ref{fig:f81} shows a band presentation for the genus one Seifert surface for the figure-8 knot, together with a gray dot representing an unknotted curve about which the knot will be braided; this braiding is shown in Figure~\ref{fig:f82}.
	Note that the resolution of the bands at this point would yield a unknot (denoted $U''$ in the proof of Proposition~\ref{prop:to_BBB_realizing}) that is in 3--braid position.  Thus, at least two helper bands are need.  In Figure~\ref{fig:f83} we have attached three helper bands, as described in the proof of Proposition~\ref{prop:to_BBB_realizing}.  Note that the cores of these bands are simultaneously parallel to the arcs one would attach to form the braid closure, and the disks exhibiting this parallelism correspond to the bridge triangles in the proof.
	In Figure~\ref{fig:f84}, all five bands have been leveled so that they are framed by the bridge sphere, intersecting it only in their cores.  In addition, each band is dualized by a bridge disk for $\Tt_3$.  Three of these bridge disks are obvious. The remaining two are only slightly harder to visualize; one can choose relatively simple disks corresponding to any two of the three remaining flat arcs.

\begin{figure}[h!]
\begin{subfigure}{.15\textwidth}
  \centering
  \includegraphics[width=.6\linewidth]{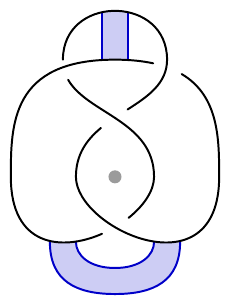}
  \caption{}
  \label{fig:f81}
\end{subfigure}%
\begin{subfigure}{.2\textwidth}
  \centering
  \includegraphics[width=.5\linewidth]{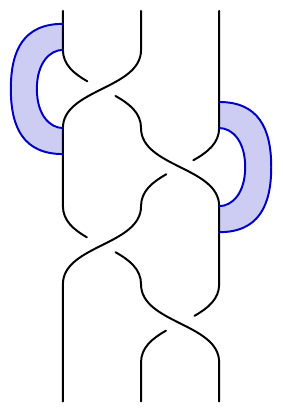}
  \caption{}
  \label{fig:f82}
\end{subfigure}%
\begin{subfigure}{.2\textwidth}
  \centering
  \includegraphics[width=.9\linewidth]{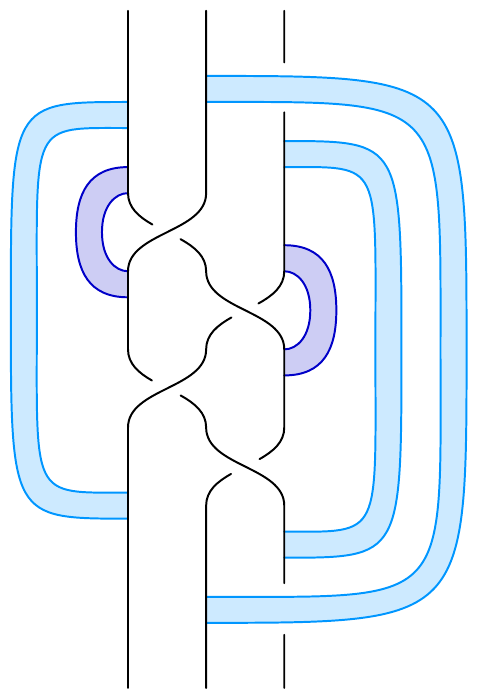}
  \caption{}
  \label{fig:f83}
\end{subfigure}%
\begin{subfigure}{.45\textwidth}
  \centering
  \includegraphics[width=.8\linewidth]{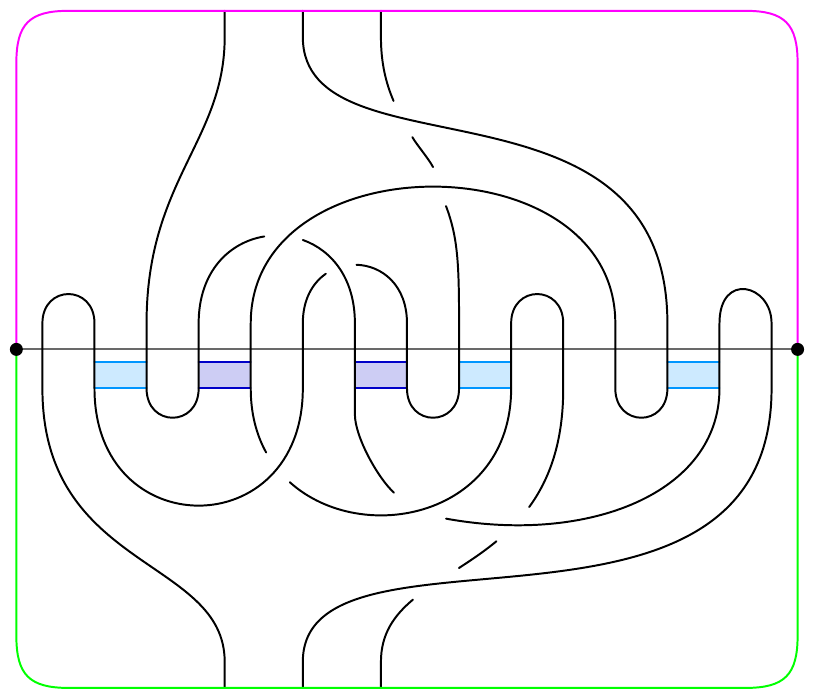}
  \caption{}
  \label{fig:f84}
\end{subfigure}
\par\vspace{5mm}
\begin{subfigure}{\textwidth}
  \centering
  \includegraphics[width=\linewidth]{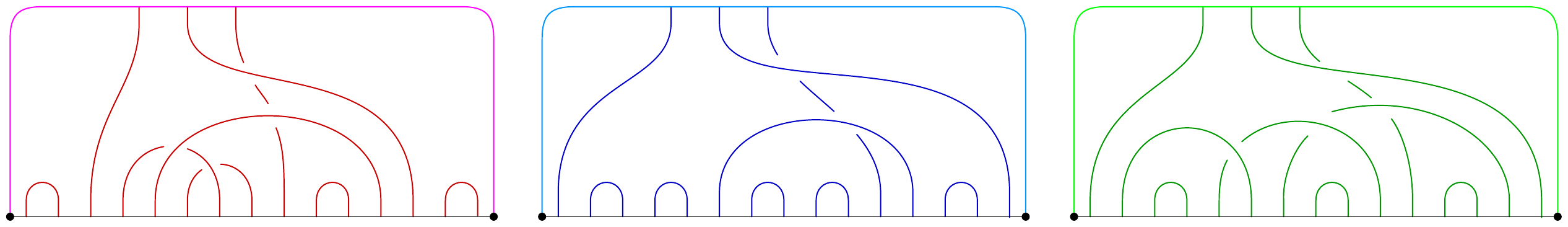}
  \caption{}
  \label{fig:f85}
\end{subfigure}
\par\vspace{5mm}
\begin{subfigure}{\textwidth}
  \centering
  \includegraphics[width=\linewidth]{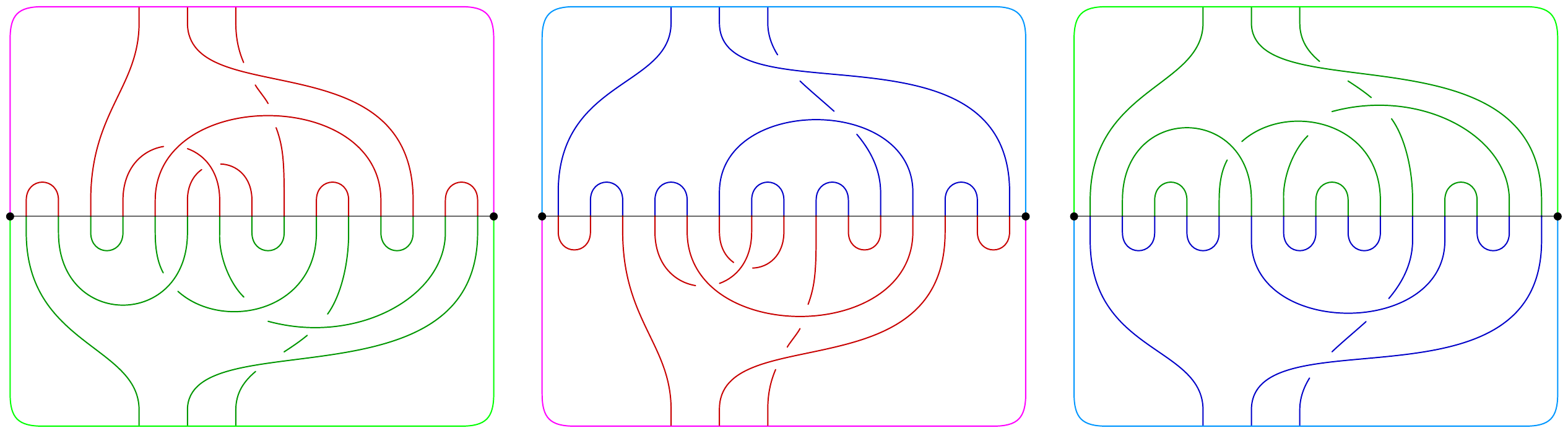}
  \caption{}
  \label{fig:f86}
\end{subfigure}
\caption{(A)--(D) The process of converting a band presentation for the genus one Seifert surface for the figure-8 knot into a bridge-braided band presentation. (E) A tri-plane diagram corresponding to the bridge-braided band presentation of~(D).  See Figure~\ref{fig:fig8} for a second instantiation of this example.}
\label{fig:f8}
\end{figure}
			
	Figure~\ref{fig:f85} shows a tri-plane diagram for the bridge trisection that can be obtained from the bridge-braided band presentation given in Figure~\ref{fig:f84} according to Proposition~\ref{prop:band_to_bridge}. (See Section~\ref{sec:tri-plane} for precise details regarding tri-plane diagrams.)
	Figure~\ref{fig:f86} shows the pairwise unions of the seams of this bridge trisection.  Relevant to the present discussion is the fact that the second two unions each contain a closed, unknotted component.  The fact that the red-blue union contains such a component is related to the fact that we chose to use three helper bands, when two would suffice.
	The fact that the green-blue union contains such a component is related to the fact that the bridge splitting in Figure~\ref{fig:f84} is excessively perturbed.  We leave it as an exercise to the reader to deperturb the bridge splitting of Figure~\ref{fig:f84} to obtain a simpler bridge-braided band presentation.
\end{example}

\begin{example}
\label{ex:f82}
	\textbf{(Figure-8 knot Seifert surface redux)} As discussed in Remark~\ref{rmk:helpers}, it is often not necessary to append $v$ helper bands. The frames of Figure~\ref{fig:fig8} are analogous to those of Figure~\ref{fig:f8}, with the main change being that only two of the three helper bands are utilized. The two inner most bands from Figure~\ref{fig:f83} have been chosen, and they have each been slid once over the original bands from Figure~\ref{fig:fig82} to make subsequent picture slightly simpler.  			
			
\begin{figure}[h!]
\begin{subfigure}{.2\textwidth}
  \centering
  \includegraphics[width=.64\linewidth]{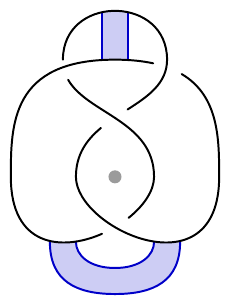}
  \caption{}
  \label{fig:fig81}
\end{subfigure}%
\begin{subfigure}{.2\textwidth}
  \centering
  \includegraphics[width=.72\linewidth]{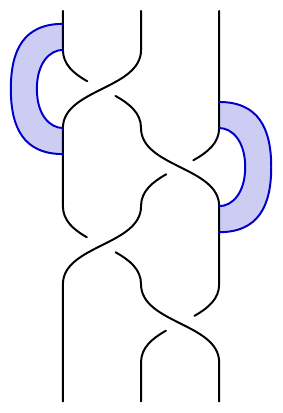}
  \caption{}
  \label{fig:fig82}
\end{subfigure}%
\begin{subfigure}{.2\textwidth}
  \centering
  \includegraphics[width=.72\linewidth]{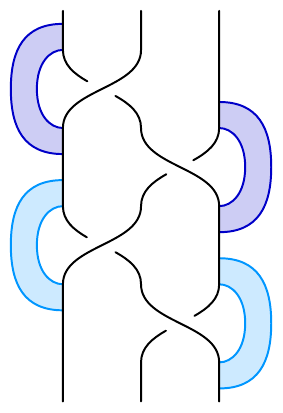}
  \caption{}
  \label{fig:fig83}
\end{subfigure}%
\begin{subfigure}{.4\textwidth}
  \centering
  \includegraphics[width=.7\linewidth]{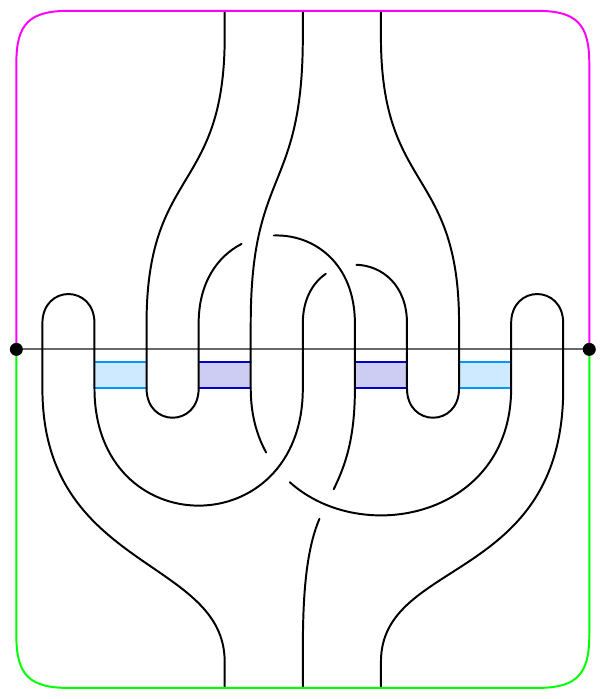}
  \caption{}
  \label{fig:fig84}
\end{subfigure}
\par\vspace{5mm}
\begin{subfigure}{\textwidth}
  \centering
  \includegraphics[width=.8\linewidth]{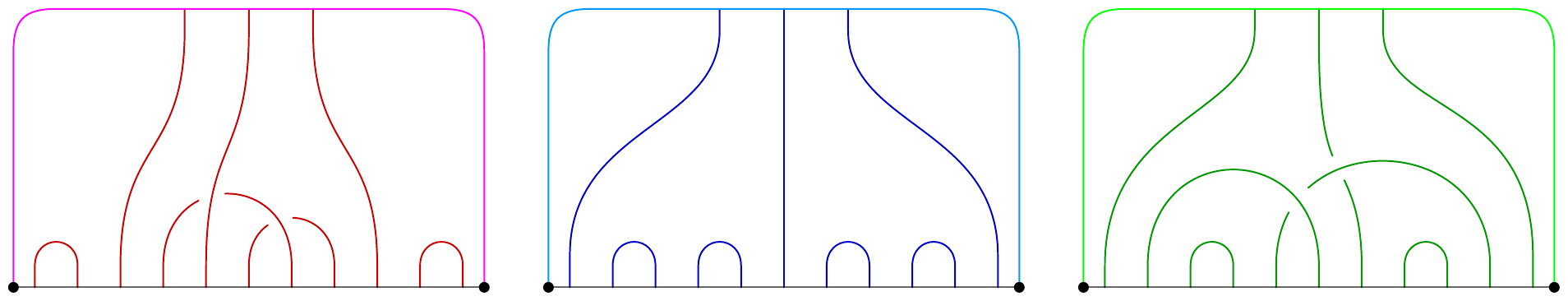}
  \caption{}
  \label{fig:fig85}
\end{subfigure}
\par\vspace{5mm}
\begin{subfigure}{\textwidth}
  \centering
  \includegraphics[width=.8\linewidth]{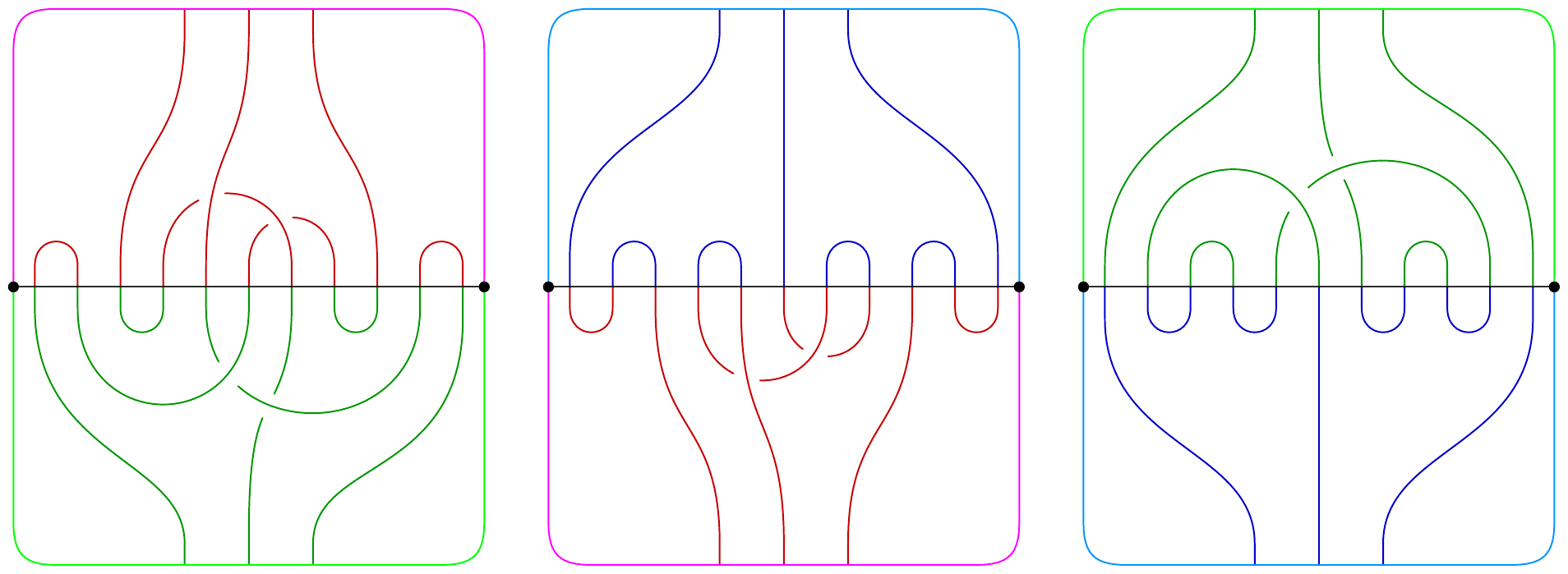}
  \caption{}
  \label{fig:fig86}
\end{subfigure}
\caption{(A)--(D) The process of converting a band presentation for the genus one Seifert surface for the figure-8 knot into a bridge-braided band presentation. (E) A tri-plane diagram corresponding to the bridge-braided band presentation of~(D). See Figure~\ref{fig:f8} for another instantiation of this example.}
\label{fig:fig8}
\end{figure}

	Since fewer bands are included, the bridge splitting required to level and dualize them is simpler. In this case, the perturbing in Figure~\ref{fig:fig84} is minimal.
	In light of these variations, we see in Figure~\ref{fig:fig86} that the pairwise unions of the seams of the bridge trisection contain no closed components, implying the bridge trisection is not perturbed -- see Section~\ref{sec:stabilize}.
\end{example}

\begin{example}
\label{ex:steve}				
	\textbf{(Stevedore knot ribbon disk)} Figure~\ref{fig:steve1} shows a band presentation for a ribbon disk for the stevedore knot, together with a gray dot representing an unknotted curve about which the knot is braided in Figure~\ref{fig:steve2}.  Note that the result of resolving the band in Figure~\ref{fig:steve2} is a 4--braiding of the 2--component unlink, with each component given by a 2--braid.  Thus, at least two helper bands are required to achieve bridge-braided band position in this example; Figure~\ref{fig:steve3} shows two such bands that suffice.  (See Remark~\ref{rmk:helpers2} below.)

\begin{figure}[h!]
\begin{subfigure}{.15\textwidth}
  \centering
  \includegraphics[width=.8\linewidth]{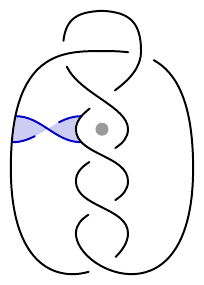}
  \caption{}
  \label{fig:steve1}
\end{subfigure}%
\begin{subfigure}{.225\textwidth}
  \centering
  \includegraphics[width=.48\linewidth]{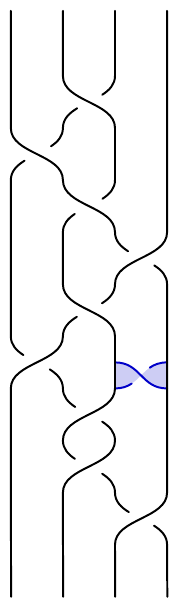}
  \caption{}
  \label{fig:steve2}
\end{subfigure}%
\begin{subfigure}{.225\textwidth}
  \centering
  \includegraphics[width=.8\linewidth]{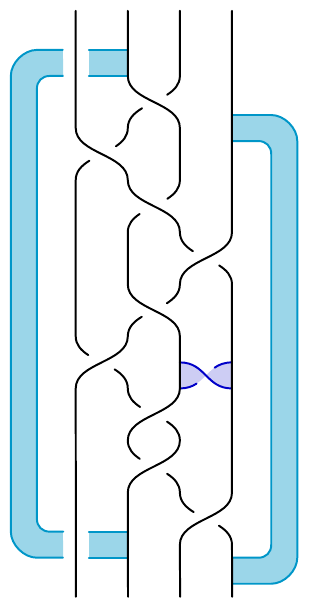}
  \caption{}
  \label{fig:steve3}
\end{subfigure}%
\begin{subfigure}{.4\textwidth}
  \centering
  \includegraphics[width=.7\linewidth]{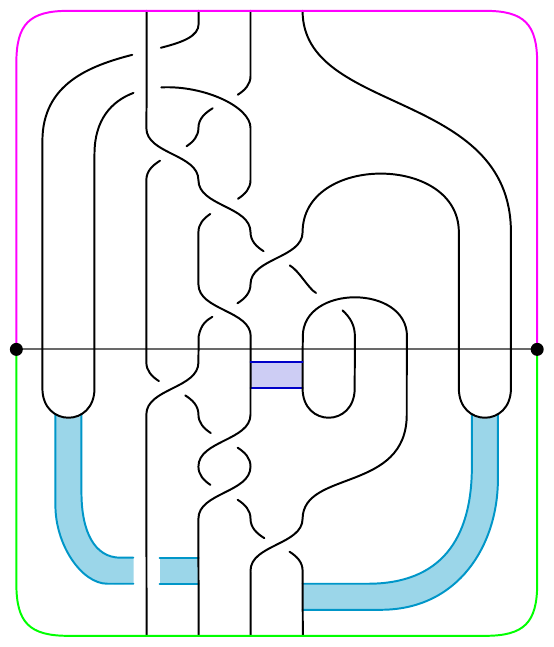}
  \caption{}
  \label{fig:steve4}
\end{subfigure}
\par\vspace{5mm}
\begin{subfigure}{\textwidth}
  \centering
  \includegraphics[width=.9\linewidth]{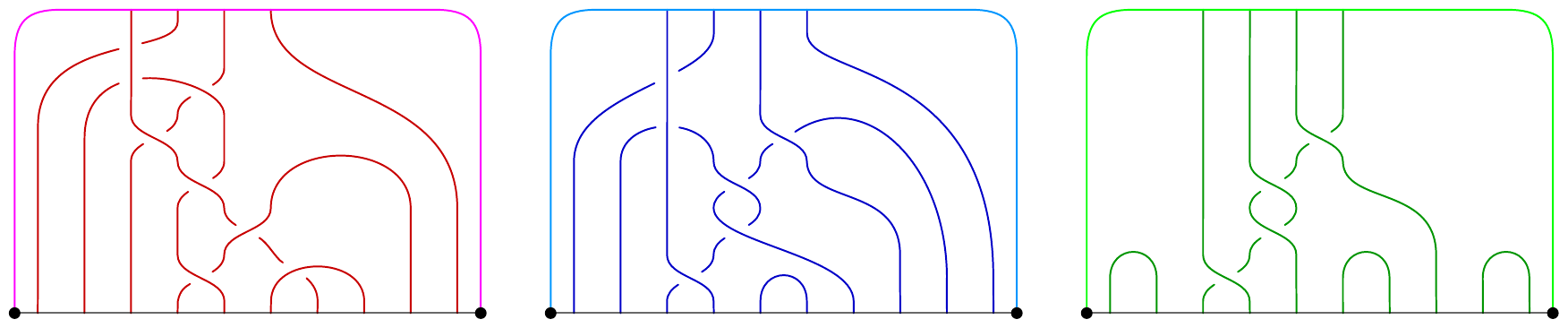}
  \caption{}
  \label{fig:steve5}
\end{subfigure}
\par\vspace{5mm}
\begin{subfigure}{\textwidth}
  \centering
  \includegraphics[width=.9\linewidth]{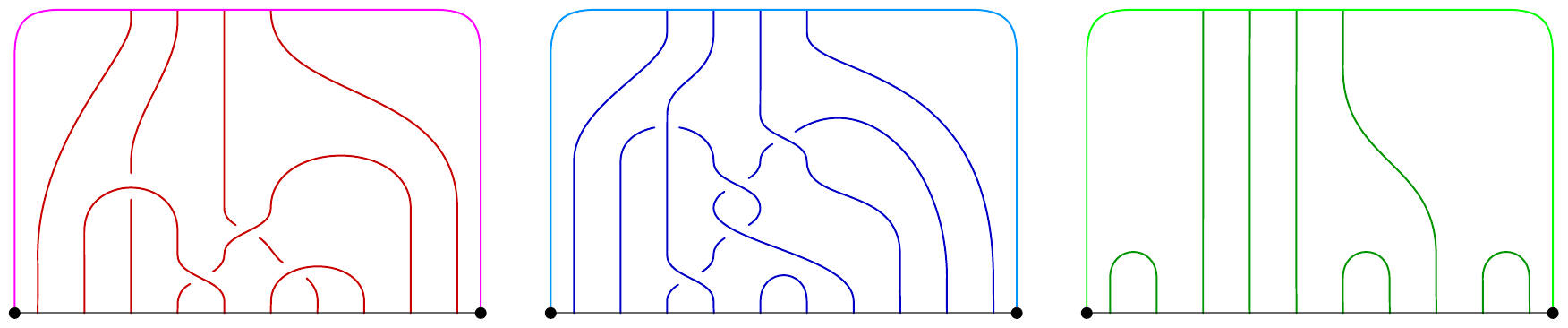}
  \caption{}
  \label{fig:steve6}
\end{subfigure}
\caption{(A)--(D) The process of converting a band presentation for a ribbon disk for the stevedore knot into a bridge-braided band presentation. (E) A tri-plane diagram corresponding to the bridge-braided band presentation of~(D). (F) A second tri-plane diagram, obtained from the first via a sequence of tri-plane moves.}
\label{fig:steve}
\end{figure}
	
	Figure~\ref{fig:steve4} gives a bridge-braided band presentation for the ribbon disk, with the caveat that the helper bands do not appear to be leveled as shown.  However, we claim that such a leveling is possible:  First, note that the left helper band can be isotoped so that its core lies in the bridge sphere without self-intersection.  Depending on how one chooses to do this, the core may intersect the core of the dark blue band (the original fission band for the ribbon disk).  However, since this latter band is dualized by a bridge disk for $\Tt_3$, there is an isotopy pushing the helper band off the fission band.  At this point, the left helper band and the fission band are both level, disjoint, and dualized by bridge disks.  Now, we note that the right helper band can be isotoped so that its core lies in the bridge sphere without self-intersection. To do this, however, we must slide the right helper band over the fission band so that their endpoints (attaching regions) are disjoint.  Again, the core may intersect the cores of the other two bands, but since the other two bands are each dualized by bridge disks, we may push the core of the right helper band off the cores of the other two bands.  The end result is that all three bands lies in the required position.
		
	Figure~\ref{fig:steve5} shows a tri-plane diagram for the bridge trisection corresponding to the bridge-braided band position from Figure~\ref{fig:steve4}.  It is worth observing that it was not necessary to carry out the leveling of the bands described in the previous paragraph; it suffices simply to know that it can be done.  Had we carried out the leveling described above, the result would have been a tri-plane diagram that could be related to the one given by a sequence of interior Reidemeister moves. Figure~\ref{fig:steve6} shows a tri-plane diagram that is related to the tri-plane diagram of Figure~\ref{fig:steve5} by tri-plane moves.  See Section~\ref{sec:tri-plane} for details regarding these moves.
\end{example}

\begin{remark}
\label{rmk:helpers2}
	There is a subtle aspect to Figure~\ref{fig:steve3} that is worth pointing out.  Suppose instead that the left helper band were chosen to cross over the braid in the two places where it crosses under.  It turns out that this new choice is still a helper band but would fail to result in a bridge-braided band position.
	To be precise, let $\Tt$ denote the braid in Figure~\ref{fig:steve3}, which we think of as a 4--stranded tangle, and let $\frak b$ denote this new choice of bands -- i.e., three bands that are identical to the ones shown in Figure~\ref{fig:steve3}, except that the left helper band passes above $\Tt$ in two places, rather than under.  The resolution $\Tt_\frak b$ is a new 4--stranded tangle.  Regardless of any concerns about bridge position that could be alleviated by perturbing $\Tt$, it is necessary that $\Tt_\frak b$ be a 4--braid.  However, this is not the case in this example.  In fact, $\Tt_\frak b$ is not even a trivial tangle!  The reader can check that $\Tt_\frak b$ is the split union of two trivial arcs, together a 2--stranded tangle $\Tt'$ that has a closure to the square knot.
	
	So, the ``helper bands'' of the $\frak b$ presently being considered are not actually helper bands in the sense that they don't transform $U''$ into an unlink $U'$ in standard position, as required.  Of course, by the proof of Proposition~\ref{prop:to_BBB_realizing}, we know that we can augment $\frak b$ by adding two more helper bands, resulting in a total of five bands, so that the result can be bridge-braided.  On the other hands, Figure~\ref{fig:steve} shows that it is possible to achieve a bridge-braided band position with fewer than four helper bands; comparison of Figures~\ref{fig:f8} and~\ref{fig:fig8} gives another example of this. Precisely when this is possible and precisely how one chooses a more efficient set of helper bands of this sort is not clear; we pose the following question.
\end{remark}

\begin{question}
	Does there exist a surface $\Ff$ in $B^4$ such that every $(b,v)$--bridge braided band presentation of $\Ff$ requires $v$ helper bands?
\end{question}

Such a surface would have the property that every bridge trisection contains some flat patches.  For this reason, it cannot be ribbon, due to the results of Subsection~\ref{subsec:Ribbon} below. 

Having discussed in detail the above examples, we now return our attention to the goal of bridge trisecting surfaces.

\begin{proposition}\label{prop:band_to_bridge}
	Let $\Ff\subset B^4$ be the realizing surface for a $(b,\bold c;v)$--bridge-braided band presentation $(\wh\beta,U,\frak b)$.  Then, $\Ff$ admits a $(b,\bold c;v)$--bridge trisection $\Tt_{(\wh\beta,U,\frak b)}$.
\end{proposition}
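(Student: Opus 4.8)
The plan is to show that the realizing surface $\Ff=\Ff_{(\wh\beta,U,\frak b)}$ is already in bridge trisected position with respect to a suitably chosen copy of $\T_0$. Recall from Subsection~\ref{subsec:band_pres} that $\Ff$ is built relative to the standard Morse function $h\colon B^4\to[0,4]$: it is a gradient product on $(3,4]$, $(2,3)$, and $(1,2)$; one has $\Ff_{\{3\}}=\Ll\{3\}\sqcup D$ with $D$ a spanning system for the cap unlink $U$; $\Ff_{\{2\}}$ is the $2$--complex $(\Ll\sqcup U)\cup\frak b$; $\Ff_{\{1\}}=D'$ is a spanning system for $U'=(\Ll\sqcup U)_\frak b$; and $\Ff$ is empty below level $1$. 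First I would choose $\T_0$ so that each level sphere $h^{-1}(t)$ with $t\in(0,4]$ carries the standard Heegaard--page structure $Y_{0,0,1}$ (with pieces $H_1$, $\overline H_3$, $Y_3$ and core $\Sigma$) of Subsection~\ref{subsec:braiding_presentations}, and so that $Z_i\cap h^{-1}(t)$ is the corresponding spread for all $t$. With this model the decomposition $\Ff=\Dd_1\cup\Dd_2\cup\Dd_3$, $\Dd_i=Z_i\cap\Ff$, is read off directly from the movie: $\Dd_3$ collects the cap disks $D$ together with the cylinder over the $v$--braid $\beta_3=\Ll\cap Y_3$; $\Dd_1$ collects the cup disks $D'$; and $\Dd_2$ collects the band layer $\frak b$ together with the product regions adjacent to it. Conditions (1)--(6) of Definition~\ref{def:bridge-braided} are exactly what make this overlay sensible, and they leave the boundary braiding $\wh\beta=\partial\Ff$ undisturbed.

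The bulk of the argument is the verification of conditions (1) and (2) of Definition~\ref{def:Bridge2} for each $i\in\Z_3$. For the disk-tangles, the outer sectors $i=1,3$ are handled by Lemma~\ref{lem:one_min}. In $Z_1$: by property~(6), $U'$ is a $(c_1+v)$--component unlink in standard $(b,v)$--bridge position, with $c_1$ flat and $v$ vertical components, so its spanning disks $D'$ carry exactly $c_1$ index-zero critical points (at level $1$), their complement is a product over a $v$--thread in the spread $Z_1\cap S^3$, and Lemma~\ref{lem:one_min} identifies $\Dd_1$ as a trivial $(c_1,v)$--disk-tangle. In $Z_3$: by properties~(3) and~(1), $U$ sits in $b''$--bridge position with respect to $\Sigma$ and $\beta_3$ is a $v$--braid, so after reflecting $h$ the collection $\Dd_3$ again meets the hypotheses of Lemma~\ref{lem:one_min}, yielding a trivial $(c_3,v)$--disk-tangle with $c_3=|U|$.

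The middle sector is the crux and I expect it to be the main obstacle. The piece $\Dd_2$ is the ``band cobordism'' swept from $U'$ through the $2$--complex $(\Ll\sqcup U)\cup\frak b$ to $\Ll\sqcup U$, and the claim that $\Dd_2$ is a trivial $(c_2,v)$--disk-tangle with $c_2=b-|\frak b|$ is where properties~(4) and~(5) are essential: the cores $y_*$ of the bands are embedded in $\Sigma$, and there is a bridge system $\Delta$ for the trivial tangle $\Tt_3=H_3\cap(\Ll\cup U)$ whose shadows $\Delta_*$ satisfy that $\Delta_*\cup y_*$ is an embedded collection of arcs. Leveling the bands on $\Sigma$ and pushing them across their dualizing bridge disks exhibits each component of $\Dd_2$ as boundary-parallel; the component count and the split into $c_2=b-|\frak b|$ flat patches plus $v$ vertical patches then follows from the handle and helper-band bookkeeping recorded in Proposition~\ref{prop:to_BBB_realizing}, the helper bands being precisely what guarantee that $\Dd_2$ (equivalently $\Dd_1$) has enough vertical patches (cf. Remark~\ref{rmk:helpers}). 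This step is a direct adaptation of the construction of bridge trisections from banded unlink presentations in~\cite{MeiZup_17_Bridge-trisections-of-knotted}; once $\Dd_2$ is understood, Proposition~\ref{prop:triv_disks} records that such trivial disk-tangles are unique rel $\partial$.

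It remains to treat the arms and the boundary. Each arm $H_i=Z_{i-1}\cap Z_i$ is a three-ball, and unwinding the positions arranged in the first paragraph, $\Tt_i=H_i\cap\Ff$ is the union of the perturbed-braid part furnished by property~(2) with the bridge tangle for $U$ furnished by property~(3), together with the $v$ vertical strands; hence $\Tt_i$ is a trivial $(b,v)$--tangle with $b=b'+b''$. Since $\partial\Ff=\Ll$ is by construction braided about $B=\partial\Sigma$ with index $v$ and agrees, as a braided link, with $\wh\beta=\beta_3\cup\Tt_1\cup\overline\Tt_3$, the induced open-book braiding of $\partial\Ff$ is the prescribed one. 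Having verified conditions (1) and (2) of Definition~\ref{def:Bridge2} for all $i\in\Z_3$, we conclude that $\Ff$ is in $(b,\bold c;v)$--bridge trisected position with respect to $\T_0$; by Corollary~\ref{coro:spine} the resulting bridge trisection, which we denote $\Tt_{(\wh\beta,U,\frak b)}$, is determined by the spine $(H_1,\Tt_1)\cup(H_2,\Tt_2)\cup(H_3,\Tt_3)$ read off above. This completes the plan.
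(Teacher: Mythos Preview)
Your overall strategy---build the trisection so that the three sectors see, respectively, the cups, the band layer, and the caps---is exactly the paper's, and you correctly isolate properties~(4) and~(5) and Lemma~3.1 of~\cite{MeiZup_17_Bridge-trisections-of-knotted} as the engine for the middle sector. The gap is in the very first move: the model of $\T_0$ you describe is not the one that produces the pieces you then analyze.

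You ask for a copy of $\T_0$ in which ``$Z_i\cap h^{-1}(t)$ is the corresponding spread for all $t$.'' That is the \emph{radial} trisection of $B^4$: each $Z_i$ is the cone on one of the three three-balls $H_1$, $\overline H_3$, $Y_3$ of the Heegaard--page structure. In that model, compute $\Dd_3 = \Ff\cap Z_3$. Since $U$, the bands $\frak b$, and the cap disks $D$ all live in $H_1\cup_\Sigma\overline H_3$ (this is forced by properties~(3) and~(4)), none of them lie in the cone over $Y_3$. Hence $\Dd_3$ sees only the trace of $\beta_3 = \Ll\cap Y_3$ and is a $(0,v)$--disk-tangle, not a $(c_3,v)$--disk-tangle. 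By the same token, the cups and caps both land in the cones over $H_1$ and $\overline H_3$, so the radial model cannot separate minima from maxima into different sectors. Your description of the $\Dd_i$ is therefore incompatible with the trisection you set up, and Lemma~\ref{lem:one_min} is being invoked with a Morse function ($h$ restricted to a radial sector) that is not the standard one on $Z_{0,0;0,1}$.

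What the paper does instead is \emph{tilt} $\T_0$ relative to $h$: the sectors are explicit unions of gradient products of the pieces of the level-$2$ Heegaard--page structure over carefully chosen subintervals of $[0,4]$ (see the seven displayed formulas for $\Sigma'$, $H_i'$, $Z_i'$ and Figure~\ref{fig:Morse_to_tri}). For instance, $Z_3'$ is $(H_1\cup_\Sigma\overline H_3)[2,4]$, which is exactly the region above level~$2$ in which the cap disks sit; $Z_1'$ is essentially everything below level $2-\epsilon$; and $Z_2'$ is a thin slab containing the band resolution. Once you write these pieces down, your verifications go through; the point is that without this explicit tilted decomposition the claim ``$\Dd_3$ collects the cap disks'' has no model in which it is true.
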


\begin{proof}
	As in Proposition~\ref{prop:band_pres}, we imagine that the 2--complex $\Ll\cup U\cup \frak b$ corresponding to the  bridge-braided band presentation $(\wh\beta, \frak b, U)$ is lying in the level set $B^4_{\{2\}}$, which inherits the Heegaard double structure $(H_1,H_3,Y_3)$. Assume that $\Ff$ is the corresponding realizing surface.  We modify this 2--complex so that the bands $\frak b$ lie in the interior of $H_3$, rather than centered on $\Sigma$.
	
	Let $\epsilon>0$, and assume that the resolution of the bands $\frak b$ for $\Ll\cup U$ occurs in $H_3(2-\epsilon,2)$.  So, $\Ff\{2\} = \Ll\cup U$, while $\Ff\{2-\epsilon\} = U'$.  Let $(P_3^+,\bold x_3^+)$ denote a slight push-off of $(P_3,\bold x_3)$ into $(H_3,\Tt_3)$.  Let $(H^-_{13},\beta^-_{13})$ denote the corresponding contraction of $(Y_3,\beta_3)$, and let $(H^+_3,\Tt^+_3)$ denote the corresponding expansion of $(H_3,\Tt_3)$.  In other words, we remove a (lensed) collar of $P_3$ from $Y_3$ and add it to $H_3$.
	
	We will now describe the pieces of a bridge trisection for $\Ff$. Figure~\ref{fig:Morse_to_tri} serves as a guide to the understanding these pieces. Define: 
	\begin{enumerate}
		\item $(\Sigma',\bold x') = (\Sigma,\bold x)\{2\}\cup B[2,4]$;
		\item $(H_1',\Tt_1') = (H_1,\Tt_1)\{2\} \cup (P_1,\bold x_1)[2,4]$;
		\item $\overline{(H_2',\Tt_2')} = (\Sigma,\bold x)[2-\epsilon,2] \cup (H_3^+,\Tt_3^+)\{2-\epsilon\} \cup (P_3^+,\bold x_3^+)[2-\epsilon,4]$;
		\item $(H_3',\Tt_3') =  (H_3,\Tt_3)\{2\} \cup (P_3,\bold x_3)[2,4] $;
		\item $(Z_1',\Dd_1') = (B^4,\Ff)_{[0,2-\epsilon]}\cup((H_1,\Tt_1)[2-\epsilon,2])\cup(Y_3^-,\beta_3^-)[2-\epsilon,2]$;
		\item $(Z_2',\Dd_2') = ((B^4,\Ff)_{[2-\epsilon,2]}\cap H_3^+[2-\epsilon,2]) \cup ((Y_3\setminus\Int(Y_3^-),\beta_3\setminus\Int(\beta_3^-))[2,4])$; and
		\item $(Z_3',\Dd_3') = (B^4,\Ff)_{[2,4]}\cap(H_1\cup_\Sigma\overline H_3)[2,4]$.
	\end{enumerate}
	
\begin{figure}[h!]
	\centering
	\includegraphics[width=.5\textwidth]{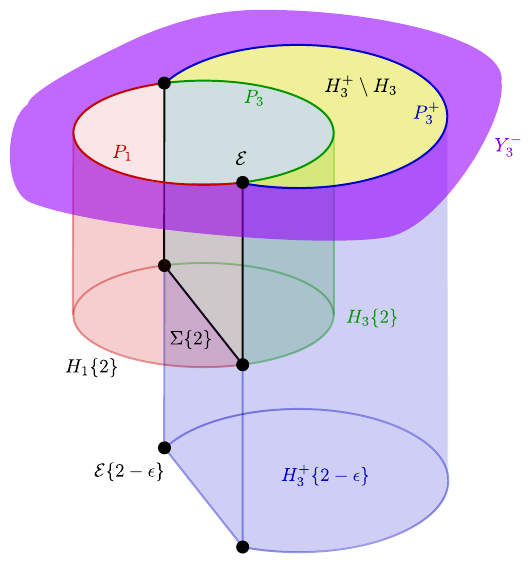}
	\caption{A schematic illustrating how to obtain a bridge trisection from a bridge-braided band presentation; codimension two objects are not shown.}
	\label{fig:Morse_to_tri}
\end{figure}

It is straight-forward to verify that the pairs (1)-(7) have the right topology, except in the case of (3) and (6), where slightly more care is needed.  For (3), the claim is that $(H_2',\Tt_2') \cong (H_2, (\Tt_2)_\frak b)$ is a trivial $(b,v)$--tangle.  For (6), the claim is that the trace $(Z_2',\Dd_2')$ of this band attachment is a trivial $(c_2,v)$--disk-tangle.  Both of these claims follow from the fact that each band of $\frak b$ is dualized by a bridge disk for $\Tt_3$; this is essentially Lemma~3.1 of~\cite{MeiZup_17_Bridge-trisections-of-knotted}.  Finally, it only remains to verify that the pieces (1)-(7) intersect in the desired way.  This is straight-forward to check, as well.
\end{proof}

\begin{remark}
\label{rmk:or1}
	Care has been taken to track the orientations throughout this section so that the orientations of the pieces of the bridge trisection produced in Proposition~\ref{prop:band_to_bridge} agree with the orientation conventions given in Subsection~\ref{subsec:Formal}.  For example, the union $H_1\cup_\Sigma\overline H_3$ appearing in the bridge-braided band presentation set-up of Definition~\ref{def:bridge-braided} gets identified with a portion of $B^4\{2\}$ in the proof of Proposition~\ref{prop:band_to_bridge}, where it is oriented as the boundary of $B^4[0,2]$.  This agrees with the convention that $\partial Z_1 = H_3\cup_\Sigma H_1\cup Y_3$, so $\partial(Z_2\cup Z_3) = Y_1\cup H_1\cup_\Sigma \overline H_3\cup Y_2$.  See Figure~\ref{fig:two-thirds}.
\end{remark}

\begin{proposition}\label{prop:bridge_to_band}
	If $\Ff$ admits a $(b,\bold c;v)$--bridge trisection, then $\Ff = \Ff_{(\wh\beta, U,\frak b)}$ for some $(b,\bold c;v)$--bridge-braided band presentation $(\wh\beta,U,\frak b)$.
\end{proposition}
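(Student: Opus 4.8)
\emph{Plan.} The idea is to run the construction of Proposition~\ref{prop:band_to_bridge} backwards: from the given bridge trisection, build an adapted Morse function on $B^4$ whose gradient picture is the one in Figure~\ref{fig:Morse_to_tri}, read off a band presentation from the three disk-tangles and their boundaries, and check the bridge-braided conditions. (One might hope instead to simply invoke Proposition~\ref{prop:to_BBB_realizing} on $\Ff$, but the band presentation it produces generally carries superfluous helper bands and so records the wrong, larger complexity parameters; reversing the construction directly is what keeps the parameters equal to the given $(b,\bold c;v)$.)

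Since the bridge trisection is taken relative to the fixed standard trisection $\T_0$, and $\T_0$ itself arises from a standard Morse function on $B^4$ exactly in the manner of Figure~\ref{fig:Morse_to_tri}, I first fix such a Morse function $h\colon B^4\to[0,4]$ together with a small $\epsilon>0$: so that $Z_3$ is the gradient product over the Heegaard-splitting portion of $B^4_{\{4\}}=S^3$ lying above height $2$, so that $Z_1$ is the part of $B^4$ below height $2-\epsilon$ enlarged by thin cylinders over parts of the arm $H_1$ and the spread $Y_3$, and so that $Z_2$ is the intervening ``band-trace'' region straddling $B^4_{\{2\}}$; the core $\Sigma$ restricts to a disk page in $B^4_{\{2\}}$ as in Subsection~\ref{subsec:Special}.

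Next I put $\Ff$ into realizing-surface position for $h$. Each $\Dd_i = Z_i\cap\Ff$ is a trivial $(c_i,v)$--disk-tangle, so by (the converse to) Lemma~\ref{lem:one_min} I may isotope $\Ff$ rel-$\partial$, with the $i$-th stage supported in $Z_i$, so that inside each sector $\Dd_i$ has exactly $c_i$ critical points, all of minimal index for that sector's Morse function, and its $v$ vertical patches are honestly vertical. Reconciling these three sector-wise pictures with the global $h$ — this is the ``combing-out'' argument of Proposition~\ref{prop:band_pres} run in reverse — I arrange that the flat patches of $\Dd_1$ become a system $D'$ of cup disks in $B^4_{\{1\}}$, the flat patches of $\Dd_3$ become a system $D$ of cap disks in $B^4_{\{3\}}$, the sector $\Dd_2$ contributes near $B^4_{\{2\}}$ the trace of a band attachment, and the $v$ vertical patches of the three $\Dd_i$ together with the threads $\beta_i=\Ll\cap Z_i$ assemble into the gradient product of the given index-$v$ braiding $\wh\beta=\beta_1\cup\beta_2\cup\beta_3$. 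Letting $U=\partial D$ and letting $\frak b$ be the collection of bands carved out by $\Dd_2$ at height $2$, a comparison with the seven defining conditions of a realizing surface yields $\Ff=\Ff_{(\wh\beta,U,\frak b)}$. It then remains to verify that $(\wh\beta,U,\frak b)$ is $(b,\bold c;v)$--bridge-braided: conditions (1)--(2) on the braided/perturbed structure of $\wh\beta$ are immediate from the vertical patches forming a gradient product; condition (6), that $U'=(\Ll\cup U)_{\frak b}$ is a $(c_1+v)$--component unlink in standard $(b,v)$--bridge position, holds because $\Dd_1$ is a trivial $(c_1,v)$--disk-tangle and hence, once positioned via Lemma~\ref{lem:one_min}, has boundary of exactly this form (cf.\ the role of the helper bands in Remark~\ref{rmk:helpers}); and conditions (3)--(5) — that $U$ lies in bridge position with respect to $\Sigma$, that the cores of $\frak b$ are embedded arcs in $\Sigma$, and that there is a bridge disk system for $\Tt_3=H_3\cap(\Ll\cup U)$ dualizing the bands — follow, after a final perturbation of $\Sigma$ relative to $\wh\beta\sqcup U$, from the fact that $\Dd_3$ is a \emph{trivial} disk-tangle meeting $H_3$ in the trivial tangle $\Tt_3$ (so the bridge disk systems of Subsection~\ref{subsec:Tangles} exist) and from the band-positioning moves already used in the proof of Proposition~\ref{prop:to_BBB_realizing}. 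The parameter count $|U|=c_3$, $c_2=b-|\frak b|$, $|U'|=c_1+v$ then follows from the tangle parameters.

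The principal difficulty I anticipate is the second step: matching the three sector-by-sector Morse pictures supplied by Lemma~\ref{lem:one_min} to a single Morse function on $B^4$ and verifying that the result is \emph{literally} a realizing surface. In particular one must see that the flat patches of $\Dd_2$ are genuine band traces — this uses that $\Tt_2\to(\Tt_2)_{\frak b}$ is a band resolution dualized by bridge disks for $\Tt_3$, i.e.\ Lemma~3.1 of~\cite{MeiZup_17_Bridge-trisections-of-knotted}, exactly as in the forward direction — and that the vertical patches and threads glue to an honest braid gradient-product rather than merely a locally product-like piece. Everything else is bookkeeping of parameters and orientations, parallel to Subsection~\ref{subsec:braiding_presentations}.
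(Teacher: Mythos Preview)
Your approach differs substantially from the paper's, and as written it has a genuine gap.

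The paper does not reverse-engineer a global Morse function. Instead it works directly with the spine of the bridge trisection: it reads $\Ll$ and $U$ off $\partial\Dd_3$ (the vertical and flat components of $\Tt_3\cup_{\bold x}\overline\Tt_1$), and then constructs the bands $\frak b$ combinatorially from \emph{shadows}. Since $(H_2,\Tt_2)\cup_{(\Sigma,\bold x)}\overline{(H_3,\Tt_3)}$ is a standard bridge splitting, one chooses shadows $\Tt_2^*,\Tt_3^*$ so that $\Tt_2^*\cup\Tt_3^*$ consists of $c_2$ embedded polygonal circles together with some embedded polygonal arcs; discarding one $\Tt_2^*$-arc from each circle leaves $\omega^*\subset\Tt_2^*$ with $|\omega^*|=b-c_2$, and one sets $\frak b=\omega^*\times I$ with the surface framing. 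Conditions~(4) and~(5) of Definition~\ref{def:bridge-braided} are then \emph{automatic}: the cores $y_*=\omega^*$ lie in $\Sigma$ by construction, and each arc of $\omega^*$ is adjacent in its polygonal chain to a $\Tt_3^*$-arc, whose bridge semi-disk dualizes the corresponding band. Condition~(6) holds because $(\Ll\cup U)_{\frak b}=\Tt_1\cup\overline\Tt_2\cup\beta_1=\partial\Dd_1$. No perturbation of $\Sigma$ is needed, so the parameters remain exactly $(b,\bold c;v)$.

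Your proposal, by contrast, asks for ``the collection of bands carved out by $\Dd_2$ at height~$2$'' and then says conditions~(3)--(5) follow ``after a final perturbation of $\Sigma$'' via the band-positioning moves of Proposition~\ref{prop:to_BBB_realizing}. There are two problems. First, you have not actually specified those bands: knowing that $\Dd_2$ is a trivial $(c_2,v)$--disk-tangle does not by itself exhibit it as a band-trace with $\Sigma$-level, dualized cores; your appeal to Lemma~3.1 of~\cite{MeiZup_17_Bridge-trisections-of-knotted} runs in the wrong direction (that lemma shows dualized bands \emph{yield} a trivial tangle, not that a given trivial disk-tangle \emph{arises from} dualized bands). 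Second, and fatally for the parameter count, the perturbations of $\Sigma$ used in Proposition~\ref{prop:to_BBB_realizing} strictly increase $b$; invoking them here would destroy the exact $(b,\bold c;v)$ match that the proposition asserts. The shadow construction is precisely what allows the paper to produce bands already satisfying~(4)--(5) without perturbing.
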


\begin{proof}
	Suppose $\Ff$ is in bridge position with respect to $\T_0$.  Consider link $L_3 = \beta_3\cup\Tt_3\cup\overline\Tt_1 = \partial \Dd_3$. Let $\overline L$ denote the vertical components of $L_3\setminus\Int(\beta_3) = \Tt_3\cup_\bold x\overline\Tt_1$, and let $U$ denote the flat components.  Then we have $\partial\Dd_3 = \overline L\cup\beta_3\cup U$; in particular, $\overline L$ is parallel to $\beta_3$ (as oriented tangles) through the vertical disks of $\Dd_3$.  Let $\Ll$ be the closed one-manifold given by
	$$\beta_1\cup\beta_2\cup \overline L.$$
	By the above reasoning, $\Ll$ is boundary parallel to the boundary braid $\beta_1\cup\beta_2\cup\beta_3 = \wh\beta = \partial\Ff$ via the vertical disks of $\Dd_3$.
	
\begin{figure}[h!]
	\centering
	\includegraphics[width=.3\textwidth]{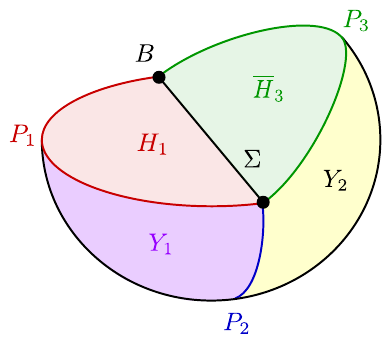}
	\caption{Two-thirds of a trisection, with induced orientations on the boundary.}
	\label{fig:two-thirds}
\end{figure}
	
	Let $Y = Y_1\cup H_1 \cup \overline H_3\cup Y_2$ and note that $Y$ has the structure of a standard Heegaard-double decomposition $(H_1, H_3, Y_1\cup Y_2)$ on $S^3 = \partial(Z_1\cup Z_2)$ and is oriented as the boundary of $Z_1\cup Z_2$, which induces the opposite orientations on the 3--balls $H_1$ and $H_3$ as does $Z_3$.  See Figure~\ref{fig:two-thirds}. It will be with respect to this structure that we produce a bridge-braided band presentation for $\Ff$.    Note that $\Ll\cap (Y_1\cup Y_2)$ is already a $v$--braid, giving condition (1) of the definition of a bridge-braided band presentation.  Similarly, conditions (2) and (3) have been met given the position of $\overline L\cup U$ with respect to the Heegaard splitting $H_1\cup_\Sigma \overline H_3$.
	
	Next, we must produce the bands $\frak b$.  This is done in the same way as in Lemma~3.3 of~\cite{MeiZup_17_Bridge-trisections-of-knotted}.  We consider the bridge splitting $(H_2,\Tt_2)\cup_{(\Sigma,\bold x)}\overline{(H_3,\Tt_3)}$, which is standard -- i.e., the union of a perturbed braid and a bridge spilt unlink. Choose shadows $\Tt_2^*$ and $\Tt_3^*$ on $\Sigma$ for these tangles.  Note that we choose shadows only for the flat strands in each tangle, not for the vertical strands.  Because the splitting is standard, we may assume that $\Tt_2^*\cup\Tt_3^*$ is a disjoint union of $c_2$ simple closed curves $C_1,\ldots,C_{c_2}$, together with some embedded arcs, in the interior of $\Sigma$.  For each closed component $C_i$, choose a shadow $\bar \tau_i^*\subset(\Tt_2^*\cap C_i)$.  Let 
	$$\omega^* = \Tt_2^*\setminus\left(\bigcup_{i=1}^{c_2}\bar\tau_i^*\right).$$
	In other words, $\omega^*$ consists of the shadow arcs of $\Tt_2^*$, less one arc for each closed component of $\Tt_2^*\cup\Tt_3^*$. Note that $|\omega^*| = b-c_2$.
	
	The arcs of $\omega^*$ will serve as the cores of the bands $\frak b$ as follows. Let $\frak b = \omega^*\times I$, where the interval is in the vertical direction with respect to the Heegaard splitting $H_1\cup_{\overline\Sigma} \overline H_3$.  In other words, $\frak b$ is a collection of rectangles with vertical edges lying on $\overline L\cup U$ and a horizontal edge in each of $H_1$ and $\overline H_3$ that is parallel through $\frak b$ to $\omega^*$.  We see that condition (4) is satisfied.
	
	Note that the arcs $\omega^*$ came from chains of arcs in $\Tt_2^*\cup\Tt_3^*$, so each one is adjacent to a shadow arc in $\Tt_3^*$.  This is obvious in the case of the closed components, since each such component must be an even length chain of shadows alternating between $\Tt_2^*$ and $\Tt_3^*$.  Similarly, each non-closed component consists of alternating shadows. This follows from the fact that these arcs of shadows correspond to vertical components of $\overline L$, each of which must have the same number of bridges on each side of $\Sigma$. These adjacent shadow arcs in $\Tt_3^*$ imply that $\frak b$ is dual to a collection of bridge disks for $\Tt_3$, as required by condition (5).
	
	Finally, let $U' = \Ll_\frak b$, which should be thought of as lying in $H_1\cup \overline H_2\cup\beta_1$.  In fact, $U' = \Tt_1\cup\overline \Tt_2\cup\beta_1$, so it is the standard link $L_{c_1,w}$ in the standard Heegaard-double structure on $\partial Z_1$.  Thus, (6) is satisfied, and the proof is complete.
\end{proof}

The following example illustrates the proof of Proposition~\ref{prop:bridge_to_band}.

\begin{example}
\label{ex:square}
	Figure~\ref{fig:square1} shows a tri-plane diagram for a surface that we will presently determine to be the standard ribbon disk for the square knot, as described by the band presentation in Figure~\ref{fig:square7}.  The first step to identifying the surface is to identify the boundary braid.  In the proof of Proposition~\ref{prop:bridge_to_band}, this was done by considering the union $\beta_1\cup\beta_2\cup\overline L$.  Diagrammatically, this union can be exhibited by the following three part process: (1) Start with the cyclic union
	$$\Tt_1\cup\overline\Tt_3\cup\Tt_3\cup\overline\Tt_2\cup\Tt_2\cup\overline\Tt_1$$
	of the seams of the bridge trisection; see Figure~\ref{fig:square3}. (2) Discard any components that are not braided; there are no such components in the present example, though there would be if this process were repeated with the tri-plane diagram in Figure~\ref{fig:f85} -- a worthwhile exercise.  (3) Straighten out (deperturb) near the intersections $\Tt_3\cap\overline\Tt_2$ and $\Tt_2\cap\overline\Tt_1$; see Figure~\ref{fig:square4}.  If we continued straightening out near $\Tt_1\cup\overline\Tt_3$, we would obtain a braid presentation for the boundary link; see Subsection~\ref{subsec:tri-plane_braid} for a discussion relating to this point.  Presently, however, it suffices to consider the 1--manifold $\beta_1\cup\beta_2\cup\overline L$ shown in Figure~\ref{fig:square4}, which we know to be isotopic (via the deperturbing near $\Tt_1\cap\overline\Tt_3$) to the boundary braid.

\begin{figure}[h!]
\centering
\begin{tabular}{ccccc}
\multirow{3}{*}{
\begin{subfigure}{.21\textwidth}
\setcounter{subfigure}{2}
\centering
\includegraphics[width=.9\linewidth]{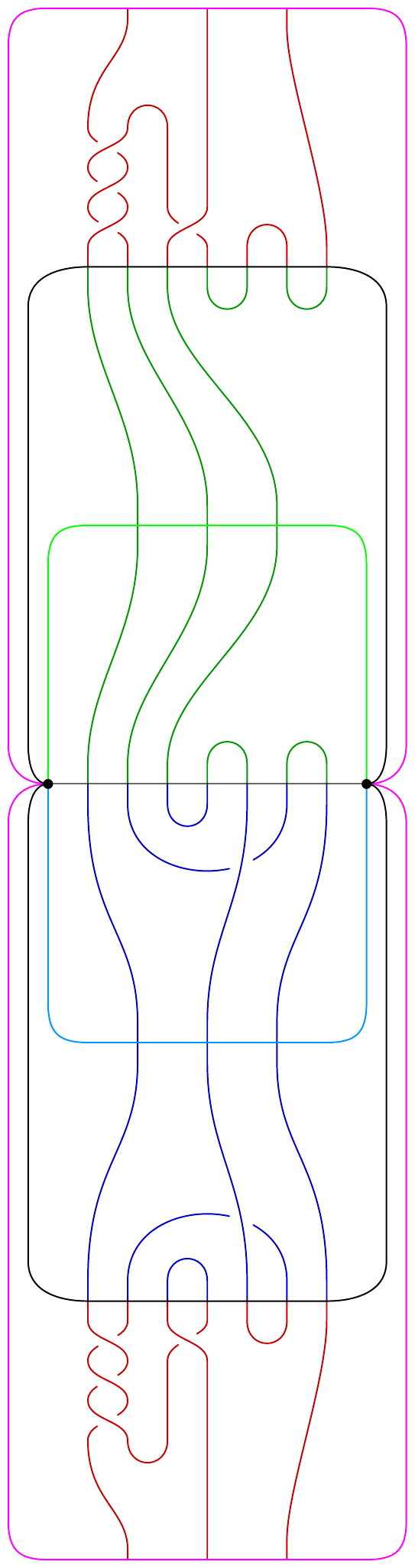}
\caption{}
\label{fig:square3}
\end{subfigure}%
}
&
\multirow{3}{*}{
\begin{subfigure}{.21\textwidth}
\centering
\includegraphics[width=.9\linewidth]{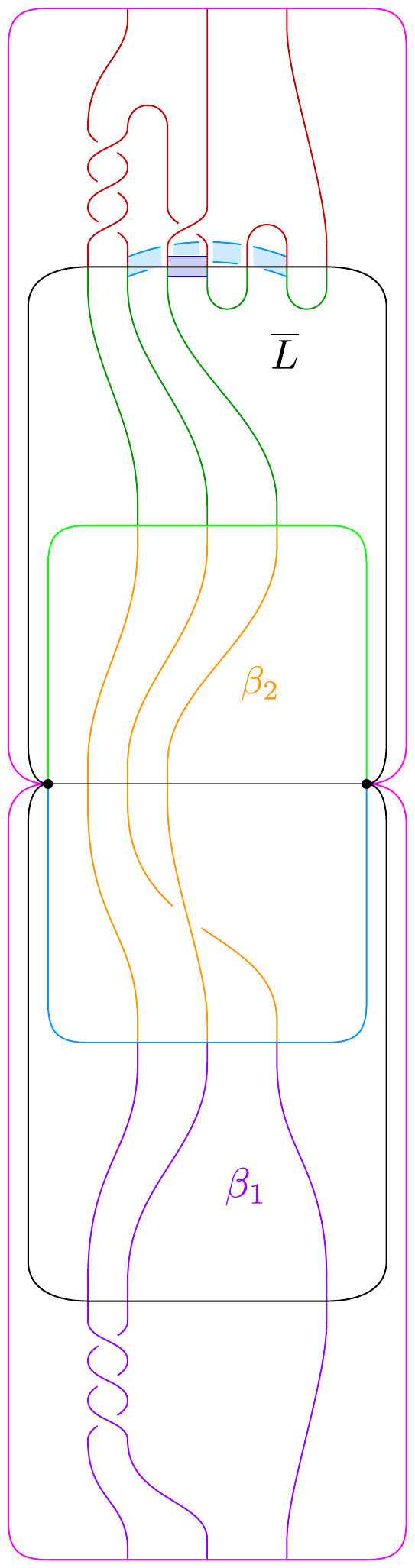}
\caption{}
\label{fig:square4}
\end{subfigure}%
}
&
\multicolumn{3}{c}{
\begin{subfigure}{.48\textwidth}
\setcounter{subfigure}{0}
\centering
\includegraphics[width=\linewidth]{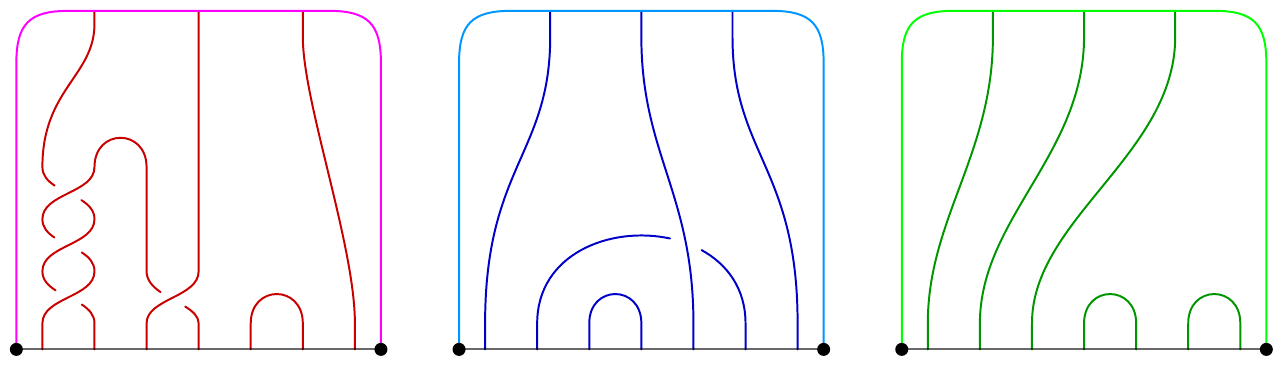}
\caption{}
\label{fig:square1}
\end{subfigure}%
}
\\[.75in]
& &
\multicolumn{3}{c}{
\begin{subfigure}{.48\textwidth}
\centering
\includegraphics[width=.5\linewidth]{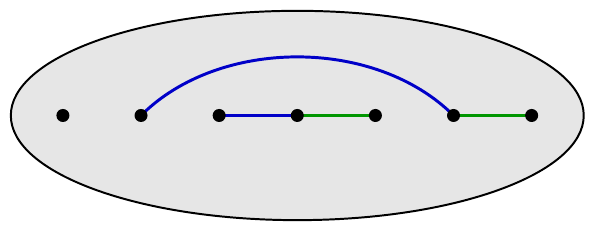}
\caption{}
\label{fig:square2}
\end{subfigure}%
}
\\[1.5in]
& &
\begin{subfigure}{.15\textwidth}
\setcounter{subfigure}{4}
\centering
\includegraphics[width=.9\linewidth]{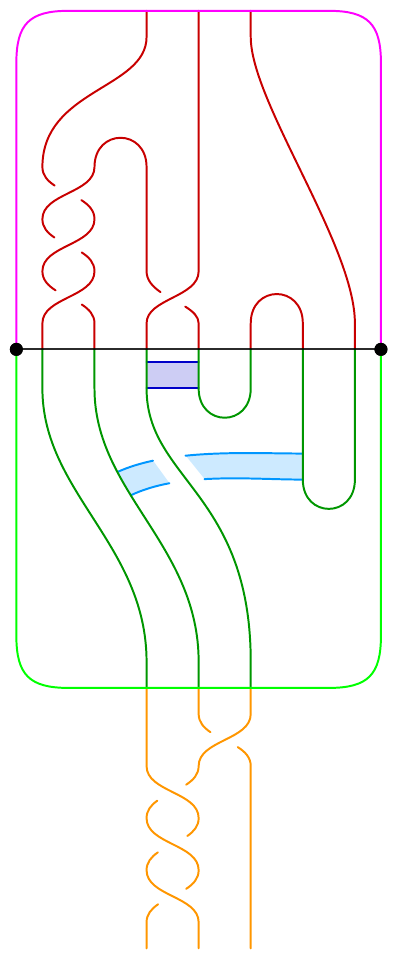}
\caption{}
\label{fig:square5}
\end{subfigure}%
&
\begin{subfigure}{.15\textwidth}
\centering
\includegraphics[width=.6\linewidth]{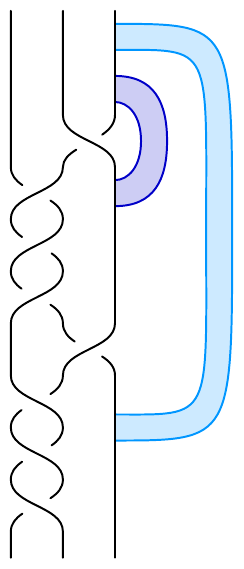}
\caption{}
\label{fig:square6}
\end{subfigure}%
&
\begin{subfigure}{.15\textwidth}
\centering
\includegraphics[width=.6\linewidth]{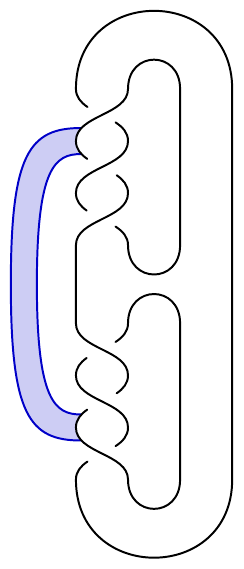}
\caption{}
\label{fig:square7}
\end{subfigure}%
\par\vspace{2mm}
\end{tabular}   
\caption{The process of converting the tri-plane diagram (A) into a bridge-braided band presentation (E) in order to identify the underlying surface, which in this case can be seen to be the standard ribbon disk for the square knot (G).}
\label{fig:square}
\end{figure}
	
	Having identified the boundary braid, we must identify a set of bands that will exhibit a bridge-braided band presentation corresponding to the original bridge trisection.  Following the proof of Proposition~\ref{prop:bridge_to_band}, these bands will come from a subset of the shadows $\Tt_2^*$.  To this end, shadows for the tangles $\Tt_2$ and $\Tt_3$ are shown in Figure~\ref{fig:square2}.  If there are closed components, one shadow of $\Tt_2^*$ is discarded from each such component.  In the present example, this step is not necessary; again, consider repeating this exercise with the tri-plane diagram from Figure~\ref{fig:f85}.  So, the set $\omega_*$ of the cores of the bands we are looking for, is precisely the blue shadows of Figure~\ref{fig:square2}.  In Figure~\ref{fig:square4} these shadows have been thickened vertically into bands that are framed by the bridge sphere $\Tt_1\cap\overline\Tt_3$.  In Figure~\ref{fig:square5}, this picture has been simplified, and the bands have been perturbed into $\overline\Tt_3$.  In Figure~\ref{fig:square6}, the bridge splitting structure has been forgotten, and the boundary braid is clearly visible.  At this point, we see that one band (light blue) is a helper band and can be discarded.  At last, Figure~\ref{fig:square7}, we recover an efficient band presentation for the surface originally described by the tri-plane diagram of Figure~\ref{fig:square1}.
\end{example}

\begin{example}
\label{ex:mono}
	\textbf{(2--stranded torus links)}  Figure~\ref{fig:mono1} shows a tri-plane diagram corresponding to a bridge trisection of the M\"obius band bounded in $S^3$ by the $(2,3)$--torus knot; see Figure~\ref{fig:mono4} for the band presentation.  However, this example could be generalized by replacing the four half-twists in the first diagram $\PP_1$ with $n$ half-twists for any $n\in\Z$, in which case the surface described would be the annulus (respectively, the M\"obius band) bounded by the $(2,n)$--torus link when $n$ is even (respectively, the $(2,n)$--torus knot when $n$ is odd).

\begin{figure}[h!]
\begin{subfigure}{.4\textwidth}
  \centering
  \includegraphics[width=.9\linewidth]{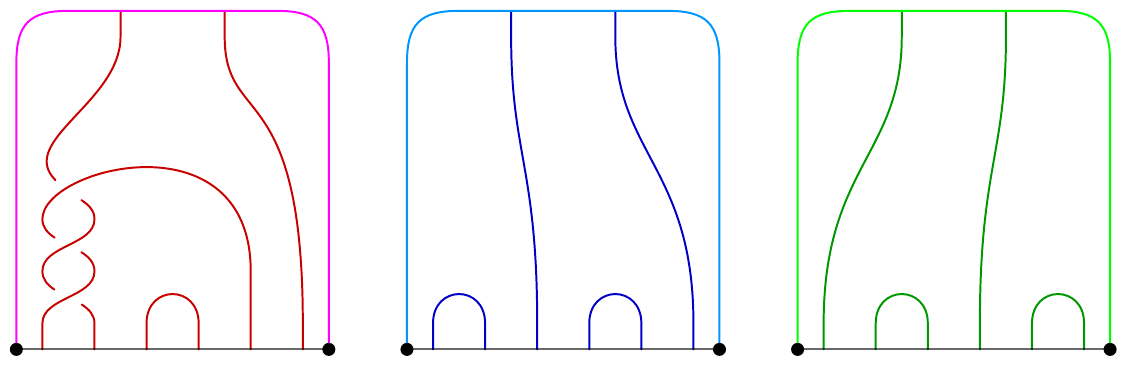}
  \caption{}
  \label{fig:mono1}
\end{subfigure}%
\begin{subfigure}{.2\textwidth}
  \centering
  \includegraphics[width=.8\linewidth]{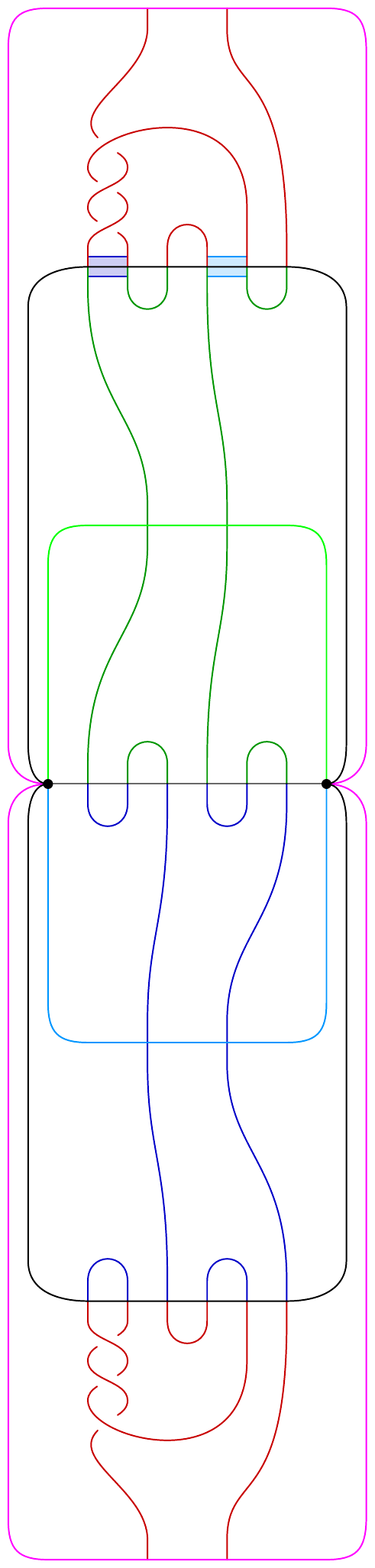}
  \caption{}
  \label{fig:mono2}
\end{subfigure}%
\begin{subfigure}{.2\textwidth}
  \centering
  \includegraphics[width=.8\linewidth]{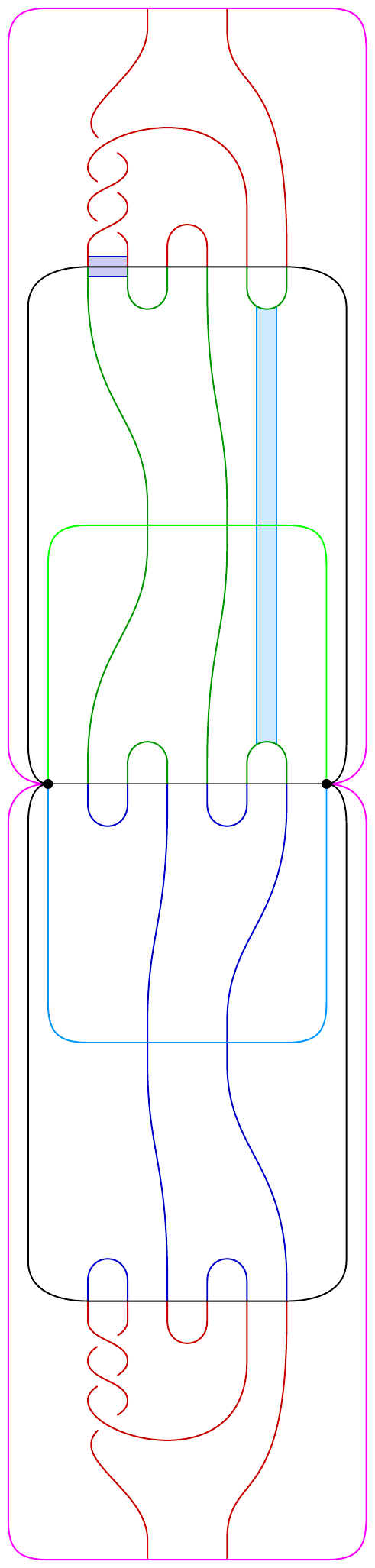}
  \caption{}
  \label{fig:mono3}
\end{subfigure}%
\begin{subfigure}{.2\textwidth}
  \centering
  \includegraphics[width=.8\linewidth]{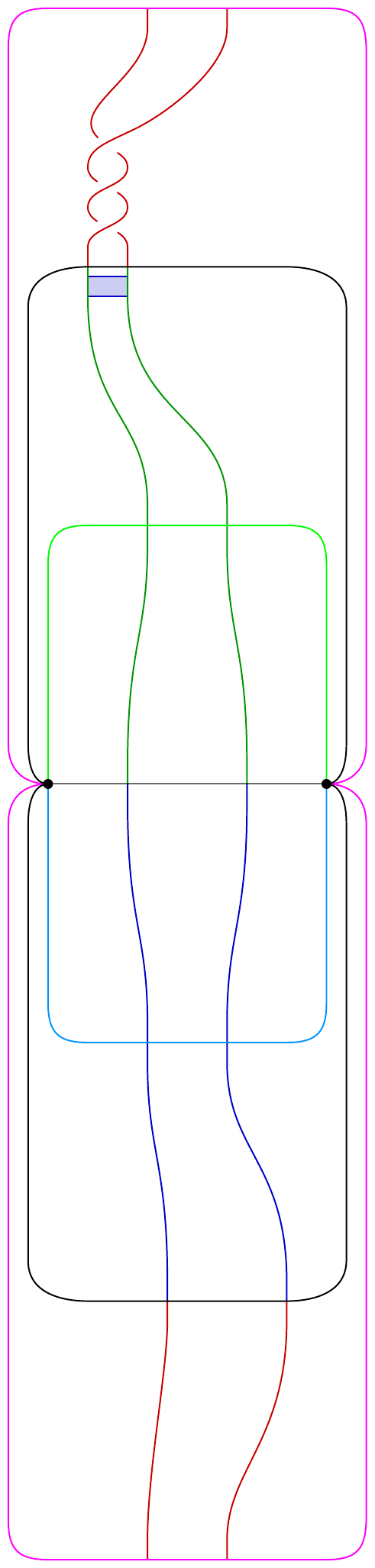}
  \caption{}
  \label{fig:mono4}
\end{subfigure}%
\caption{Recovering the boundary braid (D) from a tri-plane diagram (A), with bands tracked.  The surface described is the M\"obius band bounded by the right-handed trefoil in $S^3$.}
\label{fig:mono}
\end{figure}
	
	In any event, Figures~\ref{fig:mono2}--\ref{fig:mono4} give cross-sections of the bridge trisected surface with concentric shells of $B^4$, as described in Example~\ref{ex:monodromy} below.  In this example, we also track the information about bands encoded in the tri-plane diagram; cf. Figure~\ref{fig:square} and Example~\ref{ex:square}.  In slight contrast to the square knot examples, the shadows of $\Tt_2$ are quite simple, so the bands are easy to include.  In Figure~\ref{fig:mono3}, it becomes apparent that the right band (light blue) is a helper band and can be disregarded.
	
	A shadow diagrammatic analysis of this example is given in Example~\ref{ex:Mob_sh}.
\end{example}

\begin{theorem}
\label{thm:four-ball}
	Let $\T_0$ be the standard trisection of $B^4$, and let $\Ff\subset B^4$ be a neatly embedded surface with $\Ll = \partial \Ff$.  Fix an index $v$ braiding $\wh\beta$ of $\Ll$.  Suppose $\Ff$ has a handle decomposition with $c_1$ cups, $n$ bands, and $c_3$ caps.  Then, for some $b\in\N_0$, $\Ff$ can be isotoped to be in $(b,\bold c;v)$--bridge trisected position with respect to $\T_0$, such that $\partial\Ff = \wh\beta$, where $c_2=b-n$.
\end{theorem}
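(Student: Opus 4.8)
The plan is to deduce the theorem directly from the two structural results of this section. The first, Proposition~\ref{prop:to_BBB_realizing}, isotopes $\Ff$ rel-$\partial$ so that it is the realizing surface of a bridge-braided band presentation filling the prescribed braiding $\wh\beta$; the second, Proposition~\ref{prop:band_to_bridge}, promotes such a bridge-braided band presentation to a bridge trisection. Concatenating these two constructions, and tracking complexity parameters through them, gives the statement.

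In detail, I would first fix a handle decomposition of $\Ff$ with $c_1$ cups, $n$ bands, and $c_3$ caps, and invoke Proposition~\ref{prop:band_pres} to realize $\Ff$, after an isotopy rel-$\partial$, as $\Ff_{(\Ll,U,\frak b')}$ for a band presentation with $|U|=c_3$, $|\frak b'|=n$, and $|(\Ll\sqcup U)_{\frak b'}|=c_1$. I would then run the argument of Proposition~\ref{prop:to_BBB_realizing} on this band presentation: comb out an Alexander braiding so that $\partial\Ff=\wh\beta$; isotope $U$ into bridge position with respect to $\Sigma$; slide $\frak b'$ into $H_1\cup_\Sigma\overline H_3$ and perturb $\Sigma$ so that the band cores are embedded, leveled, and dualized by bridge disks for $\Tt_3$; and append helper bands $\frak b''$ so that $U'=(\Ll\cup U)_{\frak b'\cup\frak b''}$ is an unlink in standard $(b,v)$--bridge position. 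This produces, for some $b\in\N_0$, a $(b,\bold c;v)$--bridge-braided band presentation $(\wh\beta,U,\frak b)$ whose realizing surface is isotopic rel-$\partial$ to $\Ff$ and whose parameters $\bold c$ are determined by the handle data $(c_1,n,c_3)$ exactly as recorded in Proposition~\ref{prop:to_BBB_realizing} and in the statement of the theorem.

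Finally, I would apply Proposition~\ref{prop:band_to_bridge} to $(\wh\beta,U,\frak b)$, obtaining a $(b,\bold c;v)$--bridge trisection of its realizing surface, i.e.\ of $\Ff$ up to isotopy rel-$\partial$; since the boundary braiding of this bridge trisection is $\beta_3\cup\Tt_1\cup\overline\Tt_3=\wh\beta$ by construction, this is exactly the desired conclusion.

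Since Propositions~\ref{prop:to_BBB_realizing} and~\ref{prop:band_to_bridge} carry out all of the real work, the only remaining content is the parameter bookkeeping, and the genuine technical subtleties lie upstream: the construction and leveling of the helper bands $\frak b''$ (needed so that $\Dd_1$ has enough vertical patches), and the verification in Proposition~\ref{prop:band_to_bridge} that the sector $(Z_2',\Dd_2')$ and arm $(H_2',\Tt_2')$ have the correct topology, which relies on each band of $\frak b$ being dualized by a bridge disk for $\Tt_3$ (Lemma~3.1 of~\cite{MeiZup_17_Bridge-trisections-of-knotted}). The main obstacle, then, is essentially organizational: making sure the pieces produced by the two propositions match up and that the complexity count is consistent.
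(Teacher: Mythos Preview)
Your proposal is correct and follows essentially the same approach as the paper's own proof, which simply cites Proposition~\ref{prop:to_BBB_realizing} to produce a bridge-braided band presentation and then Proposition~\ref{prop:band_to_bridge} to convert it into a bridge trisection. Your version expands on the internal mechanics of those propositions, but the overall strategy and the dependence on the cited results are identical.
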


\begin{proof}
	By Proposition~\ref{prop:to_BBB_realizing}, $\Ff = \Ff_{(\wh\beta,U,\frak b)}$ for some bridge-braided band presentation $(\wh\beta,U,\frak b)$ of type $(b,\bold c;v)$.  By Proposition~\ref{prop:band_to_bridge}, $\Ff$ admits a bridge trisection of the same type.
\end{proof}

\subsection{Bridge-braided ribbon surfaces}
\label{subsec:Ribbon}
\ 

By construction, a $(b,\bold c;v)$--bridge-braided ribbon presentation $(\wh\beta,\frak b)$ will have $c_3=0$.  The next lemma shows that this fact can be used to systematically decrease the number $c_1$ of components of the unlink $U'$, at the expense of increasing the index $v$ of the braid $\wh\beta$.

\begin{lemma}\label{lem:decrease_c1}
	If $\Ff$ is the realizing surface for a $(b,(c_1,c_2,0);v)$--bridge-braided ribbon presentation $(\wh\beta,\frak b)$ with $c_1>0$, then $\Ff$ is the realizing surface for a $(b,(c_1-1,c_2,0);v+1)$--bridge-braided ribbon presentation $(\wh\beta^+,\frak b)$, where $\wh\beta^+$ is a Markov perturbation of $\wh\beta$.  The Markov perturbation can be assumed to be positive.
\end{lemma}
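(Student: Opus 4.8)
The plan is to obtain $\wh\beta^+$ from $\wh\beta$ by a single positive Markov stabilization (in the sense of Subsection~\ref{subsec:OBD}) performed in a small ball near the binding $B$, arranged so that it converts one flat component of the unlink $U'=\wh\beta_{\frak b}$ into a vertical component while leaving the bands $\frak b$ untouched. Since such a stabilization changes $\wh\beta$ only by an ambient isotopy of $S^3$ supported away from $\frak b$, the realizing surface $\Ff_{(\wh\beta^+,\frak b)}$ will be identified with $\Ff$, so the content of the lemma is entirely in checking that $(\wh\beta^+,\frak b)$ satisfies the conditions of Definition~\ref{def:bridge-braided} with the advertised parameters.

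Concretely, first I would use the hypothesis $c_1>0$ to fix a flat component $K_0$ of the $(c_1+v)$--component unlink $U'=\wh\beta_{\frak b}$, which is in standard $(b,v)$--bridge position. Being flat, $K_0$ is disjoint from $Y_3$ and is built from flat strands of the standard bridge splitting $H_1\cup_\Sigma\overline H_3$ together with dual--band arcs; in particular $K_0$ contains an arc $\alpha_0$ of $\wh\beta$, and $\alpha_0\subset\wh\beta\setminus\partial_1\frak b$ is automatically disjoint from the attaching regions of the bands. Let $J$ be a small, positively oriented meridian of $B$, placed so as to be disjoint from $\frak b$ and from $U'\setminus K_0$, and let $\frak s$ be a band from $\alpha_0$ to $J$ realizing the standard Markov stabilization picture, with core in a page of the open book and with $\frak s$ disjoint from $\frak b$ and from $U'\setminus K_0$; set $\wh\beta^+:=\wh\beta_{\frak s}$, a positive Markov stabilization of $\wh\beta$ of index $v+1$. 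Because $\frak s$ is disjoint from $\frak b$, the bands $\frak b$ are bands for $\wh\beta^+$ as well, and since resolutions along disjoint bands commute,
$$\wh\beta^+_{\frak b}=(\wh\beta_{\frak s})_{\frak b}=(\wh\beta_{\frak b})_{\frak s}=U'_{\frak s},$$
so $\wh\beta^+_{\frak b}$ is $U'$ with the flat component $K_0$ band--summed along $\frak s$ to the meridian $J$.

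The crux, and the step I expect to be the real work, is to show that the resulting component $K_0':=(K_0\sqcup J)_{\frak s}$ can be isotoped to a \emph{standard vertical} component, so that $\wh\beta^+_{\frak b}$ is a $(c_1+v)$--component unlink in standard $(b,v+1)$--bridge position with $c_1-1$ flat components and $v+1$ vertical components. For this I would use that $K_0$ is a standard flat component: it bounds a shadow disk in $H_1\cup_\Sigma\overline H_3$ that is disjoint from $U'\setminus K_0$ and compatible with the splitting, so that shrinking $K_0$ along this disk (dragging $\frak s$ along while keeping $J$ and $U'\setminus K_0$ fixed) carries $K_0'$ onto a standard $1$--braid parallel to $J$ without disturbing standardness of the splitting; the orientation count (the algebraic intersection of $K_0$ with a page is $0$, that of $J$ is $+1$) shows $K_0'$ meets each page algebraically once, so the stabilization is positive. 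Granting this, conditions (1)--(5) of Definition~\ref{def:bridge-braided} for $(\wh\beta^+,\frak b)$ are inherited from $(\wh\beta,\frak b)$, since every modification was carried out near $B$ and away from $Y_3$, the bands $\frak b$, and their dualizing bridge disks, while condition (6) is exactly what was just verified. The parameter count is then immediate: $|\frak b|$, $b$, and $|\wh\beta^+_{\frak b}|=c_1+v$ are unchanged, so $c_3=|U|=0$ and $c_2=b-|\frak b|$ are unchanged, while the new first entry of $\bold c$ is $|\wh\beta^+_{\frak b}|-(v+1)=c_1-1$, as claimed. Finally, since $\frak s$ together with most of the meridian disk of $J$ bounds a disk meeting $\wh\beta^+$ in $\partial_2\frak s$, there is an ambient isotopy of $S^3$ supported near $\alpha_0\cup\frak s\cup J$ — hence fixing $\frak b$ — taking $\wh\beta^+$ back to $\wh\beta$, so $\Ff_{(\wh\beta^+,\frak b)}$ is isotopic to $\Ff_{(\wh\beta,\frak b)}=\Ff$, completing the argument.
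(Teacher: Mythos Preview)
Your strategy matches the paper's: both perform a single Markov stabilization of $\wh\beta$ arranged so that one flat component $K_0$ of $U'$ becomes vertical. The paper implements this intrinsically by choosing a positive bridge point $x\in K_0\cap\Sigma$, perturbing the bridge splitting there to create small flat arcs $\tau_1,\tau_3$, and then dragging the new bridge point $x'$ along an arc $\omega\subset\Sigma$ through the binding $B$. Your ``band $\frak s$ to a meridian $J$'' picture is the same operation once one specifies that the relevant page is $\Sigma$ and that $\frak s$ attaches at a bridge point of $K_0$; the core of $\frak s$ plays the role of $\omega$.

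That said, your write-up has real imprecisions that the paper's formulation is designed to avoid. First, the flat component $K_0$ lies in the interior of $H_1\cup_\Sigma\overline H_3$ and is disjoint from $P_1$, $P_3$, and $Y_3$; so if ``core in a page'' means $P_1$ or $P_3$, the band $\frak s$ cannot attach along $\alpha_0\subset K_0$ at all. You must use the page $\Sigma$, and then $\frak s$ attaches at a bridge point of $K_0$ where no band of $\frak b$ is attached --- the existence of such a point follows from $K_0$ being in bridge position, but you do not say this. Second, your assertion that ``every modification was carried out \dots\ away from $Y_3$'' is false: the meridian $J$ contributes a vertical arc to $Y_3$, and this is exactly what makes condition~(1) hold with index $v+1$. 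Third, and most substantively, your ``shrink $K_0$ along its shadow disk'' sketch is an isotopy of $U'_{\frak s}$, whereas condition~(6) demands that $U'_{\frak s}$ be in standard $(b,v+1)$--bridge position \emph{as given}, and condition~(2) demands that $\wh\beta^+\cap(H_1\cup_\Sigma\overline H_3)$ still be a $b$--perturbing of a $(v+1)$--braid. The paper's perturb-then-drag description makes these checks transparent: the perturbation supplies new bridge semi-disks $D_1,D_3$, the drag along $\omega$ converts them to bridge triangles for the new vertical strands $\tau_i''$, and the remainder of the bridge-disk system $\Delta_i$ (including the disks dualizing $\frak b$) is untouched because $\omega$ can be chosen disjoint from it.
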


\begin{proof}
	Suppose that $(\wh\beta, \frak b)$ is a bridge-braided ribbon presentation with respect to the standard Heegaard double structure $(H_1,H_3,Y_3)$ on $S^3$, as in Definition~\ref{def:bridge-braided}.
	We orient $\wh\beta$ so that it winds counterclockwise about the braid axis $B=\partial \Sigma$.
	This induces an orientation on the arcs of $\overline L=\Tt_1\cup_\bold x\overline \Tt_3$, which induces an orientation on the bridge points $\bold x$:
	A bridge point $x\in\bold x$ is \emph{positive} if an oriented arc of $\overline L$ passes from $H_1$ to $\overline H_3$ through $x$.  Since $c_3=0$, every point of $\bold x$ can be oriented in this way.
	
	Recall from the proof of Proposition~\ref{prop:band_to_bridge} that we can perturb the bands of $\frak b$, which originally intersect $\Sigma$ in their core arcs, into the interior of $H_3$ so that they may be thought of as bands for the tangle $\Tt_3$.  Let $\Tt_2 = (\Tt_3)_\frak b$, and let $L' = \Tt_1\cup_\bold x\overline\Tt_2$.
	
	Utilizing the assumption that $c_1>0$, let $J$ be a flat component of $U'$.  Let $x$ be a positive point of $L\cap\Sigma$ so that $x\in J$. See Figure~\ref{fig:Markov_drag1}. Such a point exists, since $J$ contains a flat arc of $\Tt_1$, and the endpoints of this arc have differing signs. We perturb $\Sigma$ at $x$ to produce a new bridge splitting $\Tt_1'\cup_{\bold x'}\overline\Tt_3'$, which we consider as $\Tt_i' = \Tt_i\cup\tau_i$, where $\tau_i$ is the new flat strand near $x$.  If $\Delta_i$ was a bridge system for $\Tt_i$, then $\Delta_i' = \Delta_i\cup D_i$ is a bridge system for $\Tt_i'$, where $D_i$ is a bridge semi-disk for $\tau_i$. See Figure~\ref{fig:Markov_drag2}, and note that there may or may not be a band attached to $\Tt_3$ near $x$.

\begin{figure}[h!]
\begin{subfigure}{.33\textwidth}
  \centering
  \includegraphics[width=.9\linewidth]{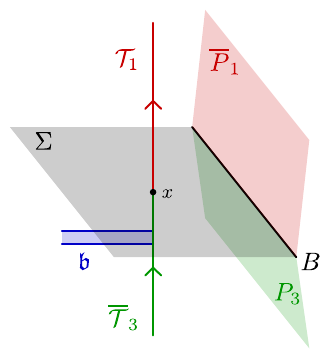}
  \caption{}
  \label{fig:Markov_drag1}
\end{subfigure}%
\begin{subfigure}{.33\textwidth}
  \centering
  \includegraphics[width=.9\linewidth]{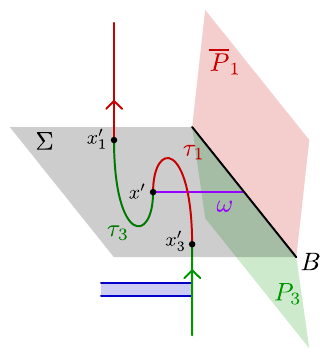}
  \caption{}
  \label{fig:Markov_drag2}
\end{subfigure}%
\begin{subfigure}{.33\textwidth}
  \centering
  \includegraphics[width=.9\linewidth]{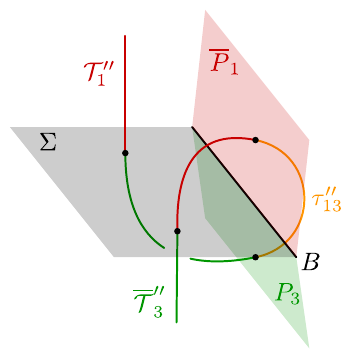}
  \caption{}
  \label{fig:Markov_drag3}
\end{subfigure}
\caption{Modifying a bridge trisection of a ribbon surface to remove a flat patch at the expense of Markov-stabilizing the boundary braid.}
\label{fig:Markov_drags}
\end{figure}

	Now, we have that $x' = \tau_1\cap\tau_3$ is negative.  Let $x'_i = \partial\tau_i\setminus x$ denote the positive points introduced by this perturbation.  Let $\lambda = \tau_1\cup_x\tau_3$.  Note that we can assume there is no band of $\frak b$ incident to either $\tau_i$.
	The bridge splitting $\Tt_1'\cup_\bold x\overline\Tt_3'$ is perturbed at $x'$.  We will swap this perturbation for a Markov perturbation by dragging the point $x'$ towards and through the boundary $B$ of $\Sigma$.  Let $\omega$ be an embedded arc in $\Sigma$ connecting $x'$ to $B$ such that $\Int(\omega)\cap \bold x = \emptyset$.  Since $\omega$ is dualized by each of the two small bridge semi-disks $D_i\subset \Delta_i'$, we can assume that $\Int(\omega)\cap\Delta_i' = \emptyset$.
	
	Change $(\wh\beta,\frak b)$ by an ambient isotopy that is supported in a tubular neighborhood of $\omega$ and that pushes $x'$ along $\omega$ towards and past $B$.  This is a finger move of $\lambda$ along $\omega$. (Note that the surface $\Ff$ is locally a product of $\lambda$ near $x'$.) Let $\lambda'$ denote the end result of this finger move; i.e., a portion of $\lambda$ has been pushed out of $H_1\cup_\Sigma\overline H_3$ into $Y_3$.  Let $\tau_i'' = \lambda'\cap H_i$.  Let $\tau_{13}'' = \lambda'\cap Y_3$.  Let $D_i''$ denote the bridge triangle resulting from applying the ambient isotopy to $D_i$.  We see immediately that $\tau_i''$ are vertical, and that $\Delta_i'' = (\Delta_i'\setminus D_i)\cup D_i''$ is a bridge system for $\Tt_i'' = (\Tt_i'\setminus\tau_i)\cup\tau_i''$.  It's also clear that $\tau_{13}''$ is a vertical strand in $Y_3$.  We make the following observations, with an eye towards Definition~\ref{def:bridge-braided}:
	\begin{enumerate}
		\item $\beta_3'' = \beta_3\cup\tau_{13}''$ is a $(v+1)$--braid.
		\item $L'' = \Tt_1''\cup_{\bold x''}\overline\Tt_3''$ is a perturbing of a $(v+1)$--braid.
		\item We still have $c_1=0$; the $\Tt_i''$ are $(b-v-1)$--perturbings of $(v+1)$--braids.
		\item The bands $\frak b$ can still be isotoped to intersect $\Sigma$ in their cores.
		\item The bride disks $\Delta_3''$ dualize the bands $\frak b$.
		\item $(\wh\beta)_\frak b$ has one fewer flat component.
	\end{enumerate}
	
	Thus, we have verified that conditions (1)-(6) of Definition~\ref{def:bridge-braided} are still satisfied, with the only relevant differences being that each tangle has an additional vertical strand and the flat component $J$ of $U'$ is now vertical. It follows that we have produced a bridge-banded ribbon presentation $(\wh\beta^+,\frak b)$ for $\Ff$, where $\wh\beta^+$ is a Markov perturbation of $\wh\beta$.
\end{proof}

\begin{remark}\label{rmk:why_no_U}
	The hypothesis that $c_3=0$ is the above lemma was necessary to ensure that the process described in the proof resulted in a Markov perturbation of the boundary. If $c_3>0$, then it is possible that each point $x\in\bold x\cap J$ lies on a (flat) component of $U$.  If the proof were carried out in this case, it would have the effect of changing the link type from $\Ll$ to the split union of $\Ll$ with an unknot on the boundary of $\Ff$.  This is reflective of the general fact that a non-ribbon $\Ff$ with boundary $\Ll$ can be thought of as a ribbon surface for the split union of $\Ll$ with an unlink.
\end{remark}

Recall that $\bold c$ is an ordered partition of type $(c,3)$ for some $c\in\N_0$; in particular, $c=c_1+c_2+c_3$.

\begin{lemma}\label{lem:decrease_all_c}
	If $\Ff$ is the realizing surface for a $(b,\bold c;v)$--bridge-braided band presentation $(\wh\beta,U,\frak b)$ with $c_i=0$ for some $i$, then $\Ff$ is the realizing surface for a $(b,0;w+c)$--bridge-braided ribbon presentation $(\wh\beta^{++},\frak b'')$, where $\wh\beta^{++}$ is a Markov perturbation of $\wh\beta$. 
\end{lemma}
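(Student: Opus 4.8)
The plan is to deduce the statement from repeated use of Lemma~\ref{lem:decrease_c1}, exploiting the cyclic symmetry of the standard trisection to rotate each of the three parameters $c_i$ into the single slot on which Lemma~\ref{lem:decrease_c1} operates.

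First I would move to the bridge‑trisection picture, where the symmetry is manifest. By Proposition~\ref{prop:band_to_bridge}, the realizing surface $\Ff = \Ff_{(\wh\beta,U,\frak b)}$ carries a $(b,\bold c;v)$--bridge trisection with boundary braid $\wh\beta$. The standard trisection $\T_0$ is preserved by an orientation‑preserving rotation $\rho$ of $B^4$ cyclically permuting the sectors, $\rho(Z_i)=Z_{i+1}$; it may be taken to preserve the radial Morse function, it is isotopic to $\id_{B^4}$, and its restriction to $S^3$ is a rotation of the induced open book by one‑third turn, hence carries every braiding to one isotopic to it through braidings. Applying a power of $\rho$ therefore replaces the given bridge trisection by one of an isotopic surface whose complexity triple is the corresponding cyclic rotation of $\bold c$ and whose boundary braid is braid‑isotopic to $\wh\beta$; re‑extracting a band presentation by Proposition~\ref{prop:bridge_to_band} (and dragging the braid back to $\wh\beta$ along the rotation isotopy, which preserves the Morse function and hence realizing surfaces) lets us assume from the start — using that $c_i=0$ for some $i$ and rotating that zero into the third coordinate — that $\Ff$ is the realizing surface of a $(b,(c_1,c_2,0);v)$--bridge‑braided \emph{ribbon} presentation $(\wh\beta,\frak b)$ with $c_1+c_2=c$.

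Now I would exhaust Lemma~\ref{lem:decrease_c1}. Applying it $c_1$ times turns $(\wh\beta,\frak b)$ into a $(b,(0,c_2,0);v+c_1)$--bridge‑braided ribbon presentation whose boundary braid is a positive Markov perturbation of $\wh\beta$. Cycling once more as above — which keeps us in the ribbon case, since rotating $(0,c_2,0)$ to $(c_2,0,0)$ leaves the third entry zero — produces a $(b,(c_2,0,0);v+c_1)$--bridge‑braided ribbon presentation with braid‑isotopic boundary. Applying Lemma~\ref{lem:decrease_c1} another $c_2$ times reaches a $(b,(0,0,0);v+c_1+c_2)=(b,0;v+c)$--bridge‑braided ribbon presentation $(\wh\beta^{++},\frak b'')$, and $\wh\beta^{++}$ is obtained from $\wh\beta$ by $c$ successive (positive) Markov stabilizations and some braid isotopies, so it is a Markov perturbation of $\wh\beta$, as required.

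The only delicate point is the bookkeeping for the cyclic symmetry: one must verify that $\rho$ descends to an honest rotation of the boundary open book (so that ``Markov perturbation of $\wh\beta$'' is unaffected by cycling), that it preserves the Morse function on $B^4$ (so Proposition~\ref{prop:bridge_to_band} returns a genuine realizing surface when we rotate back), and that it permutes the disk‑tangle and tangle parameters $c_i$ and $b$ precisely as claimed. One can sidestep the geometry of $\rho$ altogether by instead cyclically relabeling the spine $(H_1,\Tt_1)\cup(H_2,\Tt_2)\cup(H_3,\Tt_3)$ of the bridge trisection and invoking Lemma~\ref{lem:LP} and Proposition~\ref{prop:spine} to see the relabeled spine still determines a bridge trisection of an isotopic pair; everything else is a direct iteration of Lemma~\ref{lem:decrease_c1}.
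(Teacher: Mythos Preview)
Your proposal is correct and follows essentially the same route as the paper: pass to a bridge trisection via Proposition~\ref{prop:band_to_bridge}, relabel the pieces so that $c_3=0$, extract a ribbon presentation via Proposition~\ref{prop:bridge_to_band}, apply Lemma~\ref{lem:decrease_c1} repeatedly to kill $c_1$, relabel again to move $c_2$ into the first slot, and repeat. The paper simply says ``re-labeling the pieces'' rather than invoking an explicit rotation $\rho$, which is exactly the alternative you mention at the end; your more geometric discussion of $\rho$ is unnecessary but not wrong.
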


\begin{proof}
	Suppose $\Ff$ is the realizing surface for a $(b,\bold c;v)$--bridge-braided band presentation $(\wh\beta,U,\frak b)$ with $c_i=0$ for some $i$.  By Proposition~\ref{prop:band_to_bridge}, $\Ff$ admits a $(b,(c_1,c_2,c_3);v)$--bridge trisection filling $\wh\beta$.  By re-labeling the pieces, we can assume that $c_3=0$.  By Proposition~\ref{prop:bridge_to_band}, this gives us a $(b,(c_1,c_2,0);v)$--bridge-braided ribbon presentation $(\wh\beta,\frak b')$.  Note that while the braid type $\wh\beta$ hasn't changed, the bands $\frak b$ may have, and the intersection of $\wh\beta$ with the pieces of the standard Heegaard-double decomposition may have, as well.  Nonetheless, we can apply Lemma~\ref{lem:decrease_c1} iteratively to decrease $c_1$ to zero, at the cost of Markov-perturbing $\wh\beta$ into a $(w+c_1)$--braid $\wh\beta^+$.
	
	Passing back to a $(b,(0,c_2,0);w+c_1)$--bridge trisection filling $\wh\beta^+$ via Proposition~\ref{prop:band_to_bridge}, re-labelling, and applying Proposition~\ref{prop:bridge_to_band}, we extract a $(b,(c_2,0,0);w+c_1)$--bridge-braided ribbon presentation $(\wh\beta^+,\frak b'')$.  Again, the bands and the precise bridge splitting may have changed.  However, a second application of Lemma~\ref{lem:decrease_c1} allows us to decrease $c_2$ to zero, at the cost of Markov perturbing $\wh\beta^+$ into a $(w+c_1+c_2)$--braid $\wh\beta^{++}$.  Note that we have Markov perturbed a total of $c = c_1+c_2$ times.
\end{proof}

\begin{theorem}
\label{thm:ribbon}
	Let $\T_0$ be the standard trisection of $B^4$, and let $\Ff\subset B^4$ be a neatly embedded surface with $\Ll = \partial \Ff$.  Let $\wh\beta$ be an index $v$ braiding $\Ll$.
	Then, the following are equivalent.
	\begin{enumerate}
		\item $\Ff$ is ribbon.
		\item $\Ff$ admits a $(b,\bold c;v)$--bridge trisection filling $\wh\beta$ with $c_i=0$ for some $i$.
		\item $\Ff$ admits a $(b,0;v+c)$--bridge trisection filling a Markov perturbation $\wh\beta^+$ of $\wh\beta$.
	\end{enumerate}
\end{theorem}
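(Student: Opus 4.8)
The plan is to prove the cycle $(1)\Rightarrow(2)\Rightarrow(3)\Rightarrow(1)$, leaning entirely on results already in hand: Theorem~\ref{thm:four-ball}, Propositions~\ref{prop:bridge_to_band} and~\ref{prop:band_to_bridge}, and Lemma~\ref{lem:decrease_all_c} (itself an iteration of Lemma~\ref{lem:decrease_c1}). Throughout I would use two interchangeable characterizations of ``ribbon'' furnished by the earlier development: $\Ff$ is ribbon if and only if it admits a handle decomposition with respect to the standard Morse function on $B^4$ having no caps (i.e.\ $c_3=0$), which by Proposition~\ref{prop:band_pres} happens if and only if $\Ff$ is rel-$\partial$ isotopic to the realizing surface of a band presentation $(\Ll,U,\frak b)$ with $U=\emptyset$ -- that is, of a ribbon presentation. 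Granting this, $(1)\Rightarrow(2)$ is essentially immediate: fixing a handle decomposition of a ribbon $\Ff$ with $c_3=0$, $c_1$ cups, and $n$ bands, Theorem~\ref{thm:four-ball} applied to $\Ff$ and the given braiding $\wh\beta$ produces, for some $b\in\N_0$, a $(b,(c_1,b-n,0);v)$--bridge trisection of $\Ff$ filling $\wh\beta$, whose third complexity parameter vanishes, so this is a bridge trisection of the kind required by~(2).

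For $(2)\Rightarrow(3)$: suppose $\Ff$ is $(b,\bold c;v)$--bridge trisected with $c_i=0$ for some $i$. Cyclically relabelling the sectors $Z_1,Z_2,Z_3$ (and the arms $H_i$) permutes $(c_1,c_2,c_3)$ cyclically and preserves the structure of a bridge trisection, so I may assume $c_3=0$. By Proposition~\ref{prop:bridge_to_band}, $\Ff=\Ff_{(\wh\beta,U,\frak b)}$ for a $(b,\bold c;v)$--bridge-braided band presentation, and $|U|=c_3=0$ makes $(\wh\beta,\frak b)$ a bridge-braided ribbon presentation with $c_i=0$ for some $i$. Lemma~\ref{lem:decrease_all_c} then produces a $(b,0;v+c)$--bridge-braided ribbon presentation $(\wh\beta^{+},\frak b'')$ of $\Ff$, with $\wh\beta^{+}$ a (positive) Markov perturbation of $\wh\beta$ and $c=c_1+c_2+c_3$, and Proposition~\ref{prop:band_to_bridge} converts this back to a $(b,0;v+c)$--bridge trisection of $\Ff$ filling $\wh\beta^{+}$, which is exactly~(3).

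For $(3)\Rightarrow(1)$: a $(b,0;v+c)$--bridge trisection has all three complexity parameters zero, so by Proposition~\ref{prop:bridge_to_band} (no relabelling needed) $\Ff$ is the realizing surface of a $(b,0;v+c)$--bridge-braided band presentation $(\wh\beta^{+},U,\frak b)$ with $|U|=0$, i.e.\ of a ribbon presentation, hence $\Ff$ is ribbon by the characterization above. (One could instead observe that $\wh\beta^{+}$ is still a braiding of the \emph{same} link $\Ll$, so~(3) is a special instance of~(2), and the same argument gives $(2)\Rightarrow(1)$ outright; the loop closes either way.)

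I expect the main obstacle to be organizational rather than geometric: every substantive move -- perturbing bands into a single arm, the helper-band construction, leveling and dualizing bands, and trading a flat patch for a Markov stabilization -- is already packaged in the cited results. The points needing genuine care are (a) invoking the correct equivalent forms of ``ribbon'' so that ``realizing surface of a ribbon presentation'' and ``handle decomposition with $c_3=0$'' may be used interchangeably; (b) justifying that a cyclic relabelling of the sectors legitimately moves the vanishing $c_i$ into the third slot without disturbing the bridge-trisected structure; and (c) confirming both that a Markov perturbation $\wh\beta^{+}$ of $\wh\beta$ remains an index-$(v+c)$ braiding of $\Ll$ (so statement~(3) is of the asserted form) and that the exponent $c$ is precisely the total $c_1+c_2+c_3$ counted by the iterations of Lemma~\ref{lem:decrease_c1} inside the proof of Lemma~\ref{lem:decrease_all_c}.
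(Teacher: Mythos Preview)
Your proposal is correct and follows essentially the same route as the paper: the cycle $(1)\Rightarrow(2)\Rightarrow(3)\Rightarrow(1)$ via Propositions~\ref{prop:to_BBB_realizing}/\ref{prop:band_to_bridge} (packaged in Theorem~\ref{thm:four-ball}), Proposition~\ref{prop:bridge_to_band}, and Lemma~\ref{lem:decrease_all_c}. The only cosmetic difference is that you relabel to force $c_3=0$ \emph{before} invoking Proposition~\ref{prop:bridge_to_band}, whereas the paper applies Proposition~\ref{prop:bridge_to_band} first and lets the relabelling happen inside the proof of Lemma~\ref{lem:decrease_all_c}; both orderings are fine.
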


\begin{proof}
	Assume (1).  Since $\Ff$ is ribbon, it admits a $(b,(c_1,c_2,0);v)$--bridge-braided ribbon presentation $(\wh\beta,\frak b)$, by Proposition~\ref{prop:to_BBB_realizing}.  By Proposition~\ref{prop:band_to_bridge}, this can be turned into a $(b,(c_1,c_2,0);v)$--bridge trisection filling $\wh\beta$, which implies (2).
	
	Assume (2).  The bridge trisection filling $\wh\beta$ with $c_i=0$ for some $i$ gives a bridge-braided ribbon presentation $(\wh\beta,\frak b')$ with $c_i=0$ for the same $i$.  By Lemma~\ref{lem:decrease_all_c}, there is a $(b,0;w+c)$--bridge-braided ribbon presentation $(\wh\beta^{++},\frak b'')$ for $\Ff$, where $\wh\beta^{++}$ is a Markov perturbation of $\wh\beta$.  By Proposition~\ref{prop:band_to_bridge}, this gives a $(b,0;w+c)$--bridge trisection of $\Ff$ filling $\wh\beta^{++}$.  This implies (3), where $\wh\beta^{++}$ is denoted by $\wh\beta^+$ for simplicity.
	
	Assume (3).  The $(b,0;w')$--bridge trisection filling $\wh\beta^+$ gives rise to a bridge-braided ribbon presentation $(\wh\beta^+,\frak b'')$ of the same type, by Proposition~\ref{prop:bridge_to_band}, such that $\Ff = \Ff_{(\wh\beta^+,\frak b'')}$.  However, a band presentation of a surface is precisely a handle-decomposition of the surface with respect to the standard Morse function on $B^4$.  It follows that $\Ff$ can be built without caps; hence, $\Ff$ is ribbon, and (1) is implied.
	
	Note for completeness that (2) can be seen to imply (1) by the argument immediately above, and that (3) implies (2) trivially.
	
\end{proof}

\section{Tri-plane diagrams}
\label{sec:tri-plane}

A significant feature of the theory of trisections (broadly construed) is that it gives rise to new diagrammatic representations for four-dimensional objects (manifolds and knotted surfaces therein).  In this section, we describe the diagrammatic theory for bridge trisections of surfaces in the four-ball.  Recall the notational set-up of Subsection~\ref{subsec:Special}.

Let $(H,\Tt)$ be a tangle with $H\cong B^3$.  Let $E\subset H$ be a neatly embedded disk with $\partial \Tt\subset \partial E$.  By choosing a generic projection of $H$ onto $E$, we can represent $(H,\Tt)$ by a \emph{tangle diagram}.
In the case that $H\cong B^3$, the lensed cobordism structure on $(H,\Tt)$ discussed in Subsection~\ref{subsec:Compression} can be thought of as inducing the hemispherical decomposition of $\partial H\cong S^2$.  So, we refer to $\partial_+H$ and $\partial_-H$ as the \emph{southern} and \emph{northern} boundaries.  This induces a decomposition of $\partial E$ into a \emph{northern arc} and a \emph{southern arc}.
See Figure~\ref{fig:moves} for examples of $(1,2)$--tangle diagrams.

\begin{definition}
	A \emph{$(b,\bold c;v)$--tri-plane diagram} is a triple $\PP = (\PP_1,\PP_2,\PP_3)$ such that $\PP_i$ is a $(b,v)$--tangle diagram and the union $\PP_i\cup\overline{\PP_i}$ is a tangle diagram for a split union of a $v$--braid with a $c_i$--component unlink.  (Note that $\overline{\PP_i}$ is the diagram $\PP_i$ with crossing information reversed.) The southern arcs (and the $2b+v$ points $\bold x$ that they contain) are assumed to be identified.  We denote the $v$ points contained in the northern arc of $\PP_i$ by $\bold y_i$; the three northern arcs are not identified.
\end{definition}

A tri-plane diagram describes a bridge trisected surface in the following way.  Let $(H_i,\Tt_i)$ be tangles corresponding to the tangle diagrams $\PP_i$.  Then the triple of tangle diagrams can be thought of as describing the union
$$(H_1,\Tt_1)\cup(H_2,\Tt_2)\cup(H_3,\Tt_3)$$
of these tangles, where $(H_i,\Tt_i)\cap\overline{(H_{i+1},\Tt_i)} = (\Sigma,\bold x)$.
This explains the identification of the souther portions of the tangle diagrams in the definition.
Now, by definition, each union $(H_i,\Tt_i)\cup\overline{(H_{i+1},\Tt_{i+1})}$ is a the split union of a braid with an unlink of $c_i$ components inside a 3--ball.  By Lemma~\ref{lem:LP}, there is a unique way to glom on to this 3--ball a $(c_i,v)$--disk-tangle  $(Z_i,\Dd_i)$, where $Z_i\cong B^4$.  Therefore, the union
$$(Z_1,\Dd_1)\cup(Z_2,\Dd_2)\cup(Z_3,\Dd_3)$$
is a bridge trisected surface in $B^4$.  In the next section, we will prove the existence of bridge trisections for surfaces in $B^4$.  Since bridge trisections are determined by their spine (Corollary~\ref{coro:spine}, this gives the following.

\begin{corollary}
\label{coro:tri-plane}
	Every neatly embedded surface in $B^4$ can be described by a tri-plane diagram.
\end{corollary}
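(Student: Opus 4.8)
The plan is to combine the existence statement Theorem~\ref{thm:four-ball}, the rigidity recorded in Corollary~\ref{coro:spine}, and the elementary fact that a trivial tangle in a $3$--ball is faithfully recorded by a tangle diagram. First I would fix any braiding $\wh\beta$ of $\partial\Ff$ and invoke Theorem~\ref{thm:four-ball} to isotope $\Ff$ into $(b,\bold c;v)$--bridge trisected position with respect to $\T_0$, for some parameters coming from a handle decomposition of $\Ff$. This yields a bridge trisection whose spine is $(H_1,\Tt_1)\cup(H_2,\Tt_2)\cup(H_3,\Tt_3)$, in which each $(H_i,\Tt_i)$ is a trivial $(b,v)$--tangle in a $3$--ball, the three tangles share the disk subsurface $\partial_+(H_i,\Tt_i)=(\Sigma,\bold x)$ of their boundaries, and each $(Z_i,\Dd_i)$ is a trivial $(c_i,v)$--disk-tangle with $Z_i\cong B^4$ and $\partial\Dd_i=\Tt_i\cup\beta_i\cup\overline{\Tt_{i+1}}$. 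By Corollary~\ref{coro:spine} it suffices to encode this spine by a diagram.

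Next I would fix, once and for all, a generic projection of $(\Sigma,\bold x)$ onto a model southern arc, carrying the $2b+v$ bridge points $\bold x$ to a fixed collection of marked points. For each $i\in\Z_3$, choose a neatly embedded disk $E_i\subset H_i$ with $\partial\Tt_i\subset\partial E_i$ whose boundary meets $\partial_+H_i=\Sigma$ in the prescribed southern arc, and a generic projection $H_i\to E_i$ restricting to the fixed projection of $\Sigma$. A standard general-position argument arranges that $\Tt_i$ projects to an immersed collection of arcs meeting the boundary only at $\bold x$ and having only transverse double points, yielding a $(b,v)$--tangle diagram $\PP_i$ whose southern arc and bridge points are identified with the model; the $v$ points $\bold y_i$ of its northern arc are the images of the vertical strands of $\Tt_i$, and no identification of the three northern arcs is needed.

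What remains is to verify that $\PP=(\PP_1,\PP_2,\PP_3)$ is a $(b,\bold c;v)$--tri-plane diagram, i.e., that each union $\PP_i\cup\overline{\PP_{i+1}}$ is a tangle diagram for a split union of a $v$--braid with a $c_i$--component unlink. Gluing the projections $H_i\to E_i$ and $\overline{H_{i+1}}\to\overline{E_{i+1}}$ along the common projection of $(\Sigma,\bold x)$ produces a generic projection of the $3$--ball $H_i\cup_\Sigma\overline{H_{i+1}}$ onto a disk, carrying $\Tt_i\cup_{\bold x}\overline{\Tt_{i+1}}$ to a diagram with only transverse crossings; reversing the crossing data over $\overline{H_{i+1}}$ is precisely the convention that turns $\PP_{i+1}$ into $\overline{\PP_{i+1}}$. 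Now $\Tt_i\cup_{\bold x}\overline{\Tt_{i+1}}$ is exactly the part of $\partial\Dd_i$ lying in $H_i\cup_\Sigma\overline{H_{i+1}}\subset\partial Z_i$, and since $(Z_i,\Dd_i)$ is a trivial $(c_i,v)$--disk-tangle, $\partial\Dd_i$ is an unlink in standard $(b,v)$--bridge position; hence this portion is the split union of the $c_i$ flat closed components (which avoid the spread) with the $v$ braid arcs of the vertical components, i.e., the split union of a $c_i$--component unlink with a $v$--braid, as required. Thus $\PP$ is a tri-plane diagram, and --- since tangle diagrams faithfully record trivial tangles in $3$--balls and, by Lemma~\ref{lem:LP}, the $(Z_i,\Dd_i)$ are uniquely recovered from the spine --- the bridge trisected surface assigned to $\PP$ by the construction following the definition of a tri-plane diagram is precisely $\Ff$, up to isotopy.

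I expect the only genuine difficulty to be organizational: arranging simultaneously that the three projections literally agree on $(\Sigma,\bold x)$, that each $\PP_i$ is a legitimate diagram, and that each pairwise union is too. This is handled by fixing the projection of the shared disk $(\Sigma,\bold x)$ at the outset and then perturbing each $H_i\to E_i$ rel $\Sigma$ into general position, using that the finitely many relevant genericity conditions --- one for each $\PP_i$ and one for each union $\PP_i\cup\overline{\PP_{i+1}}$ --- are each open and dense.
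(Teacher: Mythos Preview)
Your proposal is correct and follows essentially the same approach as the paper: invoke Theorem~\ref{thm:four-ball} to obtain a bridge trisection, note that it is determined by its spine, and record the spine via generic projections of the three tangles onto disks $E_i\subset H_i$ sharing a common southern arc. You supply considerably more detail than the paper does---in particular the explicit verification that the pairwise unions $\PP_i\cup\overline{\PP_{i+1}}$ have the required form and the general-position bookkeeping---but the underlying argument is the same.
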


\begin{proof}
	By Theorem~\ref{thm:four-ball}, every such surface in $B^4$ can be put in bridge position with respect to the genus zero trisection $\T_0$.  The corresponding bridge trisection is determined by its spine
	$$(H_1,\Tt_1)\cup(H_2,\Tt_2)\cup(H_3,\Tt_3).$$
	This spine can be represented by a tri-plane diagram by choosing a triple of disks $E_i\subset H_i$ whose boundaries agree and choosing generic projections $H_i\twoheadrightarrow E_i$ that induce tangle diagrams for the $\Tt_i$.
\end{proof}

The union $E_i\cup E_2\cup E_3$ of disks that appeared in the proof above is called a \emph{tri-plane} for the bridge trisection. We consider bridge trisections up to ambient isotopy, and an ambient isotopy of a bridge trisection can change the induced tri-plane diagram.  These changes can manifest in following three ways, which we collectively call \emph{tri-plane moves}.  See Figure~\ref{fig:moves} for an illustration of each move.

\begin{figure}[h!]
\begin{subfigure}{\textwidth}
  \centering
  \includegraphics[width=.5\linewidth]{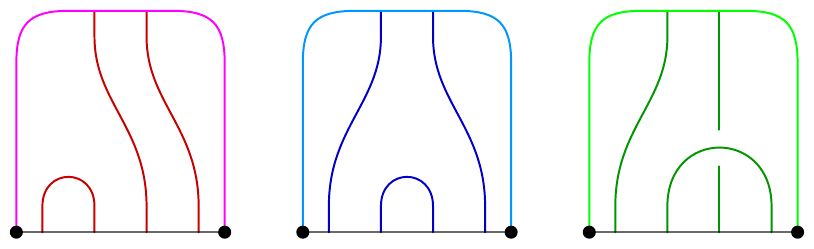}
  \caption{}
  \label{fig:b=11}
\end{subfigure}
\par\vspace{5mm}
\begin{subfigure}{\textwidth}
  \centering
  \includegraphics[width=.5\linewidth]{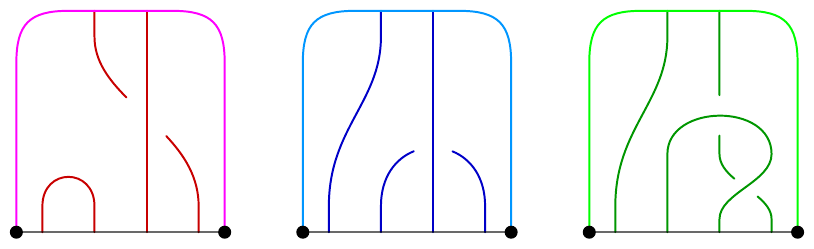}
  \caption{}
  \label{fig:b=13}
\end{subfigure}
\par\vspace{5mm}
\begin{subfigure}{\textwidth}
  \centering
  \includegraphics[width=.5\linewidth]{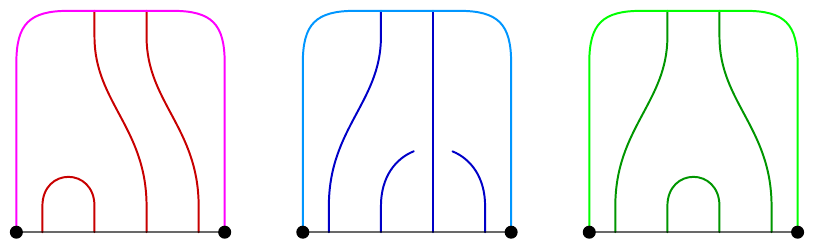}
  \caption{}
  \label{fig:b=12}
\end{subfigure}
\caption{(A) A tri-plane diagram. (B) The result of applying to (A) a bridge sphere braid transposition at the third and fourth bridge points.  (C) The result of applying to (B) a page braid transposition in the fist tangle and a Reidemeister move in the third tangle.}
\label{fig:moves}
\end{figure}

An \emph{interior Reidemeister move} on $\PP$ is a Reidemeister move that is applied to the interior of one of the tangle diagrams $\PP_i$.  Interior Reidemeister moves correspond to ambient isotopies of the surface that are supported away from $\partial B^4$ and away from the core surface $\Sigma$. They also reflect the inherent indeterminacy of choosing a tangle diagram to represent a given tangle.

A \emph{core (braid) transposition} is performed as follows:
Pick a pair of adjacent bridge points $x,x'\in\bold x$, recalling that $x$ and $x'$ are (identified) points in the southern arc of each of the three tangles diagram.
Apply a braid transposition to all three tangle diagrams that exchanges $x$ and $x'$.
This introduces a crossing in each tangle diagram; the introduced crossing should have the same sign in each diagram.
Bridge sphere braiding corresponds to ambient isotopies of the surface that are supported in a neighborhood of the core surface $\Sigma$.
Note that this gives an action of the braid group $\Mm(D^2,\bold x)$ on the set of tri-plane diagrams.

A \emph{page (braid) transposition} is performed as follows
Pick a pair of adjacent points $y,y'\in\bold y_i$ in the northern arc of one of the tangle diagrams.
Apply a braid transposition to this tangle diagram that exchanges $y$ and $Y'$.
In contrast to a core transposition, the braid transposition is only applied simultaneously to one diagram.
Page transpositions correspond to ambient isotopies of the surface that are supported near $\partial B^4$.

Interior Reidemeister moves and core transpositions featured in the theory of bridge trisections of closed surfaces in the four-sphere described in~\cite{MeiZup_17_Bridge-trisections-of-knotted}.  See, in particular, Lemma~7.4 for more details.

\begin{proposition}
\label{prop:tri-plane_moves}
	Suppose $\PP$ and $\PP'$ are tri-plane diagrams corresponding to isotopic bridge trisections.  Then $\PP$ and $\PP'$ can be related by a finite sequence tri-plane moves.
\end{proposition}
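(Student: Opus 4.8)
The plan is to prove this by the general-position method underlying Reidemeister's theorem, adapted to the trisected setting as in the closed-surface case treated in~\cite{MeiZup_17_Bridge-trisections-of-knotted} (see Lemma~7.4 there). By Corollary~\ref{coro:spine}, the bridge trisection underlying a tri-plane diagram is recovered from its spine $(H_1,\Tt_1)\cup(H_2,\Tt_2)\cup(H_3,\Tt_3)$, and producing a tri-plane diagram from this spine requires only two auxiliary choices: a tri-plane $E_1\cup E_2\cup E_3$ --- a neatly embedded disk $E_i\subset H_i$ with $\partial\Tt_i\subset\partial E_i$, the southern arcs being identified along $(\Sigma,\bold x)$ --- and a generic projection $H_i\twoheadrightarrow E_i$ for each $i$. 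First I would dispose of the case of a \emph{fixed} spine: after fixing the boundary data (the southern arc with its marked points $\bold x$ and the northern arcs with their marked points $\bold y_i$), any two admissible tri-planes and projections are connected by a path in the connected space of such choices, and Reidemeister's theorem applied in each ball $H_i$ with fixed endpoints shows that the resulting tri-plane diagrams differ by interior Reidemeister moves. Allowing the boundary data itself to be reparametrized introduces, in addition, braid transpositions of the points $\bold x$ and $\bold y_i$; because the southern arc is common to all three $\PP_i$, a transposition of two points of $\bold x$ is forced to occur simultaneously and with a sign-coherent new crossing in all three diagrams --- a core transposition --- whereas a transposition of two points of $\bold y_i$ affects only $\PP_i$ --- a page transposition.

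Next I would incorporate an isotopy of spines. An ambient isotopy of $B^4$ carrying the bridge trisection underlying $\PP$ to the one underlying $\PP'$ may be taken to preserve the standard genus-zero trisection $\T_0$, hence restricts to an isotopy $\{(H_1^t,\Tt_1^t)\cup(H_2^t,\Tt_2^t)\cup(H_3^t,\Tt_3^t)\}_{t\in[0,1]}$ of spines from the first to the second. Choosing a smoothly varying family of tri-planes and generic projections along this path and putting the induced one-parameter family of tri-plane diagrams in general position, the diagram is constant away from finitely many times, at each of which exactly one of the following codimension-one degenerations occurs: (i) a triple point, a simple tangency, or a cusp of the projection in the interior of some $\PP_i$, producing an interior Reidemeister move; (ii) two of the bridge points of $\bold x\subset\Sigma$ colliding and exchanging, which --- since the southern arcs of the three tangle diagrams are identified --- produces a core transposition; or (iii) two of the northern points of some $\bold y_i\subset\partial_-H_i$ colliding and exchanging, producing a page transposition in $\PP_i$ alone. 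That these are the only degenerations, and that simultaneity across diagrams is forced only in case (ii), is a routine jet-transversality argument. Composing with the fixed-spine moves needed to make the two endpoint diagrams literally equal to $\PP$ and $\PP'$ (rather than merely isotopic to them), we conclude that $\PP$ and $\PP'$ are related by a finite sequence of tri-plane moves.

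The main obstacle I anticipate is the boundary bookkeeping: one must verify that the general-position path does not secretly permute the marked points on the \emph{identified} southern arc except by a simultaneous, sign-coherent core transposition, and that degenerations happening near $\partial_+H_i$ versus near $\partial_-H_i$ are correctly separated into core versus page transpositions. Carrying this out carefully --- in particular checking that the three southern arcs can be kept identified throughout a generic path, so that a collision of bridge points in one diagram genuinely forces the same transposition in the other two --- is where the real work lies; the enumeration of interior degenerations is classical and the rest is assembly.
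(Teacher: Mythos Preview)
Your proposal is correct and follows essentially the same approach as the paper, both adapting Lemma~7.4 of~\cite{MeiZup_17_Bridge-trisections-of-knotted} and identifying page transpositions as the new move arising from the northern boundaries $\partial_-H_i$. The only organizational difference is that the paper fixes the tri-plane and decomposes the ambient isotopy of the seams into pieces supported near $\Sigma$, near each $\partial_-H_i$, and in the interior (yielding core transpositions, page transpositions, and interior Reidemeister moves, respectively), whereas you track a one-parameter family of projections and enumerate codimension-one degenerations; these are two packagings of the same argument.
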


\begin{proof}[Proof of Proposition~\ref{prop:tri-plane_moves}]
	As in the proof of Lemma~7.4 of~\cite{MeiZup_17_Bridge-trisections-of-knotted}, it suffices to assume that the two tri-plane diagrams $\PP$ and $\PP'$ are induced by different choices of tri-planes $E_1\cup E_2\cup E_3$ and $E_1'\cup E_2'\cup E_3'$ for a fixed bridge trisection.  Further more, we can switch perspective and assume that the tri-planes agree, but the seams of the tangles do not.  Thus, assume we have a fixed $\Ee =E_1\cup E_2\cup E_3$ within $\Hh = H_1\cup H_2\cup H_3$ and that we have two sets of seams $\Tt=\Tt_1\cup\Tt_2\cup\Tt_3$ and $\Tt'=\Tt_1'\cup\Tt_2'\cup\Tt_3'$ determining a pair of isotopic spines in $B^4$. 
	
	Note that the southern endpoints of the $\Tt_i$ and the $\Tt_i'$ are both contained in the southern arc $\partial E_i\cap\Int(B^4)$, while all the northern endpoints are contained in the northern arc $\partial E_i\cap \partial B^4$.  Without loss of generality, we assume the northern (resp., southern) endpoints of $\Tt_i$ agree with the northern (resp., southern) endpoints of $\Tt_i'$ for each $i$.
	
	As in the proof of Lemma~7.4 of~\cite{MeiZup_17_Bridge-trisections-of-knotted}, if $f_t$ is an ambient isotopy of $\Hh$ such that $f_0$ is the identity and $f_1(\Tt) =\Tt'$, then $f_t$ induces a loop in the configuration space of the bridge points $\bold x=\Tt\cap\Sigma$.  In this setting, $f_t$ also induces, for each $i\in\Z_3$, a loop in the configuration space of the points $\bold y\Tt_i\cap \partial_- H_i$ in the disk $\partial_- H_i$.
	
	We write $f_t$ as $f_t^\Sigma\cup f_t^1\cup f_t^2\cup f_t^3\cup f_t'$, where $f_t^\Sigma$ agrees with $f_t$ in a small neighborhood of $\Sigma$ and is the identity outside of a slightly larger neighborhood of $\Sigma$; $f_t^i$ agrees with $f_t$ in a small neighborhood of $\partial_- H_i$ and is the identity outside a slightly larger neighborhood of $\partial_- H_i$; and $f_t'$ is supported away from the small neighborhoods of $\Sigma$ and $\partial_- H_i$.
	Since these can be isolated to a single region near $\partial_- H_i$ for some $i$, they are independent of each other.

	Since $f_t^\Sigma$ corresponds to a braiding of the bridge points $\bold x$ there are tri-plane diagrams $\PP$ and $\PP^\Sigma$ corresponding to $\Tt$ and $\Tt^\Sigma = f_1^\Sigma(\Tt)$ that are related by a sequence of core transpositions.  Continuing, there is a tri-plane diagram $\PP''$ corresponding to $\Tt''=(f_1^1\cup f_1^2\cup f_1^3)(\Tt^\Sigma)$ that is related to $\PP^\Sigma$ by a sequence of interior Reidemeister moves.  Finally, the tri-plane diagram $\PP'$ corresponds to $f_1'(\Tt'')$ and is related to $\PP''$ by a sequence of page transpositions.  In total, $\PP$ and $\PP'$ are related by a sequence of tri-plane moves, as desired.
\end{proof}

\subsection{Recovering the boundary braid from a tri-plane diagram}
\label{subsec:tri-plane_braid}
\ 

We now describe how to recover the boundary braid $(S^3,\Ll) = \partial(B^4,\Ff)$ from the data of a tri-plane diagram for $(B^4,\Ff)$.  This process is illustrated in the example of the Seifert surface for the figure-8 knot in Figure~\ref{fig:monodromy}; cf. Figure~\ref{fig:f8} for more details regarding this example.  See also Figure~\ref{fig:square} for another example.

Let $\PP = (\PP_1,\PP_2,\PP_3)$ be a tri-plane diagram for a surface $(B^4,\Ff)$.  Let $\Ee = (E_1,E_2,E_3)$ denote the underlying tri-plane.  Let $\partial_-E_i$ and $\partial_+E_i$ denote the northern and southern boundary arcs of these disks, respectively, and let $S^0_i = \partial_-E_i\cap\partial_+E_i$ their 0--sphere intersections.
Recall that, diagrammatically, the arcs $\partial_+E_i$ correspond to the core surface $\Sigma$ of the trisection, which is a disk, and the 0--spheres $S^0_i$ correspond to the unknot $B = \partial\Sigma$, which we think of as the binding of an open-book decomposition of $S^3$ with three disk pages given by the $P_i$.
Recall that $\Sigma$ is isotopic rel-$\partial$ to each of the $\PP_i$ via the arms $H_i$.

With this in mind, consider the planar link diagram $\circ\widehat\PP$ obtained as follows.  First, form the cyclic union
$$\PP_1\cup\overline\PP_3\cup\PP_3\cup\overline\PP_2\cup\PP_2\cup\overline\PP_1\cup\PP_1,$$
where $\PP_{i+1}$ and $\overline\PP_i$ are identified along their southern boundaries, $\overline\PP_i$ and $\PP_i$ are identified along their northern boundaries, and the two copies of $\PP_1$ are identified point-wise.  Note that the cyclic ordering here is the opposite of what one might expect.
This important subtlety is explained in the proof of Proposition~\ref{prop:tri-plane_braid} below.
The corresponding union of the disks of the tri-plane
$$E_1\cup \overline E_3\cup E_3\cup \overline E_2\cup E_2\cup \overline E_1\cup E_1$$
is topologically a two-sphere $S^2$.  In particular, the 0--spheres $S^0_i$ have all been identified with a single 0--sphere $S^0$, which we think of as poles of the two-sphere.
We represent this two-sphere in the plane by cutting open along $\partial_-E_1$ and embedding the resulting bigon so that the $E_i$ and $\overline E_i$ lie in the $yz$--plane with $E_3\cap\overline E_2$ on the $y$--axis.  See Figure~\ref{fig:monodromy2}
In this way, the diagram $\circ\PP$ encodes a link in a three-sphere. The unknotted binding $B$ in $S^3$ can be thought of as the unit circle in the $xy$--plane.  (The positive $x$--axis points out of the page.) Each longitudinal arc on $S^2$, including the northern and southern arcs of each $E_i$, corresponds to a distinct page, given six in all.  However, the ambient three-sphere in which this link lives is not $S^3=\partial B^4$, as the proof of Proposition~\ref{prop:tri-plane_braid} will make clear.

Note that the diagram $\circ\widehat\PP$ will have only two types of connected components: (1) components that meet each disk $E_i$ and are homotopically essential in $S^2\setminus\nu(S^0)$, and components that are null-homotopic and are contained in some pair $E_{i+1}\cup\overline E_i$.  Components of type (1) will correspond to the boundary link $(S^3,\Ll)$, while components of the second kind will correspond to split unknots.
The components of type (1) are not braided in the sense of being everywhere transverse to the longitudinal arcs of $S^2$ but, as we shall justify below, they become braided after a sequence of Reidemeister moves and isotopies that are supported away from $S^0$. 
Define $\circ\PP$ to be the result of discarding all components of type (2) from $\circ\widehat\PP$, then straightening out the arcs of type (1) until they give a braid diagram in the sense that they are everywhere transverse to the longitudinal arcs connecting the poles $S^0\subset S^2$.

\begin{proposition}
\label{prop:tri-plane_braid}
	Suppose $\PP = (\PP_1,\PP_2,\PP_3)$ is a tri-plane diagram for $(B^4,\Ff)$.  Then the diagram $\circ\PP$ is a braid diagram for the boundary link $(S^3,\Ll) = \partial(B^4,\Ff)$.
\end{proposition}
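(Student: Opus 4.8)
The plan is to recover the boundary braid $\Ll=\partial\Ff$ directly from the spine, to realize it as a braid carried by a concrete $2$--sphere assembled from the tri-plane disks, and to check that the diagram it produces there is exactly $\circ\widehat\PP$ after the prescribed pruning and straightening. I would begin, following the proof of Proposition~\ref{prop:bridge_to_band}, by recovering $\Ll$ from the spine. For each $i\in\Z_3$ the trivial disk-tangle $\Dd_i$ is boundary-parallel into $\partial Z_i$ by Proposition~\ref{prop:triv_disks}, and since $\partial\Dd_i=\Tt_i\cup\beta_i\cup\overline\Tt_{i+1}$, each vertical patch of $\Dd_i$ is a triangle whose three sides are an arc of the thread $\beta_i$, a vertical strand of $\Tt_i$, and a vertical strand of $\overline\Tt_{i+1}$ sharing a bridge point of $\Sigma$; pushed to $\partial Z_i$, this patch carries that arc of $\beta_i$ rel-$\partial$ onto the concatenation of those two vertical strands through $H_i\cup_\Sigma\overline H_{i+1}$, while each flat patch exhibits the flat-to-flat closures of $\Tt_i\cup_{\bold x}\overline\Tt_{i+1}$ as split unknots disjoint from $\partial\Ff$. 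Carrying this out for all three indices isotopes $\Ll$ off of the spreads $Y_i$ and onto the union of the three diagramming disks $E_i\subset H_i$, so that the part of $\Ll$ lying in the arm $H_i$ is drawn by $\PP_i$ and $\overline\PP_i$ (according to the two sides), $\Ll$ meets $\Sigma$ in bridge points, and $\Ll$ meets each page $P_i$ in the $v$ points $\Ll\cap P_i$.

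Next I would match this picture with $\circ\widehat\PP$. In the assembly of $\circ\widehat\PP$ the northern arcs $\partial_-E_i$ are the pages $P_i$ and the three southern arcs $\partial_+E_i$ are copies of the core $\Sigma$; each consecutive bigon pair $E_i\cup\overline E_{i-1}$, glued along their southern arcs, is a diagramming disk, drawn by $\PP_i\cup\overline\PP_{i-1}$, for $(H_i,\Tt_i)\cup_\Sigma\overline{(H_{i-1},\Tt_{i-1})}$ — equivalently for the non-thread part of $\partial\Dd_{i-1}$ — whose vertical part is carried by the spread $Y_{i-1}$ and is isotopic to the thread $\beta_{i-1}$, and gluing these three bigon pairs cyclically along $P_1,P_2,P_3$ is precisely the formation of $\circ\widehat\PP$. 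The delicate point — which I expect to be the main obstacle — is that these bigon pairs occur in the cyclic order $(E_1,\overline E_3),(E_3,\overline E_2),(E_2,\overline E_1)$, so that the spreads are read in the order $Y_3,Y_2,Y_1$ rather than $Y_1,Y_2,Y_3$; this is the ``opposite of what one might expect'' ordering, and it is forced by the orientation conventions of Definition~\ref{def:Bridge2} and Remark~\ref{rmk:or1} governing how each $\partial\Dd_i$, hence each $\beta_i$ and each spread, is cooriented relative to the core. Making this bookkeeping explicit is the heart of the argument; granting it, the link carried by the sphere meets $E_i$ (resp.\ $\overline E_i$) in exactly the strands of $\PP_i$ (resp.\ $\overline\PP_i$) and joins them up as prescribed, so it equals $\circ\widehat\PP$, and by the first paragraph this sphere carries $\Ll$.

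Finally I would identify the pruning and the straightening with honest isotopies. A component of type (2) runs entirely within some $E_i\cup\overline E_{i-1}$, bouncing between flat strands; by the first paragraph these are split unknots, and being the flat components of the $\partial\Dd_i$ — equivalently the patches of $\Ff$ interior to $\Ff$ — they are disjoint from $\partial\Ff=\Ll$, so discarding them is harmless. Every component of type (1) runs once through all six bigons along vertical strands, and these are exactly the components of $\Ll$. To make them everywhere transverse to the six longitudinal arcs, note that $\Ll$ is already braided about the open-book of $\partial B^4$ whose three pages the six arcs refine; the Alexander/Markov framework of Subsection~\ref{subsec:OBD} (or a direct finger-move argument supported away from the binding) then supplies an isotopy of $\Ll$ fixing $S^0=B$ after which every type-(1) component is transverse to all six arcs, and realized diagrammatically this isotopy is precisely the sequence of interior Reidemeister moves and arc-straightenings that turns $\circ\widehat\PP$ into $\circ\PP$. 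Hence $\circ\PP$ is a braid diagram about $B$ for $(S^3,\Ll)=\partial(B^4,\Ff)$, as claimed.
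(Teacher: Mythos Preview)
Your approach is correct in outline and reaches the same conclusion, but it runs the paper's argument in the opposite direction and is imprecise at the key step. The paper does not push $\Ll$ inward onto the spine; instead it takes a small lensed neighborhood $N$ of the spine $H_1\cup H_2\cup H_3$ and studies $\partial N\cap\Ff$. The sphere $\partial N$ decomposes into six pieces $H_i^\pm$---two push-offs of each arm, one into $Z_{i-1}$ and one into $Z_i$---glued cyclically along push-offs of $\Sigma$ and along the pages $P_i$. Projecting $\partial N\cap\Ff$ to six diagramming disks $E_i^\pm$ yields $\circ\widehat\PP$ directly, and the reversed cyclic order falls out because the outward normal to $N$ points \emph{into} each $Z_i$, so the correspondences $H_i\leftrightarrow H_i^-$ and $\overline H_{i+1}\leftrightarrow H_{i+1}^+$ reverse orientation. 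Then, as $N$ expands radially to fill $B^4$, the flat patches of the $\Dd_i$ cap off the type-(2) unknots while the vertical patches deperturb the type-(1) components until $\partial N=S^3$ carries the honest braid $\Ll$. Thus the pruning and the straightening are realized by a single geometric isotopy, rather than by your separate appeal to the Alexander/Markov framework.

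Your inward-pushing version has a genuine wrinkle you should address: if you literally push each $\beta_i$ across the vertical patches of $\Dd_i$ onto the non-thread part of $\partial\Dd_i$, the three images all land in the spine and coincide along the vertical strands of the $\Tt_j$, so the resulting curve backtracks rather than embeds. Your phrase ``according to the two sides'' suggests you intend to separate the two passages through each arm, but that is exactly what $\partial N$ accomplishes, and it should be made explicit. Two smaller imprecisions: a vertical patch of $\Dd_i$ need not be a three-sided ``triangle''---its boundary in $H_i\cup_\Sigma\overline H_{i+1}$ can contain flat strands in addition to the two vertical ones, so the type-(1) components need not run purely along vertical strands---and the bigon $\PP_i\cup\overline\PP_{i-1}$ diagrams the \emph{mirror} of the non-thread part of $\partial\Dd_{i-1}$, which is the same orientation reversal as above. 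None of these break your argument, but the paper's $\partial N$ framework handles them all at once.
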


\begin{proof}
	Consider the spine $H_1\cup H_2\cup H_3$ of the genus zero trisection $\T_0$ of $B^4$.  Let $N$ be a small lensed neighborhood of this spine inside $B^4$.  Here, the qualifier `lensed' has the effect that $N\cap\partial B^4$ is unchanged:
	$$N\cap\partial X = P_1\sqcup P_2\sqcup P_3.$$
	We can decompose $\partial N$ into six pieces:
	$$\partial N = H_1^+\cup H_3^-\cup H_3^+\cup H_2^-\cup H_2^+\cup H_1^-,$$
	where the pieces intersect cyclically in the following manner:  the $H_{i+1}^+\cap H_i^- = \Sigma_{i}^-$ are the three obvious push-offs of $\Sigma$ into $\partial N$, and $H_i^-\cap H_i^+ = P_i$.
	Because $B = \partial \Sigma =\partial \Sigma_i = \partial P_i$, it follows that $\partial N$ is a closed 3--manifold.  In fact, there is an obvious `radial' diffeomorphism $N\to B^4$ that pushes $H_{i+1}^+\cup H_i^-$ onto $Y_i$ into an \emph{orientation preserving} way.
	To unpack this last statement, recall that $Z_i$ induces an orientation on it's boundary such that
	$$\partial Z_i = H_i\cup_\Sigma \overline H_{i+1}\cup Y_i.$$
	In $\partial N$, we have corresponding pieces $H_{i+1}^+\cup_{\Sigma^-_i}H_i^+$, but the correspondences
	$$H_i\leftrightarrow H_i^-, \hspace{.5cm} \overline H_{i+1}\leftrightarrow H_{i+1}^+, \hspace{.5cm} \text{and} \hspace{.5cm} \Sigma\leftrightarrow\Sigma_i^-$$
	all reverse orientation.
	This is because the outward normal to $N$ points into $Z_i$.  Figure~\ref{fig:Morse1} provides a potentially helpful schematic.
	
	Bringing the surface $\Ff$ into the picture, we have the identification
	$$\partial N\cap\Ff = (H_1,\Tt_1)\cup\overline{(H_2,\Tt_2)}\cup(H_2,\Tt_2)\cup\overline{(H_3,\Tt_3)}\cup(H_3,\Tt_3)\cup\overline{(H_1,\Tt_1)}.$$
	If $\Ee = E_1\cup E_2\cup E_3$ was our original tri-plane, then there are obvious disks $E_i^\pm\subset H_i^\pm$ onto which $\partial N\cap\Ff$ can be projected.
	As discussed in the text preceding this proposition, the union of the $E_i^\pm$ is a two-sphere, which can be identified with the plane, as discussed.
	Adopting this identification, we find that the induced diagram $\circ\PP$ is a planar diagram for $\partial N\cap \Ff$.

	Recall that, by definition, $\PP_{i+1}\cup\overline\PP_i$ is a diagram for (the mirror of) a split union of a braid with an unlink.
	Thus, the total union $\circ\PP$ is (currently) a diagram for a closed braid split union three unlinks.
	Note that although the diagram describes a closed (geometric) braid, the diagram may not be braided. See Figure~\ref{fig:monodromy2}.
	
	It remains to observe how this diagram changes as the neighborhood $N$ is enlarged until it fills up all of $B^4$ and $\partial N$ coincides with $S^3 = \partial B^4$.
	Two things happen in the course of this.
	First, the unlinks will shrink to points and disappear as the neighborhood $N$ is enlarged to encompass the flat patches of the trivial disk-tangles that cap them off.
	Second, the portions of the diagram corresponding to the closed braid will `straighten out', meaning they will deperturb until the diagram is an honest braid diagram.
	Finally, the neighborhood $N$ will coincide with all of $B^4$, the union of the $E_i^\pm$ will live in $S^3$, and the diagram $\circ\PP$ will correspond to a braid diagram for $\Ll = \partial \Ff$, as desired.
\end{proof}

\begin{figure}[h!]
\begin{subfigure}{\textwidth}
  \centering
  \includegraphics[width=.8\linewidth]{f85}
  \caption{}
  \label{fig:monodromy1}
\end{subfigure}
\par\vspace{5mm}
\begin{subfigure}{.25\textwidth}
  \centering
  \includegraphics[width=.8\linewidth]{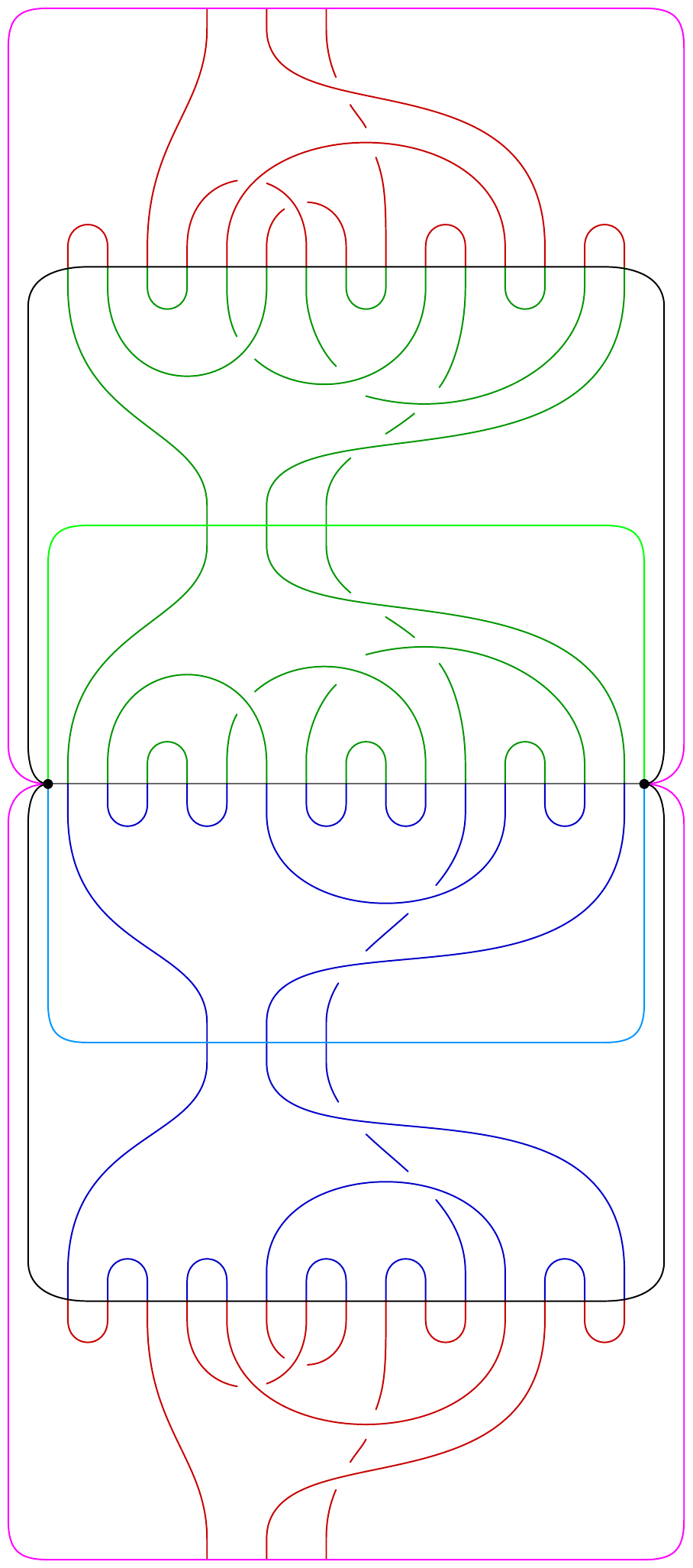}
  \caption{}
  \label{fig:monodromy2}
\end{subfigure}%
\begin{subfigure}{.25\textwidth}
  \centering
  \includegraphics[width=.8\linewidth]{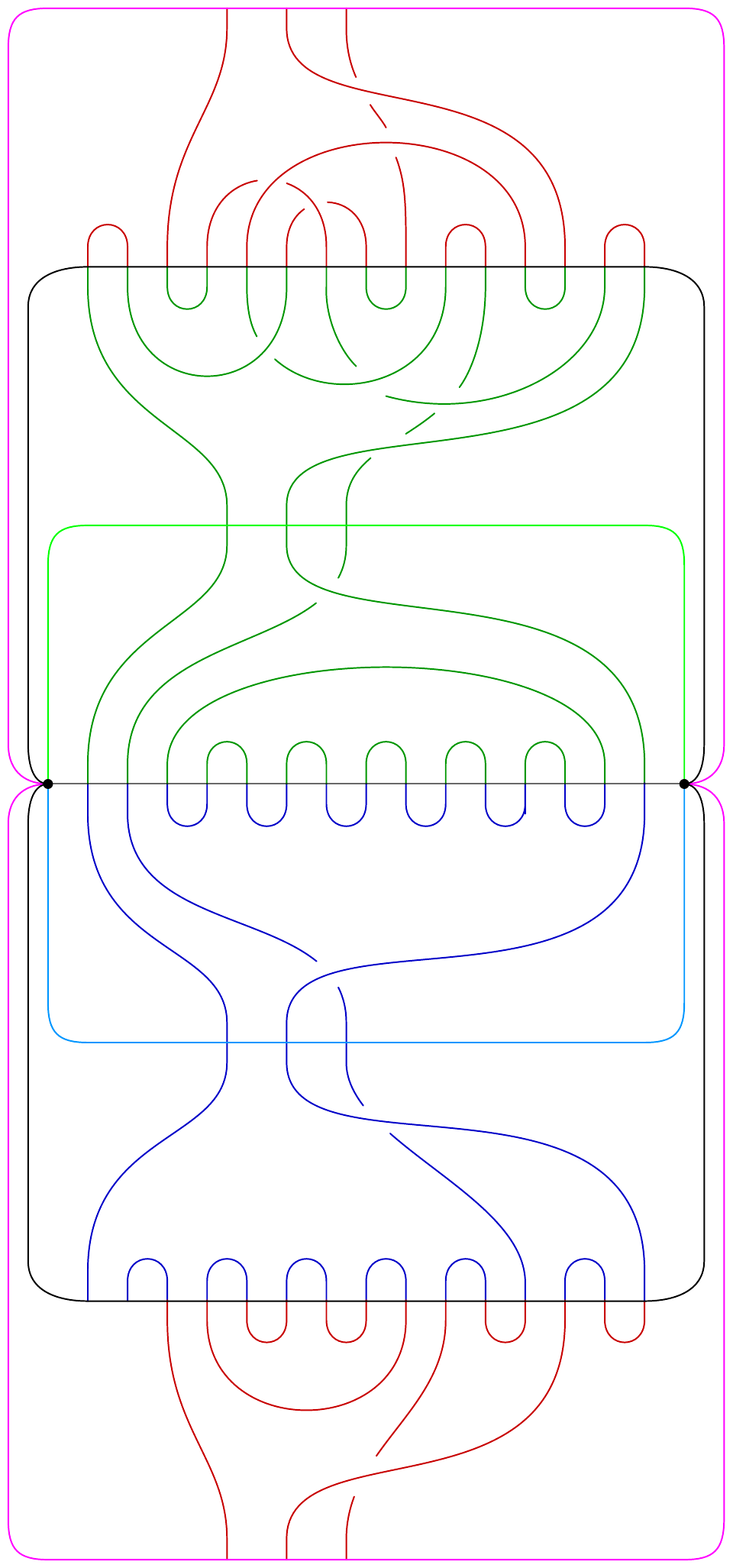}
  \caption{}
  \label{fig:monodromy3}
\end{subfigure}%
\begin{subfigure}{.25\textwidth}
  \centering
  \includegraphics[width=.8\linewidth]{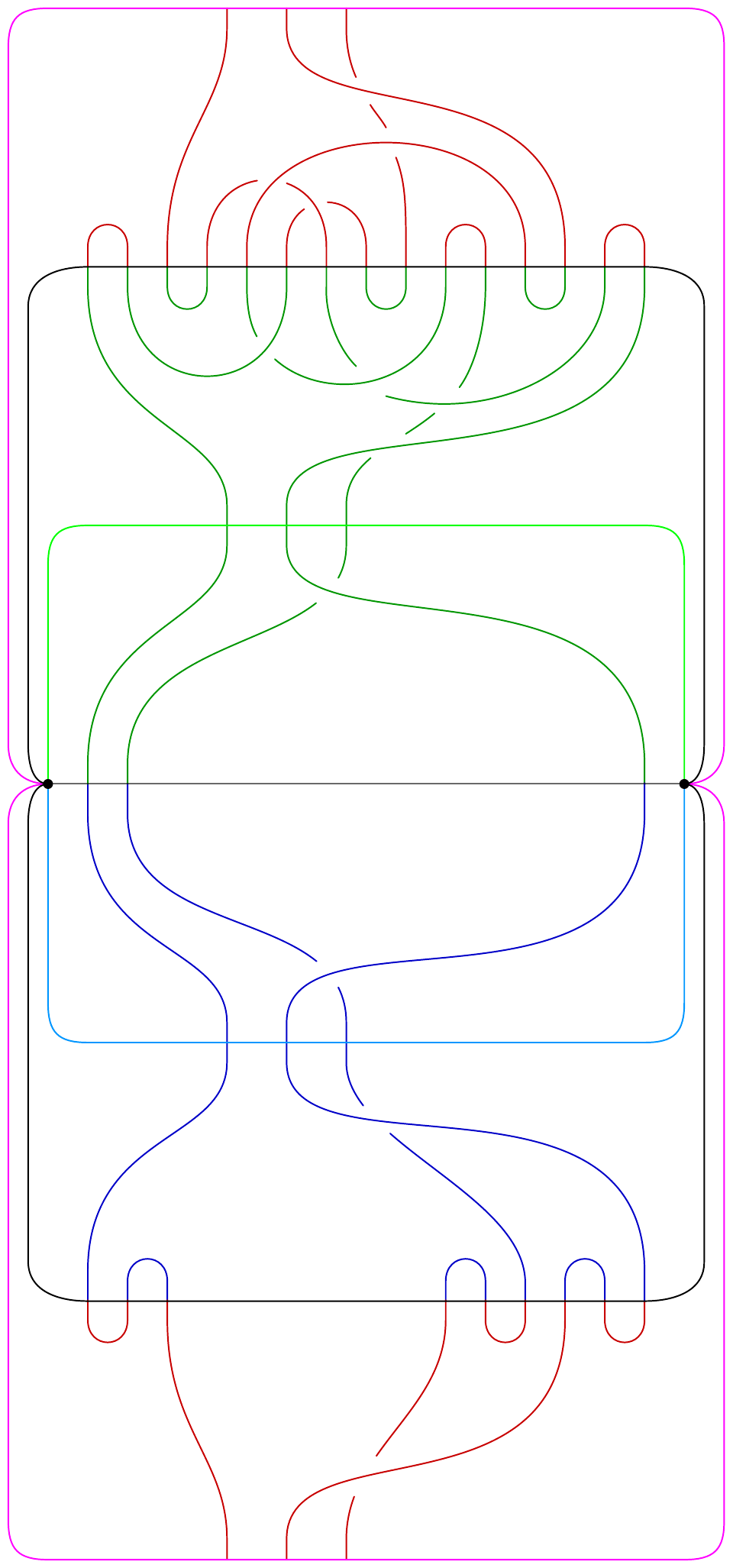}
  \caption{}
  \label{fig:monodromy4}
\end{subfigure}%
\begin{subfigure}{.25\textwidth}
  \centering
  \includegraphics[width=.8\linewidth]{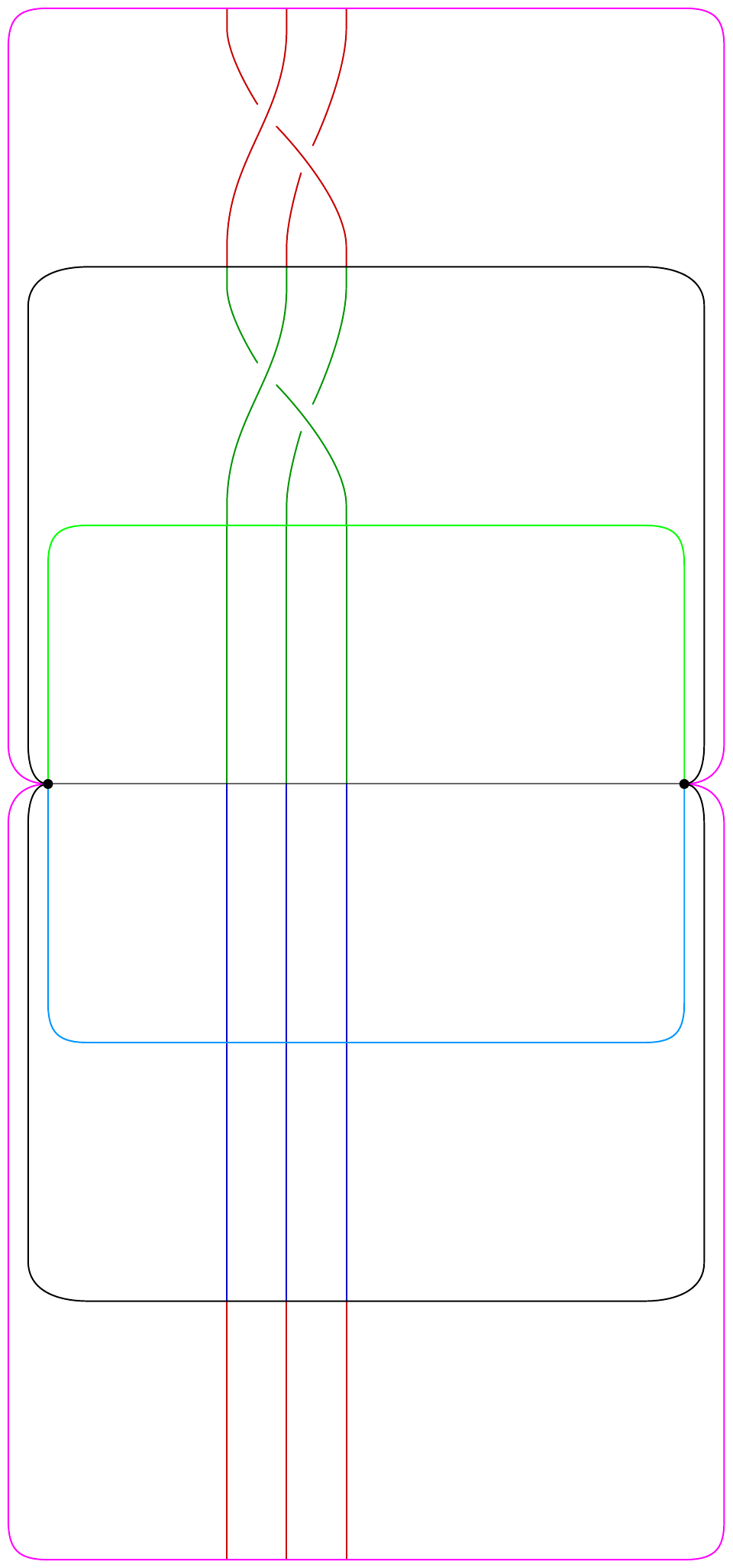}
  \caption{}
  \label{fig:monodromy5}
\end{subfigure}
\caption{Recovering the boundary braid (E) from a tri-plane diagram (A).  Compare with Figure~\ref{fig:f8}.}
\label{fig:monodromy}
\end{figure}

\begin{example}
\label{ex:monodromy}
	Figure~\ref{fig:monodromy2} shows the diagram $\circ\widehat\PP$ corresponding to the tri-plane diagram in Figure~\ref{fig:monodromy1}. (This tri-plane diagram corresponds to the Seifert surface for the figure-8 knot; see Figure~\ref{fig:f8} for more details.) The two black dots represent the braid axis, and each arc connecting the these dots corresponds to a disk page of the braid axis.
	
	As described in the proof of Proposition~\ref{prop:tri-plane_braid}, the sequence Figures~\ref{fig:monodromy2}--\ref{fig:monodromy5} can be thought of as describing the cross-section of the bridge trisected surface with concentric shells in $B^4$, starting with the boundary of a regular neighborhood of the spine of the trisection of $B^4$ and terminating in the boundary of $B^4$.  Moving from Figure~\ref{fig:monodromy2} to Figure~\ref{fig:monodromy3}, the cross-section changes only by isotopy, revealing clearly the presence of two unknotted, type (2) components.  In the transition to Figure~\ref{fig:monodromy4}, these components cap off and disappear.  In the transition to Figure~\ref{fig:monodromy5}, the flat structure is forgotten as we deperturb.  The end result is the boundary of the surface, described by a braid.
\end{example}

For more examples, see Figures~\ref{fig:square} and~\ref{fig:mono}, which were discussed in Examples~\ref{ex:square} and~\ref{ex:mono}, respectively.

\section{Shadow diagrams}
\label{sec:shadow}

Consider a $(g,b;\bold p,\bold f,\bold v)$--tangle $(H,\Tt)$. Let $\Delta$ be a bridge disk system for $\Tt$.  We now fix some necessary notation.
\begin{itemize}
	\item Let $\Sigma = \partial_+H$.
	\item Let $\alpha\subset\Sigma$ be a defining set of curves for $H$, disjoint from $\Delta$.
	\item Let $\frak a$ denote a collection of neatly embedded arcs, disjoint from $\Delta$ and $\alpha$ such that surgering $\Sigma$ along $\alpha$ and $\frak a$ results in a disjoint union of disks.  We assume $|\frak a|$ is minimized.
	\item Let $\Tt^*$ denote the shadows of the flat strands of $\Tt$ -- i.e. those coming from the bridge semi-disks.
	\item Let $\Aa^*$ denote the shadows for the vertical strands -- i.e. those coming from the bridge triangles.
	\item Let $\bold x = \Tt\cap\Sigma$.
\end{itemize}
The tuple $(\Sigma,\alpha,\Tt^*,\bold x)$ is called a \emph{tangle shadow} for the pair $(H,\Tt)$.
The tuple $(\Sigma,\alpha,\frak a,\Tt^*,\Aa^*,\bold x)$ is called an \emph{augmented tangle shadow} for the pair $(H,\Tt)$.  We will say that an augmented tangle shadow is an \emph{augmenting} of the underlying tangle shadow.
Figure~\ref{fig:std_diag} shows a pair of augmented tangle shadows: One is found by considering the red, pink, and orange arcs and curves, while the other is found by considering the dark blue, light blue, and orange arcs and curves.  Note that we consider tangle shadows up to isotopy rel-$\partial$.

\begin{figure}[h!]
\centering
\includegraphics[width=.8\linewidth]{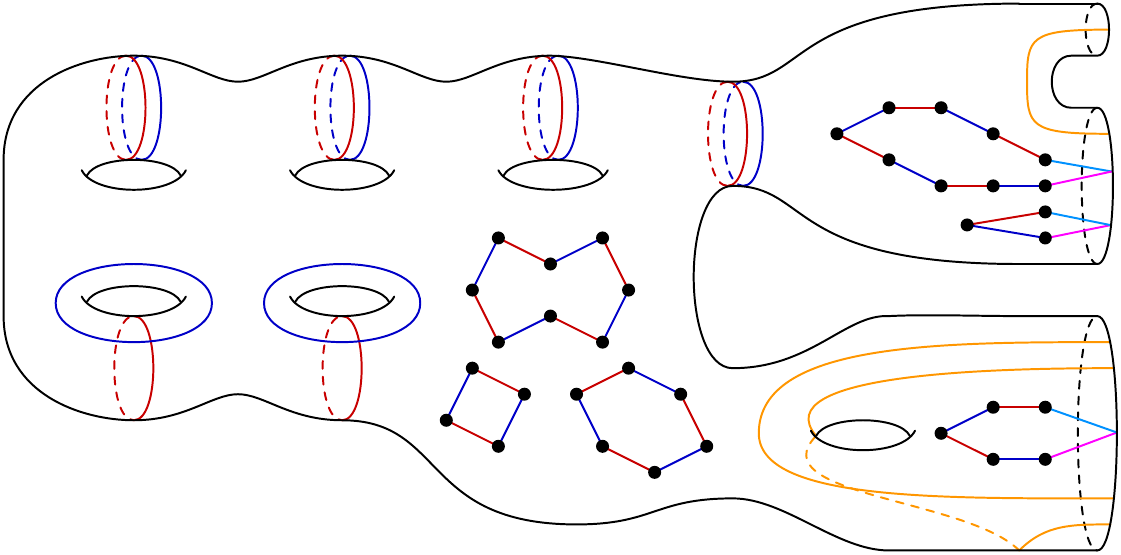}
\caption{A pair of (augmented) tangle shadows that, taken together, give a standard (augmented) splitting shadow.  The relevant parameters for each handlebody are $g=6$, $n=2$, $m=3$, $\bold p=(0,1)$, $\bold f = (2,1)$.  The relevant parameters for each tangle are $b=16$ and $\bold v=(0,2,1)$.  The arcs and curves of the $\alpha_i$ and $\Tt_i^*$ for $i=1,2$ are shown in red and blue, respectively, while the arcs of the $\Aa_i$ are shown in pink and light blue, respectively, and the arcs of $\frak a_1=\frak a_2$ are shown in orange.}
\label{fig:std_diag}
\end{figure}

\begin{lemma}
\label{lem:tangle_shadow}
	A tangle shadow determines a tangle $(H,\Tt)$.
\end{lemma}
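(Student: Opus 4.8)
The plan is to reverse the construction of a tangle shadow: from the abstract data $(\Sigma,\alpha,\Tt^*,\bold x)$ I build a trivial tangle $(H,\Tt)$, and then I check that the result is independent of the auxiliary choices, so that it is well defined up to the equivalence (isotopy rel-$\partial$, with the standard identification on $\partial_-H$) under which trivial tangles are considered. Throughout I use that $\alpha$ is a collection of essential, disjoint simple closed curves in $\Sigma\setminus\nu(\bold x)$ forming a defining set of curves, and that $\Tt^*$ is a collection of disjoint, neatly embedded arcs in $\Sigma$ whose endpoints pair up $2b$ of the points of $\bold x$, leaving $v=|\bold x|-2b$ of them unpaired.

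First I would recover the compressionbody. By Subsection~\ref{subsec:Compression}, attaching $3$--dimensional $2$--handles to $\Sigma\times[-1,1]$ along $\alpha\times\{1\}$ and filling in any resulting sphere components with balls produces the lensed compressionbody $H=H_\alpha$ with $\partial_+H=\Sigma$ and $\partial_-H=\Sigma^\alpha$; the pair $(H,\partial_+H)$ is well defined up to diffeomorphism rel-$\Sigma$. Since $\alpha$ is a defining set, it bounds a cut system $\Dd$, and $H\setminus\nu(\Dd)$ is a product $(\partial_-H)\times I$ (or a ball, in which case there are no vertical strands, as discussed in Subsection~\ref{subsec:Tangles}); fixing such a $\Dd$ equips $H$ with a standard Morse function $\Phi$, under which $\Sigma=\Phi^{-1}(-1)$ contains $\bold x$. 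The flat strands come from $\Tt^*$: pushing the interior of each arc $\tau^*\in\Tt^*$ slightly into $H$, rel $\partial\tau^*$, yields a flat strand $\tau$ with a single maximum together with the bridge semi-disk swept out by the push-in, whose shadow is exactly $\tau^*$; since the $\tau^*$ are disjoint, these push-ins can be taken disjoint, giving $b$ disjoint flat strands.

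Next I would reconstruct the vertical strands. After a small isotopy supported away from $\alpha$ and $\Tt^*$, each of the $v$ unpaired points $x\in\bold x\setminus\partial\Tt^*$ may be assumed to lie in $\Sigma\setminus\nu(\alpha)$, which sits in the product region $(\partial_-H)\times I$ from the previous step; the fiber $\{x\}\times I$ of this product is then a vertical strand joining $x$ to $\partial_-H$, and any arc from $x$ to $\partial\Sigma$ inside this region has vertical trace a bridge triangle for it. Let $\Tt$ be the union of these $b$ flat and $v$ vertical strands. Note that the ordered partition $\bold v$ is itself pinned down: the separating curves of $\alpha$ cut $\Sigma$ into the regions that survive the surgery as the components of $\partial_-H$, so recording which region each unpaired point of $\bold x$ lies in records $\bold v$.

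Finally I would check well-definedness: $(H,\partial_+H)$ depends only on $(\Sigma,\alpha)$; two push-ins of a fixed $\tau^*$ are isotopic rel-$\partial$ within a collar of $\Sigma$; and two vertical strands produced over a fixed unpaired point $x$ are isotopic rel-$\partial$. The last point — the reconstruction of the vertical strands from the \emph{reduced} data, i.e. the fact that the tangle shadow omits the vertical shadows $\Aa^*$ of the augmented shadow and yet the vertical strands are still determined — is the only real obstacle. It rests on the observation that $(H,\Tt)$ as built above is an honestly trivial tangle with prescribed bridge points $\bold x$, prescribed flat pairing $\Tt^*$, and prescribed partition $\bold v$, so by the model-tangle discussion of Subsection~\ref{subsec:Tangles} the only residual indeterminacy is a braiding of the bridge points, and a vertical strand (being ``straight'') contributes none once its top endpoint is fixed; equivalently, boundary-parallel vertical arcs with fixed endpoints in a compressionbody are unique up to isotopy rel-$\partial$ (cf. Proposition~\ref{prop:triv_disks}). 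Thus $(H,\Tt)$ is well defined, and by construction it is a trivial tangle whose tangle shadow, with respect to the bridge disk system just exhibited, is the given one.
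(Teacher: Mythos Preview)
Your proposal is correct and follows essentially the same construction as the paper: build $H$ by attaching $3$--dimensional $2$--handles to $\Sigma\times I$ along $\alpha$, push the interiors of the arcs of $\Tt^*$ into $H$ to obtain the flat strands, and use the product structure to extend the unpaired points of $\bold x$ to vertical strands. Your treatment is in fact more careful than the paper's, which records only the construction in a few lines and does not discuss well-definedness at all.
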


\begin{proof}
	Given a shadow diagram $(\Sigma,\alpha,\Tt^*,\bold x)$, let $H$ be the lensed cobordism obtained from the spread $H\times[0,1]$ by attaching 3--dimensional 2--handles along the curves $\alpha\times\{1\}$.  Let $\Tt\subset H$ be obtained by perturbing the interiors of the shadows $\Tt^*\times\{0\}$ into the interior of $H$ to obtain the flat strands of $\Tt$ and extending the marked points $\bold x$ to vertical arcs $\bold x\times[0,1]$ using the product structure of the spread to obtain the vertical strands of $\Tt$.  This completes the proof the first claim.
\end{proof}

As a matter of convention, we have assumed without loss of generality that the curves and arcs of $\alpha\cup\frak a\cup\Tt^*\cup\Aa^*$ are all pairwise disjoint; it is not strictly necessary, for example, to assume $\alpha\cap\Tt^*=\emptyset$, but this can always be achieved.
Given a tangle shadow $(\Sigma,\alpha,\Tt^*,\bold x)$, we recall two standard moves:  Let $\alpha_1$ and $\alpha_2$ be two curves in $\alpha$, and let $\omega$ be a embedded arc in $\Sigma$ connecting $\alpha_1$ to $\alpha_2$ such that $\Int(\omega)\cap(\alpha\cup\Tt^*\cup\bold x)=\emptyset$.
Then $N=\nu(\alpha_1\cup\omega\cup\alpha_2)$ is a pair of pants.  Let $\alpha_1'$ be the boundary component of $N$ not parallel to $\alpha_1$ or $\alpha_2$.  Then $\alpha' = \alpha\setminus\{\alpha_1\}\cup\{\alpha_1'\}$ is a new defining set of curves for $H$.
We say that  $\alpha'$ is obtained from $\alpha$ by a \emph{curve slide} of $\alpha_1$ over $\alpha_2$ along $\omega$.
Now let $\tau_1^*$ be an arc of $\Tt^*$ and let $\alpha_2$ be a curve in $\alpha$ (respectively, the boundary of a regular neighborhood of another arc $\tau_2^*$ of $\Tt^*$). Let $\omega$ be a embedded arc in $\Sigma$ connecting $\tau_1^*$ to $\alpha_2$ such that $\Int(\omega)\cap(\alpha\cup\Tt^*\cup\bold x)=\emptyset$.
Let $(\tau_1^*)'$ denote the arc obtained by banding $\tau_1^*$ to $\alpha_2$ using the surface-framed neighborhood of $\omega$.
Then, $(\Tt^*)' = \Tt^*\setminus\tau_1^*\cup(\tau_1^*)'$ is a new collection of shadows for the flat strands of $\Tt$.  We say that $(\Tt^*)'$ is obtained from $\Tt^*$ by an \emph{arc slide} of $\tau_1^*$ over $\alpha_2$ (respectively, $\tau_2^*)$ along $\omega$.
Two shadow diagrams for $(H,\Tt)$ are called \emph{slide-equivalent} if they can be related by a sequence of curve slides and arc slides.

Given an augmented tangle shadow $(\Sigma,\alpha,\frak a,\Tt^*,\Aa^*,\bold x)$, we have further moves.  Similar to above, we have arc slide moves that allow one to slide arcs of $\frak a$ or $\Aa^*$ over arcs and curves of $\alpha$ and $\Tt^*$.  Note that we do not allow an arc or curve of any type to slide over an arc of $\frak a$ nor $\Aa^*$.
Two (augmented) shadow diagrams that are related by a sequence of these two types of moves are called \emph{slide-equivalent}.
The following is a modest generalization of a foundational result of Johansson~\cite{Joh_95_Topology-and-combinatorics-of-3-manifolds}, and follows from a standard argument.

\begin{proposition}
\label{prop:Joh}
	Two (augmented) tangle shadows for a given tangle are slide-equivalent.
\end{proposition}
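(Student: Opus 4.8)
The plan is to reduce the statement to the classical result of Johansson, which says that any two systems of compressing disks for a fixed compressionbody that cut it into balls (or into $\partial_-H \times I$) are related by handleslides. First I would recall precisely what data a tangle shadow carries: a defining set of curves $\alpha$ for the compressionbody $H$ together with shadow arcs $\Tt^*$ (and $\Aa^*$, $\frak a$ in the augmented case) recording the bridge semi-disks and triangles of the tangle $\Tt$ and a cut system for $\Sigma$. So given two tangle shadows $(\Sigma,\alpha,\Tt^*,\bold x)$ and $(\Sigma,\alpha',(\Tt^*)',\bold x)$ for the same tangle $(H,\Tt)$, I would first handle the curves $\alpha$ and $\alpha'$: since both are defining sets for the same $H$, they bound cut systems $\Dd,\Dd'$, and Johansson's theorem (in the relative/sutured setting, which is the ``modest generalization'' referred to) shows $\alpha$ and $\alpha'$ are related by curve slides. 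One must be slightly careful that the slides can be performed in the complement of $\Tt^*\cup\bold x$: this is a general-position argument, since $\Tt^*$ is one-dimensional and the guiding arcs $\omega$ of the slides can be pushed off of it, and at the very end one can isotope $\Tt^*$ (rel $\partial$) back into standard position. After this step I may assume $\alpha = \alpha'$.

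Second, with the curves matched, I would compare $\Tt^*$ and $(\Tt^*)'$. Both are systems of shadow arcs for the flat strands of $\Tt$ in $\Sigma$, meaning $\Tt^* \times \{0\}$ and $(\Tt^*)'\times\{0\}$ push into $H$ to give isotopic (rel $\partial$) collections of flat strands together with their bridge semi-disks. The key point is that a bridge semi-disk for a flat strand $\tau$ is unique up to isotopy in $H$ once $\tau$ is fixed: its boundary is $\tau$ union a shadow arc in $\Sigma$, and in the ball $H\setminus\nu(\Dd)$ (or $\partial_-H\times I$) any two such disks differ by an isotopy. Tracing this isotopy back through the cut system gives exactly a sequence of arc slides of $\tau^*$ over the curves of $\alpha$ (when the isotopy crosses a compressing disk) and over other shadow arcs $\tau_2^*$ (when the strands get reordered past one another). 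This is the standard ``uniqueness of spanning disks up to handleslides'' argument, applied strand by strand; since the flat strands are disjoint, one can treat them in sequence. The augmented case is identical, with the extra arcs $\frak a$ (cutting $\Sigma^\alpha$ into disks) and $\Aa^*$ (shadows of bridge triangles for vertical strands) handled the same way — note that the move conventions only allow sliding $\frak a$ and $\Aa^*$ over $\alpha$ and $\Tt^*$, never the reverse, which is exactly the asymmetry one gets because $\frak a$ and $\Aa^*$ are ``auxiliary'' choices made last.

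The main obstacle I expect is bookkeeping the general-position reductions cleanly: one wants to say ``a guiding arc $\omega$ can always be chosen disjoint from the other arcs and curves and from $\bold x$,'' but one must ensure that doing so does not secretly require an isotopy that has already been ``used up.'' The clean way to organize this is to fix once and for all a cut system $\Dd$ for $H$ and parametrize $H\setminus\nu(\Dd)$ as a ball (or $\partial_-H\times I$), do all the arc/curve comparisons inside that model where everything is visibly standard, and only then translate each elementary isotopy in the model into a slide move on $\Sigma$. Because this is precisely the argument Johansson carries out for curves, and the shadow-arc version is formally the same (a shadow arc being ``half'' of a compressing-disk boundary), I would present this as a routine adaptation: state the reduction to Johansson's theorem, remark that bridge semi-disks and triangles are unique up to isotopy rel their boundary arcs, and observe that the resulting isotopies decompose into the declared slide moves, with general position supplying the disjointness of guiding arcs from $\Tt^*\cup\Aa^*\cup\frak a\cup\bold x$ throughout.
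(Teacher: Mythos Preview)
Your proposal is correct and matches the paper's approach in spirit; in fact, the paper does not give a proof at all, merely remarking that the statement ``is a modest generalization of a foundational result of Johansson\ldots and follows from a standard argument.'' Your outline---first matching the defining curves via Johansson's theorem, then matching shadow arcs via uniqueness of bridge semi-disks and triangles up to isotopy, translating the isotopies into arc slides---is precisely the standard argument being alluded to, so you have supplied the details the paper omits.
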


A tuple $(\Sigma,\alpha_1,\alpha_2,\Tt_1^*,\Tt_2^*,\bold x)$ is called a \emph{splitting shadow} if each tuple $(\Sigma,\alpha_i,\Tt_i^*,\bold x)$ is a tangle shadow. A splitting shadow gives rise to a bridge splitting of pair $(M,K)$ in the same way that a tangle shadow gives rise to a tangle.  Recall the notion of a standard bridge splitting of $(M,K)$ from Subsection~\ref{subsec:Bridge}.  If a splitting shadow corresponds to a standard bridge splitting, then the tangle shadows $(\Sigma,\alpha_i,\Tt_i^*, \bold x)$ are (respectively, for $i=1,2$) slide-equivalent to tangle shadows $(\Sigma,\alpha_i',(\Tt_i^*)', \bold x)$ such that $(\Sigma,\alpha_1',\alpha_2')$ is a standard Heegaard diagram (cf. Subsection~\ref{subsec:Heegaard}) and $(\Tt_1^*)'\cup(\Tt_2^*)'$ is a neatly embedded collection of polygonal arcs and curves such that the polygonal curves bound disjointly embedded disks.  We call such a splitting shadow $(\Sigma,\alpha_1,\alpha_2,\Tt_1^*,\Tt_2^*,\bold x)$ \emph{standard}.  Figure~\ref{fig:std_diag} shows a standard splitting shadow (ignore the pink, light blue, and orange arcs for now).
Two splitting shadows are called \emph{slide-equivalent} if the two pairs of corresponding tangle shadows are slide-equivalent.

\begin{definition}
	A \emph{$(g,\bold k, f,\bold c;\bold p, \bold f, \bold v)$--shadow diagram} is a tuple $(\Sigma, \alpha_1, \alpha_2, \alpha_3,\Tt_1^*,\Tt_2^*,\Tt_3^*,\bold x)$, such that the tuple $(\Sigma,\alpha_i,\alpha_{i+1},\Tt_i^*,\Tt_{i+1}^*,\bold x)$ slide-equivalent to a standard splitting shadow for each $i\in\Z_3$.
	
	Two shadow diagrams are called \emph{slide-equivalent} if the three pairs of corresponding tangle shadows are slide-equivalent.
\end{definition}

\begin{figure}[h!]
\centering
\includegraphics[width=.5\linewidth]{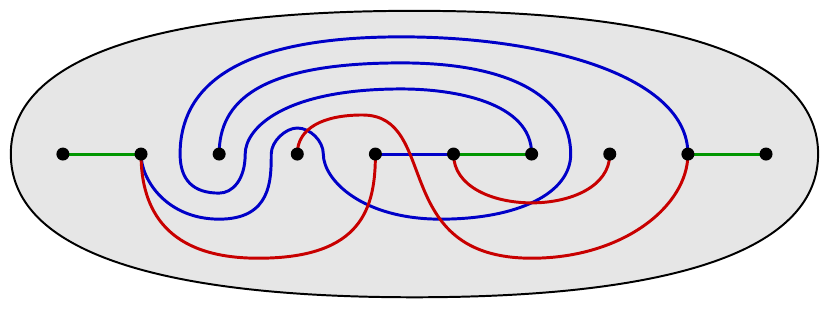}
\caption{A shadow diagram for the bridge trisection given in Figure~\ref{fig:steve}, which corresponds a ribbon disk for the stevedore knot.}
\label{fig:steve_shadow}
\end{figure}

\begin{proposition}
\label{prop:shadow}
	A $(g,\bold k, f,\bold c;\bold p, \bold f, \bold v)$--shadow diagram uniquely determines the spine of a $(g,\bold k, f,\bold c;\bold p, \bold f, \bold v)$--bridge trisection.  Any two shadow diagrams for a fixed bridge trisection are slide-equivalent.
\end{proposition}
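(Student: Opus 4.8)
### Proof proposal for Proposition~\ref{prop:shadow}

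The plan is to break the statement into its two halves: first, that a shadow diagram determines the spine of a bridge trisection of the correct type, and second, that any two shadow diagrams for a fixed bridge trisection are slide-equivalent. For the first half I would proceed tangle-by-tangle: by Lemma~\ref{lem:tangle_shadow}, each tangle shadow $(\Sigma,\alpha_i,\Tt_i^*,\bold x)$ determines a trivial tangle $(H_i,\Tt_i)$, and the three tangles share the common boundary $(\Sigma,\bold x)$ by construction. The content to verify is that this triple is genuinely the spine of a \emph{bridge trisection} — i.e., that each pairwise union $(H_i,\Tt_i)\cup_{(\Sigma,\bold x)}\overline{(H_{i+1},\Tt_{i+1})}$ is a standard bridge splitting. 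But this is exactly the defining condition of a shadow diagram: each pair $(\Sigma,\alpha_i,\alpha_{i+1},\Tt_i^*,\Tt_{i+1}^*,\bold x)$ is slide-equivalent to a standard splitting shadow, and slide moves do not change the underlying tangles (they only change the choice of defining curves and shadow arcs), so the bridge splitting obtained is standard. By Corollary~\ref{coro:spine}, the spine then determines a unique bridge trisection, and one reads off the parameters $(g,\bold k,f,\bold c;\bold p,\bold f,\bold v)$ from the combinatorics: $g$ and $f$ from $\Sigma$, the $c_i$ from the number of closed polygonal curves in $\Tt_i^*\cup\Tt_{i+1}^*$ after standardizing, the $\bold v$ from the vertical strand partition recorded by $\bold x$, and the $k_i$ from Lemma~\ref{lemma:k-value} applied to the $(m_i,n)$--standard Heegaard splitting $(\Sigma;H_i,H_{i+1})$.

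For the second half, suppose $\T_\Ff$ is a fixed bridge trisection with spine $(H_1,\Tt_1)\cup(H_2,\Tt_2)\cup(H_3,\Tt_3)$, and suppose two shadow diagrams $\Ss$ and $\Ss'$ both encode it. Each $\Ss$ amounts to a choice, for each $i\in\Z_3$, of a bridge disk system $\Delta_i$ for $\Tt_i$ (yielding $\Tt_i^*$) together with a defining set of curves $\alpha_i$ for $H_i$ disjoint from $\Delta_i$. By Proposition~\ref{prop:Joh}, for each fixed $i$ any two tangle shadows for the \emph{same} tangle $(H_i,\Tt_i)$ are slide-equivalent; applying this for $i=1,2,3$ shows that the tangle shadow of $\Ss$ and that of $\Ss'$ are slide-equivalent in each coordinate. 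Since slide-equivalence of shadow diagrams is by definition slide-equivalence of the three pairs of constituent tangle shadows, this gives $\Ss\sim\Ss'$. The one subtlety is that the marked points $\bold x$ and the identification of the common subsurface must be respected across all three coordinates simultaneously — but Proposition~\ref{prop:Joh} is a statement rel-$\partial$, so the slide moves are supported in the interior of $\Sigma$ away from $\bold x$, and the three coordinate-wise equivalences are genuinely independent and assemble into an equivalence of shadow diagrams.

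The main obstacle I anticipate is bookkeeping rather than mathematical depth: one must be careful that the slide moves invoked by Proposition~\ref{prop:Joh} in one tangle coordinate do not interfere with the standard-splitting-shadow condition on the adjacent pairs, and that the resulting triple still satisfies the pairwise-standardness defining a shadow diagram. This is handled by observing that slide-equivalence is defined purely coordinate-wise and that standardness of a splitting shadow is a property of the equivalence class (any representative slide-equivalent to a standard one counts), so no compatibility across coordinates is actually required. A secondary point worth spelling out is that the vertical-strand shadows $\Aa_i^*$ and the cut-completing arcs $\frak a_i$ present in the \emph{augmented} picture play no role in the non-augmented statement; one either works throughout with augmented shadows (using the augmented version of Proposition~\ref{prop:Joh}) and then forgets the extra arcs, or notes directly that a bridge disk system for $\Tt_i$ restricted to the flat strands already suffices to reconstruct $\Tt_i$ together with the vertical strands via the product structure, as in the proof of Lemma~\ref{lem:tangle_shadow}.
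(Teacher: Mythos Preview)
Your proposal is correct and follows essentially the same approach as the paper's proof, which invokes Lemma~\ref{lem:tangle_shadow} and the definition of a shadow diagram for the first claim and Proposition~\ref{prop:Joh} applied coordinate-wise for the second. Your write-up is considerably more detailed than the paper's three-sentence argument, and your explicit attention to the coordinate-wise independence of the slide moves and to the reading-off of parameters is a welcome elaboration, but there is no substantive difference in strategy.
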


\begin{proof}
	First, note that a shadow diagram determines the spine of a bridge trisection. This follows immediately from the definition of a shadow diagram, and Lemma~\ref{lem:tangle_shadow}.  The first claim follows from the fact that a bridge trisection is determined up to diffeomorphism by its spine.  The second claim follows from
	Proposition~\ref{prop:Joh}.
\end{proof}

Since bridge trisections are determined by their spines (Corollary~\ref{coro:spine}), we find that any surface $(X,\Ff)$ can be described by a shadow diagram.

\begin{corollary}
\label{coro:shadow_describe}
	Let $X$ be a smooth, orientable, compact, connected four-manifold, and let $\Ff$ be a neatly embedded surface in $X$.  Then, $(X,\Ff)$ can be described by a shadow diagram.
\end{corollary}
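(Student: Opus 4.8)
The plan is to deduce Corollary~\ref{coro:shadow_describe} from the general existence theorem for bridge trisections together with the fact (Corollary~\ref{coro:spine}) that a bridge trisection is recovered from its spine, and the fact (Proposition~\ref{prop:shadow}) that shadow diagrams encode spines. So the proof is really a two-line assembly argument; the substance has all been done earlier.

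First I would invoke Theorem~\ref{thm:general}: given a smooth, orientable, compact, connected four-manifold $X$, fix any trisection $\T$ of $X$ (such a trisection exists by the work of Gay--Kirby and its extension to manifolds with disconnected boundary cited in the preliminaries). Let $(B,\pi)$ be the induced open-book decomposition on $Y = \partial X$, let $\Ll = \partial\Ff$, and fix any braiding $\wh\beta$ of $\Ll$ about $(B,\pi)$ (one exists by the Alexander--Rudolph theorem). Theorem~\ref{thm:general} then puts $\Ff$ in bridge trisected position with respect to $\T$, so that $(X,\Ff)$ acquires a bridge trisection $\T_\Ff$ with some parameters $(g,\bold k, b, \bold c; \bold p, \bold f, \bold v)$.

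Next I would pass to the spine $(H_1,\Tt_1)\cup(H_2,\Tt_2)\cup(H_3,\Tt_3)$ of $\T_\Ff$. By Corollary~\ref{coro:spine}, this spine determines $\T_\Ff$, hence determines $(X,\Ff)$ up to diffeomorphism. For each $i\in\Z_3$, choose a bridge disk system $\Delta_i$ for the trivial tangle $(H_i,\Tt_i)$ (these exist by the lemma on bridge disk systems, using that the relevant boundary surfaces have boundary), a defining set of curves $\alpha_i$ for $H_i$ disjoint from $\Delta_i$, and the shadows $\Tt_i^*$ of the flat strands. This produces a tangle shadow $(\Sigma,\alpha_i,\Tt_i^*,\bold x)$ for each $i$. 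Because $\T_\Ff$ is a bridge trisection, each pairwise union $(H_i,\Tt_i)\cup\overline{(H_{i+1},\Tt_{i+1})}$ is a standard bridge splitting, so after slide moves the pair of tangle shadows $(\Sigma,\alpha_i,\Tt_i^*,\bold x)$, $(\Sigma,\alpha_{i+1},\Tt_{i+1}^*,\bold x)$ is slide-equivalent to a standard splitting shadow; thus the tuple $(\Sigma,\alpha_1,\alpha_2,\alpha_3,\Tt_1^*,\Tt_2^*,\Tt_3^*,\bold x)$ is a shadow diagram in the sense of the definition preceding Proposition~\ref{prop:shadow}. By Proposition~\ref{prop:shadow}, this shadow diagram determines the spine, hence the bridge trisection, hence $(X,\Ff)$.

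There is no real obstacle here — the corollary is a packaging statement. The only point requiring a word of care is checking that the tangle shadows extracted from the spine genuinely assemble into a \emph{shadow diagram}, i.e.\ that each consecutive pair is slide-equivalent to a \emph{standard} splitting shadow; this is exactly the content of the standardness of the pairwise bridge splittings in a bridge trisection (Definition~\ref{def:Bridge} together with the discussion of standard splitting shadows), so it is immediate. I would therefore keep the write-up to a few sentences, citing Theorem~\ref{thm:general}, Corollary~\ref{coro:spine}, and Proposition~\ref{prop:shadow} in turn.
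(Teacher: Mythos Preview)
Your proposal is correct and follows essentially the same approach as the paper: the corollary is stated immediately after Proposition~\ref{prop:shadow}, with the one-sentence justification that bridge trisections are determined by their spines (Corollary~\ref{coro:spine}), implicitly relying on Theorem~\ref{thm:general} for existence. Your write-up simply fills in the details the paper leaves implicit.
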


\subsection{Recovering the boundary braid from a shadow diagram}
\label{subsec:shadow_braid}
\ 

We now see how to recover the information about the boundary of a bridge trisected pair $(X,\Ff)$.  By augmenting a shadow diagram for the bridge trisection, we will recover this information in the form of an abstract open-book braiding, as defined in Subsection~\ref{subsec:OBD}.  What follows is based on the monodromy algorithm described by Castro, Gay, Pinz\'on-Caicedo in~\cite{CasGayPin_18_Diagrams-for-relative-trisections} and is closely related to the notion of an arced relative trisection diagram, as described in~\cite{GayMei_18_Doubly-pointed}.

To start, we return our attention to pairs of augmented tangle shadows.  A tuple $(\Sigma,\alpha_1,\alpha_2,\frak a, \Tt_1^*,\Tt_2^*,\Aa_1^*,\Aa_2^*,\bold x)$ is called a \emph{standard augmented splitting shadow} if
\begin{itemize}
	\item For each $i=1,2$, $(\Sigma,\alpha_i,\frak a, \Tt_i^*,\Aa_i^*,\bold x)$ is a augmented tangle shadow;
	\item $(\Sigma,\alpha_1,\alpha_2, \Tt_1^*,\Tt_2^*,\bold x)$ is a standard splitting shadow;
	\item The components of $\Tt_1^*\cup\Tt_2^*\cup\Aa_1^*\cup\Aa_2^*$ intersecting $\partial\Sigma$ bound disjointly embedded polygonal disks, each of which intersects $\partial \Sigma$ in a single point.
\end{itemize}
See Figure~\ref{fig:std_diag} for an example of a standard augmented splitting shadow.

\begin{definition}[\textbf{\emph{augmented shadow diagram}}]
	An \emph{augmented $(g,\bold k, f,\bold c;\bold p, \bold f, \bold v)$--shadow diagram} is a tuple $(\Sigma, \alpha_1, \alpha_2, \alpha_3,\frak a_1, \Tt_1^*,\Tt_2^*,\Tt_3^*,\Aa_1^*,\bold x)$, such that the tuple $(\Sigma, \alpha_1, \alpha_2, \alpha_3,\Tt_1^*,\Tt_2^*,\Tt_3^*,\bold x)$ is a shadow diagram, and $(\Sigma, \alpha_1, \frak a_1,\Tt_1^*,\Aa_1^*,\bold x)$ is an augmented tangle shadow.

	A \emph{fully augmented $(g,\bold k, f,\bold c;\bold p, \bold f, \bold v)$--shadow diagram} is a tuple
	$$\left(\Sigma, \alpha_1, \alpha_2, \alpha_3,\frak a_1,\frak a_2,\frak a_3,\frak a_4, \Tt_1^*,\Tt_2^*,\Tt_3^*,\Aa_1^*,\Aa_2^*,\Aa_3^*,\Aa_4^*,\bold x\right),$$
	such that the tuple $(\Sigma, \alpha_1, \alpha_2, \alpha_3,\Tt_1^*,\Tt_2^*,\Tt_3^*,\bold x)$ is a shadow diagram,  the tuples $(\Sigma, \alpha_1, \frak a_1,\Tt_1^*,\Aa_1^*,\bold x)$ and $(\Sigma, \alpha_1, \frak a_4,\Tt_1^*,\Aa_4^*,\bold x)$ are augmented tangle shadows for the same tangle, and the following hold:
	\begin{enumerate}
		\item For each $i\in\Z_3$, the diagram
			$$\left(\Sigma, \alpha_i,\alpha_{i+1}, \frak a_i, \frak a_{i+1}, \Tt_i^*, \Tt_{i+1}^*, \Aa_i^*, \Aa_{i+1}^*, \bold x\right)$$
			is slide-equivalent to a diagram
			$$\left(\Sigma, \alpha_i', \alpha_{i+1}', \frak a_i', \frak a_{i+1}', (\Tt_i^*)', (\Tt_{i+1}^*)', (\Aa_i^*)', (\Aa_{i+1}^*)', \bold x\right)$$
			that is a standard augmented splitting shadow, with $\frak a_i'=\frak a_{i+1}'$.
		\item The diagram
			$$\left(\Sigma, \alpha_3,\alpha_1, \frak a_3, \frak a_4, \Tt_3^*, \Tt_1^*, \Aa_3^*, \Aa_4^*, \bold x\right)$$
			is slide-equivalent to a diagram
			$$\left(\Sigma, \alpha_3'', \alpha_1'', \frak a_3'', \frak a_4'', (\Tt_3^*)'', (\Tt_1^*)'', (\Aa_3^*)'', (\Aa_4^*)'', \bold x\right)$$
			that is a standard augmented splitting shadow, with and satisfies $\frak a_3''=\frak a_4''$.
	\end{enumerate}
	We say that an augmented shadow diagram is an \emph{augmenting} of the underlying shadow diagram and that a fully  augmented shadow diagram is a \emph{full-augmenting} of the underlying (augmented) shadow diagram.
\end{definition}

We now describe how the data of an augmented shadow diagram allows us to recover the boundary open-book braiding  $(Y,\Ll)$ of the corresponding bridge trisected pair $\partial(X,\Ff)$.  First, we note the following crucial connection between augmented shadow diagrams and fully augmented shadow diagrams.

\begin{proposition}
\label{prop:aug}
	There is an algorithmic way to complete an augmented shadow diagram to a fully augmented shadow diagram, which is unique up to slide-equivalence.
\end{proposition}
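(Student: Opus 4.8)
The plan is to build the fully augmented diagram by propagating the augmenting data $(\frak a_1,\Aa_1^*)$ once around the cyclic sequence of junctions $1\to 2\to 3\to 1$, so that the whole construction reduces to a single \emph{propagation step} across one standard splitting shadow. Concretely, I would isolate the following local statement: given a standard splitting shadow $(\Sigma,\alpha_i,\alpha_{i+1},\Tt_i^*,\Tt_{i+1}^*,\bold x)$ and an augmenting $(\frak a_i,\Aa_i^*)$ of the tangle shadow $(\Sigma,\alpha_i,\Tt_i^*,\bold x)$, there is an augmenting $(\frak a_{i+1},\Aa_{i+1}^*)$ of $(\Sigma,\alpha_{i+1},\Tt_{i+1}^*,\bold x)$, unique up to slide-equivalence, such that the augmented junction $(\Sigma,\alpha_i,\alpha_{i+1},\frak a_i,\frak a_{i+1},\Tt_i^*,\Tt_{i+1}^*,\Aa_i^*,\Aa_{i+1}^*,\bold x)$ is slide-equivalent to a standard augmented splitting shadow with $\frak a_i'=\frak a_{i+1}'$. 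Applying this three times, starting from the given $(\frak a_1,\Aa_1^*)$ and relabelling the terminal output over the $\alpha_1$-side as $(\frak a_4,\Aa_4^*)$, produces a tuple satisfying conditions (1) and (2) of the definition; the requirement that $(\Sigma,\alpha_1,\frak a_1,\Tt_1^*,\Aa_1^*,\bold x)$ and $(\Sigma,\alpha_1,\frak a_4,\Tt_1^*,\Aa_4^*,\bold x)$ augment the same tangle is automatic, since both augment the fixed tangle shadow $(\Sigma,\alpha_1,\Tt_1^*,\bold x)$, which by Lemma~\ref{lem:tangle_shadow} determines a unique tangle. This is the bridge-theoretic analogue of the arc-completion step in the monodromy algorithm of Castro--Gay--Pinz\'on-Caicedo~\cite{CasGayPin_18_Diagrams-for-relative-trisections}; compare also~\cite{GayMei_18_Doubly-pointed}.

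To carry out the propagation step I would first apply a slide-equivalence putting the unaugmented junction into standard form, carrying $\frak a_i$ and $\Aa_i^*$ along; by Proposition~\ref{prop:Joh} the outcome is independent of this choice up to slide-equivalence. In standard form, Lemma~\ref{lemma:std_Heeg} gives a connected-sum decomposition of the underlying Heegaard diagram into $n$ ``page'' summands, in which $\alpha_i$ and $\alpha_{i+1}$ restrict to the \emph{same} cut system, together with the standard genus $g-p$ diagram for $\#^m(S^1\times S^2)$. In each page summand a collection of arcs cutting that component into a disk serves simultaneously as an augmenting of both sides; in the $\#^m(S^1\times S^2)$ summand one takes the dual/product arc system meeting the non-separating curves of both $\alpha_i$ and $\alpha_{i+1}$ once each. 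Gluing these along the connected-sum spheres yields a single arc collection $\frak a$ that augments both standardized tangle shadows, and a compatible choice of vertical-strand shadows $\Aa_i^*,\Aa_{i+1}^*$ is read off from the bridge-triangle structure implicit in standard position; after arc slides among the $\Aa^*$ one arranges that the components of $\Tt_i^*\cup\Tt_{i+1}^*\cup\Aa_i^*\cup\Aa_{i+1}^*$ meeting $\partial\Sigma$ bound disjoint polygonal disks each meeting $\partial\Sigma$ once, so the augmented junction is a standard augmented splitting shadow. Finally, Proposition~\ref{prop:Joh} lets us slide the given $\frak a_i$ (carried into standard form) to the $\frak a$ just constructed, and we transport $\frak a$ and $\Aa_{i+1}^*$ back through the inverse of the original slide-equivalence to obtain $(\frak a_{i+1},\Aa_{i+1}^*)$ on the original $(\Sigma,\alpha_{i+1},\Tt_{i+1}^*,\bold x)$. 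Uniqueness up to slide-equivalence at this step follows because standard form is unique up to handleslides together with Proposition~\ref{prop:Joh}; uniqueness for the whole fully augmented diagram follows by composing the three propagation steps.

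The main obstacle I anticipate is the coherence of the vertical-strand shadows $\Aa^*$ with the third bullet in the definition of a standard augmented splitting shadow (the polygonal-disk condition near $\partial\Sigma$) as the arcs are propagated: one must track, through each slide-equivalence and through the connected-sum gluing, how the endpoints of the $\Aa^*$ on $\partial\Sigma$ are organized, and show the requisite disjoint polygonal disks can always be recovered by arc slides that do not disturb conditions already achieved at earlier junctions. This bookkeeping is precisely where the disconnectedness of $\partial_-H$ and the presence of vertical strands enter, and it is the one place where the argument genuinely extends rather than merely quoting the relative-trisection monodromy algorithm.
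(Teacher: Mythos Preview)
Your high-level strategy---propagate the augmenting data $(\frak a_i,\Aa_i^*)$ across each junction $i\to i+1$ and iterate three times---is exactly the paper's approach, and your reduction to a single propagation step is the right organizational move.

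However, your execution of the propagation step is both overcomplicated and contains an error. You propose to build a \emph{new} common arc system $\frak a$ from the connected-sum decomposition of Lemma~\ref{lemma:std_Heeg} and then slide the given $\frak a_i$ to it. But the arcs of $\frak a$ are neatly embedded in $\Sigma$, so they have endpoints on $\partial\Sigma$; the $\#^m(S^1\times S^2)$ summand $\Sigma''$ is disjoint from $\partial\Sigma$, so it carries no $\frak a$-arcs at all. Your ``dual/product arc system meeting the non-separating curves of both $\alpha_i$ and $\alpha_{i+1}$'' has no place in the construction. More to the point, building a canonical $\frak a$ is unnecessary: the paper's key observation is that once the splitting shadow is standardized (carrying $\frak a_i\cup\Aa_i^*$ along through the slides), one simply slides $\frak a_i\cup\Aa_i^*$ off the curves and arcs of $\alpha_{i+1}'\cup(\Tt_{i+1}^*)'$---using the dual $\alpha_i'$-curves to remove intersections with $\alpha_{i+1}'$, and using the $(\Tt_i^*)'$-arcs in each polygonal component to remove intersections with $(\Tt_{i+1}^*)'$---and then \emph{sets} $\frak a_{i+1}$ equal to the resulting $\frak a_i'$. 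This makes the condition $\frak a_i'=\frak a_{i+1}'$ hold tautologically.

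Your treatment of $\Aa_{i+1}^*$ is also too vague. The paper's construction is concrete: after the slides above, each polygonal arc of $(\Tt_i^*)'\cup(\Tt_{i+1}^*)'$ together with its adjacent arc of $(\Aa_i^*)'$ forms an embedded ``augmented polygonal arc'' meeting $\partial\Sigma$ once; one obtains $\Aa_{i+1}^*$ by pushing each such arc off itself while fixing its interior endpoint (equivalently, sliding the interior endpoint of $(\Aa_i^*)'$ along the polygonal arc to its far end). This push-off is what guarantees the third bullet of the standard-augmented condition, and it is the step you flag as the main obstacle but do not actually carry out.
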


\begin{proof}
	Start with an augmented shadow diagram $(\Sigma, \alpha_1, \alpha_2, \alpha_3,\frak a_1, \Tt_1^*,\Tt_2^*,\Tt_3^*,\Aa_1^*,\bold x)$.
	Restrict attention to the splitting shadow $(\Sigma,\alpha_1,\alpha_2,\Tt_1^*,\Tt_2^*,\bold x)$.
	By definition, this diagram is slide-equivalent to a standard splitting shadow $(\Sigma,\alpha_1',\alpha_2',(\Tt_1^*)',(\Tt_2^*)',\bold x)$.
	Choose a sequence of arc and curve slides realizing this equivalence. 
	Whenever a slide involving the arcs and curves of $\alpha_1\cup\Tt_1^*$ would be performed along an arc $\omega$ that intersects $\frak a_1\cup\Aa_1^*$, first slide the offending arcs of $\frak a_1\cup\Aa_1^*$ out of the way using the same slide-arc $\omega$.
	Now the splitting shadow has been standardized, but the arcs of $\frak a_1\cup\Aa_1^*$ may intersect the curves and arcs of $\alpha_2'\cup(\Tt_2^*)'$.
	Intersections of $\frak a_1\cup\Aa_1^*$ with the curves of $\alpha_2'$ can be removed via slides over the curves of $\alpha_1'$ dual to curves of~$\alpha_2'$.
	Recall that the closed components of $(\Tt_1^*)\cup(\Tt_2^*)'$ are embedded polygonal curves, while the non-closed components are embedded polygonal arcs.
	Moreover, the arcs of $\Aa_1^*$ connect one end of each polygonal arc to $\partial\Sigma$.
	Intersections of (the interior of) $\frak a_1\cup\Aa_1^*$ with the polygonal curves of $(\Tt_1^*)\cup(\Tt_2^*)'$ can be removed via slides over the arcs of $(\Tt_1^*)'$ included in these polygonal curves.
	Intersections of (the interior of) $\frak a_1\cup\Aa_1^*$ with the polygonal arcs of $(\Tt_1^*)\cup(\Tt_2^*)'$ can be removed via slides over the arcs of $(\Tt_1^*)'$ included in these polygonal arc, provided one is careful to slide towards the end of the polygonal arc that is not attached to~$\Aa_1^*$.
	
	Once the described slides have all been carried out, the collection $\frak a_1$ and $\Aa_1^*$ of arc will have been transformed into new collection, which we denote $\frak a_1'$ and $(\Aa_1^*)'$, respectively.
	The key fact is that $\frak a_1'$ and $(\Aa_1^*)'$ are disjoint (in their interiors) from the arcs and curves of $\alpha_2'\cup(\Tt_2^*)'$.
	Set $\frak a_2=\frak a_1'$, and note that $\frak a_2$ has the desired property of being (vacuously) slide-equivalent to $\frak a_2' = \frak a_1'$.
	To define $\Aa_2^*$, note that at this point the union of the polygonal arcs of $(\Tt_1^*)'\cup(\Tt_2^*)'$ with $(\Aa_1^*)'$ is a collection of embedded `augmented' polygonal arcs each of which intersects $\partial\Sigma$ in a single point.
	Let $\Aa_2^*$ be the collection of arcs obtained by pushing each augmented polygonal arc off itself slightly, while preserving its endpoint that lies in the interior of $\Sigma$.
	See Figure~\ref{fig:push-off}.
	This can be thought of as sliding the endpoint of $(\Aa_1^*)'$ that lies in the interior of $\Sigma$ along the polygonal arc of $(\Tt_1^*)'\cup(\Tt_2^*)'$ that it intersects until it reaches the end.
	Having carried out these steps, we have that $(\Sigma, \alpha_1', \alpha_2',\frak a_1', \frak a_2, (\Tt_1^*)',(\Tt_2^*)',(\Aa_1^*)',\Aa_2^*,\bold x)$ is a standard augmented splitting shadow, as desired.
	
\begin{figure}[h!]
\centering
\includegraphics[width=.25\linewidth]{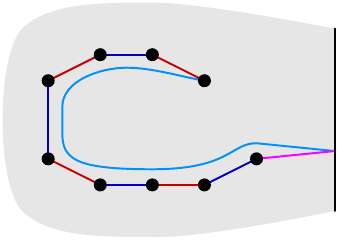}
\caption{Obtaining $\Aa_2^*$ from $(\Aa_1^*)'$.}
\label{fig:push-off}
\end{figure}

	Next, we repeat the process outlined in the first two paragraph, starting this time with the splitting shadow $(\Sigma,\alpha_2',\alpha_3,(\Tt_2^*)',\Tt_3^*,\bold x)$:
	Standardize the splitting shadow, and include the arcs of $\frak a_2\cup\Aa_2^*$ in the slides when necessary.
	Perform additional slides to obtain the new collection of arcs $\frak a_2'$, and $(\Aa_2^*)'$ whose interiors are disjoint from all other arcs and curves.
	Let $\frak a_3 = \frak a_2'$, and obtain $\Aa_3^*$ from $(\Aa_2^*)'$ in the same way as before, so that the new diagram $(\Sigma, \alpha_2'', \alpha_3',\frak a_2', \frak a_3, (\Tt_2^*)'',(\Tt_3^*)',(\Aa_2^*)',\Aa_3^*,\bold x)$ is a standard augmented splitting shadow, as desired.
	Note that $(\Sigma,\alpha_2'',(\Tt_2^*)'',\bold x)$ is slide-equivalent to the original diagram $(\Sigma,\alpha_2,\Tt_2^*,\bold x)$.

	Finally, repeat the process once more, starting with the splitting shadow $(\Sigma,\alpha_3',\alpha_1',(\Tt_3^*)',(\Tt_1^*)',\bold x)$ and performing slides until we can obtain new collections $\frak a_4$ and $\Aa_4^*$ from the modified collections $\frak a_3'$ and $(\Aa_3^*)'$, as before.
	At this point, there is a minor wrinkle.
	We are not finished once we set $\frak a_4 = \frak a_3'$ and obtain $\Aa_4^*$ from $(\Aa_3^*)'$ as before.
	The reason is that these choices for $\frak a_4$ and $\Aa_4$ might not be compatible with the original tangle shadow $(\Sigma, \alpha_1,\Tt_1^*,\bold x)$, rather these choices are compatible with the slide-equivalent tangle shadow $(\Sigma, \alpha_1'', (\Tt_1^*)'',\bold x)$.
	To remedy this issue, we perform the slides to change this latter tangle shadow to the former one, and we carry $\frak a_4$ and $\Aa_4^*$ with us along the way, sliding them over arcs and curves when necessary.
	In abuse of notation, we denote the results of this transformation $\frak a_4$ and $\Aa_4^*$.
	
	In summary, we have produce the collections of arcs $\frak a_2$, $\frak a_3$, $\frak a_4$, $\Aa_2^*$, $\Aa_3^*$, and $\Aa_4^*$ required to fully augment the original augmented shadow diagram.
\end{proof}

Following Castro, Gay, and Pinz\'on-Caicedo, we refer to the above algorithm as the \emph{monodromy algorithm}.  What follows a is generalization of the discussion of~\cite[Section~3]{GayMei_18_Doubly-pointed}; see also~\cite[Section~4]{CasGayPin_18_Diagrams-for-relative-trisections} and~\cite[Section~2]{CasOzb_19_Trisections-of-4-manifolds-via-Lefschetz}.

Given an augmented shadow diagram $\DD=(\Sigma, \alpha_1, \alpha_2, \alpha_3,\frak a_1, \Tt_1^*,\Tt_2^*,\Tt_3^*,\Aa_1^*,\bold x)$, let $(H,\Tt)$ denote the tangle determined by the tangle shadow $(\Sigma,\alpha_1,\Tt_1^*,\bold x)$.  Let $(P,\bold y)_\DD = \partial_-(H,\Tt)$.  We call $(P,\bold y)_\DD$ the \emph{page} of the shadow diagram.  Fix an identification  $\Id\colon(P,\bold y)_\DD\to(\Sigma_{\bold p,\bold f},\bold x_{\bold p,\bold f})$. We use the standard Morse structure on $H$ to consider $\frak a_1$ and $\Aa_1^*$ as lying in $P$. Consider the arcs $\frak a=\Id(\frak a_1)$, which cut the standard surface into a collection of disks, and the arcs $\Aa^*=\Id(\Aa^*_1)$, which connect the marked points to the boundary in the standard pair.

Let $\DD^+=\left(\Sigma, \alpha_1, \alpha_2, \alpha_3,\frak a_1,\frak a_2,\frak a_3,\frak a_4, \Tt_1^*,\Tt_2^*,\Tt_3^*,\Aa_1^*,\Aa_2^*,\Aa_3^*,\Aa_4^*,\bold x\right)$ be a full-augmenting of $\DD$.  We consider the arcs $\frak a_4$ and $\Aa^*_4$ as lying in $P$, as well.  Consider the arcs $\frak a'=\Id(\frak a_4)$ and the arcs $(\Aa^*)'=\Id(\Aa^*_4)$.  Let $\phi_\DD$ be the automorphism of $(\Sigma_{\bold p,\bold f},\bold x_{\bold p,\bold f})$ satisfying $\phi_\DD(\frak a\cup\Aa^*) = \frak a'\cup(\Aa^*)'$, noting that $\phi$ is unique up to isotopy.  We call $\phi_\DD$ the \emph{monodromy} of the shadow diagram. It is straight-forward to check that monodromy $\phi_\DD$ of an augmented shadow diagram $\DD$ depends only on the underlying shadow diagram (not on the choice of augmentation).  The relevance of $\phi_\DD$ is given in the following proposition; we refer the reader to Subsection~\ref{subsec:OBD} for relevant notation and terminology regarding open-book decompositions and braidings.  The following is a generalization of~\cite[Theorem~5]{CasGayPin_18_Diagrams-for-relative-trisections} and~\cite[Lemma~3.1]{GayMei_18_Doubly-pointed}.

\begin{proposition}
\label{prop:monodromy}
	Suppose that $\DD$ is a shadow diagram for a bridge trisection $\T$ of a pair $(X,\Ff)$. Let $\phi_\DD$ denote the monodromy of the shadow diagram, and let $(Y_{\phi_\DD},\Ll_{\phi_\DD})$ denote the model open-book braiding corresponding to the abstract open-book braiding $(\Sigma_{\bold p,\bold f},\bold x_{\bold p,\bold f},\phi_\DD)$. Then, there is a canonical (up to isotopy) diffeomorphism
	$$\psi_\DD\colon \partial(X,\Ff)\to(Y_{\phi_\DD},\Ll_{\phi_\DD}).$$
\end{proposition}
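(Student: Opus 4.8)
The plan is to reduce the proposition to the statement that $\phi_\DD$ equals, up to isotopy, the geometric monodromy of the boundary open-book braiding, and then to identify that monodromy with the output of the full-augmenting algorithm---first for the underlying relative trisection, and then with the braiding data tracked along. Write $(Y,\Ll) = \partial(X,\Ff)$. By the remarks following Definition~\ref{def:Bridge}, each component $Y^j$ carries the open-book induced by $\T$, with pages $P_i^j = H_i\cap Y^j$ and binding $B^j$, and $\Ll^j = \beta_1^j\cup\beta_2^j\cup\beta_3^j$ is braided about it; the common page is $(P,\bold y)_\DD = \partial_-(H_i,\Tt_i)$, independent of $i\in\Z_3$. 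Each spread $Y_i^j = Z_i\cap Y^j$ is a product lensed cobordism $H_{p_j,f_j}$ meeting $\Ll$ in the braid $\beta_i^j$, so it supplies an identification $g_i$ of $(P_i,\bold y_i)$ with $(P_{i+1},\bold y_{i+1})$ (on each component), canonical up to isotopy. Thus $(Y,\Ll)$ is the model open-book braiding of $(P,\bold y)_\DD$ with monodromy $\phi$ equal, after conjugating by the fixed identification $\Id$, to the composite $g_3\circ g_2\circ g_1$ around the three spreads, and the desired diffeomorphism $\psi_\DD$ is assembled from these product structures, from $\Id$, and from the canonical filling of the binding by solid tori as in the model-manifold construction of Subsection~\ref{subsec:OBD}. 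Since each ingredient is canonical up to isotopy, so is $\psi_\DD$, and the entire content of the proposition is the claim $\phi = \phi_\DD$ in the mapping class group of $(\Sigma_{\bold p,\bold f},\bold x_{\bold p,\bold f})$.

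I would first treat the case $\Ff=\emptyset$, where the statement reduces to the relative-trisection monodromy theorem of Castro--Gay--Pinz\'on-Caicedo~\cite[Theorem~5]{CasGayPin_18_Diagrams-for-relative-trisections} (compare~\cite[Lemma~3.1]{GayMei_18_Doubly-pointed}). The mechanism I would use throughout is \emph{descent along an arm}: since $\frak a_i$ is disjoint from $\alpha_i$ and $\Tt_i^*$, the standard Morse function on $H_i$ carries $\frak a_i\subset\Sigma = \partial_+H_i$ to a well-defined (up to isotopy) system $\overline{\frak a}_i\subset P_i = \partial_-H_i$ cutting $P_i$ into disks. In $\partial Z_i = H_i\cup Y_i\cup\overline{H}_{i+1}$ the arms $H_i$ and $H_{i+1}$ are glued along $\Sigma$ to form the standard Heegaard double $\partial Z_i\cong Y_{g;\bold p,\bold f}$, and condition~(1) in the definition of a fully augmented shadow diagram---that $(\Sigma,\alpha_i,\alpha_{i+1},\frak a_i,\frak a_{i+1},\dots)$ is slide-equivalent to a standard augmented splitting shadow with $\frak a_i'=\frak a_{i+1}'$---is exactly the combinatorial shadow of the geometric fact that $g_i$ intertwines $\overline{\frak a}_i$ with $\overline{\frak a}_{i+1}$. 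Composing around, and using condition~(2) to return to an augmenting compatible with the original tangle shadow $(\Sigma,\alpha_1,\Tt_1^*,\bold x)$, one gets $g_3\circ g_2\circ g_1(\overline{\frak a}_1) = \overline{\frak a}_4$ in $P_1 = P_4$, i.e. $\phi(\frak a)=\frak a'$, which is the defining property of $\phi_\DD$ restricted to the cut system.

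The new ingredient is the arc systems $\Aa_i^*$, which I would handle by the same descent principle applied to bridge triangles in place of bridge semi-disks. A bridge triangle $D_\tau\subset H_i$ for a vertical strand $\tau$ of $\Tt_i$ has $\partial D_\tau = \tau\cup\tau^*\cup\tau^-$ with $\tau^*\subset\Sigma$ an arc of $\Aa_i^*$ and $\tau^-\subset P_i$ running from the $\bold y$-endpoint of $\tau$ to the binding; since $D_\tau$ is a Morse-flow band, the descent of $\tau^*$ along $H_i$ is $\tau^-$, so $\Aa_i^*$ descends to a system $\overline{\Aa}_i^*\subset P_i$ joining the braid points $\bold y_i$ to $\partial P_i$. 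Passing from $P_i$ to $P_{i+1}$ across the spread $Y_i^j$ drags each braid point along its braid strand, and the push-off construction in the proof of Proposition~\ref{prop:aug} (Figure~\ref{fig:push-off}), which builds $\Aa_{i+1}^*$ from $(\Aa_i^*)'$ together with the polygonal arcs of $(\Tt_i^*)'\cup(\Tt_{i+1}^*)'$, is precisely the combinatorial record of this drag; hence $g_3\circ g_2\circ g_1$ also carries $\overline{\Aa}_1^*$ to $\overline{\Aa}_4^*$, i.e. $\phi(\Aa^*)=(\Aa^*)'$. Since a mapping class of $(\Sigma_{\bold p,\bold f},\bold x_{\bold p,\bold f})$ is determined up to isotopy by its action on $\frak a\cup\Aa^*$---a cut system together with one arc from each marked point to the boundary---by the change-of-coordinates/Alexander method for surfaces with marked points and boundary, it follows that $\phi$ and $\phi_\DD$ agree, which completes the identification and hence the proof.

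The main obstacle I expect is the verification, underlying the second and third paragraphs, that the specific arc slides prescribed by Proposition~\ref{prop:aug} are move-for-move the combinatorial mirror of isotopies of $\partial N$---for $N$ a lensed regular neighborhood of the spine---carrying $\overline{\frak a}_i\cup\overline{\Aa}_i^*$ to $\overline{\frak a}_{i+1}\cup\overline{\Aa}_{i+1}^*$ in the page. This is essentially the Castro--Gay--Pinz\'on-Caicedo argument, but carried out in the category of surfaces with marked points and boundary arcs; the delicate point is that the three standardness conditions on an augmented splitting shadow, together with the rules in Proposition~\ref{prop:aug} that forbid sliding anything over an arc of $\frak a$ or $\Aa^*$ and that require sliding $\Aa^*$-arcs toward the free end of their polygonal arcs, are exactly what is needed to keep the descended arcs honest disjointly embedded cut-and-arc systems compatible with the product structures of the spreads at every stage.
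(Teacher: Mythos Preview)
Your proposal is correct and follows essentially the same strategy as the paper's proof: both build $\psi_\DD$ spread-by-spread, using the augmented arc systems to pin down the product identification $(P_i,\bold y_i)\to(P_{i+1},\bold y_{i+1})$ across each $Y_i$, and then observe that the resulting gluing monodromy is exactly $\phi_\DD$. The paper phrases the key step more tersely by invoking Lemma~\ref{lem:BridgeDouble} (the map $\Id_{(M_i,K_i,\Sigma)}$ determined by a standard bridge splitting) to obtain each spread identification directly from the standard augmented splitting shadow, whereas you unpack the same mechanism as ``descent along an arm'' via the Morse flow and bridge triangles; your final paragraph's ``main obstacle'' is precisely what that lemma packages.
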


\begin{proof}
	Let $(H_1,\Tt_1)\cup(H_2,\Tt_2)\cup(H_3,\Tt_3)$ denote the spine of the bridge trisection determined by the diagram~$\DD$; cf. Proposition~\ref{prop:spine} and Proposition~\ref{prop:shadow}.  Fix an identifcation $\psi\colon(P_1\bold y_1)\to(\Sigma_{\bold p,\bold f},\bold x_{\bold p,\bold f})$ and regard this latter pair as a page $(P,\bold y)\times\{0\}$ in the model open-book braiding $(Y_{\phi_\DD},\Ll_{\phi_\DD})$, which we think of as $(P,\bold y)\times_{\phi_\DD}S^1$.
	
	Choose an augmenting of $\DD$ by picking arcs $\frak a_1$ and $\Aa_1^*$, which we consider as having been isotoped vertically to lie in $(P_1,\bold y_1)$.  Let $\frak a\times\{0\}$ and $\Aa^*\times\{0\}$ denote the arcs on $(P,\bold y)\times\{0\}$ that are the images of $\frak a_1$ and $\Aa_1^*$ under $\psi$.  Apply the monodromy algorithm of Proposition~\ref{prop:monodromy} to obtain a full-augmenting of $\DD$.  Consider the arcs $\frak a_1'$, $(\Aa_1^*)'$, and $(\Aa_2^*)'$ coming from the standard augmented splitting diagram for
	$$(M_1,K_1)=(H_1,\Tt_1)\cup_{(\Sigma,\bold x)}\overline{(H_2,\Tt_2)},$$
	noting that, regarded as arcs in $(P_1,\bold y_1)$, $\frak a_1$ and $\frak a_1'$ are isotopic rel-$\partial$, as are $\Aa_1^*$ and $(\Aa_1^*)'$.
	These arcs determine the identity map $\Id_{(M_1,K_1,\Sigma)}$ described in Lemma~\ref{lem:BridgeDouble}.  In particular, this gives a unique extension of $\psi$ to a diffeomorphism from the spread $(Y_1,\beta_1)$ in $\partial(X,\Ff)$ to the spread $(P,\bold y)\times[0,1/3]$ in $(Y_{\phi_\DD},\Ll_{\phi_\DD})$.
	
	Repeating the step described above ($i=1$) for $i=2$ and $i=3$ allows us to extend $\psi_1$ to a map $\psi_\DD$ whose domain is the entire boundary
	$$\partial(X,\Ff) = (Y_1,\beta_1)\cup(Y_2,\beta_2)\cup(Y_3,\beta_3)$$
	and whose codomain is $(P,\bold y)\times[0,1]$, equipped with the identification $x\sim\phi'(x)$, where $\phi'$ must take the arcs $\frak a_4\cup\Aa_4^*$ to the arcs $\frak a_1\cup\Aa_1^*$, in order for $\psi_\DD$ to be continuous.  However, this implies that $\phi'$ is isotopic rel-$\partial$ to $\phi_\DD$, by definition, and we have that $\psi_\DD$ respects the original identification space structure on $(Y_{\phi_\DD},\Ll_{\phi_\DD})$, hence is a diffeomorphism, as desired.
\end{proof}

\begin{figure}[h!]
\begin{subfigure}{.33\textwidth}
	\centering
	\includegraphics[width=.9\linewidth]{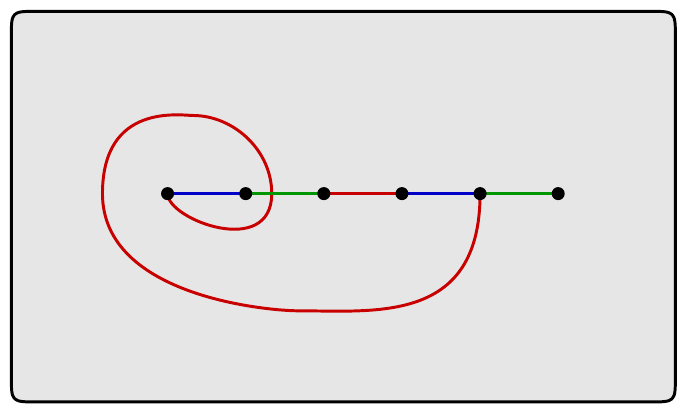}
	\caption{}
	\label{fig:Mob_sh_1}
\end{subfigure}%
\begin{subfigure}{.33\textwidth}
	\centering
	\includegraphics[width=.9\linewidth]{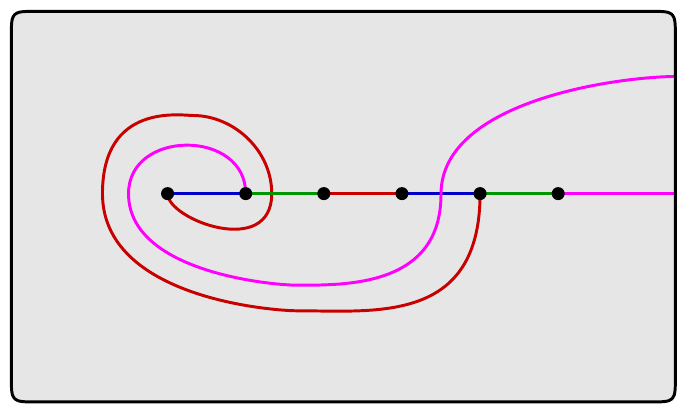}
	\caption{}
	\label{fig:Mob_sh_2}
\end{subfigure}%
\begin{subfigure}{.33\textwidth}
	\centering
	\includegraphics[width=.9\linewidth]{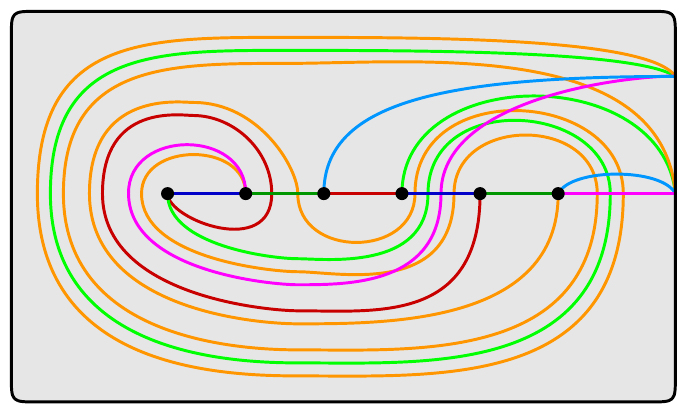}
	\caption{}
	\label{fig:Mob_sh_3}
\end{subfigure}
\begin{subfigure}{.33\textwidth}
	\centering
	\includegraphics[width=.9\linewidth]{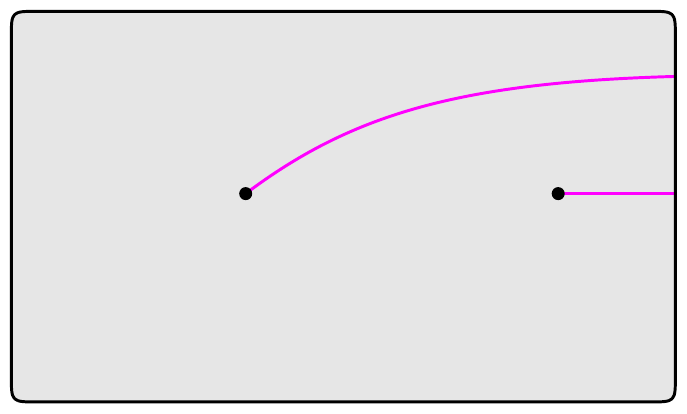}
	\caption{}
	\label{fig:Mob_sh_4}
\end{subfigure}%
\begin{subfigure}{.33\textwidth}
	\centering
	\includegraphics[width=.9\linewidth]{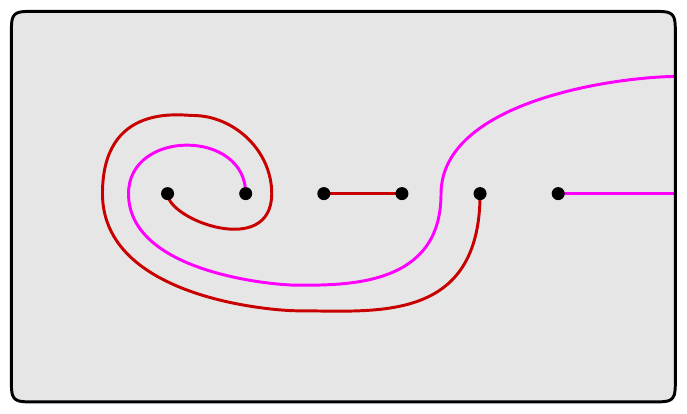}
	\caption{}
	\label{fig:Mob_sh_5}
\end{subfigure}%
\begin{subfigure}{.33\textwidth}
	\centering
	\includegraphics[width=.9\linewidth]{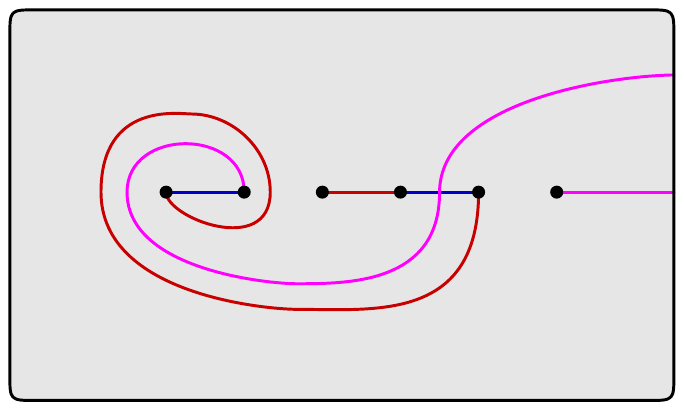}
	\caption{}
	\label{fig:Mob_sh_6}
\end{subfigure}
\begin{subfigure}{.33\textwidth}
	\centering
	\includegraphics[width=.9\linewidth]{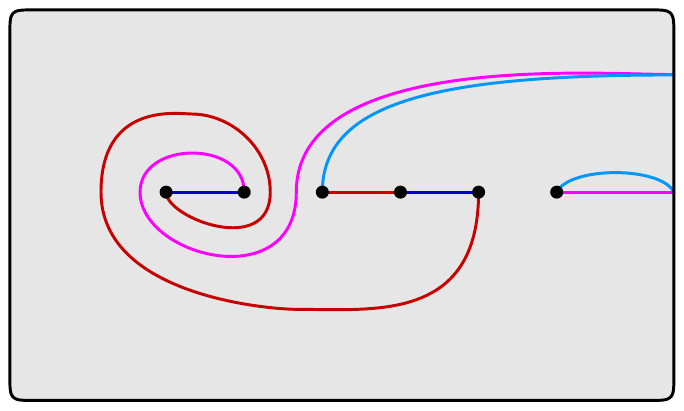}
	\caption{}
	\label{fig:Mob_sh_7}
\end{subfigure}%
\begin{subfigure}{.33\textwidth}
	\centering
	\includegraphics[width=.9\linewidth]{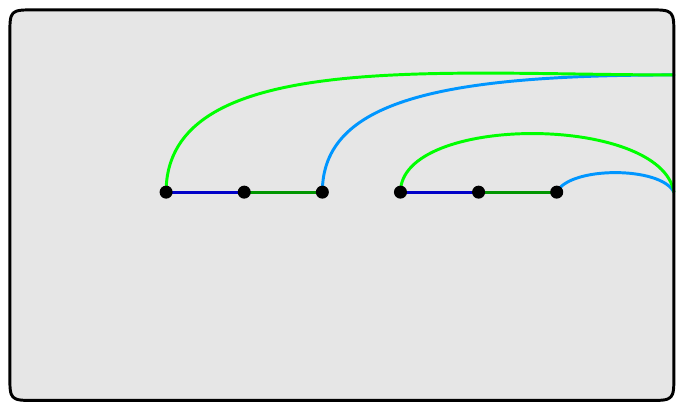}
	\caption{}
	\label{fig:Mob_sh_8}
\end{subfigure}%
\begin{subfigure}{.33\textwidth}
	\centering
	\includegraphics[width=.9\linewidth]{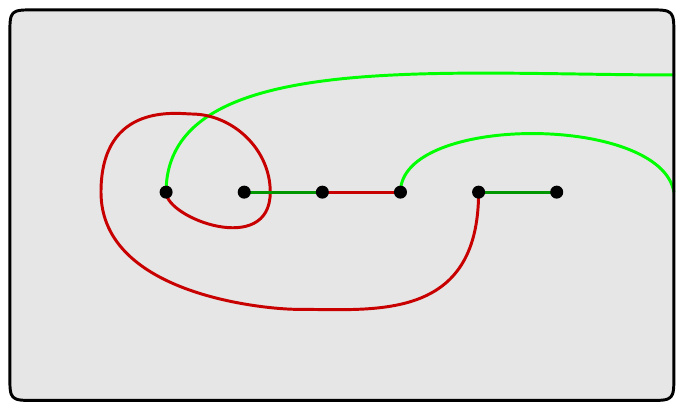}
	\caption{}
	\label{fig:Mob_sh_9}
\end{subfigure}
\begin{subfigure}{.33\textwidth}
	\centering
	\includegraphics[width=.9\linewidth]{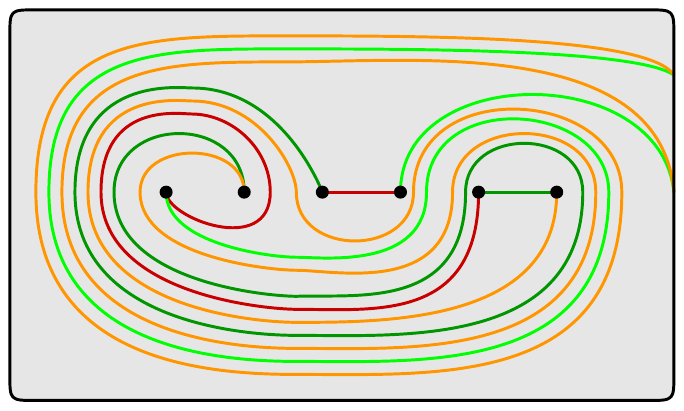}
	\caption{}
	\label{fig:Mob_sh_10}
\end{subfigure}%
\begin{subfigure}{.33\textwidth}
	\centering
	\includegraphics[width=.9\linewidth]{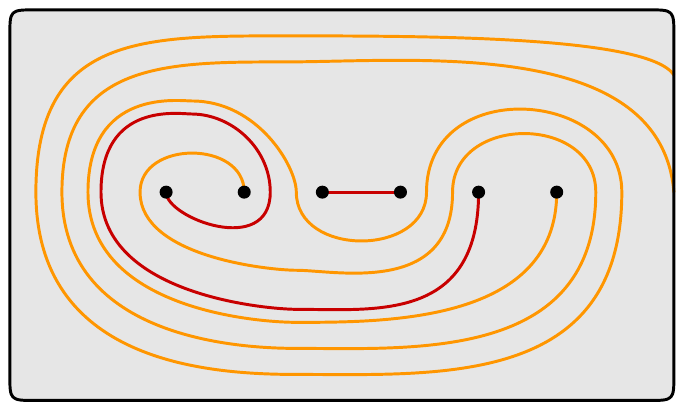}
	\caption{}
	\label{fig:Mob_sh_11}
\end{subfigure}%
\begin{subfigure}{.33\textwidth}
	\centering
	\includegraphics[width=.9\linewidth]{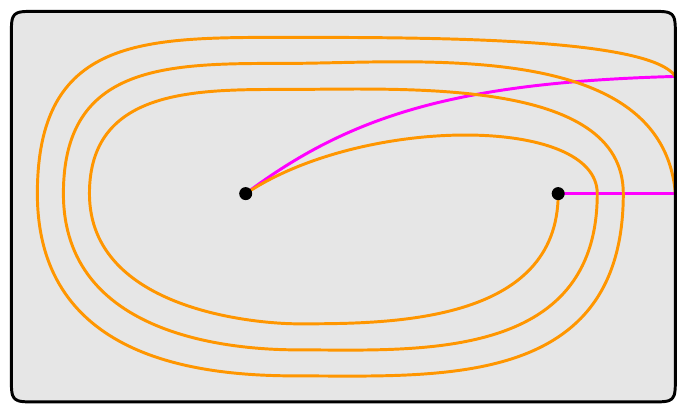}
	\caption{}
	\label{fig:Mob_sh_12}
\end{subfigure}
\caption{A shadow diagram (A), a augmented shadow diagram (B), and a fully augmented shadow diagram (C) for a bridge trisection for the M\"obius band bounded by the right-handed trefoil in $S^3$.  (E)--(K) illustrate the process described by the monodromy algorithm of Proposition~\ref{prop:monodromy}, used to find the full-augmenting (C) of the augmented shadow diagram (B). We recover the braiding induced on the boundary of the bridge trisection by studying  (L), which show the arcs $\frak a$ and $\frak a'$ in the page $(P,\bold Y)$.}
\label{fig:Mob_sh}
\end{figure}

\begin{example}
\label{ex:Mob_sh}
	\textbf{(M\"obius band for the trefoil)}
	Figure~\ref{fig:Mob_sh_1} shows a shadow diagram corresponding to the bridge trisection of the M\"obius band bounded by the right-handed trefoil in $S^3$ that was discussed in Example~\ref{ex:mono}.  Since this is a $(2;0,2)$--bridge trisection, we have that $(P,\bold y) = \partial_-(H_1,\Tt_1)$ is a disk with two distinguished points in its interior.  This pair is shown in Figure~\ref{fig:Mob_sh_4}, together with a pair of arcs that connect the points $\bold y$ to $\partial P$.  Using the Morse function on $(H_1,\Tt_2)$, these arcs can be flowed rel-$\partial$ to lie in $\Sigma$, as shown in Figure~\ref{fig:Mob_sh_5}.  In Figure~\ref{fig:Mob_sh_6}, the shadows for $(H_2,\Tt_2)$ have been added, making an splitting shadow for $(M_1,K_1)$, which is a geometric 2--braid in $D^2\times I$, one component of which is twice-perturbed, while the other is not perturbed.  In Figure~\ref{fig:Mob_sh_7}, a slide of an arc of $\Aa^*_1$ has been performed to arrange that all arcs are disjoint in their interiors, and the arcs of $\Aa_2^*$ have been obtained, as described in the proof of Proposition~\ref{prop:monodromy}; this is an augmented splitting shadow for $(M_1,K_1)$.  Figure~\ref{fig:Mob_sh_8} shows a splitting shadow for $(M_2,K_2)$, with $\Aa_2^*$ remembered, and since all arcs are disjoint in their interiors, the arcs of $\Aa_3^*$ have been derived.  Figure~\ref{fig:Mob_sh_9} shows a splitting shadow for $(M_3,K_3)$, with the arcs of $\Aa_3^*$ remembered, and Figure~\ref{fig:Mob_sh_10} is obtained from this diagram by arc slides of arcs from $\Tt_3^*\cup\Aa_3^*$, before $\Aa_4^*$ is obtained.  In Figure~\ref{fig:Mob_sh_11}, the arcs of $\Aa_1^*$ and $\Aa_4^*$ are shown with the arcs of $\Tt_1^*$ in $\Sigma$.  Figure~\ref{fig:Mob_sh_12} shows the result of flowing $\Aa_1^*\cup\Aa_4^*$ up to the page $(P,\bold y)$.

\begin{figure}[h!]
\begin{subfigure}{.33\textwidth}
	\centering
	\includegraphics[width=.9\linewidth]{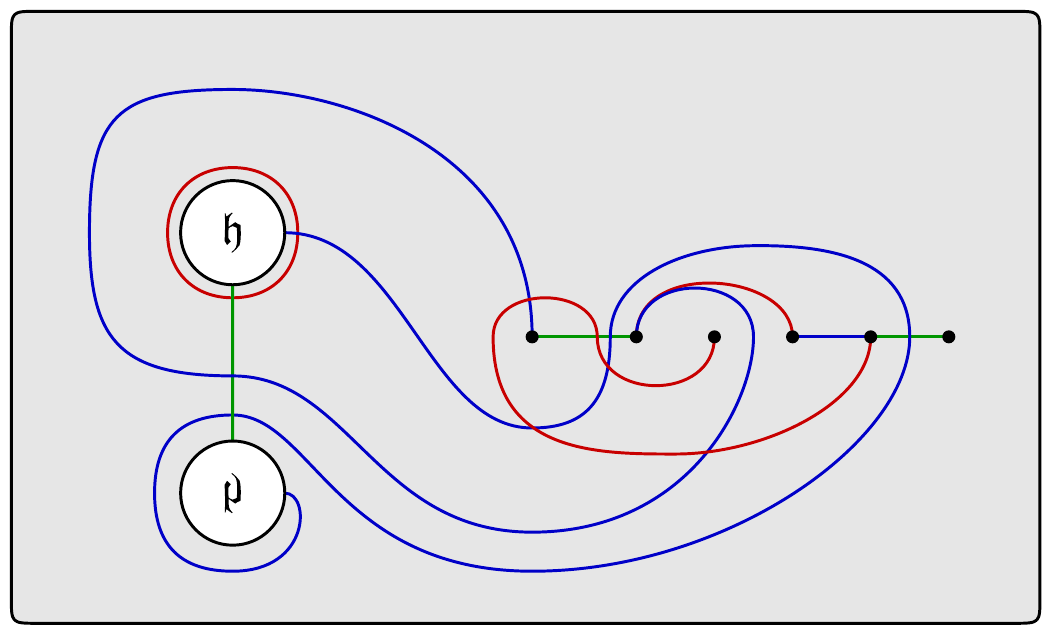}
	\caption{}
	\label{fig:tref_disk_1}
\end{subfigure}%
\begin{subfigure}{.33\textwidth}
	\centering
	\includegraphics[width=.9\linewidth]{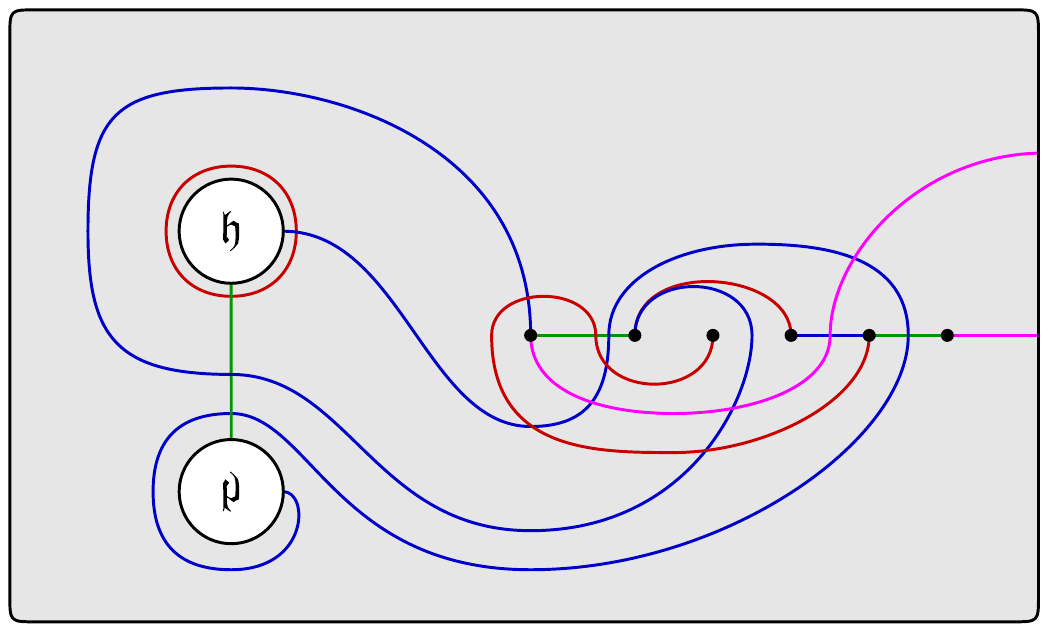}
	\caption{}
	\label{fig:tref_disk_2}
\end{subfigure}%
\begin{subfigure}{.33\textwidth}
	\centering
	\includegraphics[width=.9\linewidth]{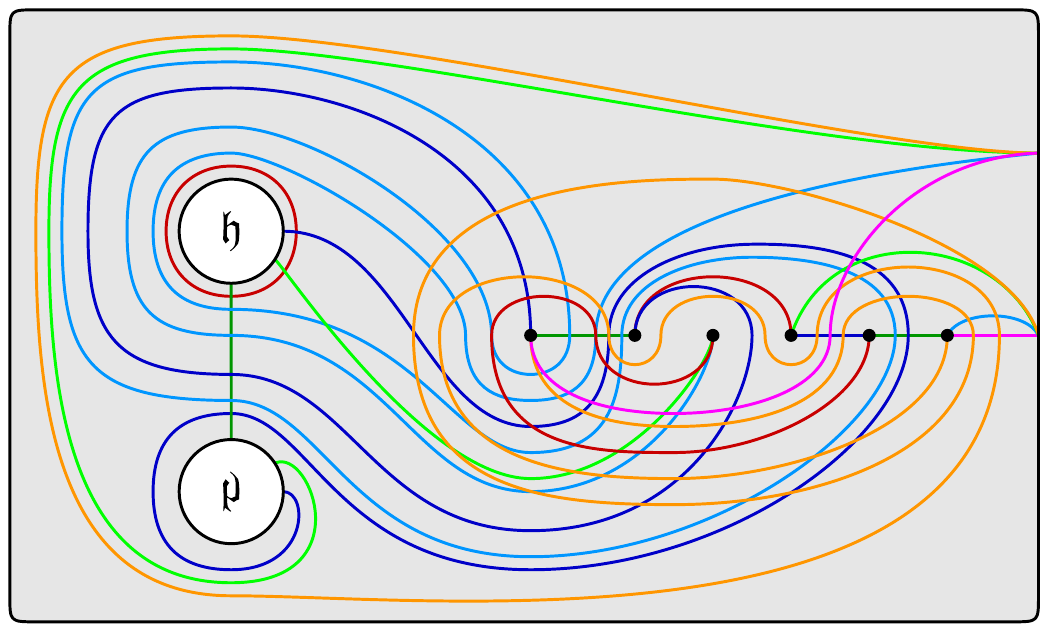}
	\caption{}
	\label{fig:tref_disk_3}
\end{subfigure}
\begin{subfigure}{.33\textwidth}
	\centering
	\includegraphics[width=.9\linewidth]{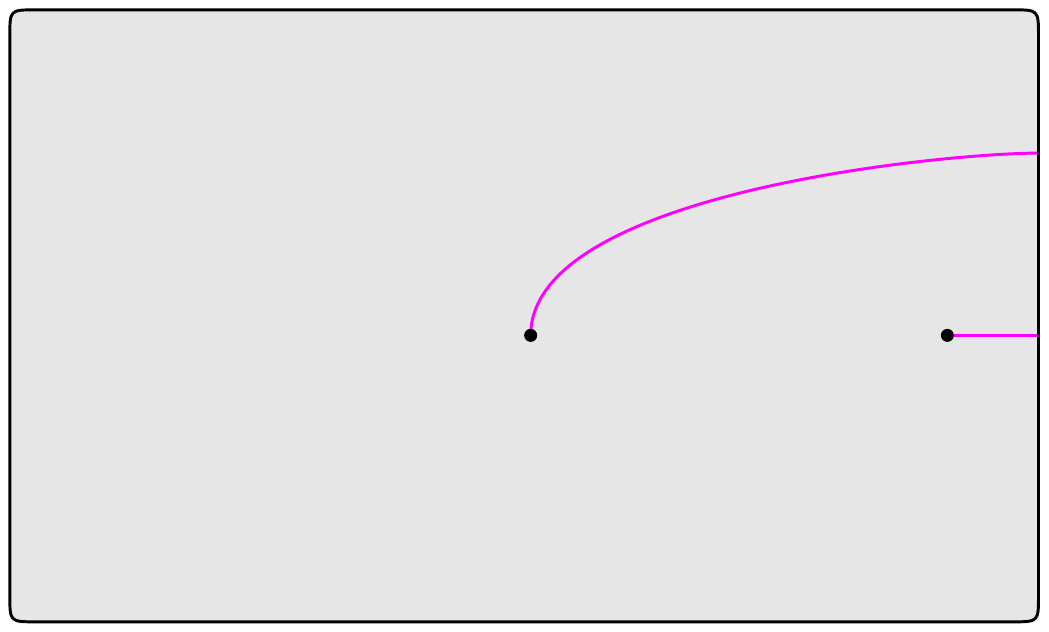}
	\caption{}
	\label{fig:tref_disk_4}
\end{subfigure}%
\begin{subfigure}{.33\textwidth}
	\centering
	\includegraphics[width=.9\linewidth]{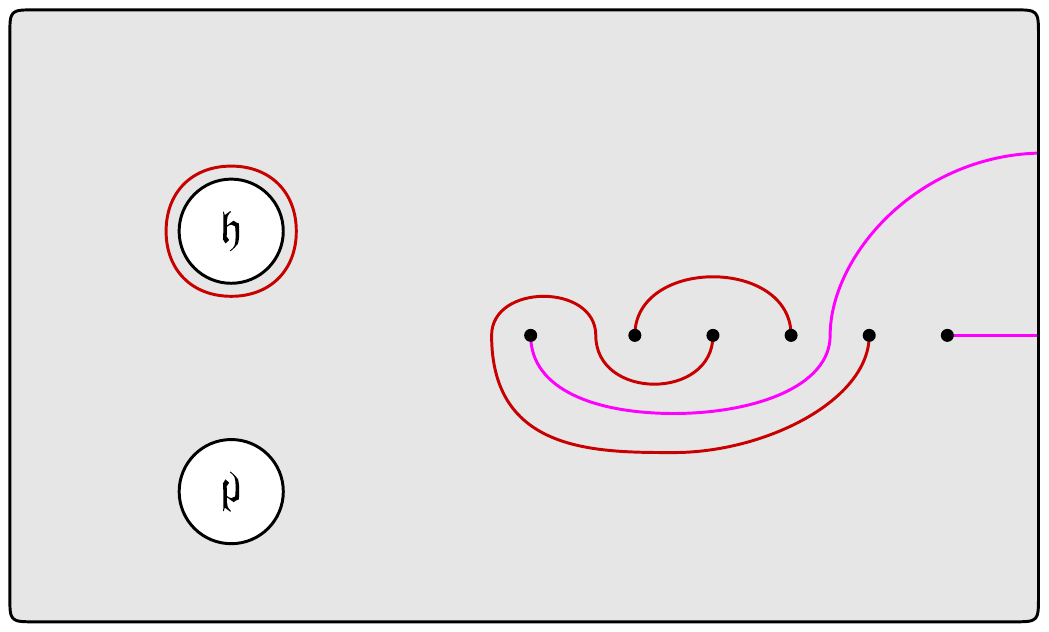}
	\caption{}
	\label{fig:tref_disk_5}
\end{subfigure}%
\begin{subfigure}{.33\textwidth}
	\centering
	\includegraphics[width=.9\linewidth]{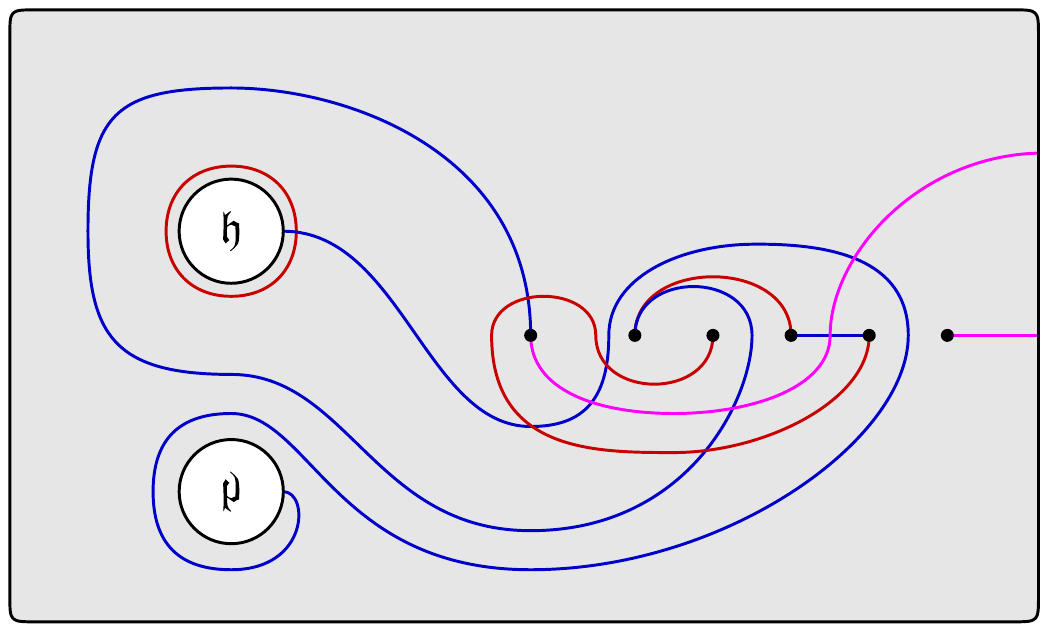}
	\caption{}
	\label{fig:tref_disk_6}
\end{subfigure}
\begin{subfigure}{.33\textwidth}
	\centering
	\includegraphics[width=.9\linewidth]{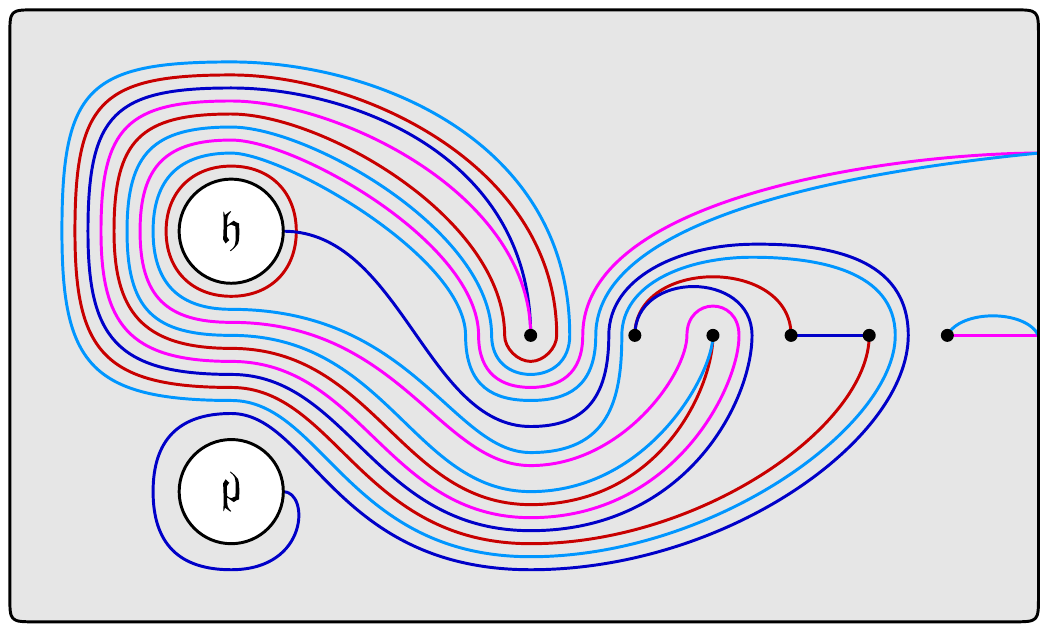}
	\caption{}
	\label{fig:tref_disk_7}
\end{subfigure}%
\begin{subfigure}{.33\textwidth}
	\centering
	\includegraphics[width=.9\linewidth]{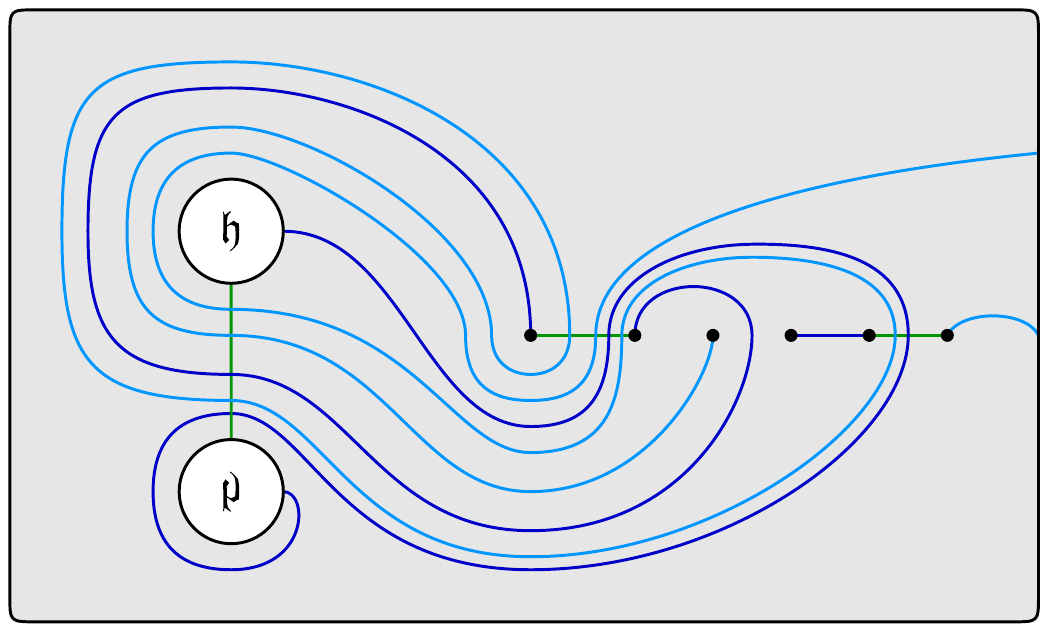}
	\caption{}
	\label{fig:tref_disk_8}
\end{subfigure}%
\begin{subfigure}{.33\textwidth}
	\centering
	\includegraphics[width=.9\linewidth]{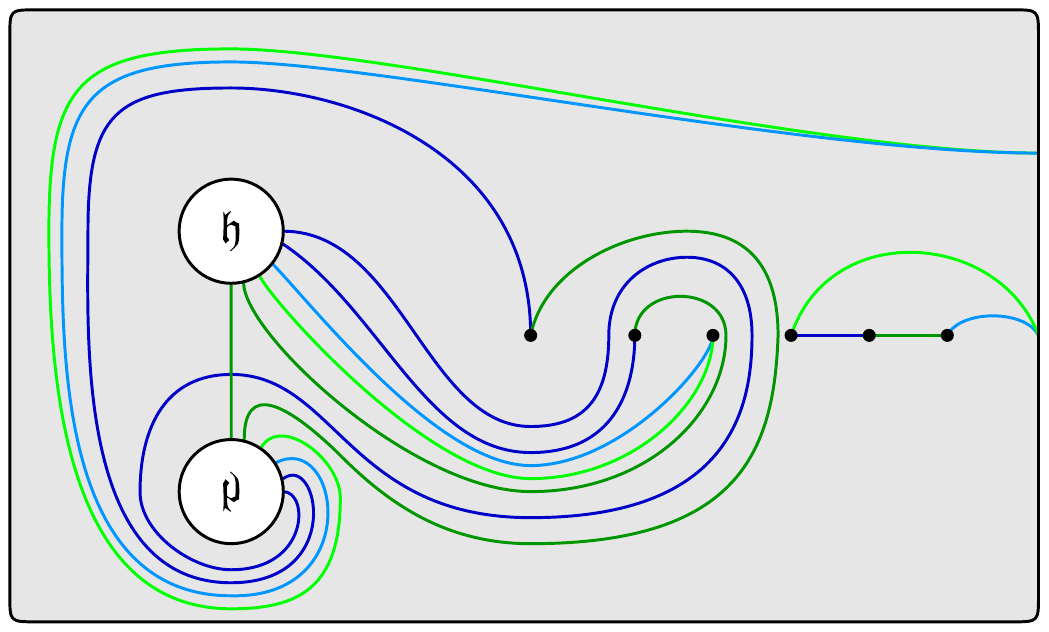}
	\caption{}
	\label{fig:tref_disk_9}
\end{subfigure}
\begin{subfigure}{.33\textwidth}
	\centering
	\includegraphics[width=.9\linewidth]{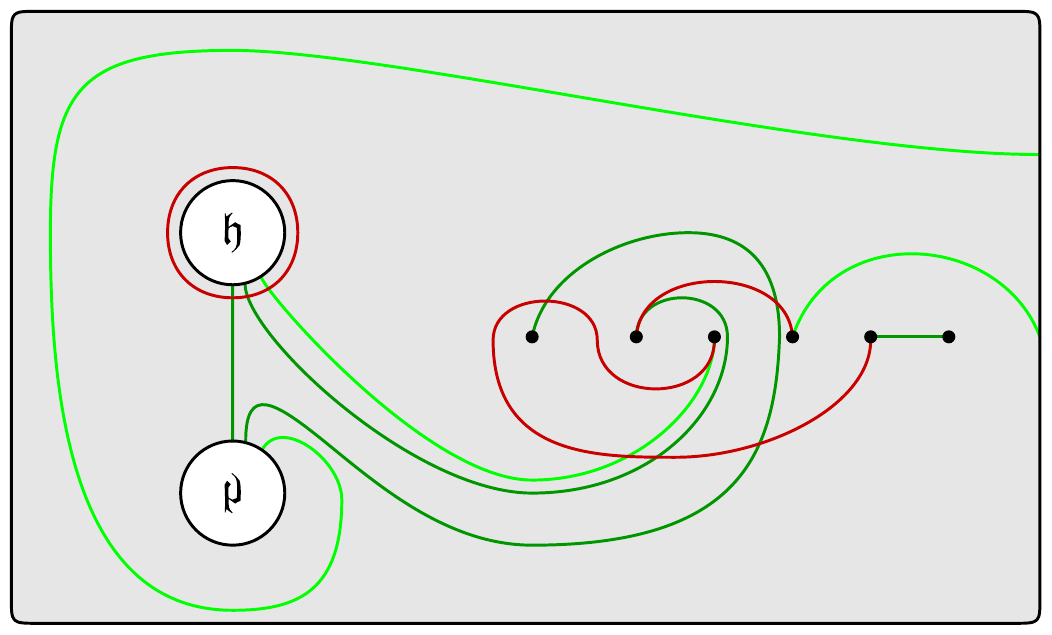}
	\caption{}
	\label{fig:tref_disk_10}
\end{subfigure}%
\begin{subfigure}{.33\textwidth}
	\centering
	\includegraphics[width=.9\linewidth]{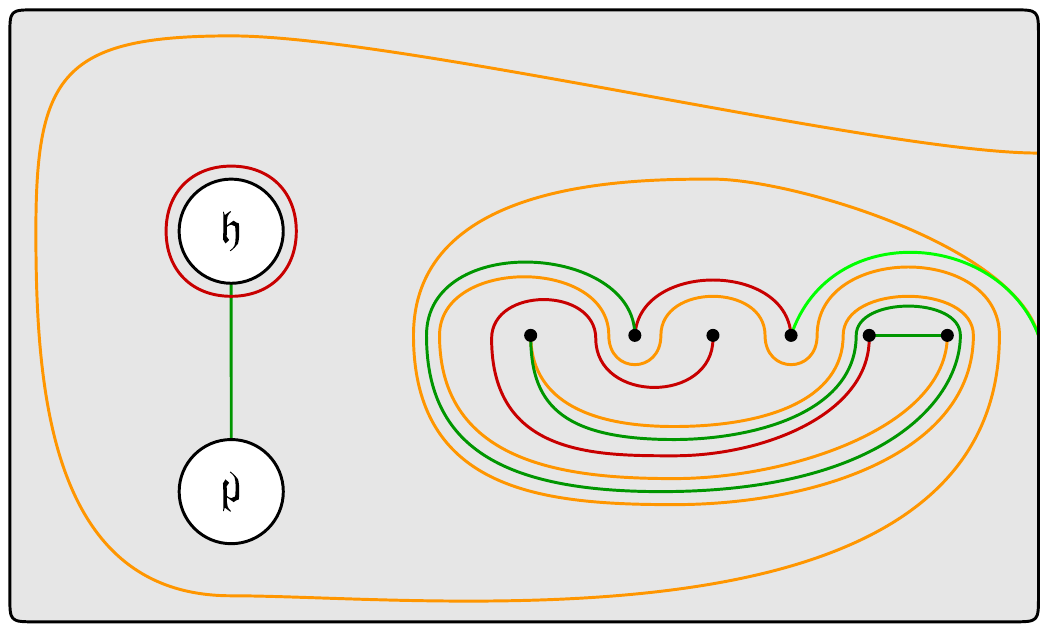}
	\caption{}
	\label{fig:tref_disk_11}
\end{subfigure}%
\begin{subfigure}{.33\textwidth}
	\centering
	\includegraphics[width=.9\linewidth]{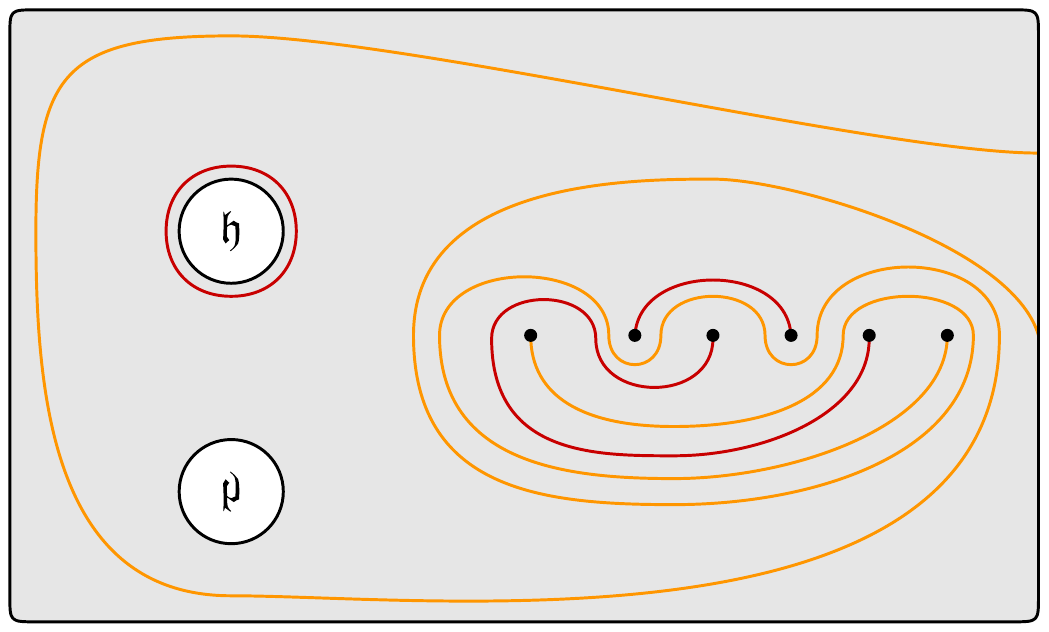}
	\caption{}
	\label{fig:tref_disk_12}
\end{subfigure}
\begin{subfigure}{.33\textwidth}
	\centering
	\includegraphics[width=.9\linewidth]{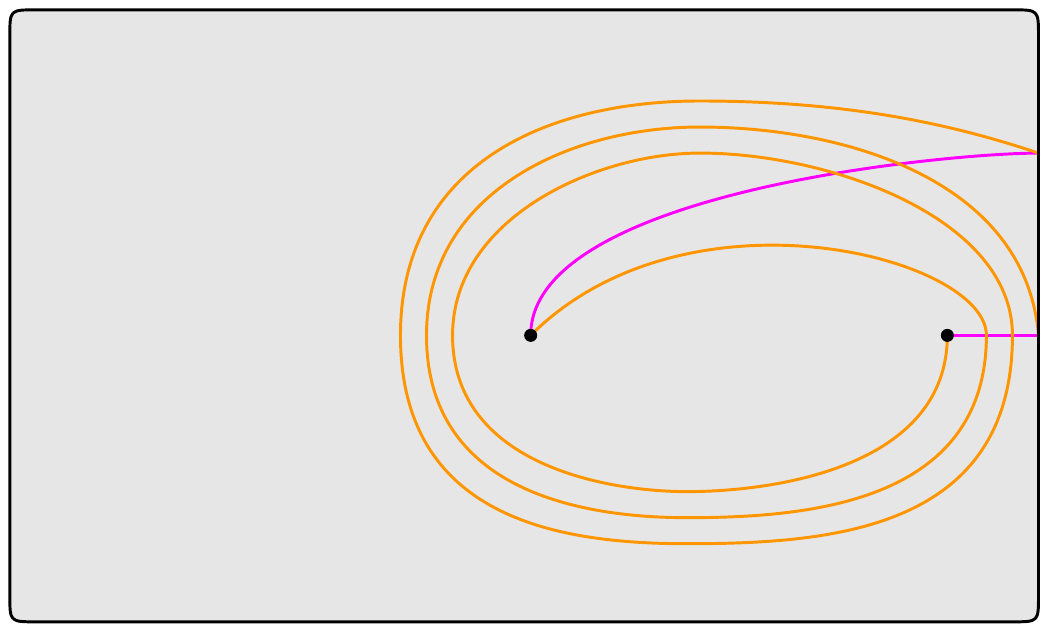}
	\caption{}
	\label{fig:tref_disk_13}%
\end{subfigure}
\caption{A shadow diagram (A), a augmented shadow diagram (B), and a fully augmented shadow diagram for a bridge trisection for the disk bounded by the right-handed trefoil in $(\CP^2)^\circ$.  (D)--(M) illustrate the process described by the monodromy algorithm of Proposition~\ref{prop:monodromy}, used to find a full-augmenting of a shadow diagram and recover the braiding induced on the boundary of the bridge trisection.}
\label{fig:tref_disk}
\end{figure}
	
	Figure~\ref{fig:Mob_sh_12} allows us to see that the braiding induced on the boundary of the bridge trisection is diffeomorphic to the abstract open-book $(P,\bold y, \tau^3)$, where $P$ is a disk, $\bold y$ is two points, and $\tau$ is a right-handed Dehn twist about a curve parallel to $\partial P$ in $P\setminus\nu(\bold y)$.  This derivation is a shadow diagram version of the calculation of this braiding given in Example~\ref{ex:mono} and Figure~\ref{fig:mono}.
\end{example}

\begin{example}
\label{ex:tref_disk}
	\textbf{(Disk for the trefoil in $(\CP^2)^\circ$)}
	Figure~\ref{fig:tref_disk_1} shows a shadow diagram corresponding to a bridge trisection of a disk bounded by the right-handed trefoil in $(\CP^2)^\circ$, the result of removing a neighborhood of a point from $\CP^2$. The two circles represent the foot of a handle for the surface $\Sigma$ and are identified via vertical reflection.
	If one forgets the bridge points $\bold x$ and all shadow arcs, one obtains a $(1,0;0,1)$--trisection diagram for this four-manifold.  The bridge trisection itself is type $(2,(0,1,0);2)$; the union of the blue and green shadows includes a bigon.  As in the previous example, we have that $(P,\bold y) = \partial_-(H_1,\Tt_1)$ is a disk with two distinguished points in its interior.  This pair is shown in Figure~\ref{fig:tref_disk_4}, together with a pair of arcs that connect the points $\bold y$ to $\partial P$.  Using the Morse function on $(H_1,\Tt_2)$, these arcs can be flowed rel-$\partial$ to lie in $\Sigma$, as shown in Figure~\ref{fig:tref_disk_5}.
	In Figure~\ref{fig:tref_disk_6}, the shadows for $(H_2,\Tt_2)$ have been added, giving a splitting shadow for $(M_1,K_1)$, which is a geometric 2--braid in $D^2\times I$, one component of which is twice-perturbed with respect to the once-stabilized Heegaard splitting of this spread.
	In Figure~\ref{fig:tref_disk_7}, a number of arc slides of $\Tt_1^*\cup\Aa^*_1$ have been performed to arrange that all arcs and curves are disjoint in their interiors, save the standard curve pair $\alpha_1\cup\alpha_2$.  From this standard splitting shadow, the arcs of $\Aa_2^*$ have been obtained, as described in the proof of Proposition~\ref{prop:monodromy}.  Figure~\ref{fig:tref_disk_8} shows a splitting shadow for $(M_2,K_2)$, with $\Aa_2^*$ remembered.  Figure~\ref{fig:tref_disk_9} shows the standard augmented splitting shadow resulting from  a number or arc slides, together with the arcs of $\Aa_3^*$.
	Figure~\ref{fig:tref_disk_10} shows a splitting shadow for $(M_3,K_3)$, with the arcs of $\Aa_2^*$ remembered, and Figure~\ref{fig:tref_disk_11} shows a slide-equivalent standard splitting shadow, with $\Aa_\circ^*$ derived.
	In Figure~\ref{fig:tref_disk_12}, the arcs of $\Aa_1^*$ and $\Aa_\circ^*$ are shown with the arcs and curves of the original tangle shadow for $(H_1,\Tt_1)$ in $\Sigma$.
	Figure~\ref{fig:tref_disk_13} shows the result of flowing $\Aa_1^*\cup\Aa_\circ^*$ up to the page $(P,\bold y)$.
	
	Figure~\ref{fig:tref_disk_13} allows us to see that the braiding induced on the boundary of the bridge trisection is diffeomorphic to the abstract open-book $(P,\bold y, \tau^3)$, where $P$ is a disk, $\bold y$ is two points, and $\tau$ is a right-handed Dehn twist about a curve parallel to $\partial P$ in $P\setminus\nu(\bold y)$.  This proves that this bridge trisection corresponds to a surface bounded by the right-handed trefoil in $(\CP^2)^\circ$.  From the bridge trisection parameters, we conclude that the surface is a disk, since it has Euler characteristic one and is connected.

\begin{figure}[h!]
\centering
\includegraphics[width=.5\linewidth]{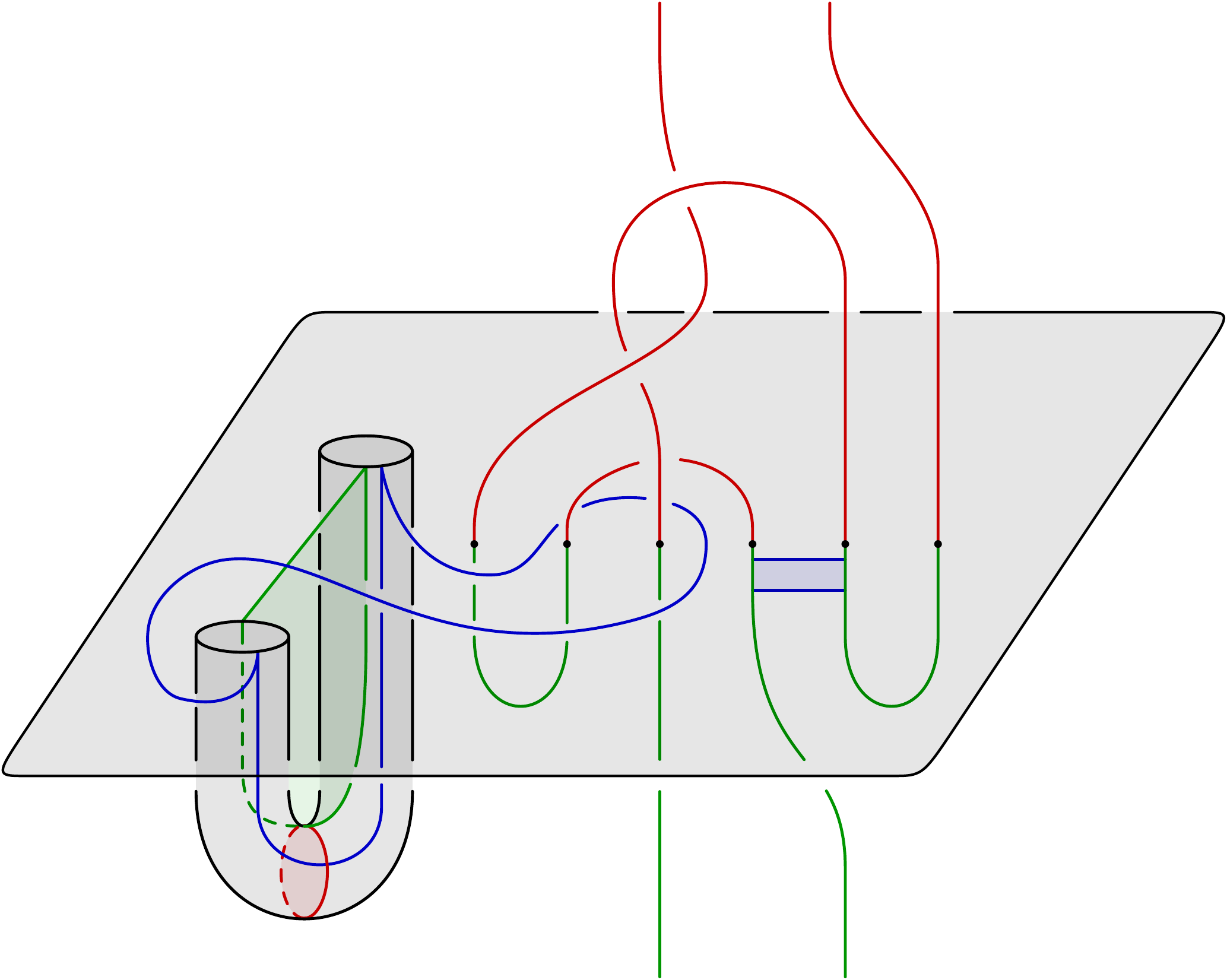}
\caption{A three-dimensional rendering of the shadow diagram in Figure~\ref{fig:tref_disk_1} corresponding to the disk bounded by the right-handed trefoil in $(\CP^2)^\circ$.}
\label{fig:genus_one_ex}
\end{figure}
	
	A three-dimensional rendering for this example is given in Figure~\ref{fig:genus_one_ex}.  The ambient 3--manifold is $S^3 = \partial(CP^2)^\circ$, equipped with the Heegaard-page structure coming from the compressionbody $H_{1,0,1}$.  The right-handed trefoil is in 2--braid position, and perturbed twice with respect to the genus one Heegaard surface $\Sigma$.  The closed curve shown in blue is the belt-sphere for the 2--handle that is attached to a 0--cell $B^4$ to build $(\CP^2)^\circ$.  The curve lies on $\Sigma$ with surface-framing $-1$.  This reflects the fact that $(\CP^2)^\circ$ can be thought of as being built from $\overline{S^3}\times[-1,0]$ by attaching a $(+1)$--framed 2--handle along the corresponding curve in the mirror manifold $\overline{S^3}\times\{-1\}$, before capping off with a 0--handle below.
	A single band is shown for the boundary knot, but this band is a helper-band in the sense of Remarks~\ref{rmk:helpers} and~\ref{rmk:helpers2} and Subsection~\ref{subsec:braiding_presentations} more generally.  In fact, relative to the Morse function on $(\CP^2)^\circ$, the disk bounded by the trefoil can be (and has been) assumed to have no saddle points, just a single minimum.  However, the Morse function on $(\CP^2)^\circ$ coming from the bridge trisection will require the disk to be built from a pair of vertical disks (since we require a 2--braid on the boundary), and the helper-band joins these disks together.  Compare with the Morse-theoretic proof of Theorem~\ref{thm:general}.
\end{example}

\section{Gluing bridge trisected surfaces and shadow diagrams}
\label{sec:gluing}

In this section, we describe how to glue bridge trisected surfaces along portions of their boundary in a way that respects the  bridge trisection structure.  The gluing of trisections was first discussed by Castro~\cite{Cas_17_Trisecting-smooth-4--dimensional}, with further development given by Castro and Ozbagci~\cite{CasOzb_19_Trisections-of-4-manifolds-via-Lefschetz} and by the author and Gay~\cite{GayMei_18_Doubly-pointed}.  We conclude this section with some examples of simple gluings of bridge trisected pairs with disconnected boundary, as well as a more complicated example involving the surfaces bounded by the right-handed trefoil discussed above.  We refer the reader to Section~\ref{sec:shadow} for necessary concepts related to shadow diagrams.

The development below is a generalization of previous developments to the setting of bridge trisections for four-manifold pairs and is complicated by the fact that we allow the four-manifolds being glued to have multiple boundary components and for the gluings to involve proper submanifolds of these boundaries.  To account for this, we will allow our gluing maps to be \emph{partial diffeomorphisms}, which means that they may be defined on proper subsets of their domain.  This subset is called the \emph{domain of definition} of the map; the image of the domain of definition is called the \emph{range}, and may be a proper subset of the codomain.  The domain of definition and range of our partial diffeomorphisms will always be closed submanifolds of the domain and codomain, respectively.

Let $\T$ be a bridge trisection of a pair $(X,\Ff)$, and let $\DD$ be a shadow diagram for $\T$.  Let $(P,\bold y) = \partial_-(H_1,\Tt_1)$, and let $\phi_\DD\colon(P,\bold y)\to(P,\bold y)$ be the monodromy automorphism determined by $\DD$ according to Proposition~\ref{prop:aug}.  Let $\psi_\DD\colon\partial(X,\Ff)\to(Y_{\phi_\DD},\Ll_{\phi_\DD})$ be the diffeomorphism given by Proposition~\ref{prop:monodromy}, where $(Y_{\phi_\DD},\Ll{\phi_\DD})$ is the model pair of the abstract open-book $(P,\bold y,\phi_\DD)$.
We note that both $\phi_\DD$ and $\psi_\DD$ depend on the underlying bridge trisection $\T$, and are determined up to post-composing with a automorphism of $(P,\bold y)$.  Thus, we might as well denote these maps by $\phi_\T$ and $\psi_\T$; we will adopt either decoration, depending on whether we wish to emphasize the shadow diagram or the underlying bridge trisection.

We work in the generality of bridge trisected pairs with disconnected boundary, so we emphasize the decomposition
$$(Y,\Ll) = (Y^1,\Ll^1)\sqcup\cdots\sqcup(Y^n,\Ll^n)$$
of $(Y,\Ll) = \partial(X,\Ff)$ into connected components of $Y$; for any connected component $Y^j$ of $Y$, we may have $\Ll^j$ disconnected -- i.e. a link.  Thus, we have corresponding decomposition of the pairs $(P,\bold y)$, $(P_{\phi_\T},\bold y_{\phi_\T})$, and $(Y_{\phi_\T},\Ll_{\phi_\T})$, and of the maps $\phi_\T$ and $\psi_\T$.

Our first result is that bridge trisections that induce diffeomorphic braidings on some portion of their boundaries can be glued along those boundaries to obtain a new bridge trisection.  By a \emph{diffeomorphism of open-book braidings} we mean a diffeomorphism of three-manifold pairs that restricts to a diffeomorphism of pages (hence, commutes with the monodromies).

\begin{proposition}
\label{prop:glue_tri}
	Let $\T'$ and $\T''$ be bridge trisections for pairs $(X',\Ff')$ and $(X'',\Ff'')$. Suppose we have an orientation reversing partial diffeomorphism of open-book braidings $\Psi\colon\partial(X',\Ff')\to\partial(X'',\Ff'')$.  Then the pair $(X,\Ff) = (X',\Ff')\cup_\Psi(X'',\Ff'')$ inherits a canonical bridge trisection $\T = \T'\cup_\Psi\T''$.
\end{proposition}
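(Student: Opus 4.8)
The plan is to perform the gluing one sector at a time and then to verify the axioms of Definition~\ref{def:Trisection} and Definition~\ref{def:Bridge} for the result, using the rigidity of open-book braidings to first normalize $\Psi$ and the characterizations of trivial tangles and disk-tangles from Section~\ref{sec:general} to control the glued pieces. So I would begin by normalizing the gluing map. Write $\partial(X',\Ff') = (Y_1',\beta_1')\cup(Y_2',\beta_2')\cup(Y_3',\beta_3')$ for the decomposition into spreads coming from $\T'$, with pages $P_i'$ and binding $B'=\partial\Sigma'$, and similarly for $\T''$; let $W'\subseteq\partial X'$ be the domain of definition of $\Psi$ and $W''=\Psi(W')$ its range, each a union of connected components of the respective boundary. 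The canonical diffeomorphisms $\psi_{\T'}$ and $\psi_{\T''}$ of Proposition~\ref{prop:monodromy} identify $\partial(X',\Ff')$ and $\partial(X'',\Ff'')$ with the model open-book braidings of $(P',\bold y',\phi_{\T'})$ and $(P'',\bold y'',\phi_{\T''})$, so $\psi_{\T''}\circ\Psi\circ\psi_{\T'}^{-1}$ is an orientation-reversing diffeomorphism of open-book braidings between model pairs. Such a map is, up to isotopy, induced by a diffeomorphism of pages intertwining the monodromies, and such a page diffeomorphism preserves the three distinguished pages of each model; hence I may isotope $\Psi$ (through diffeomorphisms of open-book braidings) so that on $W'$ it carries each spread $Y_i'\cap W'$ onto the corresponding spread of $W''$, each page $P_i'\cap W'$ onto $P_i''\cap W''$, the binding $B'\cap W'$ onto $B''\cap W''$, and the thread $\beta_i'\cap W'$ onto $\beta_i''\cap W''$, matching the braided structures. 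Since $\psi_{\T'}$ and $\psi_{\T''}$ are each well-defined only up to post-composition with a page automorphism, this normalization loses no generality, and by the uniqueness clause of Lemma~\ref{lem:BridgeDouble} the isotopy class of the normalized $\Psi$ is determined by $\Psi$ (up to isotopy) together with $\T'$ and $\T''$.

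With $\Psi$ so normalized, define $Z_i = Z_i'\cup_\Psi Z_i''$ (glued along $Y_i'\cap W'$), $H_i = H_i'\cup_\Psi H_i''$ (glued along $P_i'\cap W'$), $\Sigma = \Sigma'\cup_\Psi\Sigma''$ (glued along $B'\cap W'$), $\Dd_i = \Dd_i'\cup_\Psi\Dd_i''$, and $\Tt_i = \Tt_i'\cup_\Psi\Tt_i''$. Because $\Psi$ respects the sector structure these assemble to a decomposition $(X,\Ff) = (Z_1,\Dd_1)\cup(Z_2,\Dd_2)\cup(Z_3,\Dd_3)$ of $(X,\Ff) = (X',\Ff')\cup_\Psi(X'',\Ff'')$; each $Z_i$ is connected because $Z_i'$, $Z_i''$, and their nonempty common face are; and since $\Psi$ reverses orientation and matches oriented pages to oriented pages, the oriented decompositions in Definitions~\ref{def:Trisection} and~\ref{def:Bridge} are inherited. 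It then remains to check that each $Z_i$ is a four-dimensional $1$--handlebody, each $H_i = Z_{i-1}\cap Z_i$ a lensed compressionbody, and $\Sigma = Z_1\cap Z_2\cap Z_3$ the core surface, all of the types predicted by summing the parameters of $\T'$ and $\T''$ together with the corrections forced by the merging of boundary components; these follow from the standard facts that a boundary-sum of $1$--handlebodies along product pieces is a $1$--handlebody and a page-sum of compressionbodies is a compressionbody, which are the gluing facts underpinning the gluing theory for relative trisections~\cite{Cas_17_Trisecting-smooth-4--dimensional,CasGayPin_18_Diagrams-for-relative-trisections}. The step I expect to be the main obstacle is verifying that the glued codimension-two objects remain trivial: the parallelisms witnessing triviality of $\Dd_i'$ and $\Dd_i''$ are, over the spreads being glued, built from bridge triangles for the threads $\beta_i'$ and $\beta_i''$ exactly as in the proof of Lemma~\ref{lem:one_min}, and because the normalized $\Psi$ matches these braided threads we may choose the bridge triangles on the two sides compatibly, so that they glue to a global boundary parallelism for $\Dd_i$; the analogous argument one dimension down — matching bridge triangles for the vertical strands of $\Tt_i'$ and $\Tt_i''$, while the bridge semi-disks for the flat strands lie away from the gluing region — shows each $\Tt_i$ is a trivial tangle. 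This is precisely where the hypothesis that $\Psi$ is a diffeomorphism of open-book \emph{braidings}, rather than merely of three-manifolds, is used.

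For canonicity, note that the only choice made in the construction was the isotopy class of the normalized gluing map, which by the first paragraph depends only on $\Psi$ up to isotopy and on $\T'$ and $\T''$; hence $\T = \T'\cup_\Psi\T''$ depends only on this data. Alternatively, the spine $(H_1,\Tt_1)\cup(H_2,\Tt_2)\cup(H_3,\Tt_3)$ of $\T$ is visibly the $\Psi$-gluing of the spines of $\T'$ and $\T''$, hence canonical, and Corollary~\ref{coro:spine} then identifies $\T$ itself as canonical, completing the proof.
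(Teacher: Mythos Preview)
Your proposal is correct and follows essentially the same approach as the paper: normalize $\Psi$ so it respects pages, bindings, and threads; glue the trisection pieces accordingly; and verify triviality of the glued tangles by matching bridge triangles for the vertical strands across the identified pages so they combine into bridge semi-disks for the new flat strands. The one place the paper differs is in the disk-tangle step: rather than directly gluing boundary parallelisms built from bridge triangles for the threads as you propose, the paper observes that a trivial disk-tangle is naturally the lensed product $(H,\Tt)\times[0,1]$ with $\partial_-(Z,\Dd)=\partial_-(H,\Tt)\times[0,1]$, so gluing disk-tangles along spreads is exactly the interval-product of the tangle gluing already established---a cleaner reduction than your direct argument, though yours works as well.
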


\begin{proof}
	Let $(Y',\Ll')$ and $(Y'',\Ll'')$ denote the domain of definition and range of $\Psi$, respectively, noting that these are closed (possibly proper) submanifolds of $\partial(X',\Ff')$ and $\partial(X'',\Ff'')$, respectively.

	After potentially changing $\Psi$ by an isotopy through diffeomorphisms of open-book braidings, we can assume that $\Psi(P_i',\bold y_i') = (P_i'',\bold y_i'')$ for each $i\in\Z_3$.  We will verify that gluing the various corresponding pieces of $\T'$ and $\T''$ together according to $\Psi$ results in a collection of pieces giving a bridge trisection of $(X,\Ff)$.
	
	Consider the restriction of $\Psi$ to the binding $B'$ of the open-book decomposition of $(Y',\Ll')$, recalling that $B' = \partial(\Sigma',\bold x')$ and $B'' =\Psi(B') = \partial(\Sigma'',\bold x'')$.  Let $(\Sigma,\bold x) = (\Sigma',\bold x')\cup_\Psi(\Sigma'',\bold x'')$, which is simply the union of two surfaces with marked points and boundary along closed subsets of their respective boundaries, hence a new surface with marked points and (possibly empty) boundary.
	
	Consider the restriction of $\Psi$ to the pages $P_i'$ for each $i\in\Z_3$, recalling that $(P_i',\bold y_i') = \partial(H_i',\Tt_i')$ and $(P_i'',\bold y_i'') =\Psi(P_i',\bold y_i') = \partial(H_i'',\Tt_i'')$.  Let $(H_i,\Tt_i) = (H_i',\Tt_i')\cup_{\Psi_{(P_i',\bold y_i')}}(H_i'',\Tt_i'')$, noting that
	$$\partial(H_i,\Tt_i) = (\Sigma,\bold x)\cup_B\left((\partial_-(H_i',\Tt_i')\setminus(P_i',\bold y'))\sqcup(\partial_-(H_i'',\Tt_i'')\setminus(P_i'',\bold y''))\right).$$
	(A word of caution regarding notation: The fact that we are considering gluings along potentially strict subsets of the boundaries complicates the exposition notationally.  For example, earlier in the paper, we would have written $(P_i',\bold y_i') = \partial_-(H_i',\Tt_i')$, but here we regard $(P_i',\bold y_i')\subset\partial_-(H_i',\Tt_i')$ as the portion of $\partial_-(H_i',\Tt_i')$ lying in the domain of definition.)
	
	For each $i\in\Z_3$, let $\frak a_i'$ be a neatly embedded collection of arcs in $P_i'\setminus\bold y_i'$ such that surgery along the arcs reduces $P_i'$ to a collection of disks with the number of connected components as $P_i'$.  Moreover, we require that $\frak a_i'$ and $\frak a_{i+1}'$ be isotopic rel-$\partial$ in $Y'\setminus\Ll'$ via an isotopy that is monotonic with respect to the open-book structure. Let $\frak a_i'' = \Psi(\frak a_i')$. For each $i\in\Z_3$, let $\Aa_i$ be an embedded collection of arcs connecting the points of $\bold y_i'$ to $\partial P_i'$, and assume, as before, that $\Aa_i'$ and $\Aa_{i+1}'$ are isotopic via an isotopy that fixes $\Aa_i'\cap\partial P_i'$ and is monotonic with respect to the open-book-braiding structure; the free endpoints of $\Aa_i'$ will move along $\Ll'$.  Let $\Aa_i'' = \Psi(\Aa_i')$.
	
	Using the Morse structure on $(H_i',\Tt_i')$, flow the arcs of $\frak a_i'$ and $\Aa_i'$  down to $\Sigma'$, and denote the results $(\frak a_i^*)'$ and $(\Aa_i^*)'$, respectively. Let $E_i'$ and $T_i'$ denote the traces of the respective isotopies, noting that the $E_i'$ are compression disks for the $H_i'$, and that the $T_i'$ are bridge triangles for the vertical strands $\bold y_i'\times[0,1]$. Do the same for $\frak a_i''$ and $\Aa_i''$ to obtain $(\frak a_i^*)''$ and $(\Aa_i^*)''$ on $\Sigma''$, with corresponding traces $E_i''$ and $T_i''$.
	
	Let $D_i'$ and $D_i''$ be collections of neatly embedded disks in $H_i'$ and $H_i''$, respectively, such that surgery along $D_i'$ and $D_i''$ reduces $H_i'$ and $H_i''$, respectively, to spreads $\partial_-(H_i',\Tt_i')\times[0,1]$ and $\partial_-(H_i'',\Tt_i'')\times[0,1]$.  For each connected component of $(P_i',\bold y')$, pick a disk of $D_i'$ adjacent to that component in the sense that one of the two scars resulting from surgery along the chosen disk lies in the corresponding component of $(P_i',\bold y')\times[0,1]$.  (Equivalently, the chosen disk is the cocore of a 1--handle connecting the component of $(P_i',\bold y')\times[0,1]$ to another component of the spread obtained by surgery.) Let $F_i'\subset D_i'$ denote the chosen disks.  Then, we claim that
	$$D_i = (D_i'\setminus F_i')\sqcup (E_i'\cup_\Psi E_i'')\sqcup D_i''$$
	is a collection of compression disks in $H_i$ such that surgery along $D_i$ reduces $H_i$ to
	$$(\partial_-(H_i')\setminus P_i')\sqcup(\partial_-(H_i'')\setminus P_i'').$$
	
	To see that this is the case, note that the result of surgering $H_i$ along $D_i\sqcup F_i'$ is precisely 
	$$((\partial_-(H_i')\setminus P_i')\times[0,1])
	\sqcup(\sqcup_{m'}D^2\times[0,1])
	\sqcup((\partial_-(H_i'')\setminus P_i'')\times[0,1]),$$
	where $m'$ is the number of connected components of $Y_i'$, $P_i'$, and $F_i'$.  The effect of removing the disks of $F_i'$ from this collection of compression disk is to attach 1--handles, one for each $D^2\times[0,1]$ in the above decomposition, connecting the $m'$ copies of $D^2\times[0,1]$ to the rest of the spread. 
	It follows that $H_i$ is a compressionbody with $\partial_+H_i = \Sigma$ and $\partial_-(H_i) = (\partial_-(H_i')\setminus P_i')\sqcup(\partial_-(H_i'')\setminus P_i'')$, as desired.
	
	Moreover, let $\Delta_i'$ and $\Delta_i''$ be bridge disks for the flat strands of $\Tt_i'$ and $\Tt_i''$, respectively.  Then,
	$$\Delta_i = \Delta_i'\sqcup(T_i'\cup_\Psi T_i'')\sqcup\Delta_i''$$
	is a collection of bridge semi-disks and triangles for the strands of $\Tt_i'\cup_\Psi\Tt_i''$ in $H_i$.  The key thing to note here is that the bridge triangles $T_i'$ for the vertical strands $\bold y_i'\times[0,1]$ glue to the corresponding bridge triangles $T_i''$ for the vertical strands of $\bold y_i''\times[0,1]$ along the identified arcs $\Aa_i'\cup_\Psi\Aa_i''$ to give bridge disks for the new flat strands $(\bold y_i'\times[0,1])\cup_\Psi(\bold y_i''\times[0,1])$.
	
	Finally, consider the restriction of $\Psi$ to the spreads $(Y_i',\beta_i')$ cobounded by $(P_i',\bold y_i')$ and $(P_{i+1}',\bold y_{i+1}')$ in $(Y',\Ll)$, recalling that $(Y_i',\beta_i') = (Z_i',\Dd_i')\cap\partial(X',\Ff')$, and noting that $\Psi(Y_i',\beta_i') = (Y_i'',\beta_i'')$.
	Let $(Z_i,\Dd_i) = (Z_i',\Dd_i')\cup_\Psi(Z_i'',\Dd_i'')$ for each $i\in\Z_3$.  We claim that the fact that the $(Z_i,\Dd_i)$ are trivial disk-tangles follows easily from the detailed argument just given that the $(H_i,\Tt_i)$ are trivial tangles.
	The reason is that a trivial disk-tangle $(Z,\Dd)$ can be naturally viewed as the lensed product $(H,\Tt)\times[0,1]$ such that the decomposition of $\partial(H,\Tt) = (S,\bold x)\cup_{\partial S}(P,\bold y)$ gives rise to a bridge-braid structure on $\partial(Z,\Dd)$.  Precisely, the lensed product $(H_{g,\bold p,\bold f},\Tt_{b,\bold v})\times[0,1]$ is $(Z_{g,k;\bold p,\bold f},\Dd_{c;\bold v})$, where $k = g+p+f-n$ and $n$ is the length of the partition $\bold p$.  The structure on the boundary is that of a symmetric Heegaard double.  Moreover, we have that $\partial_-(Z,\bold D) = \partial_-(H,\Tt)\times[0,1]$, so gluing two trivial disk-tangles along a portion of their negative boundaries is the same as gluing the corresponding trivial tangles (of which the trivial disk-tangles are lensed products) along the corresponding portions of their negative boundaries, then taking the product with the interval.  Succinctly, the gluings along portions of the negative boundaries commute with the taking of the products with the interval.  Therefore, the $(Z_i,\Dd_i)$ are trivial disk-tangles, as desired.
	
	It remains only to verify that $(Z_i,\Dd_i)\cap(Z_{i-1},\Dd_{i-1}) = (H_i,\Tt_i)$ and $(H_i,\Tt_i)\cap(H_{i+1},\Tt_{i+1}) = \Sigma$, but this is immediate.
\end{proof}

\begin{remark}
\label{rmk:self_glue}
	It is interesting to note that Proposition~\ref{prop:glue_tri} holds in the case that $\T'=\T''$ and $\Psi$ is a (partial) \emph{self}-gluing!  See Example~\ref{ex:torus1} below.
\end{remark}

Having established how to glue bridge trisections from the vantage point of bridge trisected pairs, we now turn our attention to understanding gluings diagrammatically.  Suppose that $\T'$ and $\T''$ are bridge trisections of pairs $(X',\Ff')$ and $(X'',\Ff'')$ with augmented shadow diagrams $\DD'$ and $\DD''$, respectively.  Let $f\colon\partial(\Sigma,\frak a_1',(\Aa_1^*)')\to\partial(\Sigma',\frak a_1',(\Aa_1^*)')$ be an orientation-reversing \emph{partial} diffeomorphism.  We call $\DD'$ and $\DD''$ \emph{gluing compatible} if there is an orientation-reversing \emph{partial} diffeomorphism
$$\psi_f(\DD',\DD'')\colon(P_1',\bold y_1')\to(P_1'',\bold y_1'')$$
that extends $f$ and commutes with the monodromies of the diagrams -- i.e., $\psi_f(\DD',\DD'')\circ\phi_{\DD'} = \phi_{\DD''}$ -- where this composition is defined.  In this case, we call $f$ a \emph{compatible (partial) gluing}.

The map $\psi_f(\DD',\DD'')$ determines an orientation-reversing (partial) diffeomorphism
$$\Upsilon_f(\DD',\DD'')\colon (Y_{\phi_{\DD'}},\Ll_{\phi_{\DD'}})\to(Y_{\phi_{\DD''}},\Ll_{\phi_{\DD''}})$$ of abstract open-book braidings.  So, we can define a (partial) gluing map $\Psi_f(\DD',\DD'')\colon\partial(X',\Ff')\to\partial(X'',\Ff'')$ of the bridge trisected pairs by
$$\Psi_f(\DD',\DD'') = \psi_{\DD''}^{-1}\circ\Upsilon_f(\DD',\DD'')\circ\psi_{\DD'}.$$
Again, we are interested in partial boundary-gluings, so we reiterate that the above caveats regarding the domain and codomain apply to $\Psi_f(\DD',\DD'')$.  Given this set-up, we can now describe how gluing shadow diagrams corresponds to gluing bridge trisected four-manifold pairs.

\begin{proposition}
\label{prop:glue_diag}
	Suppose that $\T'$ and $\T''$ are bridge trisections of four-manifold pairs $(X',\Ff')$ and $(X'',\Ff'')$, respectively, and that the corresponding fully-augmented shadow diagrams $\DD'$ and $\DD''$ admit a compatible gluing $f$.  Let $\DD = \DD'\cup_f\DD''$, and let $(X,\Ff) = (X',\Ff')\cup_{\Psi_f(\DD',\DD'')}(X'',\Ff'')$.  Then, $\DD$ is a fully-augmented shadow diagram for the bridge trisection on $(X,\Ff)$ given in Proposition~\ref{prop:glue_tri}, once it is modified in the following two ways:
	\begin{enumerate}
		\item The arcs of $(\frak a_4)'\sqcup(\Aa_4^*)'$ and $(\frak a_4)''\sqcup(\Aa_4^*)''$ whose endpoints lie in the domain of definition and range of $f$ should be deleted.
		\item If $\partial X''$  is disconnected, then, for each component $Y''$ of the range of $\Psi_f(\DD',\DD'')$ there is a subcollection of curves of $\alpha_i''$, for each $i\in\Z_3$, that separate the components of $\partial \Sigma''$ corresponding to $Y''$ from the other components of $\partial\Sigma''$.  Throw out one curve from the subcollection of curves corresponding to each connected component of the range of $\Psi_f(\DD',\DD'')$.
		\item If $\partial X''$  is connected but $\partial X'$ is disconnected, then, for each component $Y'$ of the domain of definition  of $\Psi_f(\DD',\DD'')$ there is a subcollection of curves of $\alpha_i'$, for each $i\in\Z_3$, that separate the components of $\partial \Sigma'$ corresponding to $Y'$ from the other components of $\partial\Sigma'$. Throw out one curve from the subcollection of curves corresponding to each connected component of the domain of definition of $\Psi_f(\DD',\DD'')$.
	\end{enumerate}
\end{proposition}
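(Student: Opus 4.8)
The plan is to leverage the two structural facts established earlier: a shadow diagram determines the spine of a bridge trisection (Proposition~\ref{prop:shadow}), and a bridge trisection is determined by its spine (Corollary~\ref{coro:spine}). Since Proposition~\ref{prop:glue_tri} already produces the bridge trisection $\T=\T'\cup_\Psi\T''$ on $(X,\Ff)$, where $\Psi=\Psi_f(\DD',\DD'')$, the entire content of the statement is that the modified tuple $\DD=\DD'\cup_f\DD''$ is a \emph{fully-augmented shadow diagram for that particular trisection}. So I would split the argument in two: first check that $\DD$, after the modifications (1)--(3), satisfies the axioms of a fully-augmented shadow diagram; then check that the spine $(\Sigma,(H_i,\Tt_i)_{i\in\Z_3})$ determined by $\DD$ is diffeomorphic to the spine of $\T$, invoking Proposition~\ref{prop:spine} and Corollary~\ref{coro:spine} to conclude.

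For the identification of spines I would compare, piece by piece, the construction of $\DD=\DD'\cup_f\DD''$ with the construction carried out inside the proof of Proposition~\ref{prop:glue_tri}. The core surface underlying $\DD$ is $\Sigma=\Sigma'\cup_f\Sigma''$, which is literally the core of $\T$. For the arm $(H_i,\Tt_i)=(H_i',\Tt_i')\cup_\Psi(H_i'',\Tt_i'')$, recall that that proof exhibits a cut system $D_i=(D_i'\setminus F_i')\sqcup(E_i'\cup_\Psi E_i'')\sqcup D_i''$ and a bridge-disk system $\Delta_i=\Delta_i'\sqcup(T_i'\cup_\Psi T_i'')\sqcup\Delta_i''$, where $E_i'$ and $T_i'$ are the Morse-flow traces of $\frak a_i'$ and $(\Aa_i^*)'$. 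Flowing these back down to $\Sigma$, the boundary curves of $D_i$ are exactly the curves of $\alpha_i'\cup_f\alpha_i''$ \emph{after deleting the curves bounding the chosen disks} $F_i'$ --- and this deletion is precisely modification (2) or (3) --- while the shadows of $\Delta_i$ are the arcs $(\Tt_i^*)'\sqcup(\Tt_i^*)''$ together with the glued arcs of $(\Aa_i^*)'\cup_f(\Aa_i^*)''$, which have become shadows for the new flat strands $(\bold y_i'\times I)\cup_\Psi(\bold y_i''\times I)$. By Lemma~\ref{lem:tangle_shadow}, the tangle shadow $(\Sigma,\alpha_i,\Tt_i^*,\bold x)$ of the modified $\DD$ therefore determines precisely $(H_i,\Tt_i)$. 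Modification (1) then records the fact that the glued pages $P_i'\cong P_i''$ are absorbed into the interior of $H_i$, so the copies of $\frak a_4',(\Aa_4^*)'$ and $\frak a_4'',(\Aa_4^*)''$ supported over those pages drop out, and the remaining augmenting arcs restrict to valid cut-arcs and marked-point arcs on the reduced page $\partial_-(H_1,\Tt_1)=(\partial_-(H_1',\Tt_1')\setminus P_1')\sqcup(\partial_-(H_1'',\Tt_1'')\setminus P_1'')$; the monodromy they encode is $\phi_{\DD'}\cup_f\phi_{\DD''}$, which by the very definition of $\Psi_f(\DD',\DD'')$ matches the monodromy of $\partial(X,\Ff)$ under $\psi_\DD$ (Proposition~\ref{prop:monodromy}).

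To see that $\DD$ is a bona fide fully-augmented shadow diagram, I would glue the standardizing slide sequences of $\DD'$ and $\DD''$. Curve and arc slides may be taken supported in the interiors of $\Sigma'$ and $\Sigma''$, so the sequences that standardize each pair $(\Sigma',\alpha_i',\alpha_{i+1}',\dots)$ and $(\Sigma'',\alpha_i'',\alpha_{i+1}'',\dots)$, and their augmented analogues, run simultaneously on $\Sigma=\Sigma'\cup_f\Sigma''$; after the curve deletions of modification (2)/(3) one lands on a standard (augmented) splitting shadow, using the ``stabilized double'' picture of standard Heegaard splittings from Lemma~\ref{lemma:std_Heeg}. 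The hypothesis that $f$ is a compatible gluing --- $\psi_f(\DD',\DD'')\circ\phi_{\DD'}=\phi_{\DD''}$ where defined --- is exactly what guarantees that the glued ``input'' page data $\frak a_1'\cup_f\frak a_1''$, $(\Aa_1^*)'\cup_f(\Aa_1^*)''$ and ``output'' page data $\frak a_4'\cup_f\frak a_4''$, $(\Aa_4^*)'\cup_f(\Aa_4^*)''$ remain coherent across the gluing locus, so the full-augmentation conditions persist; uniqueness of the result up to slide-equivalence follows from Proposition~\ref{prop:Joh} and Proposition~\ref{prop:aug}.

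I expect the main obstacle to be the bookkeeping behind modifications (2) and (3): one must verify that discarding exactly one separating curve of $\alpha_i''$ (respectively $\alpha_i'$) for each glued boundary component of $\partial X''$ (respectively $\partial X'$) yields a curve system whose geometric type --- its numbers of separating and non-separating curves --- matches the genus and number of boundary components of the new negative boundary $\partial_-(H_i,\Tt_i)$, equivalently that it realizes the cut system $D_i$ of Proposition~\ref{prop:glue_tri}. This should come down to a comparison of Euler characteristics and component counts of $\Sigma'\cup_f\Sigma''$ against those of $\partial_-(H_i',\Tt_i')$ and $\partial_-(H_i'',\Tt_i'')$, together with the dichotomy that a separating curve isolating a page that gets glued away becomes redundant, whereas the curves cutting that page into disks survive as interior cut disks (the traces $E_i$). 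Once this is settled, the uniqueness of the cap-off of a standard bridge splitting to a disk-tangle (Lemmas~\ref{lem:BridgeDouble} and~\ref{lem:LP}) together with Corollary~\ref{coro:spine} completes the identification of $\DD$ with a shadow diagram for $\T'\cup_\Psi\T''$.
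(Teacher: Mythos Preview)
Your proposal is correct and follows the same approach as the paper: both reduce the statement to the constructions carried out in the proof of Proposition~\ref{prop:glue_tri}, identifying the curve deletions of modifications (2)/(3) with the discarding of the disks $F_i'$ from the cut system and modification (1) with the absorption of the glued pages. The paper's own proof is a single sentence to this effect, whereas you have written out in detail what ``follows from the proof of Proposition~\ref{prop:glue_tri}'' actually entails; your elaboration is accurate and your identification of the bookkeeping around the separating curves as the only point requiring care matches the paper's remark that modifications (2) and (3) are the substantive ones.
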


The first modification required above is a minor issue.  If this is not done, then the would-be-deleted arcs give rise to extra shadows and curves that are redundant in the encoding of the trivial tangle $(H_1,\Tt_1)$.  The next two modifications are more serious, and are required to ensure that the resulting diagram is a shadow diagram. The rationale was made clear in the proof of Proposition~\ref{prop:glue_tri}, where this precise discarding was carried out at the level of compression disks.  Note that only one of the final two modification will need to be made in practice.

\begin{proof}
	The proof of the proposition follows from the proof of Proposition~\ref{prop:glue_tri}, as applied to the gluing $\Psi_f(\DD',\DD'')$.
\end{proof}

We conclude this section with some examples illustrating gluings of bridge trisected four-manifold pairs.

\begin{example}
\label{ex:sphere}
	First, we recall the bridge trisected surfaces bounded by the right-handed trefoil discussed in Examples~\ref{ex:Mob_sh} and~\ref{ex:tref_disk}.  Let $\DD'$ denote the fully-augmented shadow diagram in Figure~\ref{fig:sphere1}, which corresponds to a bridge trisection of the pair $(X',\Ff')$, where $\Ff'$ is a disk bounded by the right-handed trefoil in $X'=(\CP^2)^\circ$. Let $\DD''$ denote the fully-augmented shadow diagram in Figure~\ref{fig:sphere2}, which corresponds to the pair $(X'',\Ff'')$, where  $\Ff''$ is the M\"obius band bounded by the left-handed trefoil in $S^3$, which we imagine as being perturbed so that its interior lies in $X''=B^4$. Note that $\DD''$ is the mirror of the diagram show in Figure~\ref{fig:Mob_sh_3}. Orientations for the boundaries of the diagrams are shown.

\begin{figure}[h!]
\begin{subfigure}{.5\textwidth}
	\centering
	\includegraphics[width=.8\linewidth]{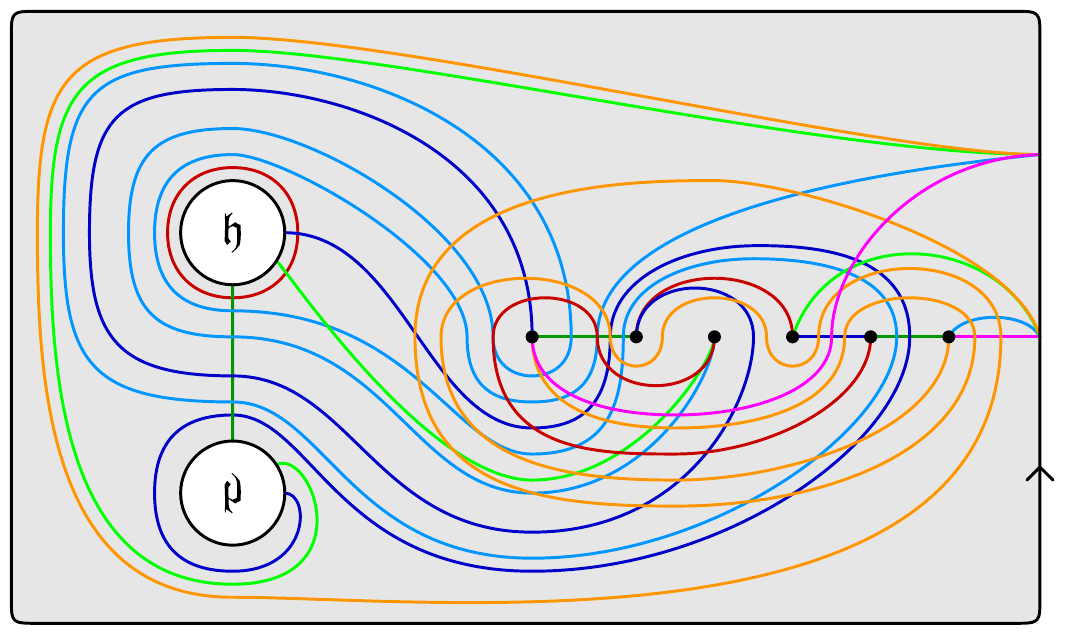}
	\caption{}
	\label{fig:sphere1}
\end{subfigure}%
\begin{subfigure}{.4\textwidth}
	\centering
	\includegraphics[width=.7\linewidth]{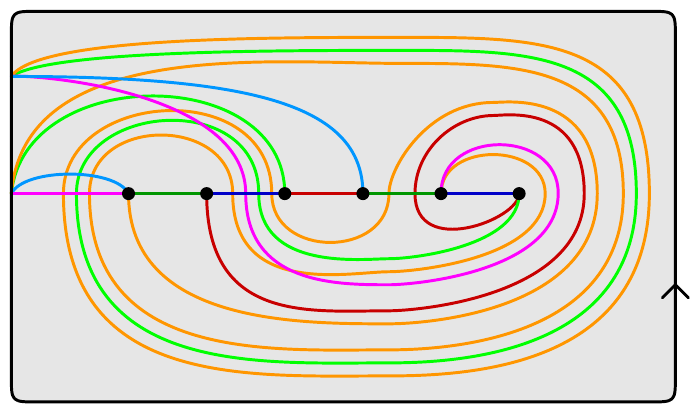}
	\caption{}
	\label{fig:sphere2}
\end{subfigure}
\par\vspace{5mm}
\begin{subfigure}{.7\textwidth}
	\centering
	\includegraphics[width=.9\linewidth]{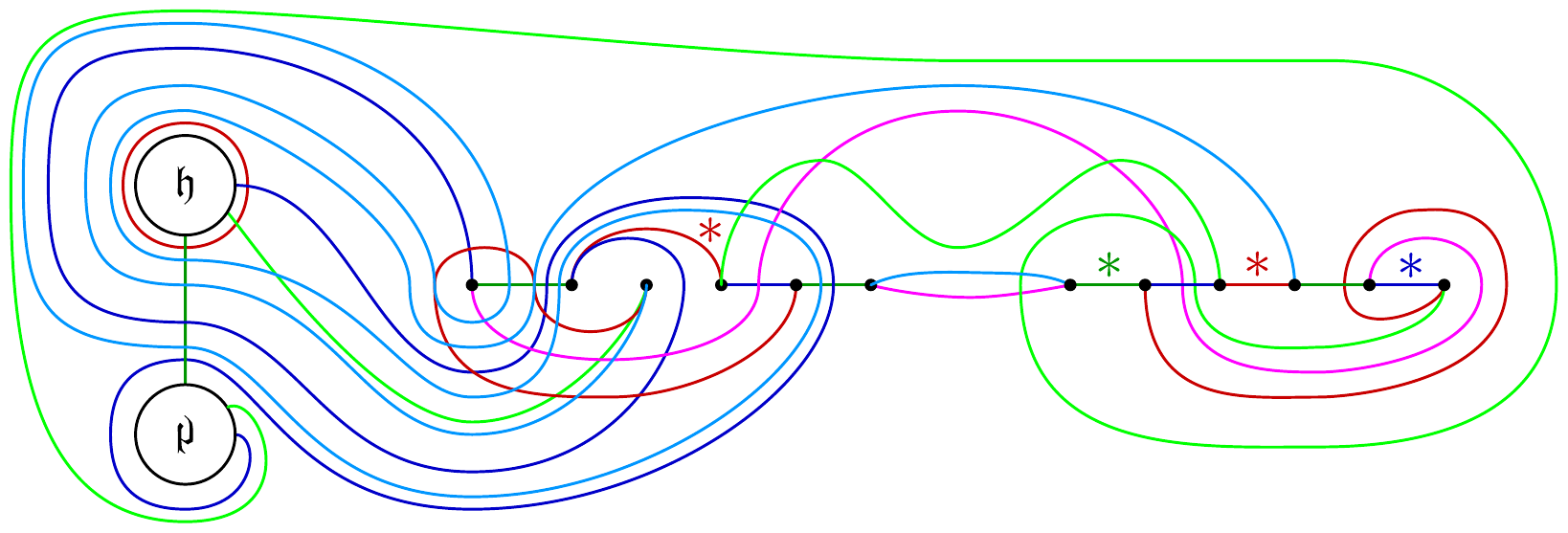}
	\caption{}
	\label{fig:sphere3}
\end{subfigure}%
\begin{subfigure}{.3\textwidth}
	\centering
	\includegraphics[width=.9\linewidth]{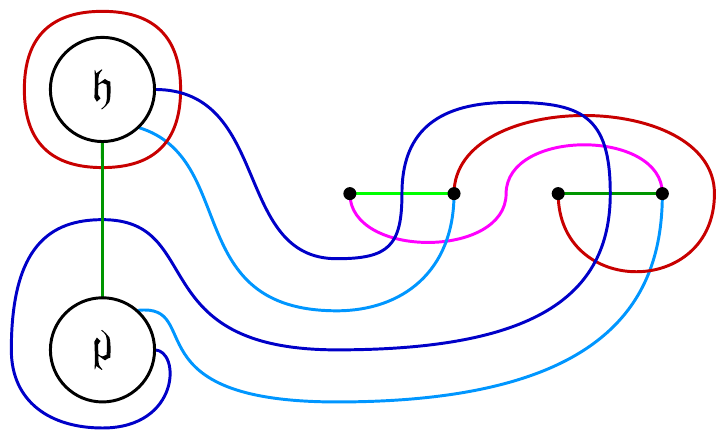}
	\caption{}
	\label{fig:sphere4}
\end{subfigure}
\caption{(A) A shadow diagram for the disk bounded by the right-handed trefoil in $(\CP^2)^\circ$. (B) A shadow diagram for the M\"obius band bounded by the right-handed trefoil in $B^4$. (C) The result of gluing these diagrams via the unique compatible gluing: a shadow diagram for a projective plane in $\CP^2$. (D) is obtained from (C) by deperturbing along the indicated shadows.}
\label{fig:sphere}
\end{figure}

	These bridge trisections induce open-book braidings on the boundaries of their corresponding manifold pairs that are orientation-reversing diffeomorphic. Both open-book braidings have disk page and boundary link in 2--braid position: For $\DD'$, the monodromy is three \emph{positive} half-twists about the two braid points.  This was described in Example~\ref{ex:tref_disk} and Figure~\ref{fig:tref_disk}.  However, for $\DD''$, the half-twists are \emph{negative}, since $\DD''$ is the mirror of the diagram discussed in Example~\ref{ex:Mob_sh} and Figure~\ref{fig:Mob_sh}. 
	
	Let $f\colon\partial\DD'\to\partial\DD''$ be the orientation-reversing diffeomorphism that matches the endpoints of the arcs $(\Aa^*_1)'$ with those of $(\Aa^*_1)''$.  There is an orientation-reversing diffeomorphism $\psi_f(\DD',\DD'')\colon (P_1',\bold y_1')\to(P_2'',\bold y_2'')$ that extends $f$; simply pick the obvious diffeomorphism relating pair in Figure~\ref{fig:tref_disk_4} to the mirror of the pair in Figure~\ref{fig:Mob_sh_4}.  It follows that is a compatible gluing corresponding to an orientation-reversing diffeomorphism $\Psi_f(\DD',\DD'')$.
	
	Let $(X,\Ff) = (X',\Ff')\cup_{\Psi_f(\DD',\DD'')}(X'',\Ff'')$.  By Proposition~\ref{prop:glue_diag}, the $\DD=\DD'\cup_f\DD''$ shown in Figure~\ref{fig:sphere3} is a shadow diagram for $(X,\Ff)$.
	Observe how the arcs $(\Aa_4^*)'$ and $(\Aa_4^*)''$ have been discarded according with the first modification required by Proposition~\ref{prop:glue_diag}.  (The second and third modification are not necessary in this example, since $\partial X'$ and $\partial X''$ are connected.)
	A brief examination reveals that this diagram can be deperturbed three times, using the indicated shadows. (See Section~\ref{sec:stabilize} for details about perturbation.)  Doing so produces the diagram of Figure~\ref{fig:sphere4}.
	
	We have that $X\cong\CP^2$ and $\Ff\cong\RP^2$, but it is not true that $(X,\Ff)\cong(\CP^2,\RP^2)$, where the latter pair is the projectivization of the standard pair $(\C^3,\R^3)$. The standard projective pair $(\CP^2,\RP^2)$ is depicted in Figure~2 of~\cite{MeiZup_18_Bridge-trisections-of-knotted}. One way to distinguish these two pairs is to note that  $\Ff$ has normal Euler number +6, while $\RP^2$ has normal Euler number +2.  Moreover,  $\pi_1(X\setminus\nu(\Ff))\cong\Z/2\Z$, while $\pi_1(\CP^2\setminus\nu(\RP^2))\cong 1$.  These facts are left as exercises to the reader.
\end{example}

\begin{example}
\label{ex:annulus}
	Consider the shadow diagram  $\DD'$ shown in Figure~\ref{fig:annulus1}, which corresponds to a bridge trisection of the cylinder pair $(X',\Ff')=(S^3\times I, S^1\times I)$.  The underlying trisection of $S^3\times I$ can be thought of as follows. If one ``trisects" $S^3$ into three three-balls, which meet pairwise along disk pages of the open-book decomposition with unknotted boundary -- so the triple intersection of the three-balls is this binding, then the trisection of $S^3\times I$ can be thought of as the product of this ``trisection" of $S^3$ with the interval, and the core $\Sigma$ is simply the product of the binding with the interval. So, the diagram $\DD'$ can be thought of as a bridge trisection for a copy $\Ff$ of $\Sigma$.  To carry this out, the copy $\Ff$ of the annular core must be perturbed relative the original copy $\Sigma$ of the core.  We leave it as an exercise to the reader to verify that $\DD'$ describes the cylinder pair, as claimed.

\begin{figure}[h!]
\begin{subfigure}{.3\textwidth}
	\centering
	\includegraphics[width=.8\linewidth]{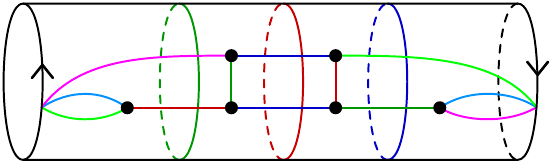}
	\caption{}
	\label{fig:annulus1}
\end{subfigure}%
\begin{subfigure}{.6\textwidth}
	\centering
	\includegraphics[width=.9\linewidth]{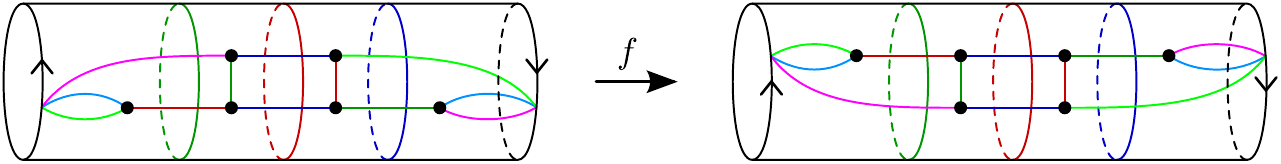}
	\caption{}
	\label{fig:annulus2}
\end{subfigure}
\par\vspace{5mm}
\begin{subfigure}{.6\textwidth}
	\centering
	\includegraphics[width=.8\linewidth]{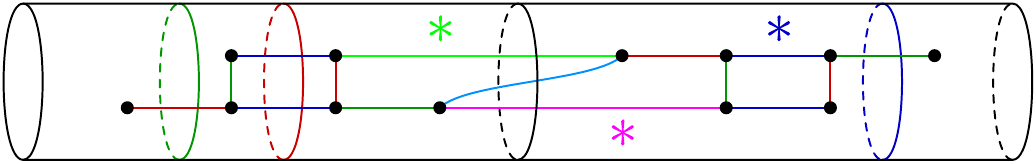}
	\caption{}
	\label{fig:annulus3}
\end{subfigure}%
\begin{subfigure}{.3\textwidth}
	\centering
	\includegraphics[width=.8\linewidth]{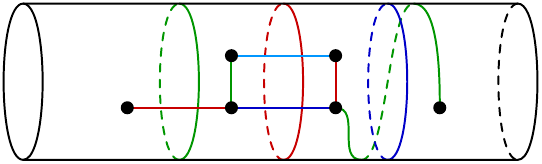}
	\caption{}
	\label{fig:annulus4}
\end{subfigure}
\caption{(A) A shadow diagram for $S^3\times I$. (B) A copy of this diagram and a copy of its mirror, with compatible gluing $f$ indicated. (C) The result of the gluing, $S^3\times I$. (D) is obtained from (C) by deperturbing along the indicated shadows.}
\label{fig:annulus}
\end{figure}
	
	Now, let $\DD''$ denote a mirror copy of $\DD'$ that corresponds to a second copy of cylinder pair: $(X'',\Ff'')=(S^3\times I,S^1\times I)$.  Each of the two boundary components of both $(X',\Ff')$ and $(X'',\Ff'')$ have induced open-book braidings with page a disk with one braid point.  Let $f\colon\partial\DD'\to\partial\DD''$ be the orientation-reversing partial diffeomorphism shown in Figure~\ref{fig:annulus2} -- i.e., $f$ maps the boundary component $S^1\times\{1\}$ fo $\DD'$ to the boundary component $S^1\times\{0\}$ of $\DD''$.	Trivially, $f$ extends to an orientation-reversing partial diffeomorphism $\psi_f(\DD',\DD'')\colon (P_1',\bold y_1')\to(P_2'',\bold y_2'')$ between the page pairs corresponding to the boundary components of the chosen boundary components of $\DD'$ and $\DD''$.  Thus, we have an orientation-reversing partial diffeomorphism $\Psi_f(\DD',\DD'')\colon\partial(X',\Ff')\to\partial(X'',\Ff'')$.
	
	Let $(X,\Ff) = (X',\Ff')\cup_{\Psi_f(\DD',\DD'')}(X'',\Ff'')$.  By Proposition~\ref{prop:glue_diag}, the diagram $\DD = \DD'\cup_f\DD''$ shown in Figure~\ref{fig:annulus3} is a shadow diagram for $(X,\Ff)$. Note that one curve of each color has been discarded in accordance with modification (2). As before, the diagram obtained from gluing can be deperturbed. (This is a common phenomenon when gluing shadow diagrams.)  The diagram obtained after deperturbing, shown in Figure~\ref{fig:annulus4}, is diffeomorphic to the original diagram $\DD'$.  Of course, $(X,\Ff)\cong(S^3\times I,S^1\times I)$.
	
	In this example, modification (1) of Proposition~\ref{prop:glue_diag} is implicit; the arcs $\frak a_4'$, $(\Aa_4^*)'$, $\frak a_4''$, and $(\Aa_4^*)''$ were never drawn and were never needed.  More interestingly, we see how modification (2) is required.  The curves of $\DD''$ have been discarded upon gluing.  Had this not been done, there would have been parallel curves in $\alpha_i$ for each $i\in\Z_3$.  This would imply that $P_i = \partial_-H_i$ would have a two-sphere component, which is not allowed.
\end{example}

\begin{example}
\label{ex:torus1}
	Finally, we consider two more compatible gluings involving $\DD'$.  First, let $\DD''$ denote a mirror copy of $\DD'$, and let $f\colon\partial\DD'\to\partial\DD''$ be the compatible gluing shown in Figure~\ref{fig:torus1}.  This compatible gluing is similar to the one explored in Example~\ref{ex:annulus}, but this time $f$ is not a partial diffeomorphism.  The induced gluing $\Psi_f(\DD',\DD'')$ matches the two boundary components of $(X,\Ff)$ with the corresponding components of $(X'',\Ff'')$. As a result,  $(X,\Ff) = (X',\Ff')\cup_{\Psi_f(\DD',\DD'')}(X'',\Ff'')$ is the closed four-manifold pair $(S^3\times S^1,S^1\times S^1)$, and the diagram $\DD = \DD'\cup_f\DD''$ for this pair is shown in Figure~\ref{fig:torus2}.  As in Example~\ref{ex:annulus}, the redundant arcs have been suppressed, and the curves $\alpha_i''$ have been discarded upon gluing.  Also, we can again deperturb, arriving at the diagram of Figure~\ref{fig:torus4}.

\begin{figure}[h!]
\begin{subfigure}{.3\textwidth}
	\centering
	\includegraphics[width=.8\linewidth]{annulus1}
	\caption{}
	\label{fig:torus0}
\end{subfigure}%
\begin{subfigure}{.3\textwidth}
	\centering
	\includegraphics[width=.7\linewidth]{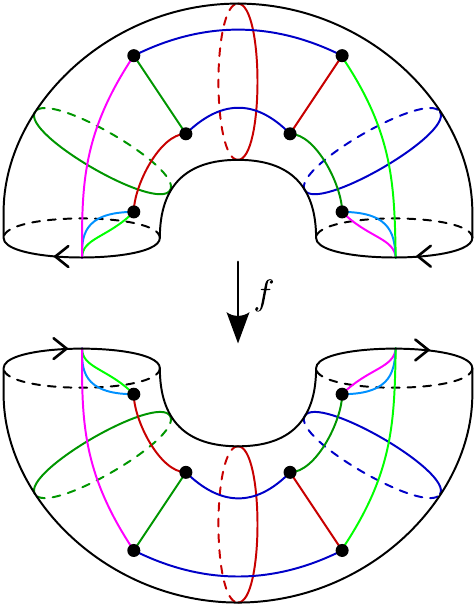}
	\caption{}
	\label{fig:torus1}
\end{subfigure}%
\begin{subfigure}{.3\textwidth}
	\centering
	\includegraphics[width=.7\linewidth]{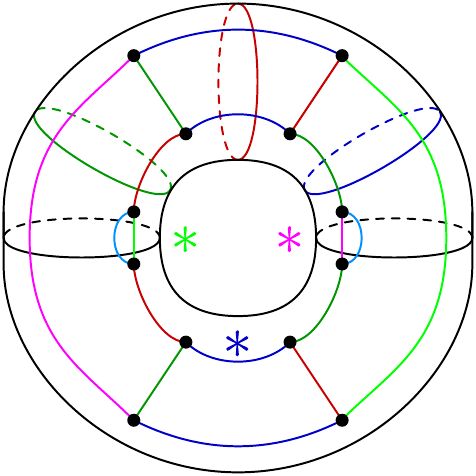}
	\caption{}
	\label{fig:torus2}
\end{subfigure}
\par\vspace{5mm}
\begin{subfigure}{.3\textwidth}
	\centering
	\includegraphics[width=.7\linewidth]{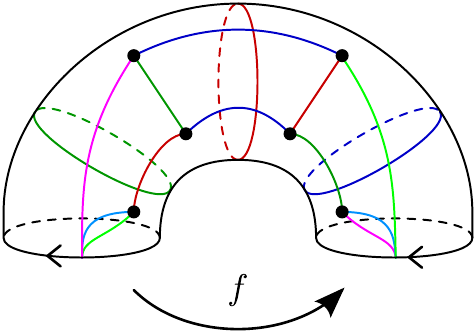}
	\caption{}
	\label{fig:torus3}
\end{subfigure}
\begin{subfigure}{.3\textwidth}
	\centering
	\includegraphics[width=.7\linewidth]{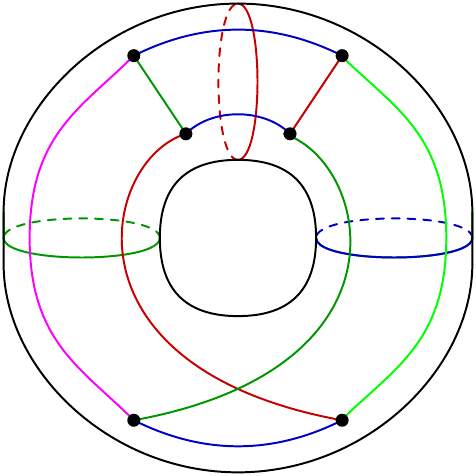}
	\caption{}
	\label{fig:torus4}
\end{subfigure}
\caption{(A) A shadow diagram for $S^3\times I$. (B) A copy of this diagram and a copy of its mirror, with compatible gluing $f$ indicated. (C) The result of the gluing, $S^3\times S^1$. (D) A compatible self-gluing of the diagram. (E) The result of the self gluing, $S^3\times S^1$.  (E) is obtained from (C) by deperturbing along the indicated shadows.}
\label{fig:torus}
\end{figure}

	Now, let $f$ denote the compatible self-gluing shown in Figure~\ref{fig:torus3}.
	The induced self-map of $(S^3\times I, S^1\times I)$ is   $\Psi_f(\DD')\colon(S^3\times\{0\},S^1\times\{0\})\to(S^3\times\{1\},S^1\times\{1\})$.
	The diagram resulting from the compatible self-gluing $f$ is the diagram of Figure~\ref{fig:torus4}, which describes $(S^3\times S^1,S^1\times S^1)$, as noted before.
\end{example}

\section{Classification and Examples}
\label{sec:class}

In this section, we classify $(b,\bold c;v)$--bridge trisections in the trivial cases where one or more of the parameters is sufficiently small.  Then, we present families of examples representing more interesting choices of parameters and pose questions about further possible classification results.  To get started, we discuss the connected sum and boundary connected sum operations, then we introduce some notions of reducibility for bridge trisections.

\subsection{Connected sum of bridge trisections}
\label{subsec:sum}
\ 

Given trisections $\T'$ and $\T''$ for four-manifolds $X'$ and $X''$, it is straight-forward to see that there is a trisection $\T=\T'\#\T''$ describing  $X'\#X''$.  Let $\varepsilon\in\{',''\}$.  All that needs to be done is to choose the points $x^\varepsilon\in X^\varepsilon$ that determine the connected sum to lie on the respective cores.  Having done so, the pieces of the trisection $\T$ can be described as by: $\Sigma = \Sigma'\#\Sigma''$, $H_i = H_i'\natural H_i''$, and $Z_i = Z_i'\natural Z_i''$.  Note that $\T$ is independent of the choice of points made above.

\begin{remark}
\label{rmk:sum_glue}
	Note that the connected sum operation, as described, is a very simple example of a gluing of trisections, as described in detail in Section~\ref{sec:gluing}. Each of $\T^\varepsilon\setminus\nu(x^\varepsilon)$ is automatically a trisection with one new boundary component diffeomorphic to $S^3$.
	If $\DD^\varepsilon$ is a shadow diagram for $\T^\varepsilon$, then $\DD^\varepsilon\setminus\nu(x^\varepsilon)$ is a diagram for $\T^\varepsilon\setminus\nu(x^\varepsilon)$ after a simple modification is made in the case that $\partial X\not=\emptyset$:
	In this case, a curve $\delta$ must be added to each of the $\alpha_i$ that is parallel to the curve $\partial\nu(x^\varepsilon)$ where $\Sigma'$ and $\Sigma''$ were glued together.  (This is a separating reducing curve in the sense of Definition~\ref{def:reduce}, below.)
\end{remark}

There is a complication in extending this interpretation to connected sum of bridge trisections with boundary that was not present in discussions of the connected sum of \emph{closed} bridge trisections elsewhere in the literature.
The na\"ive idea is to simple choose the connected sum points $x^\varepsilon$ to be bridge points.  This works for closed bridge trisections, because every bridge point is incident to a flat strand in each of the three trivial tangles.
This is not the case for bridge trisections with boundary.  To convince oneself of the problem, try to form the connect sum of two bridge trisections, each of which is a copy of the bridge trisection described in Figure~\ref{fig:b=11}, which corresponds to the standard positive M\"obius band.  It is simply not possible: The removal of an open neighborhood around any bridge point has the effect that one of the trivial tangles will no longer be trivial, since it will have a strand with no endpoints on $\Sigma$.

One might think that perturbing the bridge trisection (see Subsection~\ref{subsec:interior_perturbation} below) would fix the problem by creating a bridge point that is incident to flat strands in each arm, however, the problem persists due to consideration of the vertical patches.  Since vertical patches are only allowed to be incident to one component of $\partial X$, we cannot puncture our bridge trisection at a bridge point that is incident to a vertical patch.

The next lemma makes precise when puncturing a bridge trisection at a bridge point produces a new bridge trisection and indicates how to form the connected sum of bridge trisections.

\begin{lemma}
\label{lem:punc}
	Let $\T$ be a bridge trisection for a pair $(X,\Ff)$, and let $x$ be a bridge point.  Then, $\T\setminus\nu(x)$ is a bridge trisection for the pair $(X\setminus\nu(x),\Ff\setminus\nu(x))$ if and only if $x$ is incident to a flat patch of $\Dd_i$ for each~$i\in\Z_3$.
	
	If $\DD = (\Sigma,\alpha_1,\alpha_2,\alpha_3,\Tt_1^*,\Tt_2^*,\Tt_3^*,\bold x)$ is a shadow diagram for $\T$, then a shadow diagram for $\T\setminus\nu(x)$ can be obtained as follows: Let $\delta = \partial\nu(x)$ in $\DD$.  Let $\delta_i$ denote the result of sliding $\delta$ off the arc $\tau_i^*$ of $\Tt_i^*$ that is incident to $x$.  Let $\Sigma' = \Sigma\setminus\nu(x)$, $\alpha_i' = \alpha_i\cup\delta_i$, $(\Tt_i^*)' = \Tt_i^*\setminus\tau_i^*$, and $\bold x' = \bold x\setminus\{x\}$. Then, there are two cases:
	If $\partial X=\emptyset$, then
	$$\DD = (\Sigma',\alpha_1,\alpha_2,\alpha_3,(\Tt_1^*)',(\Tt_2^*)',(\Tt_3^*)',\bold x'),$$
	is a shadow diagram for $T\setminus\nu(x)$.
	If $\partial X\not=\emptyset$, then
	$$\DD = (\Sigma',\alpha_1',\alpha_2',\alpha_3',(\Tt_1^*)',(\Tt_2^*)',(\Tt_3^*)',\bold x'),$$
	is a shadow diagram for $T\setminus\nu(x)$.
\end{lemma}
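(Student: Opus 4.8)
The statement has two parts: first, a local criterion on the bridge point $x$ under which puncturing yields a bridge trisection; second, a recipe for the shadow diagram of the punctured bridge trisection. The plan is to establish the first part by a careful analysis of what happens to each of the seven pieces $(\Sigma, H_i, Z_i)$ and their intersections with $\Ff$ when we remove $\nu(x)$, and then to derive the diagrammatic statement by tracking how the trivial-tangle data and defining curves change.

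\textbf{The local criterion.} First I would observe that removing $\nu(x)$ from $\Sigma$ produces a new surface $\Sigma' = \Sigma\setminus\nu(x)$, which is again a surface with (one more) boundary component; since $x$ is a bridge point, $\delta = \partial\nu(x)$ is a new circle boundary component of $\Sigma'$, and in the four-manifold this corresponds to a new $S^3$ boundary component of $X\setminus\nu(x)$. Then for each $i\in\Z_3$ I would examine $(H_i,\Tt_i)\setminus\nu(x)$ and $(Z_i,\Dd_i)\setminus\nu(x)$. The key point: $x$ is an endpoint of exactly one strand $\tau_i$ of $\Tt_i$ in each arm, and this strand is either flat or vertical. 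If $\tau_i$ is flat, then $\tau_i\setminus\nu(x)$ is still a flat arc with an endpoint on the new binding piece, and the compressionbody $H_i\setminus\nu(x) \cong H_i$ still carries a standard Morse function — so $(H_i,\Tt_i)\setminus\nu(x)$ is again a trivial tangle, now of the expected form with one more (incomplete) flat strand. If $\tau_i$ is vertical, then removing $\nu(x)$ severs it into an arc with a free endpoint (not lying on $\partial_+ H_i$ nor $\partial_- H_i$), which violates neatness; likewise for the patch data. So triviality of all three arms after puncturing forces each $\tau_i$ to be flat, i.e., $x$ is incident to a flat patch of $\Dd_i$ for each $i$. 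Conversely, if $x$ is incident to a flat patch in each $\Dd_i$, I would check that $(Z_i,\Dd_i)\setminus\nu(x)$ is still a trivial disk-tangle: since the flat patch is boundary-parallel and $x$ lies on its boundary, $\nu(x)$ meets $\Dd_i$ in a half-disk cut out of this patch, and boundary-parallelism is preserved (this is where Proposition~\ref{prop:triv_disks} and Lemma~\ref{lem:one_min} are invoked — the punctured patch is still boundary-parallel, and no minima are introduced). Finally I would verify the intersection conditions $(Z_i,\Dd_i)\cap(Z_{i-1},\Dd_{i-1}) = (H_i,\Tt_i)\setminus\nu(x)$ and $(H_i,\Tt_i)\cap(H_{i+1},\Tt_{i+1}) = (\Sigma',\bold x\setminus\{x\})$, which are immediate from the local nature of the excision.

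\textbf{The diagrammatic recipe.} For the second part I would argue directly from Lemma~\ref{lem:tangle_shadow} and the definition of a shadow diagram. Removing $\nu(x)$ deletes the bridge point $x$ and the shadow arc $\tau_i^*$ incident to it (its strand no longer being a complete flat strand, but rather one running to the new binding component $\delta$). The boundary circle $\delta = \partial\nu(x)$ must be recorded; in the closed case $\partial X = \emptyset$, the new boundary sphere of $X\setminus\nu(x)$ is encoded simply by $\delta$ appearing as a boundary component of $\Sigma'$, with no new defining curve needed, so $\alpha_i' = \alpha_i$. In the case $\partial X \neq\emptyset$, however, the new $S^3$ boundary component requires (as in Remark~\ref{rmk:sum_glue}) an additional defining curve in each compressionbody parallel to $\delta$; after an isotopy/slide this curve is $\delta_i$, the result of sliding $\delta$ off the deleted arc $\tau_i^*$ so that the curves $\delta_i$ remain disjoint from the surviving shadows $(\Tt_i^*)'$ and from each other appropriately. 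I would then check that $(\Sigma',\alpha_i',(\Tt_i^*)',\bold x')$ is a tangle shadow for $(H_i,\Tt_i)\setminus\nu(x)$ by appealing to Lemma~\ref{lem:tangle_shadow}, and that the pairwise-standardness required of a shadow diagram survives — this follows because the operation is supported in the disk $\nu(x)\subset\Sigma$ and each pair of tangle shadows was standard away from a neighborhood of $x$.

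\textbf{Expected main obstacle.} The delicate step is the converse direction of the local criterion, specifically verifying that when $x$ is incident to a flat patch in each sector, the punctured disk-tangle $(Z_i,\Dd_i)\setminus\nu(x)$ is genuinely trivial and not merely ``trivial up to isotopy introducing extra structure'' — one must confirm that excising a half-disk from a boundary-parallel patch at a boundary point leaves a boundary-parallel disk without creating new index-zero critical points of the height function on the disk-tangle, so that Lemma~\ref{lem:one_min} applies cleanly. The subtlety about whether a new defining curve is needed (the $\partial X = \emptyset$ versus $\partial X \neq \emptyset$ dichotomy) is the other place where care is required, and I would handle it exactly as in Remark~\ref{rmk:sum_glue}, since the punctured object is, tautologically, a bridge trisection of a pair with one extra $S^3$ boundary component, and the diagrammatic conventions of Section~\ref{sec:shadow} then dictate the presence of the separating curve $\delta_i$.
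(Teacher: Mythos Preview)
Your overall approach mirrors the paper's, but there is a genuine gap in the ``only if'' direction. You analyze whether the \emph{strand} $\tau_i$ of $\Tt_i$ incident to $x$ is flat or vertical, and then assert that ``each $\tau_i$ flat'' is the same as ``$x$ is incident to a flat patch of $\Dd_i$ for each $i$.'' These conditions are not equivalent: the patch $D_i\subset\Dd_i$ containing $x$ on its boundary can be vertical even when the particular strands $\tau_i$ and $\tau_{i+1}$ meeting at $x$ are both flat, since $\partial D_i$ may pass through \emph{other} bridge points where the adjacent strands are vertical. In that situation the punctured tangles $(H_i\setminus\nu(x),\Tt_i\setminus\nu(x))$ are all trivial, yet $D_i\setminus\nu(x)$ meets two distinct components of $\partial(X\setminus\nu(x))$ and so is neither a flat nor a vertical disk --- the disk-tangle fails to be trivial. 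The paper avoids this by arguing directly at the patch level: if $D_i$ is vertical, the punctured patch intersects multiple boundary components of the new four-manifold, which is forbidden.

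Two smaller inaccuracies in your local analysis are worth correcting. First, when $\tau_i$ is flat, the punctured strand is not ``still a flat arc'': it becomes a \emph{vertical} strand (one endpoint on $\Sigma'$, one on the new disk page), and likewise the flat patch becomes a vertical patch --- this is exactly what the paper records. Second, when $\tau_i$ is vertical, the obstruction is not a failure of neatness; the punctured strand is still neatly embedded, with both endpoints on $\partial_-H_i'$. The actual problem is that such a strand has no endpoint on $\partial_+H_i'=\Sigma'$ and hence is neither flat nor vertical (it is forced to have a minimum with respect to the standard Morse function). Your diagrammatic argument and the treatment of the $\partial X=\emptyset$ versus $\partial X\neq\emptyset$ dichotomy are correct and match the paper.
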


\begin{proof}
	If $x$ is incident to a flat patch of $\Dd_i$ for each $i\in\Z_3$, then it is straight-forward to verify that the pieces of $\T\setminus\nu(x)$ form a bridge trisection.
	The main substantive changes are that (1) the number of components of $\partial X$, $\partial\Sigma$, and $\partial_-H_i$ all increase by one; and (2) for each $i\in\Z_3$, the flat strand of $\Tt_i$ becomes a vertical strand and the flat patch of $\Dd_i$ incident to $x$ becomes a vertical patch.
	Conversely, if $x$ is incident to a vertical patch $D\subset\Dd_i$ for some $i\in\Z_3$, then $\Dd_i\setminus\nu(x)$ is no longer a trivial disk-tangle, since $D\setminus\nu(x)$ is neither vertical nor flat, as it intersects multiple components of $\partial X$.
	
	If $\partial X=\emptyset$, then the $H_i$ are handlebodies and the $H_i'$ are compressionbodies with $\partial_-H_i'\cong D^2$.  In this case, the curves $\alpha_i$ still encode $H_i'$ without modification.
	If $\partial X\not=\emptyset$, then $\partial_-H_i' \cong \partial_-H_i\sqcup D^2$.  In this case, $\delta$ must be added to $\alpha_i$ in order to encode the fact that the new component of $\partial_-H_i'$ is disjoint from the original ones. As curves in a defining set, $\delta$ and $\delta_i$ serve the same role, since they are isotopic.  The only reason for pushing $\delta$ off $\tau_i^*$ is to satisfy our convention that the shadow arcs be disjoint from the defining set of curves for the handlebody.
	The shadow arcs $\tau_i^*$ are deleted regardless of whether $\partial X$ is empty, since these shadows correspond to flat strands that become vertical strands upon removal of $\nu(x)$. 
\end{proof}

\begin{example}
\label{ex:conn_sum}
	Consider the shadow diagram $\DD$ shown in Figure~\ref{fig:conn_sum1	}, which corresponds to a bridge trisection of the trivial disk in the four-ball.  Figure~\ref{fig:conn_sum2} shows the diagram corresponding to the bridge trisection $\T' = \T\setminus\nu(x)$ for $(X',\Ff') = (B^4\setminus\nu(x),D^2\setminus\nu(x))$.  Note that this diagram is equivalent to that of Figures~\ref{fig:annulus1} and~\ref{fig:torus0}.
\end{example}

\begin{figure}[h!]
\begin{subfigure}{.5\textwidth}
	\centering
	\includegraphics[width=.8\linewidth]{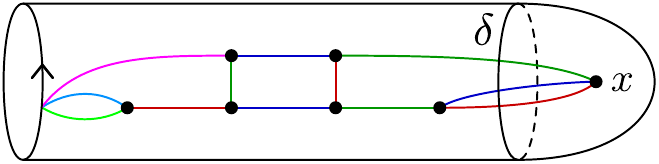}
	\caption{}
	\label{fig:conn_sum1	}
\end{subfigure}%
\begin{subfigure}{.5\textwidth}
	\centering
	\includegraphics[width=.67\linewidth]{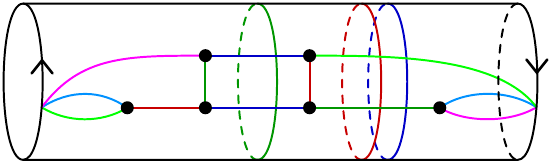}
	\caption{}
	\label{fig:conn_sum2}
\end{subfigure}
\caption{(A) A shadow diagram for a bridge trisection of $(B^4,D^2)$. (B) The diagram obtained by puncturing at the bridge point $x$.}
\label{fig:conn_sum}
\end{figure}

In light of this lemma, it is clear that we can obtain a bridge trisection for the connected sum of surfaces by choosing the connected sum points to be bridge points incident only to flat patches.  Though such bridge points need not always exist (see the M\"obius band example reference above), they can be created via interior perturbation -- at most one in each direction.  The punctured trisections $\T^\varepsilon\setminus\nu(x^\varepsilon)$ can be canonically glued along the novel three-sphere-unknot boundary-component, according to the techniques of Section~\ref{sec:gluing}.  Note that in the case that at least one of $X'$ and $X''$ have boundary, then at least one of the curves $\delta_i'$ or the curves $\delta_i''$ should be discarded upon gluing, as dictated by Propositions~\ref{prop:glue_tri} and~\ref{prop:glue_diag}.  Compare Example~\ref{ex:conn_sum} to Example~\ref{ex:annulus}.

So far, we have viewed the connected sum of bridge trisections as a special case of gluing bridge trisections, and it has been noted that, for this approach to work, we must form the connected sum at bridge points that are incident to flat patches in each disk-tangle.  However, it is possible to work in a slightly more general way so that the punctured objects need not be bridge trisections themselves, but their union will be a bridge trisection of the connected sum.

\begin{lemma}
\label{lem:conn_sum}
	Let $\T'$ and $\T''$ be bridge trisections for pairs $(X',\Ff')$ and $(X'',\Ff'')$, respectively, and let $x'$ and $x''$ be bridge points such that, for each $i\in\Z_3$, one of $x'$ or $x''$ is incident to a flat patch in $\T^\varepsilon$.  Then, the result
	$$\T = (\T'\setminus\nu(x'))\cup(\T''\setminus\nu(x''))$$
	obtained by removing open neighborhoods of the $x^\varepsilon$ from the $\T^\varepsilon$ and gluing along resulting boundaries so that the corresponding trisection pieces are matched is a bridge trisection for $(X,\Ff) = (X',\Ff')\#(X'',\Ff'')$.
\end{lemma}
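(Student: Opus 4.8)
The plan is to reduce this statement to the gluing machinery of Proposition~\ref{prop:glue_tri} together with the puncturing analysis of Lemma~\ref{lem:punc}. First I would observe that, by hypothesis, for each $i\in\Z_3$ at least one of $x'$ or $x''$ is incident to a flat patch of $\Dd_i^\varepsilon$. This is exactly the condition under which, by Lemma~\ref{lem:punc}, removing $\nu(x^\varepsilon)$ from the appropriate $\T^\varepsilon$ yields a bridge trisection with one extra boundary component of the form $(S^3,L)$, where $L$ is a split union of a braid with an unlink sitting in standard position with respect to the standard Heegaard-double structure on $S^3$. The subtlety the lemma is designed to handle is that $x'$ might be incident to a vertical patch in $\Dd_i'$ for some $i$, in which case $\T'\setminus\nu(x')$ is \emph{not} itself a bridge trisection; but then $x''$ must be incident to a flat patch in $\Dd_i''$, and the two pieces repair each other's defects upon gluing.

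The key steps, in order, are: (1) For each $i\in\Z_3$, examine the local picture near $x'$ and $x''$ in the $i$-th disk-tangle. When $x^\varepsilon$ is incident to a flat patch of $\Dd_i^\varepsilon$, removing $\nu(x^\varepsilon)$ turns that flat patch into a vertical patch and the incident flat strand of $\Tt_i^\varepsilon$ into a vertical strand, exactly as in the proof of Lemma~\ref{lem:punc}. When $x^\varepsilon$ is instead incident to a vertical patch, removing $\nu(x^\varepsilon)$ produces a piece $D\setminus\nu(x^\varepsilon)$ that is neither flat nor vertical. (2) Glue the two punctured objects along the new $(S^3,L)$-boundary components so that corresponding pieces $\Sigma^\varepsilon$, $H_i^\varepsilon$, $Z_i^\varepsilon$ are matched; this is legitimate because the gluing region is precisely a regular neighborhood of a bridge point and the two boundary open-books are canonically identified (both are the trivial genus-zero open-book on $S^3$), so the hypotheses of Proposition~\ref{prop:glue_tri} are met. (3) Check that in the glued object each $\Dd_i = \Dd_i'\cup\Dd_i''$ is again a trivial disk-tangle: for each $i$, the bad piece (if any) coming from the side where $x^\varepsilon$ was incident to a vertical patch gets completed by the matching vertical patch from the other side, so the union is genuinely a collection of vertical and flat disks. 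Similarly each $\Tt_i$ is a trivial tangle and $\Sigma = \Sigma'\#\Sigma''$. (4) Identify the resulting four-manifold pair: gluing two four-manifolds along $S^3$-boundary components obtained by puncturing is by definition the connected sum, and gluing the two surface pieces along the unknotted arcs of $L$ recovers $\Ff'\#\Ff''$; hence $(X,\Ff)=(X',\Ff')\#(X'',\Ff'')$.

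The main obstacle I expect is step~(3): verifying that the \emph{union} of the two (possibly individually-defective) disk-tangle pieces is a trivial disk-tangle. One has to argue carefully that the non-flat, non-vertical disk $D\setminus\nu(x^\varepsilon)$ coming from one side, when glued to the vertical (or flat) patch from the other side along the $S^3$-boundary, is isotopic rel-$\partial$ to a flat disk in $Z_i$; this uses Proposition~\ref{prop:triv_disks} (uniqueness of trivial disk-tangles with fixed boundary) applied to the glued-up $Z_i \cong Z_i' \natural Z_i''$, after checking that $\partial\Dd_i$ is a standard unlink. The bookkeeping of which patches are flat versus vertical on each side, indexed over $i\in\Z_3$, is where the hypothesis ``for each $i$, one of $x'$ or $x''$ is incident to a flat patch'' is used in full strength, and getting this case analysis clean — rather than just verifying the simpler situation where one of the $\T^\varepsilon\setminus\nu(x^\varepsilon)$ is already a bridge trisection — is the crux. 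Everything else is routine once the gluing is set up via Proposition~\ref{prop:glue_tri}, since the intersection conditions $(Z_i,\Dd_i)\cap(Z_{i-1},\Dd_{i-1})=(H_i,\Tt_i)$ and $(H_i,\Tt_i)\cap(H_{i+1},\Tt_{i+1})=\Sigma$ are inherited directly from the gluing.
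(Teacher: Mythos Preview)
Your plan correctly isolates the crux at step~(3), but two of the tools you invoke do not fit.

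First, Proposition~\ref{prop:glue_tri} takes as input two \emph{bridge trisections}. You yourself note that $\T^\varepsilon\setminus\nu(x^\varepsilon)$ need not be a bridge trisection when $x^\varepsilon$ is incident to a vertical patch, yet in step~(2) you then assert that ``the hypotheses of Proposition~\ref{prop:glue_tri} are met.'' They are not, in general, so that proposition cannot be cited to carry out the gluing. Second, Proposition~\ref{prop:triv_disks} is a \emph{uniqueness} statement for trivial disk-tangles with a given boundary; it does not let you conclude that a given disk-tangle \emph{is} trivial from the fact that its boundary is a standard unlink. So the plan for step~(3) does not go through as written.

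The paper's proof bypasses both issues with a direct, elementary argument. One simply observes that the boundary connected sum of trivial disk-tangles is again a trivial disk-tangle, and then tracks the flat/vertical type of the single glued disk $D_i = D_i'\cup D_i''$: since by hypothesis one of $D_i^\varepsilon$ is flat, the union $D_i$ is flat or vertical according to whether the other piece is flat or vertical. (Equivalently, each disk of $\Dd_i$ has at most one critical point with respect to the standard Morse function, which is the characterization of Lemma~\ref{lem:one_min}.) The same reasoning handles the arms $(H_i,\Tt_i)$. No appeal to Proposition~\ref{prop:glue_tri} or Proposition~\ref{prop:triv_disks} is needed; the verification is local and Morse-theoretic.
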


\begin{proof}
	Let $D_i^\varepsilon$ be the patch of $\Dd_i^\varepsilon$ containing $x^\varepsilon$ for each $i\in\Z_3$ and each $\varepsilon\in\{',''\}$.  Let $D_i = D_i'\cup_{\partial\nu(x^\varepsilon)}D_i''$.
	Then
	$$\Dd_i = \Dd_i'\cup_{\partial\nu(y^\varepsilon)}\Dd_i'' = (\Dd_i'\setminus D_i')\sqcup(\Dd_i''\setminus D_i'')\sqcup D_i.$$
	 For each $i\in\Z_3$,  one of the $D_i^\varepsilon$ will be flat, so $D_i$ will be flat or vertical, according to whether the other of the $D_i^\varepsilon$ is flat or vertical. In any event, each disk of  $\Dd_i$ has at most one critical point, we have a trivial disk-tangle, since the boundary sum of trivial disk-tangles is a trivial disk-tangle.
	 
	 A similar argument shows that the arms of $\T$ are just the boundary sum of the arms of the $\T^\varepsilon$ and that each strand is vertical or flat, as desired.  The details are straight-forward to check.
\end{proof}

Note that while the parameters $g$ and $\bold k$ are additive under connected sum, the parameters $b$ and $\bold c$ are $(-1)$--subadditive (eg. $b = b'+b''-1$).  In the case that the $(X^\varepsilon,\Ff^\varepsilon)$ have non-empty boundary, the boundary parameters $\bold p$, $\bold f$, $\bold v$, and $n$ are all additive, since we are discussing connected sum at an \emph{interior point} of the pairs.  Unlike the case of the connected sum of two four-manifold trisections, here, the resulting bridge trisection is highly dependent on the choice of bridge points made above.

\subsection{Boundary connected sum of bridge trisections}
\label{subsec:bound_sum}
\ 

Now consider the operation of boundary connected sum of four-manifolds.  We start with the set-up as above, but now we choose the summation points to be points $y^\varepsilon$ lying in components $K^\varepsilon$ of the bindings $\partial \Sigma^\varepsilon$ for each $\varepsilon\in\{',''\}$.  In this case, the pieces of the trisection $\T = \T'\natural\T''$ can be described as follows:  $\Sigma = \Sigma'\natural\Sigma''$, $H_i = H_i'\natural H_i''$, $Z_i = Z_i'\natural Z_i''$, $B = B'\#B''$, $P_i = P_i'\natural P_i''$, and $Y_i = Y_i'\natural Y_i''$.  And in this case, $g$, $\bold k$, and $\bold p$ are additive, while $\bold f$ and $n$ are $(-1)$--subadditive, and $\T$ is highly dependent on the choice of binding component $K^\varepsilon$ made above.

The situation becomes more complicated when we consider boundary connected sum of bridge trisected pairs.  The issue here is that $\Ff^\varepsilon\cap\partial\Sigma^\varepsilon =\emptyset$, so we cannot choose the $y^\varepsilon$ to lie simultaneously on $\Sigma^\varepsilon$ and on $\Ff^\varepsilon$.  Our approach is to first perform the boundary connected sum of the ambient four-manifolds, as just described, then consider the induced bridge trisection of the split union $(X,\Ff'\sqcup\Ff'')$ of surface links.  We now describe a modification of this bridge trisection that will produce a bridge trisection of $(X,\Ff'\natural\Ff'')$.  

Suppose that we would like to form the boundary connected sum of $(X',\Ff')$ with $(X'',\Ff'')$ at points $y^\varepsilon\in\partial\Ff^\varepsilon$.  Without loss of generality, we can assume that $y^\varepsilon\in \Ff^\varepsilon\cap P_i^\varepsilon$; in relation to the open-book structure on (the chosen component of) $\partial X^\varepsilon$, we assume that $y^\varepsilon$ lies on the page $P_i^\varepsilon$.  Henceforth, our model is dependent on the choice of $i\in\Z_3$.

Choose arcs $\omega^\varepsilon$ connecting the points $y^\varepsilon$ to the chosen binding components $K^\varepsilon\subset B^\varepsilon$.  Let $z^\varepsilon$ denote the points of $\omega^\varepsilon\cap K^\varepsilon$.  Form the boundary connected sum of the ambient four-manifolds at the points $z^\varepsilon$, as described above, so that $\Ff'\sqcup\Ff''$ is in bridge position with respect to $\T$.  Note that the arcs $\omega^\varepsilon$ give rise to an arc $\omega$ in the page of $P_i$ connecting the points $y^\varepsilon$.

Use the height function on $H_i$ to flow $\omega$ down to the core $\Sigma$.  Let $Q$ represent the square traced out by this isotopy, and let $\omega_* = Q\cap\Sigma$.  Let $N$ be a regular neighborhood of $Q$ in $X$.  We will change $\Ff'\sqcup\Ff''$ to $\Ff'\natural\Ff''$ in a way that will produce a bridge trisection for the latter from the bridge trisection of the former, and this change will be supported inside $N$.  See Figure~\ref{fig:bcs_schem1} for a (faithful) schematic of this set-up.  The figures depict the case of $i=1$.

\begin{proposition}
\label{prop:bcs}
	A bridge trisection for $(X,\Ff) = (X'\natural X'',\Ff'\natural\Ff'')$ can be obtained from the bridge trisection of $(X,\Ff'\sqcup\Ff'')$ described above by replacing the local neighborhood $N$ of $Q$ shown in Figure~\ref{fig:bcs_schem1} with the local neighborhood $N'$ shown in Figure~\ref{fig:bcs_schem2}.  The replacement can be seen in a shadow diagram as the local replacement of the portion of the diagram supported near $\omega_*$ shown in Figure~\ref{fig:bcs_schem3} with the portion shown in Figure~\ref{fig:bcs_schem4}.
\end{proposition}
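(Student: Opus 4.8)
The plan is to verify directly that the local replacement described does not disturb any of the global trisection structure and produces exactly the boundary connected sum of the surfaces. The strategy is entirely local: since the modification is supported inside the regular neighborhood $N$ of the square $Q$, and since $Q$ meets the core $\Sigma$ in the single arc $\omega_*$, each of the pieces $Z_i,\Dd_i$, $H_i,\Tt_i$, and $\Sigma,\bold x$ is altered only inside $N$. Thus it suffices to check three things: (1) that the surface $\Ff$ obtained after the replacement is diffeomorphic to $\Ff'\natural\Ff''$ rel the unchanged part; (2) that after the replacement each $(Z_i,\Dd_i)$ is still a trivial disk-tangle and each $(H_i,\Tt_i)$ is still a trivial tangle; and (3) that the ambient four-manifold is unchanged, i.e.\ still $X'\natural X''$, and that the pieces still intersect in the prescribed way.

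First I would establish (1) and (3) simultaneously by observing that $N\cong D^4$ and that both the ``before'' picture in Figure~\ref{fig:bcs_schem1} and the ``after'' picture in Figure~\ref{fig:bcs_schem2} present the same ambient ball with the same boundary decomposition into its trisection pieces; only the embedded surface inside changes. In the ``before'' picture, $\Ff\cap N$ consists of two parallel sheets (coming from $\Ff'$ near $y'$ and $\Ff''$ near $y''$) running alongside the square $Q$; in the ``after'' picture these two sheets are joined by a band running over the foot of the handle that realizes the boundary connected sum $X'\natural X''$. Outside $N$ nothing has changed, so the resulting surface is $(\Ff'\sqcup\Ff'')$ with a band attached connecting a boundary point of $\Ff'$ to a boundary point of $\Ff''$ along an arc in $\partial X = \partial X'\natural\partial X''$, which is by definition $\Ff'\natural\Ff''$. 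Since the handle used is the same one realizing $X'\natural X''$, the ambient manifold is correct.

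Next I would check (2). The point is that the band attached in the ``after'' picture is arranged to be flat with respect to the local Morse function on $H_i$ and is dualized by a bridge semi-disk, so the new strand it creates in $\Tt_i = H_i\cap\Ff$ is a flat strand, and the trace of the band attachment contributes to $\Dd_i = Z_i\cap\Ff$ a disk with at most one critical point; thus triviality of the disk-tangles and tangles is preserved (this is the same mechanism as in the proof of Proposition~\ref{prop:band_to_bridge}, where band attachments dualized by bridge disks were seen to preserve triviality, essentially Lemma~3.1 of~\cite{MeiZup_17_Bridge-trisections-of-knotted}). For the remaining two indices $j\neq i$, the modification near $\Sigma$ does not introduce any new critical points of the restricted Morse functions, so the disk-tangles and tangles there remain trivial as well; one only needs to note that the arc $\omega_*$ lies in $\Sigma\setminus\nu(\bold x)$ so the bridge-point structure is undisturbed. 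Finally, the diagrammatic claim follows from Lemma~\ref{lem:tangle_shadow} and Proposition~\ref{prop:shadow}: the shadow diagram records exactly the defining curves $\alpha_i$ and shadow arcs $\Tt_i^*$ on $\Sigma$, and the replacement of the portion of the diagram near $\omega_*$ shown in Figure~\ref{fig:bcs_schem3} by that of Figure~\ref{fig:bcs_schem4} is precisely the diagrammatic trace of replacing $N$ by $N'$; one checks that the new shadow arc added is the shadow of the new flat strand and the modification to the $\alpha_i$ (if any) reflects the new handle, using the same bookkeeping as in Remark~\ref{rmk:sum_glue} and Lemma~\ref{lem:punc}.

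The main obstacle I anticipate is purely one of careful bookkeeping rather than any conceptual difficulty: one must confirm that the replacement $N\rightsquigarrow N'$ respects the cyclic labeling and the orientation conventions of Subsection~\ref{subsec:Formal}, i.e.\ that the new flat strand appears in the correct arm $H_i$ for the chosen $i\in\Z_3$ and that the induced orientations on $\partial\Dd_i = \Tt_i\cup\beta_i\cup\overline{\Tt}_{i+1}$ are consistent, and that in the diagrammatic version no spurious closed shadow component or parallel defining curve is created (which, as in Example~\ref{ex:annulus}, would force an illegal two-sphere component of some $\partial_-H_i$). This is handled by inspection of Figures~\ref{fig:bcs_schem1}--\ref{fig:bcs_schem4}, checking the $i=1$ case drawn and noting the other two cases are obtained by relabeling. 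Once these compatibilities are confirmed, the proposition follows.
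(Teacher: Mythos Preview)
Your overall strategy---verify that the local replacement preserves the bridge-trisection structure---is the right idea and is close in spirit to what the paper does. However, two concrete errors in your execution need to be addressed.

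First, your claim that in $N$ and $N'$ ``only the embedded surface inside changes'' is not correct on the portion of $\partial N$ lying in $\partial X$. In $N$ the boundary link is a trivial $2$--braid, while in $N'$ it is the $(2,1)$--braid; they differ by a half-twist supported near $P_i$. This half-twist is exactly what makes $\partial\Ff'\#\partial\Ff''$ remain braided about $B$, and it must be accounted for when you argue that $N'$ can be glued back in place of $N$. The paper is explicit about this: the boundary pairs agree away from a neighborhood of $\omega$, and that is what licenses the cut-and-paste.

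Second, your claim that ``for the remaining two indices $j\neq i$ the modification does not introduce any new critical points'' is false. Passing from $N$ (a $(0;0,2)$--bridge trisection of two trivial disks) to $N'$ (a $(1;0,2)$--bridge trisection of a single disk) adds one flat strand to \emph{each} arm, hence a new maximum in every $\Tt_j$. You therefore must verify triviality of all three new tangles and all three new disk-tangles, not just the one containing the band.

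The paper sidesteps both problems by identifying $N'$ globally rather than piece-by-piece: it exhibits a bridge-braided ribbon presentation for $N'$ (Figure~\ref{fig:bcs_band}), so that the triviality of every $(H_j,\Tt_j)$ and $(Z_j,\Dd_j)$ in $N'$ follows directly from Proposition~\ref{prop:band_to_bridge}. The swap is then justified simply by matching the boundary pairs away from the half-twist. Your direct-verification route can be made to work, but you would have to carry out the case analysis in all three arms and track the boundary half-twist carefully; the paper's ``recognize $N'$ as a known bridge trisection'' approach is shorter and less error-prone. Also note that no $\alpha_i$ curves change in this replacement---only shadow arcs are modified---so your remarks about ``the modification to the $\alpha_i$'' and the bookkeeping of Lemma~\ref{lem:punc} are not relevant here.
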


\begin{proof}
	Near $\omega_*$, the neighborhood $N$ is precisely the $(0;0,2)$--bridge trisection of two copies of the trivial disk in $B^4$.  To recover all of $N$, we extend upward along $Q$.  Because $\omega$ was lowered to $\Sigma$ along a pair of vertical strands of $(H_i,\Tt_i'\sqcup\Tt_i'')$, we see that the entirety of $N$ is still just the 2--bridge trisection of two copies of the trivial disk.  In other words, $N$ is isolating, in a bridge-trisected way, a small disk from each of the $\Ff^\varepsilon$. 

\begin{figure}[h!]
\begin{subfigure}{.5\textwidth}
  \centering
  \includegraphics[width=.8\linewidth]{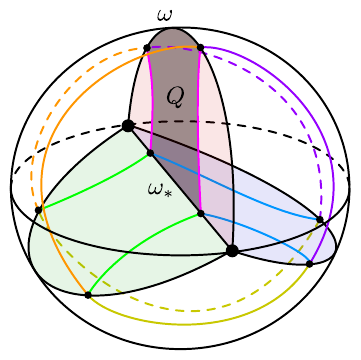}
  \caption{The neighborhood $N$.}
  \label{fig:bcs_schem1}
\end{subfigure}%
\begin{subfigure}{.5\textwidth}
  \centering
  \includegraphics[width=.8\linewidth]{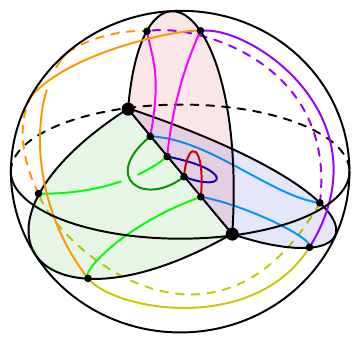}
  \caption{The neighborhood $N'$.}
  \label{fig:bcs_schem2}
\end{subfigure}
\par\vspace{5mm}
\begin{subfigure}{.5\textwidth}
  \centering
  \includegraphics[height=.7in]{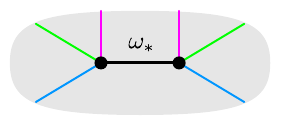}
  \caption{Local shadow diagram before...}
  \label{fig:bcs_schem3}
\end{subfigure}%
\begin{subfigure}{.5\textwidth}
  \centering
  \includegraphics[height=.7in]{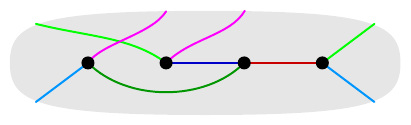}
  \caption{...and after boundary connected sum.}
  \label{fig:bcs_schem4}
\end{subfigure}
\caption{The trisected local neighborhood (A) is exchanged for the trisected local neighborhood (B) to carry out an ambient boundary connected sum of surface-links. The local change is depicted with shadow diagrams in the change from (C) to (D).  Note that, globally, the pink shadow arcs necessarily correspond to vertical strands of $\Tt_1$, while the light blue and light green shadow arcs may correspond (globally) to either flat or vertical strands.}
\label{fig:bcs}
\end{figure}

	Now, to perform the (ambient) boundary connected sum of the $\Ff^\varepsilon$ at the points $y^\varepsilon$, we must attach a half-twisted band $\frak b$ connecting these points.  (It should be half-twisted because $\partial \Ff'$ and $\partial\Ff''$ are braided about $B$; the half-twist will ensure that the result $\partial\Ff'\#\partial\Ff''$ is still braided about $B$.) We also assume that the core of $\frak b$ lies in $P_i$.  The change affected by attaching the half-twisted band is localized to the neighborhood $N$.  Therefore, it suffices to understand how $N$ is changed.
	
	Although we are describing an ambient boundary connected sum of surface in a four-manifold $X$ that may be highly nontrivial, the neighborhood $N$ is a four-ball, so it makes sense to import the bridge-braided band presentation technology from Section~\ref{sec:four-ball}.
	Figure~\ref{fig:bcs_band1} shows a bridge-braided ribbon presentation for~$N$, together with the half-twisted band~$\frak b$. 
	Figure~\ref{fig:bcs_band2} shows the effect of attaching the band, together with the dual band; this is a ribbon presentation for the boundary connected sum of the two disks in $N$.  Figure~\ref{fig:bcs_band3} shows a bridge-braided ribbon presentation for this object, which we denote by $N'$.  Note that the boundaries of $N$ and $N'$ are both 2--braids and are identical, except where they differ by a half-twist.  As stated before, we assume this difference is supported near $P_i$.  (Note that in the schematic of Figure~\ref{fig:bcs_schem2}, the half-twist is shown in the spread $Y_{i-1}$, rather than in $P^i$, due the reduction in dimension.  Similarly, in the frames of Figure~\ref{fig:bcs_band1} and~\ref{fig:bcs_band2}, the band $\frak b$ and the crossing are similarly illustrated away from $P_i$.)

\begin{figure}[h!]
\begin{subfigure}{.25\textwidth}
  \centering
  \includegraphics[width=.8\linewidth]{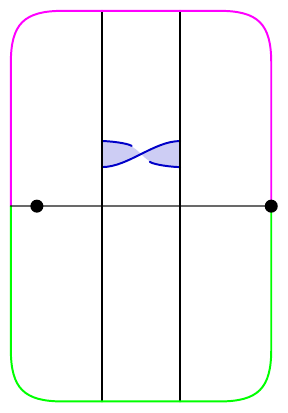}
  \caption{}
  \label{fig:bcs_band1}
\end{subfigure}%
\begin{subfigure}{.25\textwidth}
  \centering
  \includegraphics[width=.8\linewidth]{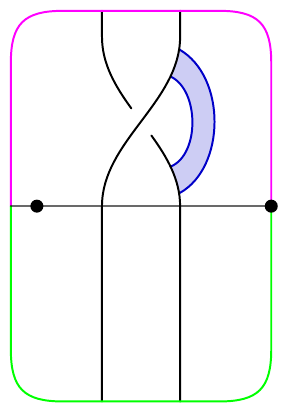}
  \caption{}
  \label{fig:bcs_band2}
\end{subfigure}%
\begin{subfigure}{.25\textwidth}
  \centering
  \includegraphics[width=.8\linewidth]{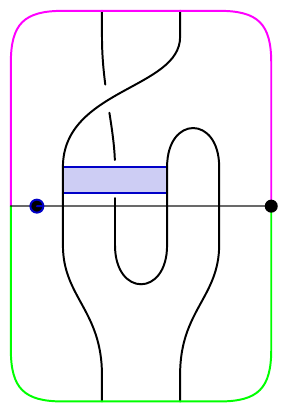}
  \caption{}
  \label{fig:bcs_band3}
\end{subfigure}%
\caption{(A) A ribbon presentation for $N$, together with the band $\frak b$ realizing the boundary connected sum; (B) A ribbon presentation for $N'$, the result of the boundary connected sum; (C) a bridge-braided ribbon presentation for $N'$.}
\label{fig:bcs_band}
\end{figure}

	The neighborhood $N'$ is the $(1;0,2)$--bridge trisection of the spanning disk for the unknot that induces the braiding of the unknot as a $(2,1)$--curve in the complement of the (unknotted) binding.  The corresponding bridge-braided ribbon presentation has one band, which is a helper band in the sense of Remarks~\ref{rmk:helpers} and~\ref{rmk:helpers2}.  This helper band is the dual band to $\frak b$.
	
	Because $\partial N$ and $\partial N'$ are identical away from a neighborhood of $\omega$, we can cut $N$ out and glue in $N'$ to realize the attaching of $\frak b$; i.e., to realize the ambient boundary connected sum.
\end{proof}

\subsection{Notions of reducibility}
\label{subsec:reduce}
\ 

We now discuss three notions of reducibility for trisections of pairs that we will show correspond with the connected sum and boundary connected sum operations discussed above.  These properties are distinct from, but related to, the properties of being stabilized or perturbed, which are discussed in Section~\ref{sec:stabilize}.

\begin{definition}
\label{def:reduce}
	Let $\T$ be a bridge trisection for a pair $(X,\Ff)$. Let $\delta\subset\Sigma\setminus\nu(\bold x)$ be an essential simple closed curve.
	\begin{enumerate}
		\item The curve $\delta$ is called a \emph{reducing curve} if, for each $i\in\Z_3$, there exists a disk $E_i\subset H_i\setminus\nu(\Tt_i)$ with~$\partial E_i = \delta$.
		\item The curve $\delta$ is called a \emph{decomposing curve} if, for each $i\in\Z_3$, there exists a disk $E_i\subset H_i$ with~$\partial E_i = \delta$ and with $|E_i\cap\Tt_i|=1$.  A decomposing curve is called \emph{trivial} if it bounds a disk in $\Sigma$ containing a single bridge point.
		\item An embedded three-sphere $S\subset X$ is a \emph{trisected reducing sphere} if $Z_i\cap S$ is a three-ball and $H_i\cap S$ is a disk for each~$i\in\Z_3$, and $\Sigma\cap S$ is a reducing curve.
		\item An embedded three-sphere-unknot pair $(S,K)\subset(X,\Ff)$ is a \emph{(nontrivial) trisected decomposing sphere pair} if
		$$(Z_i\cap S,\Dd_i\cap S)\cong(B^3,I)$$
		is a trivial 1--strand tangle in a three-ball for each~$i\in\Z_3$, and $\Sigma\cap S$ is a (nontrivial) decomposing curve.
		\item A trisection is \emph{reducible} (resp., \emph{decomposable}) if it admits a reducing curve (resp., a nontrivial decomposing curve).
	\setcounter{saveenum}{\value{enumi}}
	\end{enumerate}
		Let $\eta\subset\Sigma\setminus\nu(\bold x)$ be an essential, neatly embedded arc.
	\begin{enumerate}
	\setcounter{enumi}{\value{saveenum}}
		\item The arc $\eta$ is called a \emph{reducing arc} if, for each $i\in\Z_3$, there exists a neatly embedded arc $\eta_i\subset P_i$ and a disk $E_i\subset H_i\setminus\nu(\Tt_i)$ with $\partial E_i = \eta\cup\eta_i$.
		\item A neatly embedded three-ball $B\subset X\setminus\Ff$ is a \emph{trisected boundary-reducing ball} if, for all $i\in\Z_3$, we have $Z_i\cap B$ is a three-ball and $H_i\cap B$ is a disk, and $\Sigma\cap B$ is a reducing arc.
		\item A trisection is \emph{boundary-reducible}  if it admits a reducing arc.
	\end{enumerate}
\end{definition}

\begin{lemma}
\label{lem:reduce}
	If a trisection $\T$ is reducible, decomposable, or boundary-reducible, then $\T$ admits, respectively, a trisected reducing sphere, a nontrivial trisected decomposing sphere pair, or a trisected boundary-reducing ball.
\end{lemma}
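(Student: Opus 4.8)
The plan is to handle the three cases uniformly: in each case we are given a curve or arc in $\Sigma\setminus\nu(\bold x)$ together with compressing disks $E_i\subset H_i$ (disjoint from, or meeting once, the tangle) for each $i\in\Z_3$, and we must assemble the $E_i$ into the desired embedded sphere or ball. The key observation is that $\delta$ (or $\eta$) bounds a disk $E_i$ in each arm $H_i$, and these three disks all share the same boundary curve $\delta$ (resp. the same boundary arc $\eta$ on $\Sigma$, completed by arcs $\eta_i$ in the pages $P_i$). Since the $H_i$ pairwise intersect in $\Sigma$ (and the bindings/pages as appropriate), after a small isotopy of the $E_i$ rel-$\partial$ supported near $\Sigma$ we may assume the $E_i$ are pairwise disjoint except along $\delta$ (resp. along $\eta\cup$ the binding). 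Then $S = E_1\cup_\delta E_2\cup_\delta E_3$ is a closed surface built from three disks glued along a common boundary circle, hence is a two-sphere; and $B = E_1\cup E_2\cup E_3$ (for the arc case) is a neatly embedded three-ball, since it is built from three disks glued along a common boundary arc with the $\eta_i\subset P_i\subset\partial X$ forming the part of $\partial B$ on $\partial X$.

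First I would set up the common framework: given a reducing curve $\delta$ with disks $E_i\subset H_i\setminus\nu(\Tt_i)$, observe that $\partial E_i=\delta\subset\Sigma$ for all $i$ and that $E_i\cap E_{i+1}$ is contained in $H_i\cap H_{i+1}=\Sigma$. A disk properly embedded in $H_i$ with boundary in $\Sigma=\partial_+H_i$ meets $\Sigma$ only in its boundary (after pushing the interior slightly into the interior of $H_i$, using the lensed product structure near $\partial_+H_i$), so after this isotopy $E_i\cap E_j=\delta$ for all $i\neq j$. Hence $S:=E_1\cup E_2\cup E_3$ is an embedded closed surface; it is the union of three disks along a circle, so $\chi(S)=3\cdot 1-2\cdot 1=\ldots$ — rather than compute Euler characteristic, just note $S$ is simply connected (van Kampen, each $E_i$ contractible, intersections connected) and closed of dimension two, hence $S\cong S^2$. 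By construction $Z_i\cap S\supseteq E_i$; in fact a collar argument shows $Z_i\cap S=E_i$ is a disk (the $E_i$ were chosen in $H_i=Z_{i-1}\cap Z_i$, and pushing into $Z_i$ versus $Z_{i-1}$ can be arranged compatibly). And $\Sigma\cap S=\delta$ is the reducing curve. This verifies the conditions of Definition~\ref{def:reduce}(3).

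Next, for the decomposable case, the only change is that $E_i$ meets $\Tt_i$ in a single point, so $E_i\cap\Dd_i$ is a single arc and $(E_i,E_i\cap\Ff)\cong(B^3\cap\ldots)$; assembling as before gives an embedded $(S,K)$ with $S\cong S^2$ and $K=S\cap\Ff$ a circle meeting each $E_i$ in one arc, hence $K$ is an unknot in $S$, and $(Z_i\cap S, \Dd_i\cap S)\cong(B^3,I)$ as required for Definition~\ref{def:reduce}(4); nontriviality of $\Sigma\cap S=\delta$ as a decomposing curve is inherited from the hypothesis. For the boundary-reducible case, $\eta$ is an arc and we are given $E_i\subset H_i\setminus\nu(\Tt_i)$ with $\partial E_i=\eta\cup\eta_i$, $\eta_i\subset P_i$; now $E_i\cap E_{i+1}$ is contained in $(\Sigma\cap\overline{P_{i+1}})\cup(\text{collar})$, and since consecutive $\eta_i$ share endpoints on the binding $B=\partial\Sigma$, after a rel-$\partial$ isotopy supported near $\Sigma\cup B$ the $E_i$ meet pairwise exactly along $\eta$. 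Then $B:=E_1\cup E_2\cup E_3$ is a three-ball neatly embedded in $X$ with $B\cap\partial X=\eta_1\cup\eta_2\cup\eta_3$ (an arc's worth, completing the sphere boundary), $Z_i\cap B=E_i$ a disk, $H_i\cap B=E_i$, and $\Sigma\cap B=\eta$. This is precisely a trisected boundary-reducing ball, Definition~\ref{def:reduce}(7).

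The main obstacle I anticipate is the isotopy step making the three disks pairwise disjoint away from their common boundary, and simultaneously arranging that $Z_i\cap S$ (not merely $H_i\cap S$) is a single disk $E_i$. The point is that each $E_i\subset H_i=Z_{i-1}\cap Z_i$, so a priori $E_i$ lies in the frontier between two sectors; one must push the interior of $E_i$ off $\Sigma$ into exactly one of the two adjacent sectors in a globally consistent way so that, say, $Z_i\cap S = E_i$ for each $i$. This is a standard innermost-disk / collar argument using the product structure of the arms near $\Sigma$, but it requires care to do all three pushes compatibly; I would phrase it by first fixing disjoint bicollars $\Sigma\times[-1,1]\times\{1,2,3\}$ inside $\nu(\Sigma)$ (one "page" of the bicollar per arm) and declaring $E_i$ to live in the $i$-th slab near its boundary, which makes the disjointness and the sector-intersection claims transparent. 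Everything else is routine, and I would state the conclusion as: in each case the assembled object $S$ (resp. $(S,K)$, resp. $B$) satisfies the defining conditions, completing the proof.
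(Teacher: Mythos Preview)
Your proposal rests on a misreading of Definition~\ref{def:reduce}. A \emph{trisected reducing sphere} is a three-sphere $S\subset X$ with $Z_i\cap S$ a three-ball and $H_i\cap S$ a disk; similarly a \emph{trisected boundary-reducing ball} is a three-ball with $Z_i\cap B$ a three-ball. You are attempting to build a two-sphere (respectively, a two-disk) by gluing together the three compressing disks $E_i$ directly. Even on its own terms this fails: three disks glued along a common boundary circle $\delta$ form a non-manifold $2$--complex (a ``book with three pages''), not a surface, so the object $E_1\cup E_2\cup E_3$ you describe is not $S^2$.

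The missing idea is that one must pass up a dimension. For each $i$, the two-sphere $R_i=E_i\cup_\delta\overline E_{i+1}$ sits in $H_i\cup_\Sigma\overline H_{i+1}\subset\partial Z_i$, and one needs to find a three-ball $B_i\subset Z_i$ with $\partial B_i=R_i$; the desired three-sphere is then $S_\delta=B_1\cup B_2\cup B_3$. Producing such a $B_i$ is the substantive step of the paper's proof: one uses that $Z_i$ is built from a product by attaching $1$--handles, together with Laudenbach's result on $2$--spheres in $1$--handlebodies, to slide $R_i$ off the belt spheres and then cap it off (either as a belt-sphere cocore or inside the irreducible product piece). In the decomposable case there is further work to control how the resulting $B_i$ meets $\Dd_i$. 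For the boundary-reducible case the paper instead takes $R_i=E_i\cup_\eta\overline E_{i+1}$ (a disk) and lets $B_i$ be the trace of an isotopy pushing its interior into $Z_i$; again the point is that $B_i$ is three-dimensional. None of this structure is present in your outline.
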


\begin{proof}
	What follows is closely based on the proof of Proposition~3.5 from~\cite{MeiSchZup_16_Classification-of-trisections-and-the-Generalized}, where reducing curves are assumed (implicitly) to be separating, and some clarification is lacking.  Here, we give added detail and address the latter two conditions, which are novel.
	
	Suppose $\T$ is either reducible or decomposable, with reducing or decomposing curve $\delta$ bounding disks $E_i$ in the $H_i$.  Let $R_i = E_i\cup_\delta\overline E_{i+1}$ be the given two-sphere in $H_i\cup_\Sigma\overline H_{i+1}\subset \partial Z_i$.
	Recall (Subsection~\ref{subsec:DiskTangles}) that $Z_i$ is built by attaching 4--dimensional 1--handles the lensed product $Y_i\times[0,1]$ along $Y_i\times\{1\}$. A priori, the $R_i$ may not be disjoint from the belt spheres of the 1--handles in $Z_i$; however, by~\cite{Lau_73_Sur-les-2-spheres-dune-variete}, it can be arranged via handleslides and isotopies of the 1--handles that $R_i$ is disjoint from the belt spheres.
	Thus, we can assume that either (1) $R_i$ is parallel to a belt sphere, or (2) $R_i$ is contained in $Y_i\times\{1\}$.  These cases correspond to whether $\delta$ is non-separating or separating, respectively.
	In case (1), $R_i$ bounds the cocore of the 1--handle, which is a three-ball in $Z_i$.  In case (2), since $Y_i$ is irreducible, $R_i$ bounds a three-ball in $Y_i$ whose interior can be perturbed into $Z_i$.  In either case, we get a three-ball $B_i$ in $Z_i$ whose boundary is $E_i\cup_\delta\overline E_{i+1}$, and the union $S_\delta = B_1\cup B_2\cup B_3$ gives a trisected three-sphere.
	
	In the case that $\delta$ is reducing, we are done: $S_\delta$ is a trisected reducing sphere.  In the case that $\delta$ is a decomposing curve, it remains to show that $S_\delta\cap\Ff$ is unknotted and $B_i\cap\Ff$ is a trivial arc; the former is implied by the latter, which we now show.  Note that $B_i$ and $\Dd_i$ are both neatly embedded in $Z_i$ and that $\Dd_i$ is boundary parallel.
	Using the boundary parallelism of $\Dd_i$, we can arrange that a component $D$ of $\Dd_i$ intersects $B_i$ if and only if $D$ intersects $R_i = \partial B_i$.  It follows that there is a unique component $D\subset \Dd_i$ that intersects $B_i$.  If we isotope $D$ to a disk $D_*\subset\partial Z_i$, then we find that $D_*\cap R_i$ consists of an arc and some number of simple close curves.
	By an innermost curve argument, we may surgery $D_*$ to obtain a new disk $D_*'$ such that $D_*'\cap R_i$ consists solely of an embedded arc.  Since $D_*'$ and $D_*$ have the same boundary, they are isotopic rel-$\partial$ in $Z_i$ by Proposition~\ref{prop:triv_disks}.
	Reversing this ambient isotopy, we can arrange that $B_i\cap\Dd = B_i\cap D$ consists of a single arc.  Moreover, this arc is trivial, since it is isotopic to the arc $R_i\cap D_*$ in $\partial Z_i$, and $R_i$ is a decomposing sphere for either (a) the unknot $\partial D$ or (b) the unknotted, vertical strand $\Dd\cap H_i\cup_\Sigma\overline H_{i+1}$. Either way, $R_i$ cuts off an unknotted arc. Thus, $(S_\delta,K)$ can be constructed to be a decomposing sphere for the trisection, as desired, where $K$ is the three-fold union of the trivial arcs $B_i\cap\Ff$.
	
	Now suppose that $\T$ is boundary-reducible, with reducing arc $\eta$ and arcs $\eta_i$ such that $\eta\cup\eta_i$ bounds a disk $E_i\subset H_i$.  Consider the neatly embedded 2--disk $R_i = E_i\cup_\eta \overline E_{i+1}$ in $H_i\cup_\Sigma\overline H_{i+1}\subset\partial Z_i$.  Let $B_i$ be the trace of a small isotopy that perturbs the interior of $R_i$ into $Z_i$.  Then the union $B_\eta = B_1\cup B_2\cup B_3$ is a trisected three-ball.  If $\eta$ is a reducing arc, we are done.
\end{proof}

\begin{remark}[\textbf{Regarding non-separating curves}]
\label{rmk:nonsep}
	Reducing curves are almost always separating in the following sense.  Suppose that $\delta$ is a non-separating reducing curve. Then there is a curve $\eta\subset\Sigma$ that is dual to $\delta$.  Let $\delta' = \partial\nu(\delta\cup\eta)$.  Then $\delta'$ is a separating reducing curve, unless it is inessential (i.e., parallel to a boundary component of $\Sigma$ or null-homotopic in $\Sigma$).  This only occurs if $\Sigma$ is the core of the genus one trisection for $S^1\times S^3$ or for its puncture, $(S^1\times S^3)^\circ$.  In any event, the neighborhood $\nu(S_\delta\cup\eta)$, where $S_\delta$ is the reducing sphere corresponding to $\delta$ as in Lemma~\ref{lem:reduce} below, is diffeomorphic to $(S^1\times S^3)^\circ$.

	If $\delta$ is a non-separating decomposing curve with corresponding decomposing pair $(S_\delta,K_\delta)$, then $K_\delta$ can be separating or non-separating as a curve in $\Ff$.  If $K_\delta$ is non-separating, then we can surgery $(X,\Ff)$ along the pair $(S,K)$ to obtain a new pair $(X',\Ff')$.  That the surgery of $\Ff$ along $K$ can be performed ambiently uses the fact that $K$ is an unknot in $S$, hence bounds a disk in $X\setminus\Ff$.  Working backwards, there is a $S^0\subset\Ff'\subset X$ along which we can surger $(X',\Ff')$ to obtain $(X,\Ff)$.  It follows that $X = X'\#(S^1\times S^3)$ and $\Ff$ is obtained from $\Ff'$ by tubing.  Diagrammatically, the surgery from $(X,\Ff)$ to $(X',\Ff')$ is realized by surgering $\Sigma$ along $\delta$.  Note that this tubing is not necessarily trivial in the sense that it may or may not be true that $(X,\Ff) = (X',\Ff')\#(S^1\times S^3,S^1\times S^1)$.
\end{remark}

A bridge trisection satisfying one of the three notions of reducibility decomposes in a natural way.  See Subsection~\ref{subsec:sum} for a detailed discussion of connected sum and boundary connected sum operations.  For example, presently, we let $\T'\#\T''$ denote the connected sum of trisections, regardless of whether the connected summing point is a bridge point or not.

\begin{proposition}
\label{prop:reduce}
	Let $\T$ be a bridge trisection for a pair $(X,\Ff)$.
	\begin{enumerate}
		\item If $\T$ admits a separating reducing curve, then there exist pairs $(X',\Ff')$ and $(X'',\Ff'')$ with trisections $\T'$ and $\T''$ such that $\T = \T'\#\T''$ and
		$$(X,\Ff) = (X'\#X'',\Ff'\sqcup\Ff'').$$
		\item If $\T$ admits a nontrivial, separating decomposing curve, then there exist pairs $(X',\Ff')$ and $(X'',\Ff'')$ with trisections $\T'$ and $\T''$ such that $\T = \T'\#\T''$ and
		$$(X,\Ff) = (X'\#X'',\Ff'\#\Ff'').$$
		\item If $\T$ admits a separating reducing arc, then there exist pairs $(X',\Ff')$ and $(X'',\Ff'')$ with trisections $\T'$ and $\T''$ such that $\T = \T'\natural\T''$ and
		$$(X,\Ff) = (X'\natural X'',\Ff'\sqcup\Ff'').$$
	\end{enumerate}
\end{proposition}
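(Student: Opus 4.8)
The plan is to treat all three parts by a single cut-and-cap template, using Lemma~\ref{lem:reduce} to convert the hypothesis into an embedded separating hypersurface along which $(X,\Ff)$ decomposes. I would begin with part (1): given a separating reducing curve $\delta$, Lemma~\ref{lem:reduce} supplies a trisected reducing sphere $S = B_1\cup B_2\cup B_3$ with $\Sigma\cap S = \delta$, $H_i\cap S = E_i$, $Z_i\cap S = B_i$, and $S$ disjoint from $\Ff$. Since $\delta$ separates $\Sigma$, the disk $E_i$ separates $H_i$ and (by the arrangement in the proof of Lemma~\ref{lem:reduce}, where $B_i$ is made disjoint from the belt spheres of the $1$--handles of $Z_i$) the ball $B_i$ separates $Z_i$ by partitioning its $1$--handles; hence $S$ separates $X$ as $X = X_1\cup_S X_2$. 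I would then cap the new $S^3$ boundary component of each $X_j$ with a four-ball carrying the standard trisection $\T_0$, obtaining $X'$, $X''$ with trisection pieces $Z_i' = (Z_i\cap X_1)\cup(\text{ball of }\T_0)$, $H_i'$, $\Sigma'$. Each $Z_i'$ is a four-dimensional $1$--handlebody (only some of the $1$--handles survive), each $H_i'$ is a compressionbody with the expected negative boundary, and $\Sigma'$ is the expected surface; and because $\Dd_i$ is disjoint from $S$, the restriction $\Dd_i' = \Dd_i\cap X_1$ has unlinked boundary in standard bridge position and is boundary-parallel (inherit the parallelism of $\Dd_i$, or invoke Proposition~\ref{prop:triv_disks}), while $\Tt_i' = \Tt_i\cap H_i'$ is a trivial tangle. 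Reversing the construction then exhibits $\T = \T'\#\T''$ and $(X,\Ff) = (X'\#X'',\Ff'\sqcup\Ff'')$ with $\Ff' = \Ff\cap X'$.

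Parts (2) and (3) follow the same outline with small modifications. For (2) I would take the nontrivial trisected decomposing sphere pair $(S,K)$ from Lemma~\ref{lem:reduce}, with $(Z_i\cap S,\Dd_i\cap S)\cong(B^3,I)$ a trivial $1$--strand tangle for each $i$; cutting along $(S,K)$ as above splits $\Ff$ along the unknot $K$ into two surfaces each with one new boundary circle on $S\cong S^3$, and I would cap each half with the standard trisected pair $(B^4,D^2)$ in which $D^2$ is the trivial disk bounded by $K$ -- here one uses that $K$ is unknotted in $S$ and that the three trivial arcs $B_i\cap\Ff$ present the capping disk as a single patch meeting $\partial B^4$ standardly, so the cap is compatible with the trisection. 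This yields $\T = \T'\#\T''$ and $(X,\Ff) = (X'\#X'',\Ff'\#\Ff'')$, the surface connected sum occurring at the capping disk. For (3) I would use the trisected boundary-reducing ball $B = B_1\cup B_2\cup B_3\subset X\setminus\Ff$, a properly embedded three-ball meeting $\partial X$ in a disk with $\Sigma\cap B = \eta$; separateness of $\eta$ makes $B$ separate $X$, and writing $X = X'\cup_B X''$ is exactly a boundary connected sum (gluing along the codimension-zero ball $B$ in each boundary). Since $B$ misses $\Ff$, the disk-tangles and tangles restrict as in part (1), giving $\T = \T'\natural\T''$ and $(X,\Ff) = (X'\natural X'',\Ff'\sqcup\Ff'')$.

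The main obstacle I anticipate is the bookkeeping needed to confirm that the cut pieces are genuinely bridge trisections in the sense of Definition~\ref{def:Bridge}: that each four-dimensional piece is again one of the standard handlebodies $Z_{g,k;\bold p,\bold f}$ with the correct parameters, that the arms $H_i'$ are compressionbodies with the prescribed (possibly more disconnected) negative boundary, and that the restricted collections $\Dd_i'$ and $\Tt_i'$ remain trivial disk-tangles and trivial tangles. The first two points reduce to tracking how the $1$--handles, the defining curves, and the boundary components are apportioned by the cut -- using the disjointness from belt spheres arranged in the proof of Lemma~\ref{lem:reduce} -- while the triviality of $\Dd_i'$ follows from the boundary-parallelism technology of Subsection~\ref{subsec:DiskTangles} together with Proposition~\ref{prop:triv_disks}. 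Once these verifications are in place, the identification $\T = \T'\#\T''$ (respectively $\T'\natural\T''$) is immediate from the descriptions of the connected and boundary connected sum operations in Subsections~\ref{subsec:sum} and~\ref{subsec:bound_sum}.
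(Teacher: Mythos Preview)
Your proposal is correct and follows essentially the same cut-and-cap strategy as the paper's proof: invoke Lemma~\ref{lem:reduce} to obtain the trisected reducing sphere, decomposing sphere pair, or boundary-reducing ball, cut along it, and cap with the standard genus zero trisection of $B^4$ (respectively of $(B^4,D^2)$ for part~(2)). The only point the paper adds that you leave implicit is a parenthetical for part~(2): if some $E_i$ meets a vertical strand of $\Tt_i$, one can first perturb to make that strand flat (and later deperturb), so that the connected-sum description in the sense of Lemma~\ref{lem:conn_sum} applies directly.
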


\begin{proof}
	If $\T$ admits a separating reducing curve $\delta$, then it admits a separating trisected reducing sphere $S_\delta$, by Lemma~\ref{lem:reduce}.  Cutting open along $S_\delta$ and capping off the two resulting three-sphere boundary components with genus zero trisections of $B^4$ results in two new trisections $\T'$ and $\T''$ for pairs $(X',\Ff')$ and $(X'',\Ff'')$, as desired in part (1). For part (2), we proceed as above, except we cap off with two genus zero 0--bridge trisections of $(B^4,D^2)$ to achieve the desired result. (If any of the disks $E_i$ bounded by $\delta$ in the $H_i$ intersect vertical strands $\tau_i$, then we can perturb to make these intersecting strands flat. If such perturbations are performed before cutting, they can be undone with deperturbation after gluing.  This is related to the discussion immediately preceding Lemma~\ref{lem:conn_sum}.)
	
	If $\T$ admits a separating reducing arc $\eta$, then it admits a separating trisected reducing ball $B_\eta$, by Lemma~\ref{lem:reduce}.  Cutting open along $B_\eta$ results in two new trisections $\T'$ and $\T''$ for pairs $(X',\Ff')$ and $(X'',\Ff'')$, as desired in part (3).  
\end{proof}

\begin{remark}[\textbf{Boundary-decomposing arcs}]
\label{rmk:bda}
	Conspicuously absent from the above notions of reducibility is a characterization of what might be referred to as boundary-decomposability -- in other words, a characterization of when we have
	$$(X,\Ff) = (X'\natural X'',\Ff'\natural\Ff'').$$
	The obvious candidate for such a notion would be the existence of a neatly embedded, essential arc $\eta\subset\Sigma$, similar to the one involved in the notion of boundary-reducibility, but where the disks $E_i$ each intersect the respective $\Tt_i$ in precisely one point.  However, a lengthy examination of such arcs reveals that they rarely correspond to surfaces that are boundary connected sums in the desired way. To the point, many of the examples given later in this section admit such arc, but are not boundary-connected sums of bridge trisected surfaces.  We have been unable to find a satisfying characterization of when this occurs.
\end{remark}

\subsection{Classification for small parameters}
\label{subsec:small}
\ 

As a first example, consider the $(4,(2,4,2);3)$--bridge trisection shown in Figure~\ref{fig:c=b}, which is the boundary sum of a 1--bridge trisection, a 3--bridge trisection that is perturbed, and three 0--bridge trisections  and corresponds to $(B^4,S^2\sqcup S^2\sqcup D^2\sqcup D^2\sqcup D^2)$.  It turns out that such a bridge trisection is obtained whenever $c_i=b$ for some $i\in\Z_3$.  (Recall that $\partial D_i$ contains a flat $b$--bridge $c_i$ component unlink, so $b\geq c_i$ for all $i\in\Z_3$.)

\begin{figure}[h!]
\begin{subfigure}{.5\textwidth}
  \centering
  \includegraphics[width=.9\linewidth]{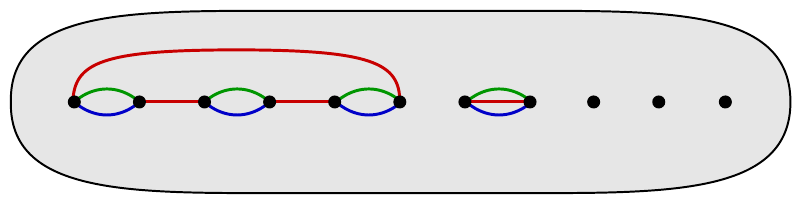}
  \caption{}
  \label{fig:c=b1}
\end{subfigure}%
\begin{subfigure}{.5\textwidth}
  \centering
  \includegraphics[width=.9\linewidth]{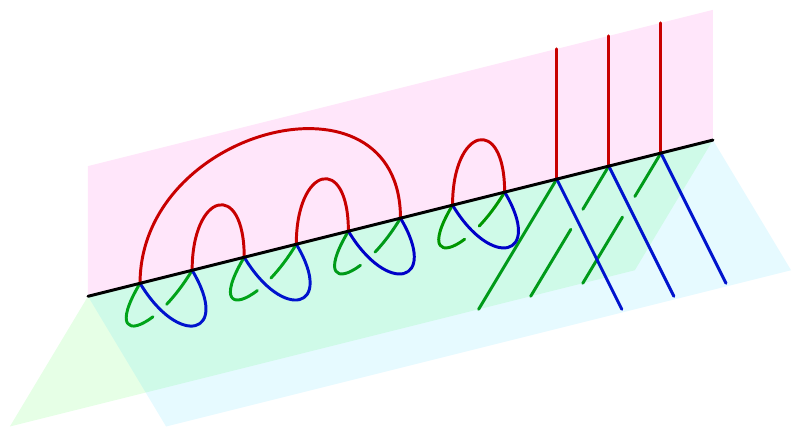}
  \caption{}
  \label{fig:c=b2}
\end{subfigure}
\caption{A shadow diagram (A) and schematic tri-plane diagram (B) for the unique $(4,(2,4,2);3)$--bridge trisection, which is totally reducible.}
\label{fig:c=b}
\end{figure}

\begin{proposition}
\label{prop:c=b}
	Let $\T$ be a $(b,\bold c;v)$--bridge trisection for a surface $(B^4,\Ff)$.  If $b=c_i$ for some $i\in\Z_3$, then $c_{i+1}=c_{i+2}=c$ and
		$$(B^4,\Ff) = \left(B^4,(\sqcup_cS^2)\sqcup(\sqcup_vD^2)\right),$$ and $\T$ is the boundary sum of $c$ genus zero bridge trisections of $(B^4,S^2)$, each of which is a finger perturbation of the 1--bridge trisection, and $v$ genus zero 0--bridge trisections of $(B^4,D^2)$.
\end{proposition}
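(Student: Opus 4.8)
The plan is to route through the band-presentation correspondence of Section~\ref{sec:four-ball}, read off the surface, and then peel $\T$ apart along reducing arcs. After cyclically relabelling the sectors we may assume $b=c_2$. Applying Proposition~\ref{prop:bridge_to_band}, $\Ff=\Ff_{(\wh\beta,U,\frak b)}$ for some $(b,\bold c;v)$--bridge-braided band presentation; since that construction produces $|\frak b|=b-c_2$ bands, the hypothesis forces $\frak b=\emptyset$. Inspecting the realizing surface of a band presentation with no bands (items (1)--(7) defining $\Ff_{(\wh\beta,U,\frak b)}$), one sees directly that $\Ff$ is the split union of $|U|=c_3$ unknotted $2$--spheres, one cup--cap pair over each component of $U$, together with $|\Ll|$ trivially embedded disks, one cup over each component of $\Ll$; moreover this split union is visibly supported in $c_3+|\Ll|$ disjoint balls inside $B^4_{\{2\}}$, and (via the round trip back through Proposition~\ref{prop:band_to_bridge}) it is in bridge trisected position with respect to $\T$ respecting this splitting.

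The next step is to pin down the parameters. From $\Ff\cong(\sqcup_{c_3}S^2)\sqcup(\sqcup_{|\Ll|}D^2)$ we get $\chi(\Ff)=2c_3+|\Ll|$, while the Euler-characteristic formula $\chi(\Ff)=c_1+c_2+c_3+v-b$ for bridge trisected surfaces gives $\chi(\Ff)=c_1+c_3+v$ (using $b=c_2$). The $c_3$ components of $U$ are flat components of $\partial\Dd_1=U'$, which has $c_1$ flat components, so $c_3\le c_1$; on the other hand $U'=\Ll\sqcup U$ with $|U'|=c_1+v$ forces $|\Ll|=c_1+v-c_3$, and since the braiding $\wh\beta$ has index $v$ we have $|\Ll|\le v$, hence $c_1\le c_3$. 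Therefore $c_1=c_3=:c$, $|\Ll|=v$, and $\Ff\cong(\sqcup_cS^2)\sqcup(\sqcup_vD^2)$.

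To get the structure of $\T$ itself I would invoke the reducibility machinery of Subsection~\ref{subsec:reduce}. Because the surface is supported in $c+v$ disjoint balls, the $c+v-1$ separating spheres between consecutive balls give trisected boundary-reducing balls, i.e.\ separating reducing arcs for $\T$; iterating Proposition~\ref{prop:reduce}(3) decomposes $\T$ as a boundary sum $\T=\T^{(1)}\natural\cdots\natural\T^{(c+v)}$, where $c$ of the summands are bridge trisections of $(B^4,S^2)$ coming from no--band presentations of a single unknot and $v$ are bridge trisections of $(B^4,D^2)$ coming from no--band presentations of a single index--one unknot. Each summand inherits $b^{(k)}=c_2^{(k)}$, since $b$ and $\bold c$ are additive under boundary sum (the surfaces are split) and $\sum_k(b^{(k)}-c_2^{(k)})=b-c_2=0$ with every term nonnegative. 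It then remains to identify the summands: analysing Proposition~\ref{prop:band_to_bridge} on a single unknot shows the $S^2$--summands are exactly the finger perturbations of the $(1,(1,1,1);0)$--bridge trisection of $(B^4,S^2)$; for a $D^2$--summand one has $c_1^{(k)}=c_3^{(k)}=0$ (it carries no sphere), so the flat part of $\partial\Dd_1^{(k)}$ is a $b^{(k)}$--bridge $0$--component unlink, forcing $b^{(k)}=0$ and identifying $\T^{(k)}$ with the $0$--bridge trisection of $(B^4,D^2)$. Finally $c$, $v$, and the way the finger perturbations are distributed among the $S^2$--summands are all recovered from $(b,\bold c;v)$, which yields the asserted uniqueness.

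The main obstacle I expect is this last identification: controlling precisely which bridge trisection of $(B^4,S^2)$ a no--band presentation of a single unknot produces, and ruling out perturbed $D^2$--summands (equivalently, showing that when $\Ff$ is a split union of trivial disks and $b=c_2$ one must have $b=0$). That is the step where the bookkeeping of bridge points and patches must be done carefully, leaning on the fact that the flat part of each $\partial\Dd_i$ is a genuine $b$--bridge unlink; by contrast the earlier steps are essentially formal consequences of Propositions~\ref{prop:bridge_to_band}, \ref{prop:band_to_bridge}, and~\ref{prop:reduce} together with the Euler-characteristic count.
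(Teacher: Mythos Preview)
Your route and the paper's diverge at the decomposition of $\T$ itself. The paper does not try to import reducing balls from a band picture; it works directly in the shadow diagram. The key observation is that $c_2=b$ forces every flat component of $\partial\Dd_2$ to be $1$--bridge, so the flat disks of $\Dd_2$ are products and one may arrange $\Tt_2^*=\Tt_3^*$. With blue and green shadows coinciding, an explicit reducing curve on $\Sigma$ (separating the closed red/blue polygons from the remaining bridge points) and then reducing arcs among the leftover bridge points are visible by inspection; Proposition~\ref{prop:reduce} is applied only after these curves and arcs are already in hand. Your identification of $\Ff$ and of $c_1=c_3$ via the no-band presentation is correct, but the paper gets $c_1=c_3$ more cheaply from $\Tt_2^*=\Tt_3^*$ (the flat parts of $\partial\Dd_1$ and $\partial\Dd_3$ are then mirror images).

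The genuine gap in your argument is the assertion that the separating spheres between the disjoint balls in $B^4_{\{2\}}$ ``give trisected boundary-reducing balls.'' Knowing $\Ff$ is split does not produce reducing arcs for $\T$: such a ball must meet each $Z_i$, $H_i$, and $\Sigma$ in the prescribed way, and nothing in the band presentation certifies this. The round trip through Proposition~\ref{prop:band_to_bridge} is nowhere shown to return the original $\T$, and even granting that, the $v$ components of $\wh\beta$ are braided about a common binding and are not separated by balls respecting the Heegaard-double structure in any obvious way. This is exactly the content the paper's shadow-diagram argument is designed to supply, and it is not recoverable from your side without essentially reproducing that analysis.

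Finally, your caution about the $D^2$--summands is well-founded rather than a mere bookkeeping worry: the inference ``$c_1^{(k)}=0$ forces $b^{(k)}=0$'' fails, since flat strands of $\Tt_1$ can lie on \emph{vertical} components of $\partial\Dd_1$. Concretely, the finger $2$--perturbation of the $0$--bridge trisection of $(B^4,D^2)$ is a valid $(1,(0,1,0);1)$--bridge trisection with $b=c_2=1$ and $c_1=c_3=0$, so this last step genuinely requires additional argument.
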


\begin{proof}
	Suppose without loss of generality that $c_2=b$. By Proposition~\ref{prop:bridge_to_band}, $\Ff$ admits a $(b,\bold c;v)$--bridge-braided band presentation. In particular, $\Ff$ can be built with $n = b-c_2=0$ bands. It follows that $c_1=c_3$. It also follows that the flat disks of $(Z_2,\Dd_2)$ are given as products on the $b$ flat strands of $(H_2,\Tt_2)$.
	
	We can assume that the union of the red and blue shadow arcs is a collection of $c_1$ embedded polygons in $\Sigma$, since they determine a $b$--bridge $c_1$--component unlink in $H_1\cup_\Sigma\overline H_2$.  We can also assume that the green shadow arcs coincide with the blue shadow arcs, due to the product structure on the flat disks of $\Dd_2$. See Figure~\ref{fig:c=b1}.
	
	Let $\delta$ be a simple closed curve in $\Sigma\setminus\nu(\bold x)$ that separates the red/blue polygons from the bridge points that are adjacent to no shadow arc.  (Note that, here, every bridge point is adjacent to either 0 or 3 shadow arcs by the above considerations.)  Then $\delta$ is a reducing curve for $\T$ so that $\T=\T^1\#\T^2$, where $\T^1$ is a $(b,\bold c)$--bridge trisection for a pair $(S^4,\Ff^1)$ and $\T^2$ is a $(0,0;v)$--bridge trisection for a pair $(B^4,\Ff^2)$.
	
	Because the blue and green shadow arcs coincide, each polygon is a finger perturbation of the 1--bridge splitting of $(S^4,S^2)$, and $\Ff^1 = \sqcup_cS^2$.  Moreover, $\T^1$ admits $c-1$ reducing curves that completely separate the polygons.  It follows that $\T^1$ is connected sum of perturbations of the 1--bridge trisection of $(S^4,S^2)$, as desired.  Finally, the bridge trisection $\T^2$ admits $v-1$ reducing arcs that cut it up into $v$ copies of the genus zero 0--bridge trisection of $(B^4,D^2)$, as desired.
\end{proof}

Having dispensed of the case when $c_i=b$ for some $i\in\Z_3$, we consider the case when $b=1$ and, in light of the above, $c_i=0$ for all $i\in\Z_3$.  Two simple examples of such bridge trisections are given in Figure~\ref{fig:b=1}.

\begin{figure}[h!]
\begin{subfigure}{.5\textwidth}
  \centering
  \includegraphics[height=20mm]{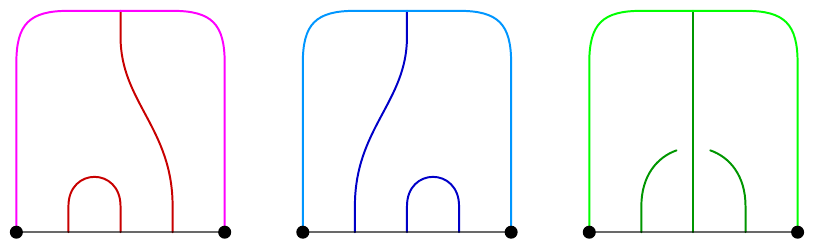}
  \caption{}
  \label{fig:b=11}
\setcounter{subfigure}{2}
\end{subfigure}%
\begin{subfigure}{.5\textwidth}
  \centering
  \includegraphics[height=20mm]{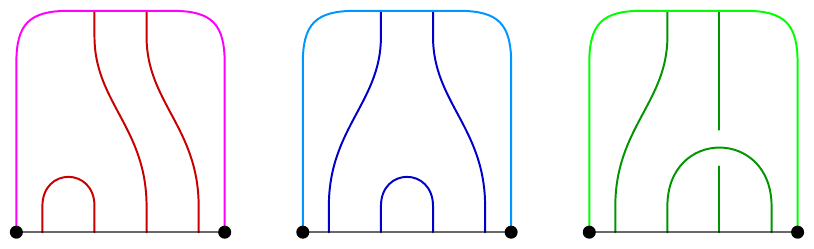}
  \caption{}
  \label{fig:b=13}
\setcounter{subfigure}{1}
\end{subfigure}
\par\vspace{5mm}
\begin{subfigure}{.5\textwidth}
  \centering
  \includegraphics[height=15mm]{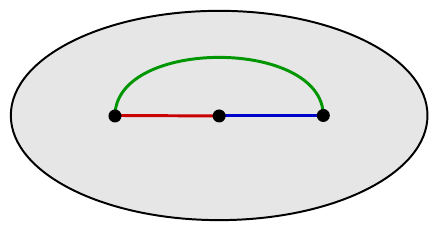}
  \caption{}
  \label{fig:b=12}
\setcounter{subfigure}{3}
\end{subfigure}%
\begin{subfigure}{.5\textwidth}
  \centering
  \includegraphics[height=15mm]{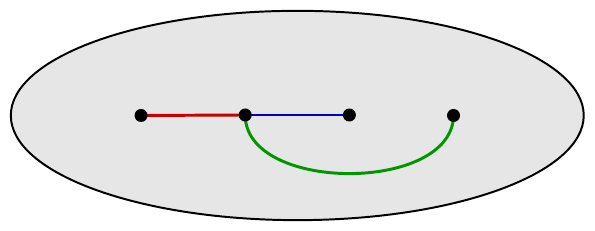}
  \caption{}
  \label{fig:b=14}
\end{subfigure}
	\caption{(A) and (B) The $(1,0;1)$--bridge trisection corresponding to the standard (positive) M\"obius band $(B^4,M^2)$.  (C) and (D) The $(1,0;2)$--bridge trisection corresponding to the unknotted disk $(B^4,D^2)$ with (positive) Markov stabilized, unknotted boundary.}
\label{fig:b=1}
\end{figure}

For a more interesting family of examples, consider the $(2,4)$--torus link $T_{2,4}$, which bounds the union of the trivial M\"obius band $M^2$ and the trivial disk $D^2$. (Imagine Figure~\ref{fig:T243} with the three parallel circles replaced with a single circle.)  Now, consider the surface $\Ff_v$ obtained by replacing the $D^2$ with $v-1$ parallel, trivial disks; Figure~\ref{fig:T243} shows the case of $v=4$.  A $(1,0;v)$--bridge trisection $\T_v$ for $(B^4,\Ff_v)$ is shown in Figures~\ref{fig:T241} and~\ref	{fig:T242}.  Note that when $v=1$, $\T_v$ corresponds the trivial (positive) M\"obius band with unknotted boundary and was given diagrammatically in Figures~\ref{fig:b=11} and~\ref{fig:b=12}.

\begin{figure}[h!]
\centering
\begin{tabular}{cc}
\multirow{2}{*}{
\begin{subfigure}{.4\textwidth}
\setcounter{subfigure}{0}
\centering
\includegraphics[width=.5\linewidth]{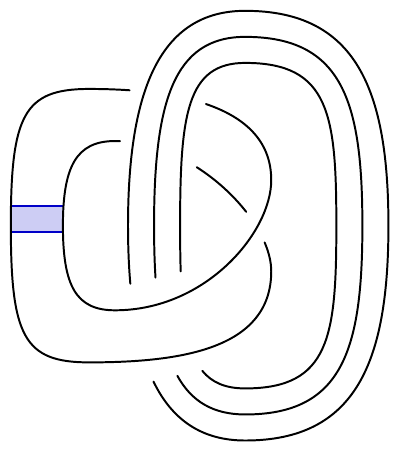}
\caption{}
\label{fig:T243}
\end{subfigure}%
}
&
\begin{subfigure}{.5\textwidth}
\centering
\includegraphics[width=\linewidth]{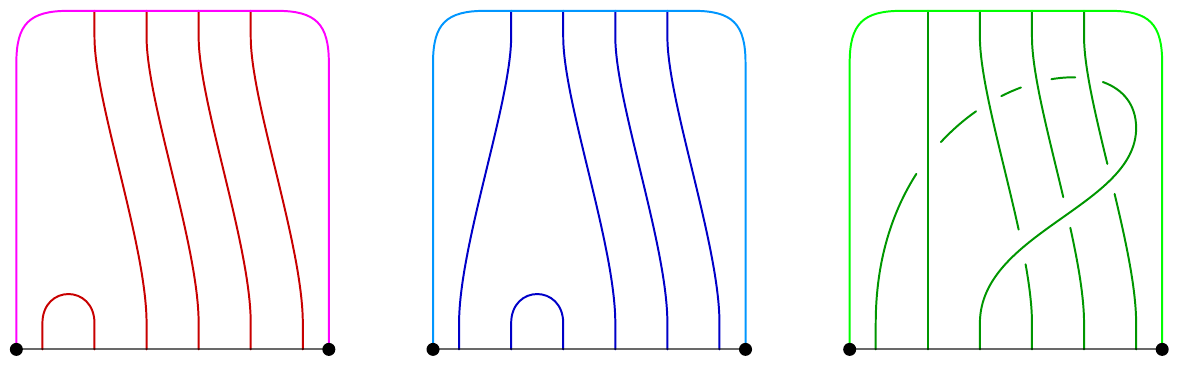}
\caption{}
\label{fig:T241}
\end{subfigure}%
\\[20mm]
&
\begin{subfigure}{.5\textwidth}
\centering
\includegraphics[width=.6\linewidth]{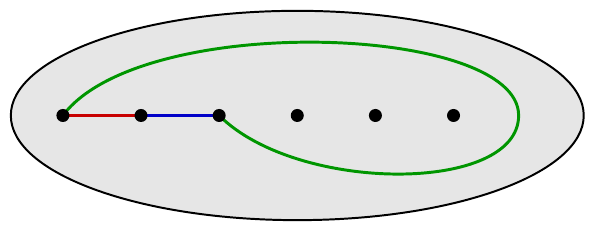}
\caption{}
\label{fig:T242}
\end{subfigure}%
\end{tabular}   
\caption{.}
\label{fig:T24}
\end{figure}

One can check using the techniques of Subsection~\ref{subsec:tri-plane_braid} that the bridge trisection $\T_v$ induces the $v$--braiding of $\partial \Ff_v$ given in Artin generators by $$(\sigma_1\sigma_2\cdots\sigma_{v-2}\sigma_{v-1}^2\sigma_{v-2}\cdots\sigma_2\sigma_1)^2.$$
In other words, one strand wraps twice around the other $v-1$ strands.
The link $\partial\Ff_v$ can be thought of as taking the $(v-1,0)$--cable of one component of $T_{2,4}$.

\begin{proposition}
\label{prop:b=1}
	The bridge trisection $\T_v$ is the unique (up to mirroring) totally irreducible $(1,0;v)$--bridge trisection.
\end{proposition}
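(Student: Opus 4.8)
The plan is to show that a totally irreducible $(1,0;v)$--bridge trisection $\T$ of a pair $(B^4,\Ff)$ must coincide with $\T_v$, by first extracting from the hypothesis a band presentation and then analyzing the very limited combinatorial possibilities. First I would invoke Proposition~\ref{prop:bridge_to_band} to obtain a $(1,0;v)$--bridge-braided band presentation $(\wh\beta,U,\frak b)$ for $\Ff$. Since $c_3=0$, we have $U=\emptyset$, so this is a $(1,\frak b)$--bridge-braided ribbon presentation, and since $c_2=0$ we have $|\frak b|=b-c_2=1$, i.e.\ there is exactly one band. Thus $\Ff$ is a ribbon surface built from $c_1+v = v$ flat/vertical disks ($c_1=0$, so all $v$ disks are vertical), one band, and no caps: its Euler characteristic is $\chi(\Ff)=c_1+c_2+c_3+v-b = v-1$. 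With a single band joining a collection of $v$ vertical patches in $\Dd_1$, the band is either non-separating on $\Ff$ (yielding one component, a once-punctured torus or Möbius-type piece plus $v-1$ disks) or separating (yielding a split surface).

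Next I would use total irreducibility to pin down the band. If the band of $\frak b$ were split from some subcollection of the disks—equivalently, if the realizing 2--complex decomposed—then $\Ff$ would be a split union, and by Proposition~\ref{prop:reduce} the bridge trisection would admit a separating reducing arc or curve, contradicting total irreducibility; hence the single band must connect in a way that produces a connected "core" piece together with exactly $v-1$ trivial disks that are each attached to $\wh\beta$ only through the braiding, not through the band. But $v-1$ trivial disks attached as separate components of $\Ff$ would again give reducing arcs unless $v=1$; this forces me to recognize that in fact the $v-1$ extra vertical patches are \emph{not} separate components but are parallel copies tied together with the core via the braiding $\wh\beta$, precisely as in the cable description of $\partial\Ff_v$ given before the statement. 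Concretely, the band must run from one vertical patch back to an adjacent vertical patch with a single flat strand in each $\Tt_i$, and the remaining $v-2$ patches must be braided as parallel push-offs; I would verify that the only configuration of a single band on a collection of $v$ trivial disks, with $b=1$ and no reducing curve or arc, is the one whose boundary braid is $(\sigma_1\cdots\sigma_{v-2}\sigma_{v-1}^2\sigma_{v-2}\cdots\sigma_1)^2$ and whose realizing surface is $\Ff_v$. Uniqueness up to mirroring then follows because the single remaining free choice—the sign of the half-twist encoded by the band, equivalently the handedness of the core Möbius band / the sign of the crossings in $\wh\beta$—is exactly the mirror symmetry.

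The main obstacle I expect is the middle step: ruling out all "split-looking" configurations using total irreducibility, and in particular showing that a lone band plus $v$ vertical patches with $b=1$ \emph{cannot} be arranged so that some essential curve or arc in $\Sigma\setminus\nu(\bold x)$ bounds compressing disks in all three $H_i$ disjoint from (or meeting once) the tangles. This requires a careful case analysis of how the band's shadow arc $\omega^*$ sits in $\Sigma$ relative to the $2b+v = v+2$ bridge points and of which pairs of bridge points are joined by flat strands in each arm; the irreducibility hypothesis must be used to eliminate every configuration except the target one. A secondary technical point is confirming, via the monodromy/braid-recovery procedure of Subsection~\ref{subsec:tri-plane_braid} (or directly from the band presentation), that the induced boundary braid is indeed the claimed one and that this determines $\Ff_v$ up to isotopy rel-$\partial$; this should be routine given Proposition~\ref{prop:band_to_bridge} and Proposition~\ref{prop:spine}, since the spine is determined by the one-band bridge-braided ribbon presentation, and two such presentations with the same braid and the same (minimal) band data give diffeomorphic spines, hence diffeomorphic bridge trisections.
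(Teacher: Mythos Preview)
Your approach via band presentations is genuinely different from the paper's, but it contains a real gap. The paper argues directly on the shadow diagram: since $b=1$, each colour contributes exactly one shadow arc, and since each $c_i=0$, any two of these arcs share exactly one endpoint (their union must be a polygonal \emph{arc}, not a closed curve). Hence the three arcs form either a Y-shaped tree or a triangle. In the Y case an arc in $\Sigma$ cutting off the tree is a reducing arc, contradicting total irreducibility (or, when $v=2$, exhibiting the diagram as a Markov perturbation). In the triangle case, any of the remaining $v-1$ bridge points lying \emph{outside} the disk bounded by the triangle would again yield a reducing arc; so all of them lie inside, and this is exactly the shadow diagram of $\T_v$ (up to mirroring).

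Your argument founders on a false converse. You write that if the band were ``split from some subcollection of the disks \ldots\ then $\Ff$ would be a split union, and by Proposition~\ref{prop:reduce} the bridge trisection would admit a separating reducing arc or curve.'' But Proposition~\ref{prop:reduce} only says that a reducing curve or arc \emph{implies} a connected-sum or boundary-connected-sum decomposition; it does not say that a split surface forces the bridge trisection to be reducible. Indeed $\Ff_v$ itself \emph{is} a split union---a M\"obius band together with $v-1$ trivial disks---yet $\T_v$ is totally irreducible, precisely because the $v-1$ extra bridge points sit inside the triangle of shadow arcs and no essential curve or arc in $\Sigma\setminus\nu(\bold x)$ can bound disks disjoint from all three tangles. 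So your plan to use irreducibility to force the surface to be connected, or to force the band to tie all vertical patches together, cannot work. The feature that must be detected is purely the arrangement of bridge points relative to the shadow arcs on $\Sigma$, which the paper's two-case analysis captures in a few lines and which the band-presentation viewpoint obscures.
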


\begin{proof}
	Suppose that $\T$ is a totally irreducible $(1,0;v)$--bridge trisection, and consider a shadow diagram for~$\T$.  Since $b=1$, there is a unique shadow arc of each of color.  Since $c=0$, the union of any two of these shadow arcs is a connected, embedded, polygonal arc in $\Sigma$.  There are two cases:  Either the union of the three shadow arcs is a circle, or the union of the three shadow arcs is a Y-shaped tree.
	
	Suppose the union is a Y-shaped tree.  Let $\eta$ be an arc connecting the tree to $\partial\Sigma$, and let $\omega$ be the arc boundary of the union of $\eta$ and the tree.  In other words, $\omega$ is a neatly embedded arc in $\Sigma\setminus\nu(\bold x)$ that separates the tree from the rest of the diagram.  If the rest of the diagram is nonempty, then $\delta$ is a reducing arc for the bridge trisection, and we have $\T = \T^1\natural\T^2$, where $\T^1$ is a $(1,0;2)$--bridge trisection (with Y-shaped shadow diagram) and $\T^2$ is a $(0,0;v)$--bridge trisection, with $v>0$. This contradicts the assumption that $\T$ was totally irreducible.  If $v=0$ (i.e. the rest of the diagram is empty), then $\T=\T^1$ is the Markov perturbation of the genus zero 0--bridge trisection and is shown in Figure~\ref{fig:b=14}, so $\T$ is not totally irreducible, another contradiction.
	
	Now suppose that the union of the three shadow arcs is a circle, and let $D\subset\Sigma$ denote the disk the union bounds.  Suppose there is a bridge point in $\Sigma\setminus D$.  Then there is a reducing arc separating the bridge point from $D$, so $\T$ is boundary reducible, a contradiction.  So, the $v-1$ bridge points that are not adjacent to a shadow arc are contained in $D$.  Therefore, the shadow diagram is the one given in Figure~\ref{fig:T242} or, in the case that $v=1$, in Figure~\ref{fig:b=12}.  This completes the proof.
\end{proof}

Having walked through these modest classification results, we now present some families of examples, as well as some questions and conjectures about further classification results.

\begin{example}
\label{ex:punc}
	Consider the three $(2,0;1)$--bridge trisections shown in Figure~\ref{fig:punc}, which correspond the punctured torus and two different Klein bottles.  All three surfaces are isotopic into $S^3$ and are bounded by the unknot.  The two Klein bottles decompose as boundary connected sums of M\"obius bands bounded by the unknot in $S^3$.  The Klein bottle depicted in Figures~\ref{fig:punc3} and~\ref{fig:punc4} is the boundary connected sum of two positive M\"obius bands; the Klein bottle depicted in Figures~\ref{fig:punc5} and~\ref{fig:punc6} is the boundary connected sum of a positive and a negative M\"obius bands

\begin{figure}[h!]
\begin{subfigure}{.5\textwidth}
\centering
\includegraphics[width=\linewidth]{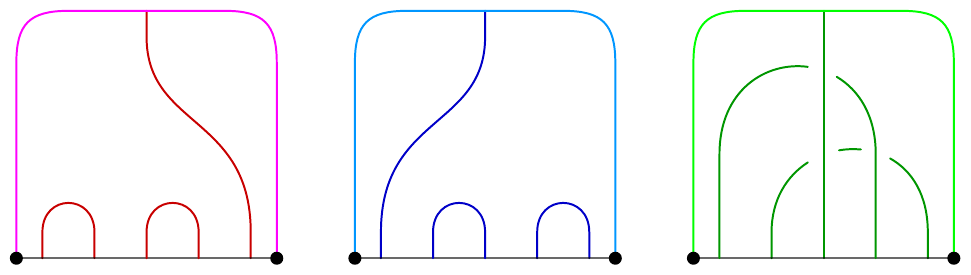}
\caption{}
\label{fig:punc1}
\end{subfigure}%
\begin{subfigure}{.4\textwidth}
\centering
\includegraphics[width=.6\linewidth]{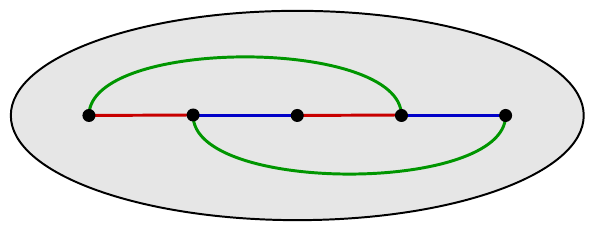}
\caption{}
\label{fig:punc2}
\end{subfigure}
\par\vspace{5mm}
\begin{subfigure}{.5\textwidth}
\centering
\includegraphics[width=\linewidth]{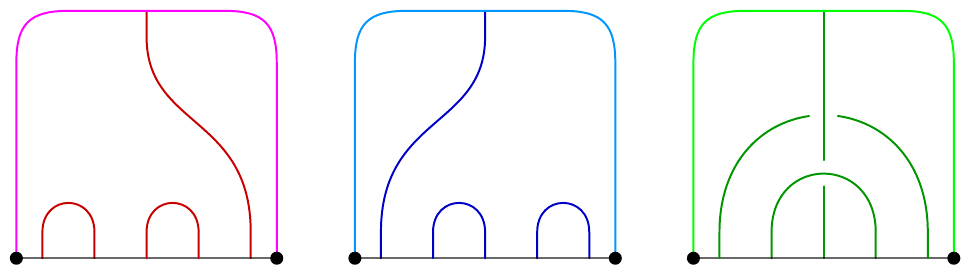}
\caption{}
\label{fig:punc3}
\end{subfigure}%
\begin{subfigure}{.4\textwidth}
\centering
\includegraphics[width=.6\linewidth]{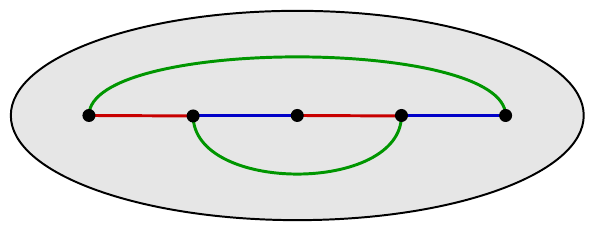}
\caption{}
\label{fig:punc4}
\end{subfigure}
\par\vspace{5mm}
\begin{subfigure}{.5\textwidth}
\centering
\includegraphics[width=\linewidth]{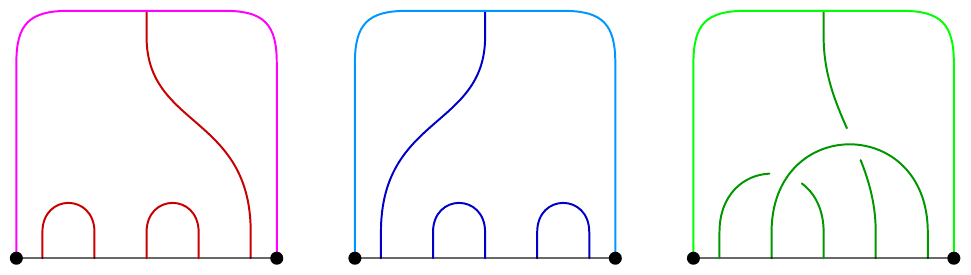}
\caption{}
\label{fig:punc5}
\end{subfigure}%
\begin{subfigure}{.4\textwidth}
\centering
\includegraphics[width=.6\linewidth]{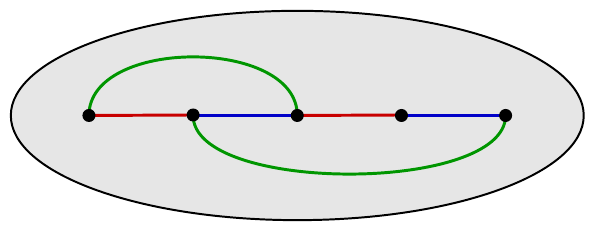}
\caption{}
\label{fig:punc6}
\end{subfigure}
\caption{Three $(2,0;1)$--bridge trisections for surfaces bounded by the unknot and isotopic into $S^3$. (A) and (B) describe a punctured torus; (C) and (D) describe the boundary connected sum of two positive M\"obius bands; (E) and (F) describe the boundary connected sum of a positive and a negative M\"obius band.}
\label{fig:punc}
\end{figure}

	These three bridge trisections can be obtained by taking the three unique $(3,1)$--bridge trisections of closed surfaces in $S^4$ and puncturing at a bridge point.

\end{example}

\begin{conjecture}
\label{conj:punc}
	There are exactly three (up to mirroring) totally irreducible $(2,0;1)$--bridge trisections.
\end{conjecture}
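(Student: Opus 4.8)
The plan is to reduce Conjecture~\ref{conj:punc} to a finite case analysis of shadow diagrams, in the spirit of the proof of Proposition~\ref{prop:b=1}, prune the list using total irreducibility, and match the survivors with the three bridge trisections of Example~\ref{ex:punc}.

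\textbf{Step 1: The combinatorial model.} By Corollary~\ref{coro:spine} and Proposition~\ref{prop:shadow}, a $(2,0;1)$--bridge trisection is determined up to diffeomorphism by a shadow diagram, and two shadow diagrams describe diffeomorphic bridge trisections if and only if they are slide-equivalent. For the genus zero trisection of $B^4$ the core is a disk $\Sigma = D^2$, there are no curves $\alpha_i$, and $\bold x$ consists of $2b+v = 5$ marked points. For each $i\in\Z_3$, exactly one marked point $x^{(i)}\in\bold x$ is the $\Sigma$--endpoint of the vertical strand of $\Tt_i$, and $\Tt_i^*$ is a pair of disjoint embedded arcs in $D^2$ pairing up the other four points. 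Since $c = 0$, the standard-splitting-shadow condition forces $\Tt_i^*\cup\Tt_{i+1}^*$ to be slide-equivalent, for each $i$, to a single embedded polygonal arc with four edges of alternating colour and with no closed polygonal component; in particular $x^{(i)}\ne x^{(i+1)}$, so $x^{(1)},x^{(2)},x^{(3)}$ are pairwise distinct and the remaining two marked points $y_1,y_2$ are flat endpoints of all three tangles. Thus the data reduces to a triple of arc-pairings on the five-times-marked disk, subject to the three pairwise conditions above, taken up to $\mathrm{MCG}(D^2,\bold x)$ and slide-equivalence.

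\textbf{Step 2: Enumeration.} Up to isotopy there are only finitely many configurations of a single $\Tt_i^*$ (a pair of disjoint embedded arcs joining four marked points of a disk) modulo the mapping-class-group action. Fixing $\Tt_1^*$, the requirement that $\Tt_1^*\cup\Tt_2^*$ standardize to a four-edge embedded arc from $x^{(1)}$ to $x^{(2)}$ severely restricts $\Tt_2^*$; imposing the analogous conditions on $\Tt_2^*\cup\Tt_3^*$ and $\Tt_3^*\cup\Tt_1^*$ then leaves only finitely many candidate triples, which I would list explicitly. This is the $b=2$ analogue of the ``circle versus $Y$--tree'' dichotomy in the proof of Proposition~\ref{prop:b=1}; the relevant configurations here are more numerous but still bounded.

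\textbf{Step 3: Pruning and identification.} For each candidate triple I would test the conditions of Definition~\ref{def:reduce}: a reducing curve is an essential simple closed curve in $\Sigma\setminus\nu(\bold x)$ that can be isotoped off all three $\Tt_i^*$; a reducing arc is the analogous statement for an essential neatly embedded arc in the disk $\Sigma$; a nontrivial decomposing curve bounds a once-punctured disk in each $H_i$ meeting $\Tt_i$ once. By Lemma~\ref{lem:reduce} and Proposition~\ref{prop:reduce}, any candidate exhibiting one of these is a connected or boundary-connected sum, hence not totally irreducible, and is discarded. I claim exactly three candidates survive, up to mirroring. For each survivor, $\chi(\Ff) = c_1+c_2+c_3+v-b = -1$ together with total irreducibility forces $\Ff$ to be connected with a single boundary circle, hence a once-punctured torus or a once-punctured Klein bottle; the two Klein-bottle cases are separated by the relative normal Euler number of $\Ff$ (equivalently, by the signs of the two M\"obius-band summands visible in the handle decomposition produced by Proposition~\ref{prop:bridge_to_band}, or by the monodromy of the induced boundary open-book computed via Proposition~\ref{prop:monodromy}). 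Since the three bridge trisections of Example~\ref{ex:punc} realize exactly these three cases and are pairwise non-diffeomorphic (orientable versus non-orientable, and, for the two Klein bottles, distinguished by the normal Euler number), they are precisely the totally irreducible $(2,0;1)$--bridge trisections up to mirroring.

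\textbf{Main obstacle.} The crux is controlling slide-equivalence in Steps~1 and~2: arc slides can raise the geometric intersection numbers $|\Tt_i^*\cap\Tt_j^*|$ before lowering them, so a bounded search is not self-evidently exhaustive, and one must supply a normal form for triples of arc systems on a punctured disk -- a Haken/hierarchy-style normalization, or a minimal-complexity argument for the union $\bigcup_i\Tt_i^*$ within its slide-equivalence class. An attractive alternative, suggested by the last sentence of Example~\ref{ex:punc}, is to prove that every $(2,0;1)$--bridge trisection is obtained by puncturing a closed $(3,1)$--bridge trisection of a surface in $S^4$ at a bridge point incident only to flat patches (inverting Lemma~\ref{lem:punc}), and then to invoke the classification of those closed objects; the remaining work is to show the unpuncturing is unique up to the symmetry group of each closed model, and in particular that the two punctured Klein bottles are recovered as genuinely distinct bridge trisections. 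A secondary, essentially routine point is to check that the diagrammatic reducing curves and arcs used in Step~3 detect the topological conditions of Definition~\ref{def:reduce}; the nontrivial direction is Lemma~\ref{lem:reduce}, and its converse follows by putting the relevant spheres and balls in normal position with respect to the spine.
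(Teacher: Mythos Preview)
The paper does not prove this statement: it is stated as a \emph{conjecture} and left open. There is therefore no proof in the paper to compare your proposal against.

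As for your proposal itself, it is a reasonable outline of a strategy, but it is not a proof, and you essentially say so yourself. Step~2 asserts that the enumeration is finite and Step~3 asserts that ``exactly three candidates survive,'' but neither claim is actually carried out; the phrase ``which I would list explicitly'' is a placeholder, not an argument. The obstacle you flag in your final paragraph is the real one: slide-equivalence of arc systems on a five-times-marked disk is not obviously controlled by any fixed complexity bound, so a naive finite search is not self-evidently exhaustive. Until you either produce a normal form for triples of shadow systems (and verify it is compatible with all three pairwise standardness conditions simultaneously) or make rigorous the ``unpuncturing'' reduction to the closed $(3,1)$ classification, what you have is a plausible plan rather than a proof. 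The secondary route via Lemma~\ref{lem:punc} is promising, but note that the closed $(3,1)$ classification you would be invoking is itself a nontrivial result (from \cite{MeiZup_17_Bridge-trisections-of-knotted}), and you would still need to show that puncturing at different bridge points of a given closed model yields diffeomorphic relative bridge trisections, which is not addressed.
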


\begin{example}
\label{ex:2-braid}
	Consider the $(2,0;2)$--bridge trisection shown in Figure~\ref{fig:2-braid41}, which corresponds the annulus $S^3$ bounded by the $(2,4)$--torus link.  Compare with Example~\ref{ex:mono} and Figure~\ref{fig:mono1}.  By replacing the three positive half-twists with $n$ half-twists for some $n\in\Z$, gives a surface in $S^3$ bounded by the $(2,n)$--torus link that is a M\"obius band if $n$ is odd and an annulus if $n$ is even.
	
	One interesting aspect of the case when $n$ is even relates to the orientation of the boundary link.  The boundary link, which is the $(2,n)$--torus link, inherits an orientation as a 2--braid.  It also inherits an orientation from the spanning annulus that the bridge trisection describes.  These orientations don't agree!  In other words, the bridge trisections of the spanning annuli for these links induce a braiding of the links, but this braiding is not coherent with respect to the orientation of the links induced by the annuli.  Compare with Example~\ref{ex:T24coherent} below.
\end{example}

\begin{figure}[h!]
\begin{subfigure}{.5\textwidth}
\centering
\includegraphics[width=.9\linewidth]{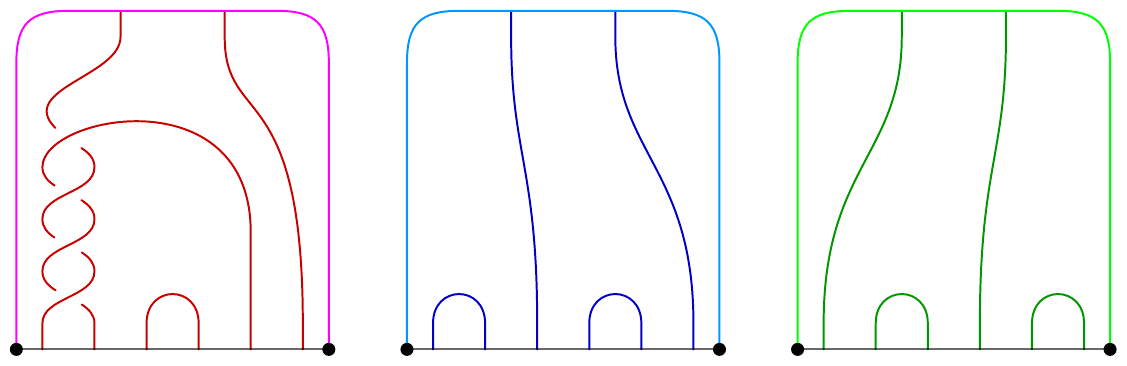}
\caption{}
\label{fig:2-braid41}
\end{subfigure}%
\begin{subfigure}{.5\textwidth}
\centering
\includegraphics[width=.5\linewidth]{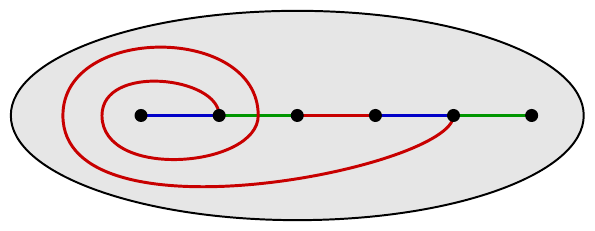}
\caption{}
\label{fig:2-braid42}
\end{subfigure}
\caption{Diagrams for a $(2,0;2)$--bridge trisection of the planar surface bounded by the $(2,n)$--torus link in $S^3$; shown is $n=4$.}
\label{fig:2-braid4}
\end{figure}

\begin{conjecture}
\label{conj:punc}
	Every $(2,0;2)$--bridge trisection is diffeomorphic to one of those described in Example~\ref{ex:2-braid} and in Figure~\ref{fig:2-braid4}.
\end{conjecture}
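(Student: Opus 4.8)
The plan is to follow the template of Proposition~\ref{prop:b=1}, analyzing a shadow diagram of an arbitrary $(2,0;2)$--bridge trisection $\T$ combinatorially, and extracting enough reducing arcs and curves to reduce to an irreducible core that must be one of the listed examples. Since $b=2$, each of the three tangle shadows $(\Sigma,\alpha_i,\Tt_i^*,\bold x)$ has exactly two shadow arcs; since $c_i=0$, in each pairwise union $\Tt_i^*\cup\Tt_{i+1}^*$ the four arcs assemble (after slide-equivalence to standard form) into embedded polygonal \emph{arcs} only -- no closed polygonal curves -- each such arc having both endpoints on $\partial\Sigma$ and corresponding to a vertical strand. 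There are $v=2$ vertical strands, so in each pairwise union the four shadow arcs form exactly two polygonal arcs, each a concatenation of two shadow arcs (one from each tangle), running between two of the four bridge points of $\bold x$. First I would set up the bookkeeping: the core $\Sigma$ is a disk (genus zero trisection), so $\alpha_i=\emptyset$, and the entire combinatorial datum is the placement of the six shadow arcs among the four bridge points in the disk $\Sigma$.

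Next I would enumerate the combinatorial types. Each tangle shadow $\Tt_i^*$ is a pair of arcs on the four bridge points, so it determines a perfect matching of the four points into two pairs (up to which two of three matchings, and up to how the arcs are embedded in the disk). The compatibility condition -- that each of the three pairwise unions yields exactly two polygonal arcs realizing the \emph{same} two vertical strands, i.e. the same pairing of the four endpoints at the $\partial_-$ level -- forces the three matchings to be compatible in a precise sense. I expect that, after discarding cases that produce a reducing arc or a trivial decomposing curve (which, by Proposition~\ref{prop:reduce} and the total-irreducibility hypothesis, cannot occur -- or rather, give a boundary connected sum decomposition into simpler listed pieces), one is left with essentially one combinatorial type, parametrized by an integer: the number of times the ``active'' strand winds around the ``passive'' strand, recorded by how the two shadow arcs of one tangle braid around each other relative to the bridge sphere. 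This integer is exactly the $n$ appearing in Example~\ref{ex:2-braid}: the resulting surface is the planar surface bounded by the $(2,n)$--torus link. The mirror ambiguity accounts for the sign of $n$. I would carry out the winding-number extraction using the monodromy/braid-recovery technique of Subsection~\ref{subsec:tri-plane_braid} (or its shadow-diagram analogue in Subsection~\ref{subsec:shadow_braid}), confirming that distinct values of $n$ give distinct $\T$ and that these exhaust the irreducible possibilities.

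The main obstacle I anticipate is the careful slide-equivalence analysis: a shadow diagram is only defined up to arc slides, and two a priori different-looking diagrams can represent the same bridge trisection, so the enumeration of ``combinatorial types'' must be carried out modulo Proposition~\ref{prop:Joh}. Concretely, one must show that any $(2,0;2)$--shadow diagram can be normalized -- via curve slides (vacuous here, $\alpha_i=\emptyset$) and arc slides -- to a standard form in which the only remaining modulus is the integer $n$, and that the reducing-arc/decomposing-curve detection is invariant under these slides. This is the step where the argument could balloon: one needs to rule out ``exotic'' braidings of the two shadow arcs of a single tangle that are not simply powers of a half-twist but are nonetheless compatible with the $c_i=0$, $v=2$ constraints in all three pairwise unions -- precisely the phenomenon flagged in Remark~\ref{rmk:helpers2}. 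I would handle this by observing that when $b=2$ and all shadow unions are arcs (no closed components), each pairwise union lives in a surface of small complexity and the ``tangle'' $\Tt_i^* \cup \Tt_{i+1}^*$ being a union of trivial polygonal arcs severely constrains the braiding; a direct case check, organized by the cyclic pattern of the four bridge points and whether consecutive tangles share a matching, should close it.

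Finally, I would assemble the pieces: given total irreducibility, rule out each reducible configuration; in the surviving configuration, identify the surface via the boundary braid computation as the $(2,n)$--torus link's planar spanning surface; and invoke the construction of Example~\ref{ex:2-braid} together with the mirror symmetry to conclude that every $(2,0;2)$--bridge trisection is diffeomorphic, up to mirroring, to one of the $\T$ realizing those surfaces -- with the understanding (as in Propositions~\ref{prop:reduce} and~\ref{prop:c=b}) that a general, possibly reducible, $(2,0;2)$--bridge trisection is a boundary connected sum of such an irreducible piece with finitely many $(0,0;1)$--pieces and lower-$b$ pieces, all of which are themselves on the list or built from listed pieces.
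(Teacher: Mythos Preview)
The statement you are attempting to prove is labeled as a \emph{Conjecture} in the paper, not a theorem or proposition; the paper offers no proof of it whatsoever. There is therefore nothing to compare your proposal against, and you should understand that you are outlining an attack on what the author regards as an open problem.

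On the substance of your plan: there is a concrete counting error that undermines the combinatorial enumeration at its core. You assert that there are four bridge points and that each $\Tt_i^*$ gives a perfect matching on those four points. In fact, $|\bold x|=2b+v=2(2)+2=6$: the two vertical strands of each tangle also contribute one endpoint apiece to $\bold x$. Thus each $\Tt_i^*$ consists of two arcs whose four endpoints lie among six bridge points, and the remaining two points of $\bold x$ are the endpoints of the vertical strands of $\Tt_i$. Crucially, which two of the six points are ``vertical'' can vary with $i$, so you are not matching a fixed four-element set but rather choosing, for each $i$, a partial matching of a six-element set that leaves two points unmatched. The pairwise-standardness condition then relates these partial matchings (and the unmatched pairs) across $i$, which is a substantially richer combinatorial problem than the perfect-matching enumeration you describe. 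The embedding data in the disk (how the arcs braid around the two vertical-strand points of each tangle) enters in a more intricate way than a single integer winding number; this is exactly the point at which the $b=1$ template of Proposition~\ref{prop:b=1} stops being a reliable guide.

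Your overall strategy---normalize the shadow diagram modulo slide-equivalence, peel off reducible pieces, and identify the irreducible core via the boundary-braid computation---is a sensible research plan, and your own caveat about the slide-equivalence analysis ``ballooning'' is well placed. But with the bridge-point count corrected, the case analysis expands considerably, and nothing in your outline indicates how to control it. Until that is done, this remains a proposal for attacking a conjecture, not a proof.
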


\begin{example}
\label{ex:T24coherent}
	Figure~\ref{fig:3-braid41} gives a $(3,0;3)$--bridge trisection for the annulus in $S^3$ bounded by the $(2,4)$--torus link. In contrast to the bridge trisection for this surface discussed in Example~\ref{ex:2-braid} and illustrated in Figure~\ref{fig:2-braid4}, this bridge trisection induces a coherent 3--braiding of the boundary link.  This example could be generalized to give a $(n+1,0;n+1)$--bridge trisection for the annulus bounded by the $(2,n)$--torus link for any even $n\in\Z$.
\end{example}

\begin{figure}[h!]
\begin{subfigure}{.6\textwidth}
\centering
\includegraphics[width=.9\linewidth]{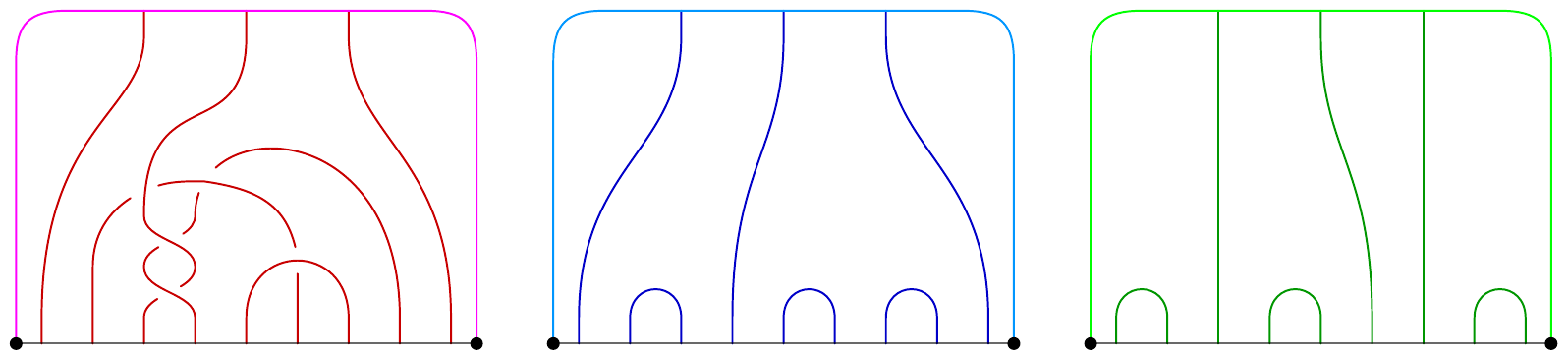}
\caption{}
\label{fig:3-braid41}
\end{subfigure}%
\begin{subfigure}{.4\textwidth}
\centering
\includegraphics[width=.7\linewidth]{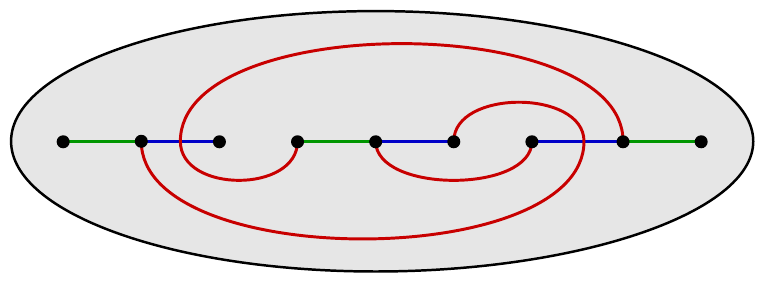}
\caption{}
\label{fig:3-braid42}
\end{subfigure}
\caption{Diagrams for a $(3,0;3)$--bridge trisection of the planar surface bounded by the $(2,n)$--torus link in $S^3$; shown is $n=4$.}
\label{fig:3-braid4}
\end{figure}

\section{Proof of Theorem~\ref{thm:general}}
\label{sec:gen_proof}

We now make use of the general framework outlined in Section~\ref{sec:general} to give a proof of Theorem~\ref{thm:general}, which we restate for convenience. We adopt the notation and conventions of Definition~\ref{def:Trisection}.

\begin{theorem}
\label{thm:general}
	Let $\T$ be a trisection of a four-manifold $X$ with $\partial X = Y$, and let $(B,\pi)$ denote the open-book decomposition of $Y$ induced by $\T$.  Let $\Ff$ be a neatly embedded surface in $X$; let $\Ll = \partial \Ff$; and fix a braiding $\wh\beta$ of $\Ll$ about $(B,\pi)$.  Then, $\Ff$ can be isotoped to be in bridge trisected position with respect to $\T$ such that $\partial \Ff = \wh\beta$.  If $\Ll$ already coincides with the braiding $\beta$, then this isotopy can be assumed to restrict to the identity on $Y$.
\end{theorem}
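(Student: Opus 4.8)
The plan is to reduce the general existence statement to the four-ball case (Theorem~\ref{thm:four-ball}) by a Morse-theoretic cut-and-paste argument, following the philosophy that the interior of $X$ relative to the spine of $\T$ looks, near $\Sigma$, like a union of four-balls. First I would choose a self-indexing Morse function $\Phi\colon X\to\R$ adapted to $\T$: one whose level sets interpolate between the spine $(H_1,\Tt_1)\cup(H_2,\Tt_2)\cup(H_3,\Tt_3)$ and $\partial X=Y$, so that $Z_2\cup Z_3$ is a collar-like region carrying the surface down toward the spine, and the remaining sector $Z_1\cong Z_{g,k_1;\bold p,\bold f}$ is built from $Y_1\times I$ by attaching $1$--handles. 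The key point is that $Z_1$ is a four-dimensional compressionbody, and by Lemma~\ref{lem:one_min} and Proposition~\ref{prop:triv_disks} a collection of neatly embedded disks in $Z_1$ with prescribed (braided/unknotted) boundary is standard as soon as its Morse function has only index-zero critical points; this is the analogue, in the general setting, of the trivial disk-tangle characterization used implicitly in the $B^4$ argument.

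The main steps, in order, would be: (1) Put $\Ff$ in Morse position with respect to $\Phi$, and use the hypothesis that $\Ff$ comes equipped with a braiding $\wh\beta$ of $\Ll$ about $(B,\pi)$ to arrange, via Alexander's theorem and Rudolph's generalization (already cited in Subsection~\ref{subsec:OBD}), that $\Ff\cap Y$ is exactly $\wh\beta$; when $\Ll$ already equals $\beta$, do nothing on $Y$, so the isotopy is the identity there. (2) Slide the critical points of $\Phi|_\Ff$ so that all minima (cups) lie in $Z_1$, all maxima (caps) lie in a dual copy of a compressionbody sitting inside $Z_3$ say, and all saddles (bands) lie in an arm-neighborhood of $\Sigma$; this is the general-setting version of the ``pull cups down, push caps up'' maneuver from the proof of Proposition~\ref{prop:band_pres}, using vertical strands to drag index-zero and index-two critical points, with a codimension count guaranteeing disjointness from $\Ff$. (3) Perform the standard flattening of the bands near $\Sigma$ and the ``helper band'' trick of Proposition~\ref{prop:to_BBB_realizing} to ensure the trivial disk-tangle produced in the cup sector has enough vertical patches and that the bands are leveled and dualized by bridge disks for $\Tt_3$. (4) Define the three sectors $(Z_i,\Dd_i)$ exactly as in the proof of Proposition~\ref{prop:band_to_bridge}, with $Z_1$ absorbing the region below the bands (so $\Dd_1$ is a trivial disk-tangle by Lemma~\ref{lem:one_min}), $Z_3$ absorbing the region above (capping region, trivial by Proposition~\ref{prop:triv_disks}), and $Z_2$ being the band-trace (trivial $(c_2,\bold v)$--disk-tangle because each band is dualized, exactly Lemma~3.1 of~\cite{MeiZup_17_Bridge-trisections-of-knotted}). (5) Check that the arms $(H_i,\Tt_i)$ are trivial tangles and that the pieces glue correctly; this is the routine verification analogous to the end of the proof of Proposition~\ref{prop:band_to_bridge}, now in the compressionbody setting where Lemma~\ref{lem:LP} provides the needed uniqueness.

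The hard part will be step (2)–(3) in the presence of nontrivial four-dimensional topology: unlike $B^4$, the sector $Z_1$ has $1$--handles and the arm $H_i$ is a compressionbody rather than a ball, so one must check that the critical-point rearrangement can be done in the complement of the cores $\Gamma$ of those $1$--handles (as in the proof of Lemma~\ref{lem:one_min}, where a codimension argument pushes $\Dd$ off $\nu(\Gamma)$), and that the induced braiding on the boundary is genuinely the prescribed $\wh\beta$ rather than merely some braiding of $\Ll$. Controlling the boundary requires combing out the Alexander/Rudolph isotopy through the collar $Z_2\cup Z_3$ without disturbing the spine—the family $G_t$ of ambient isotopies in the proof of Proposition~\ref{prop:band_pres}, here adapted to a general open-book collar—and verifying it restricts to the identity on $Y$ precisely when $\Ll=\beta$. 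A secondary subtlety is the bookkeeping of the ordered partitions $\bold p,\bold f,\bold v$ and the orientation conventions of Definition~\ref{def:Trisection}, which must be tracked so that the decomposition $\partial\Dd_i=\Tt_i\cup\beta_i\cup\overline\Tt_{i+1}$ holds with the correct signs; this is tedious but, as in Remark~\ref{rmk:or1}, purely a matter of careful orientation tracking.
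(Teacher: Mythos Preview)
Your proposal is essentially correct and follows the same Morse-theoretic approach as the paper's Section~\ref{sec:gen_proof}: an adapted Morse function (Lemma~\ref{lem:trisection_to_Morse}), rearrangement of the critical points of $\Phi_\Ff$ (Lemma~\ref{lem:Morse_Ff}), and piece-by-piece verification that each $\Dd_i$ is a trivial disk-tangle and each $\Tt_i$ a trivial tangle (Lemmata~\ref{lem:Dd_1/3}, \ref{lem:Tt_i}, \ref{lem:Dd_2}).

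Two points are worth flagging. First, despite your framing, this is not a reduction to Theorem~\ref{thm:four-ball} but a direct argument run in parallel: the sectors $Z_i$ are \emph{given} by $\T$ (up to the perturbation of $\Sigma$ carried out in Lemma~\ref{lem:Tt_i}) rather than rebuilt from a band presentation as in Proposition~\ref{prop:band_to_bridge}; your step~(4) should be read as identifying $\Dd_i=\Ff\cap Z_i$, not defining new pieces. Second, the helper-band trick in your step~(3) is unnecessary here, and the paper's proof omits it. Once $\partial\Ff=\wh\beta$ has been arranged, $\Ff\cap Y_1$ is already a $\bold v$--thread, so Lemma~\ref{lem:one_min} applies directly to show $\Dd_1$ (and dually $\Dd_3$) is a trivial disk-tangle; the standard-position requirement that helper bands were engineered to enforce in the $B^4$ argument comes for free from the Morse structure on $Z_1$. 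The genuinely delicate step is the one you identify last: leveling and dualizing the bands by perturbing $\Sigma$ inside $\Phi^{-1}(1.5)$ (Lemma~\ref{lem:Tt_i}), which is what makes $\Dd_2$ trivial.
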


Note that if $X$ is closed, then Theorem~\ref{thm:general} is equivalent to Theorem~1 of~\cite{MeiZup_18_Bridge-trisections-of-knotted}.  For this reason, we assume henceforth that $Y=\partial X\not=\emptyset$.  We will prove Theorem~\ref{thm:general} using a sequence of lemmata.    Throughout, we will disregard orientations. All isotopies are assumed to be smooth and ambient. First, we describe the existence of a Morse function $\Phi_\T$ on (most of) $X$ that is well-adapted to the trisection $\T$.  We will want to think of $X$ as a lensed cobordism from $Y_1$ to $Y_2\cup_{P_3}Y_3$.

\begin{lemma}\label{lem:trisection_to_Morse}
	There is a self-indexing Morse function
	$$\Phi_\T\colon X\setminus\nu(P_1\cup_B P_2\cup_BP_3)\to [0,4]$$
	such that
	\begin{enumerate}
		\item $\Phi_\T$ has no critical points of index zero or four;
		\item $Y_1\setminus\nu(P_1\cup_B \overline P_2) = \Phi_\T^{-1}(0)$;
		\item $(H_1\cup_\Sigma \overline H_2)\setminus\nu(P_1\cup_B \overline P_2) = \Phi_\T^{-1}(1.5)$;
		\item $\Phi_\T(H_3\setminus\nu(P_3))\subset[1.5,2.5)$;
		\item $Y_3\setminus\nu(P_3\cup_B \overline P_1) = \Phi_\T^{-1}(4)$; and
		\item The index $j$ critical points of $\Phi_\T$ are contained in $\Int(Z_j)$.
	\end{enumerate}
\end{lemma}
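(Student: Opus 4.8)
The plan is to assemble $\Phi_\T$ from three pieces, one over each of the height intervals $[0,1.5]$, $[1.5,2.5]$ and $[2.5,4]$, using the preferred Morse structure carried by each sector together with the ``indenting'' device of Remark~\ref{rmk:no_Morse} to turn lensed cobordisms into honest ones once $\nu(P_1\cup_B P_2\cup_B P_3)$ has been deleted. Throughout, ``$\setminus\nu$'' abbreviates removal of the relevant piece of $\nu(P_1\cup_B P_2\cup_B P_3)$.

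Over $[0,1.5]$ I would use $Z_1$. By the discussion preceding Lemma~\ref{lem:one_min}, $Z_1\cong Z_{g,k_1;\bold p,\bold f}$ carries a standard Morse function realizing it as a lensed cobordism from the spread $Y_1=H_{\bold p,\bold f}$ to the Heegaard-double piece $H_1\cup_\Sigma\overline{H_2}=\partial Z_1\setminus\nu(Y_1)$, with exactly $m_1+n-1$ critical points, all of index one. After deleting $\nu(P_1\cup_B P_2)$ this becomes an honest cobordism with no critical points in the indented collar (Remark~\ref{rmk:no_Morse}), and I would rescale it to have image $[0,1.5]$, sending $Y_1\setminus\nu$ to $0$, every critical point to $1$, and $(H_1\cup_\Sigma\overline{H_2})\setminus\nu$ to $1.5$. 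Symmetrically, over $[2.5,4]$ I would use $Z_3\cong Z_{g,k_3;\bold p,\bold f}$ with its standard Morse function ($m_3+n-1$ index-one critical points, realizing $Z_3$ as a lensed cobordism from $Y_3$ to $H_3\cup_\Sigma\overline{H_1}$), turned \emph{upside down} so that the critical points become index three, and rescaled to image $[2.5,4]$, sending $(H_3\cup_\Sigma\overline{H_1})\setminus\nu$ to $2.5$, every critical point to $3$, and $Y_3\setminus\nu$ to $4$. Together these account for (2), (3), (5), clause~(6) for $j=1$ and $j=3$, and the absence of critical points of index $0$ and $4$, so that (1) holds once the middle piece is in place.

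The middle piece, over $[1.5,2.5]$, is the delicate one. For the level sets at the two ends to be $(H_1\cup_\Sigma\overline{H_2})\setminus\nu$ and $(H_3\cup_\Sigma\overline{H_1})\setminus\nu$, the arm $H_1$ (together with the core $\Sigma$ and the spread $Y_2$) must be made to run vertically over the whole interval; so I would take this region to be $Z_2\setminus\nu(P_2\cup_B P_3)$ with a collar $H_1\times[1.5,2.5]$ glued on along $\Sigma$ (this collar being carved out of $Z_3$, which does no harm). It then remains to put on $Z_2$ a Morse function with image $[1.5,2.5]$ whose only critical points are of index two, at level $2$, and which is vertical along $\Sigma$ and $Y_2\setminus\nu$. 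The key observation is that, viewed as a cobordism from $H_2=Z_1\cap Z_2$ (at height $1.5$) to $\overline{H_3}=Z_2\cap Z_3$ (at height $2.5$) rel the product wall $\partial H_2\times I$ (recall $Y_2$ is a product lensed cobordism and $\Sigma$ runs vertically), $Z_2\cong\natural^{k_2}(S^1\times B^3)$ admits a handle decomposition using index-two handles only: it is connected with both ends nonempty, so no index-$0$ or index-$4$ handles are needed, and since each of $H_2$ and $\overline{H_3}$ is a compressionbody of a Heegaard splitting of $\partial Z_2\cong Y_{g,\bold p,\bold f}$ (Subsection~\ref{subsec:Heegaard}) and $\pi_1(\partial Z_2)\to\pi_1(Z_2)$ is an isomorphism, both $\pi_1(H_2)$ and $\pi_1(\overline{H_3})$ surject onto $\pi_1(Z_2)$, so no index-$1$ or index-$3$ handles are needed either; when $m_2=0$ and $n=1$ the cobordism is a genuine product realizing the monodromy of the open book. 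Splicing the three functions along the level sets at heights $1.5$ and $2.5$ and smoothing yields $\Phi_\T$, with clause~(4) — the half-open interval recording the indentation along $P_3$ — and clause~(6) for $j=2$ then immediate.

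The hard part will be making this middle step precise: showing that the trisection hypotheses really do permit a handle decomposition of $Z_2$ with \emph{all} handles of index two while keeping $Y_2$ and $\Sigma$ vertical, and then packaging it as a Morse function that splices smoothly to the outer two pieces. This is the relative, lensed analogue of the classical passage from a trisection of a closed four-manifold to a handle decomposition whose middle block of handles is entirely of index two, and it is where the way $H_1$, $H_2$, $H_3$ and the interior complexity $m_2$ interlock is actually used. The remaining work — keeping the three indentations straight so that the level sets and the vertical boundary come out exactly as in (2)--(5), and checking that the spliced function is smooth and self-indexing — is routine bookkeeping.
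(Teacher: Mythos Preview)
Your approach is correct and is precisely the standard Gay--Kirby dictionary the paper invokes; the paper's own proof is a two-sentence appeal to ``standard consequences of the cobordism structure'' together with a reference to~\cite{GayKir_16_Trisecting-4-manifolds} and the schematic Figure~\ref{fig:Morse}, whereas you actually spell out the three-piece assembly. Regarding the middle piece you flag as the hard part: the paper addresses it in the paragraph immediately following the lemma, not via your $\pi_1$--surjectivity handle-trading argument but by observing directly from the trisection structure that the attaching circles of the 2--handles lie along a 1--complex spine of $H_2$, with Dehn surgery producing $H_3$, so that $Z_2$ is by construction the trace of 2--handle attachments---this is cleaner than invoking handle cancellation and is the point you would want to import from Gay--Kirby.
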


Note that if $\Phi_\T(x)\geq 2.5$, then $x\in Z_3$.

\begin{proof}
	The existence of the Morse function and property (1) are standard consequences of the cobordism structure.  The other properties are easy and commonly discussed within the theory of trisections; see~\cite{GayKir_16_Trisecting-4-manifolds}, for example.  The set-up is made evident by the schematics of Figure~\ref{fig:Morse}.
\end{proof}

\begin{figure}[h!]
\begin{subfigure}{.5\textwidth}
  \centering
  \includegraphics[width=.6\linewidth]{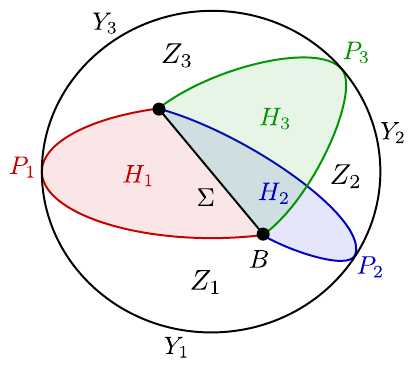}
  \caption{}
  \label{fig:Morse1}
\end{subfigure}%
\begin{subfigure}{.5\textwidth}
  \centering
  \includegraphics[width=\linewidth]{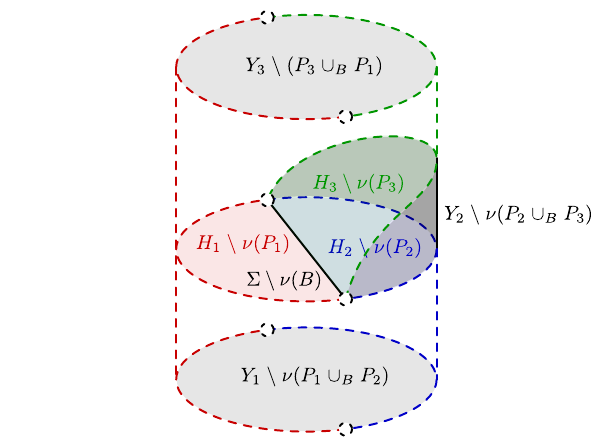}
  \caption{}
  \label{fig:Morse2}
\end{subfigure}
\caption{Passing from a trisection to a natural Morse function on $X\setminus\nu(P_1\cup P_2\cup P_3)$.}
\label{fig:Morse}
\end{figure}

Now, $Z_1$ is the result of attaching four-dimensional 1--handles to the lensed product $Y_1\times I$. The core $\Sigma$ can be assumed to satisfy $\Phi_\T(\Sigma\setminus\nu(B)) = 1.5$, and, together with $P_1$ and $P_2$, it gives a standard Heegaard double decomposition of $\partial Z_1$.  The attaching circles of the four-dimensional 2--handles are assumed to be contained in a (1--complex) spine of the compressionbody $H_2$, with the result of Dehn surgery thereupon being $H_3$.  The trace of this 2--handle attachment is $Z_2$, and $Z_3$ is the  (lensed cobordism) trace of attaching four-dimensional 3--handles to $H_3\cup_\Sigma \overline H_1$, the result of which is $Y_3$.  (Note that $Z_2$ is not quite a lensed cobordism from this perspective, since $Y_2$ is a vertical portion of its boundary $\partial Z_2 = H_2\cup Y_2\cup \overline H_3$.) 

For the remainder of the section, we let $\Phi = \Phi_\T$. Let $\Phi_i = \Phi\vert_{Z_i}$ for $i=1,2,3$.  Recall the standard Morse function on $Z_i\cong Z_{g,k_i,\bold p, \bold f}$ that was discussed in Subsection~\ref{subsec:DiskTangles}. By the above discussion, we have the following consequence of Lemma~\ref{lem:trisection_to_Morse}.

\begin{corollary}\label{coro:std_Morse}
	If $i=1$ or $i=3$, then $\Phi_i$ is a standard Morse function on $Z_i\cong Z_{g,k_i,\bold p, \bold f}$.
\end{corollary}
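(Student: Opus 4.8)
\textbf{Proof proposal for Corollary~\ref{coro:std_Morse}.} The plan is to unwind the definitions and observe that this corollary is essentially a direct consequence of Lemma~\ref{lem:trisection_to_Morse} together with the description of the standard Morse function on $Z_{g,k;\bold p,\bold f}$ given in Subsection~\ref{subsec:DiskTangles}. Recall that a standard Morse function on $Z_{g,k;\bold p,\bold f}$ is characterized by having $m+n-1$ critical points, all of index one, with $\Phi^{-1}$ of the bottom level equal to the spread $Y_1 = H_{\bold p,\bold f}$ (minus the appropriate collar) and $\Phi^{-1}$ of the top level equal to the standard Heegaard double $(\Sigma;H_1,H_2)\setminus\nu(\cdots)$. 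So the task is to check that $\Phi_1 = \Phi|_{Z_1}$ and $\Phi_3 = \Phi|_{Z_3}$ each satisfy these conditions.

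First I would handle the case $i=1$. By property (6) of Lemma~\ref{lem:trisection_to_Morse}, the critical points of $\Phi$ lying in $Z_1$ all have index one, and by property (1) there are no index zero points. Properties (2) and (3) say precisely that the bottom level set of $\Phi_1$ is $Y_1\setminus\nu(P_1\cup_B\overline P_2)$ (a spread, minus the required collar) and the top level set is $(H_1\cup_\Sigma\overline H_2)\setminus\nu(P_1\cup_B\overline P_2)$ (the standard Heegaard double structure on $\partial Z_1 = \partial_+Z_1$, minus the collar). This matches the defining data of a standard Morse function on $Z_{g,k_1;\bold p,\bold f}$ exactly, once one notes that the number of index-one critical points is forced to be $m_1 + n - 1$ by the fact that surgering $Y_1$ along them produces $H_1\cup_\Sigma\overline H_2$, which is an $(m_1,n)$--standard Heegaard splitting (cf. Remark~4 following Definition~\ref{def:Trisection}, giving $k_1 = 2p+f-n+m_1$).

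Next I would handle $i=3$. This requires a cyclic relabeling: the trisection axioms are symmetric under the $\Z_3$-action permuting the sectors, so $Z_3$ plays the role of "$Z_1$" for the cyclically shifted trisection in which the roles of the arms and spreads are rotated. Concretely, by property (6) the critical points of $\Phi$ in $Z_3$ are index three, but reading $\Phi_3$ upside-down (i.e., replacing $\Phi_3$ by $4 - \Phi_3$) turns these into index-one critical points, and property (4) together with property (5) identify the relevant level sets: $Y_3\setminus\nu(P_3\cup_B\overline P_1) = \Phi^{-1}(4)$ is a spread, and $(H_3\cup_\Sigma\overline H_1)$ sits at the bottom (after the flip, the top) of $\Phi_3$'s range. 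So $4 - \Phi_3$ is a standard Morse function on $Z_3\cong Z_{g,k_3;\bold p,\bold f}$ in the same way $\Phi_1$ is for $Z_1$.

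The main (and really only) obstacle here is bookkeeping rather than mathematics: one must be careful about the lensed-collar removals (the $\nu(P_i\cup_B\cdots)$ terms), about which level set is "$-1$" versus "$3$" in the conventions of Subsection~\ref{subsec:Compression} versus "$0$" versus "$4$" here, and about the reversal of Morse index when viewing $Z_3$ from the $Y_3$ side. None of these involve any genuine difficulty; they are reconciled by the schematic of Figure~\ref{fig:Morse} and by the observation that the pair $(\Sigma; P_i, P_{i+1})$ defines the standard Heegaard double structure on $\partial Z_i \cong Y_{g;\bold p,\bold f}$ for every $i\in\Z_3$, so there is nothing special about $i=1$ beyond the indexing conventions chosen for $\Phi$. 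I would therefore present this as a short paragraph deducing the statement from Lemma~\ref{lem:trisection_to_Morse}, Corollary following Definition~\ref{def:Trisection}, and the definition of the standard Morse function, noting the index-reversal for $i=3$.
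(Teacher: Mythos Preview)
Your proposal is correct and follows essentially the same approach as the paper: the paper treats this corollary as an immediate consequence of Lemma~\ref{lem:trisection_to_Morse} and the paragraph of discussion preceding the corollary, which describes $Z_1$ as obtained from $Y_1\times I$ by attaching 1--handles and $Z_3$ (upside-down) as obtained from $Y_3\times I$ by attaching 3--handles. Your write-up supplies exactly the bookkeeping (level-set identifications, index reversal for $i=3$) that the paper leaves implicit.
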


Presently, we will begin to isotope $\Ff$ to lie in bridge trisected position with respect to $\T$.

\begin{lemma}\label{lem:transversifying}
	After an isotopy of $\Ff$ that is supported near $\partial X$, we can assume that $\Ll=\wh\beta$.
\end{lemma}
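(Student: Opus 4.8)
The statement to be established is Lemma~\ref{lem:transversifying}: after an isotopy of $\Ff$ supported near $\partial X$, we may assume $\Ll = \wh\beta$. The plan is to reduce this to the Alexander--Markov theory of braidings in the open-book $(B,\pi)$ on $Y = \partial X$, exactly as in the analogous step in~\cite{MeiZup_18_Bridge-trisections-of-knotted}, but keeping track of the collar structure coming from the trisection. First I would use a collar neighborhood $\nu(Y) \cong Y \times [0,1]$ of $\partial X = Y$ in $X$, compatible with the trisection in the sense that each sector, arm, and the core meet the collar in the product of their boundary pieces with $[0,1]$; such a collar exists because $\T$ restricts to the open-book structure on $Y$ (Definition~\ref{def:Trisection}, part~(4), together with Remark~(3) following it). Inside this collar, $\Ff \cap \nu(Y) = \Ll \times [0,1]$ after an initial isotopy making $\Ff$ meet the collar as a product.

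The key point is that $\Ll = \partial\Ff$ and $\wh\beta$ are, by hypothesis, both braidings of the same link type with respect to $(B,\pi)$ — indeed, $\wh\beta$ is \emph{defined} to be a braiding of $\Ll$ — so they are ambient isotopic as links in $Y$. By the Alexander theorem~\cite{Ale_20_Note-on-Riemann-spaces} and Rudolph's generalization~\cite{Rud_83_Constructions-of-quasipositive-knots} (cited in Subsection~\ref{subsec:OBD}), any link in $Y$ can be braided about $(B,\pi)$; but here we need more, namely that our \emph{given} $\Ll$ can be isotoped to the \emph{specified} braiding $\wh\beta$. Since $\wh\beta$ was chosen to be a braiding of $\Ll$ (i.e., $\wh\beta$ and $\Ll$ are isotopic in $Y$ and $\wh\beta$ is transverse to the pages), there is an ambient isotopy $G_t \colon Y \to Y$, $t \in [0,1]$, with $G_0 = \id_Y$ and $G_1(\Ll) = \wh\beta$. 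This is the full analogue of the "combing-out" argument appearing in the proofs of Propositions~\ref{prop:band_pres} and~\ref{prop:to_BBB_realizing}.

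Next I would extend $G_t$ across the collar to an ambient isotopy $\widehat G_t$ of $X$ supported in $\nu(Y)$: define $\widehat G_t$ on $Y \times [0,s]$ to be $G_{\rho(s)\cdot t}$ followed by interpolation to the identity on $Y \times \{1\}$, where $\rho\colon[0,1]\to[0,1]$ is a bump function equal to $1$ near $0$ and $0$ near $1$. Because $G_t$ is supported in $Y$ and $\widehat G_t$ is the identity outside $\nu(Y)$, this isotopy moves $\Ff$ only inside $\nu(Y)$, so it does not disturb the portion of $\Ff$ in the interior of $X$; in particular, it takes $\partial\Ff$ to $\wh\beta$. After this isotopy, $\Ff$ meets the collar as $\wh\beta \times [0,1]$ and $\partial\Ff = \wh\beta$, as required.

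\textbf{Main obstacle.} The one point requiring genuine care, rather than routine collar manipulation, is the compatibility of the chosen collar with the trisection structure: we want the eventual "bridge trisected position" of $\Ff$ near $\partial X$ to match the standard model for trivial disk-tangles and trivial tangles along their negative boundaries, so the collar must be the one induced by the lensed cobordism/product structures on the $Z_i$, $H_i$, $Y_i$ (compare Remark~\ref{rmk:no_Morse} and the discussion of spreads in Subsection~\ref{subsec:DiskTangles}). Thus I expect the bookkeeping ensuring that $\widehat G_t$ respects the decomposition $\nu(Y) \cap Z_i = Y_i \times [0,1]$, $\nu(Y)\cap H_i = P_i \times [0,1]$, etc., to be the delicate part — but this follows because the open-book structure on $Y$ is literally the restriction of $\T$, so $G_t$, being an isotopy of $(Y,\Ll)$ through embeddings (which we may take to preserve the open-book structure up to isotopy, since both $\Ll$ and $\wh\beta$ are braided), extends compatibly. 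If $\Ll$ already equals $\wh\beta = \beta$, one takes $G_t$ constant, so the isotopy is trivial on $Y$; this gives the final sentence of the theorem statement.
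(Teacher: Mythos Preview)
Your proof is correct and follows essentially the same approach as the paper: both invoke the Alexander/Rudolph theorem and the fact that $\wh\beta$ is by hypothesis a braiding of $\Ll$ (hence isotopic to it in $Y$), then extend the resulting ambient isotopy of $Y$ across a collar of $\partial X$. Your treatment is in fact more explicit about the collar extension than the paper's, which simply asserts it.

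One remark: your ``main obstacle'' concerning compatibility of the collar with the trisection structure is unnecessary at this stage. Lemma~\ref{lem:transversifying} only demands that $\partial\Ff = \wh\beta$ after the isotopy; it does not require the isotopy to respect the pieces $Z_i$, $H_i$, $Y_i$. The subsequent lemmata (\ref{lem:Morse_Ff}--\ref{lem:Dd_2}) are what arrange $\Ff$ relative to the trisection, and they operate rel-$\partial$, so whatever happened in the collar is immaterial to them beyond the braided boundary condition. Thus you may safely use any collar and any ambient isotopy of $Y$ taking $\Ll$ to $\wh\beta$.
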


\begin{proof}
	By the Alexander Theorem~\cite{Ale_20_Note-on-Riemann-spaces} or the generalization due to Rudolph~\cite{Rud_83_Constructions-of-quasipositive-knots}, $\Ll$ can be braided with respect to the open-book decomposition $(B,\pi)$.  By the Markov Theorem~\cite{Mar_35_Uber-die-freie-Aquivalenz} or its generalization to closed 3--manifolds~\cite{Sko_92_Closed-braids-in-3-manifolds,Sun_93_The-Alexander-and-Markov-theorems}, any two braidings of $\Ll$ with respect to $(B,\pi)$ are isotopic.  Thus, by an isotopy of $\Ff$ that is supported near $Y$, we can assume that $\Ll$ is given by the braiding to $\wh\beta$. 
\end{proof}

Any modifications made to $\Ff$ henceforth will be isotopies that restrict to the identity on $Y$.  Let $\Phi_\Ff$ denote the restriction of $\Phi$ to $\Ff$. (Note that by choosing a small enough collar $\nu(Y)$ in $X$, we can assume that $\Ff\cap\nu(Y) = \Ll\times I$.  By a small isotopy of $\Ff$ rel-$\partial$, we can assume that $\Phi_\Ff$ is Morse.)

\begin{lemma}\label{lem:Morse_Ff}
	After an isotopy of $\Ff$ rel-$\partial$, we can assume that $\Phi_\Ff\colon \Ff\to \R$ is Morse and that
\begin{enumerate}
	\item the minima of $\Phi_\Ff$ occur in $Z_1$,
	\item the saddles of $\Phi_\Ff$ occur in $ \Phi^{-1}(1.5)$, and
	\item the maxima of $\Phi_\Ff$ occur in $Z_3$.
\end{enumerate}
\end{lemma}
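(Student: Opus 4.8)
The plan is to rearrange the critical points of $\Phi_\Ff$ using the standard Cerf-theoretic/handle-reordering techniques, adapted to the lensed cobordism setting, and using the fact that the index $j$ critical points of $\Phi$ lie in $\Int(Z_j)$ (Lemma~\ref{lem:trisection_to_Morse}(6)). First I would recall that, after a small perturbation rel-$\partial$, we may assume $\Phi_\Ff$ is Morse and, by further genericity, that $\Phi|_{Z_2}$ and $\Phi_\Ff$ are ``in general position'' so that no critical point of $\Phi_\Ff$ coincides in $\Phi$-value with a critical point of $\Phi$, and all critical points of $\Phi_\Ff$ have distinct $\Phi$-values. Since $\Phi_\Ff$ is a Morse function on a surface, its critical points have index $0$, $1$, or $2$ (minima, saddles, maxima). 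The goal is to push all minima down into $Z_1$, all maxima up into $Z_3$, and all saddles onto the central level $\Phi^{-1}(1.5) = (H_1\cup_\Sigma\overline H_2)\setminus\nu(P_1\cup_B\overline P_2)$.

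The key steps, in order: (1) \emph{Eliminate index-one critical points of $\Phi$ from the picture near saddles of $\Phi_\Ff$.} By a codimension count (the ascending/descending manifolds of an index-one critical point of $\Phi$ form a $1$-dimensional locus in a $4$-manifold, which generically misses the $2$-dimensional $\Ff$), we may isotope $\Ff$ off a neighborhood of the cores of the $1$-handles of $Z_2$; this is the same move used in the proof of Lemma~\ref{lem:one_min}. Similarly isotope $\Ff$ off the belt spheres of the $3$-handles of $Z_3$ (again a codimension argument). (2) \emph{Slide minima of $\Phi_\Ff$ down into $Z_1$.} For each minimum $x$ of $\Phi_\Ff$ with $\Phi(x) > 0$, pick a vertical arc $\omega$ (an arc along which $\Phi$ is strictly decreasing, and which avoids the critical locus of $\Phi$ and the rest of $\Ff$ — possible by a codimension count, as in Proposition~\ref{prop:band_pres}) connecting $x$ down to $Y_1 = \Phi^{-1}(0)$, and use a regular neighborhood of $\omega$ to drag $x$ down; repeat. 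After this, all minima of $\Phi_\Ff$ lie in $Z_1$. (3) \emph{Dually, slide maxima of $\Phi_\Ff$ up into $Z_3$} by the mirror-image argument using ascending arcs to $Y_3 = \Phi^{-1}(4)$. (4) \emph{Move saddles to the central level.} A saddle $x$ with $\Phi(x) \ne 1.5$ can be isotoped: if $\Phi(x) < 1.5$, use that $x$ lies in $Z_1$ (since $\Phi(x) < 1.5 < 2.5$ and there are no index-$\ge 2$ critical points of $\Phi$ below level $1.5$... actually one must be careful here), then drag it along an ascending arc to level $1.5$; symmetrically if $\Phi(x) > 1.5$ drag it down along a descending arc. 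In each case the arc is chosen to avoid $\Phi$'s critical locus (codimension count) and the rest of $\Ff$. Finally (5) \emph{flatten} each critical point into its target level set, exactly as in the proof of Proposition~\ref{prop:band_pres}: replace a small disk neighborhood of each critical point with a model flat cap/cup/saddle lying in the appropriate level.

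The main obstacle I expect is Step~(4): ensuring that a saddle of $\Phi_\Ff$ sitting at a $\Phi$-value in $(1.5, 2.5)$ — i.e.\ inside $Z_3$ but below the $3$-handles — can be dragged \emph{down} to level $1.5$ without passing through an index-two critical point of $\Phi$, and symmetrically that a saddle below level $1.5$ can be dragged up. The cleanest way around this is to first note that by Lemma~\ref{lem:trisection_to_Morse} the only critical points of $\Phi$ with values in $[1.5, 2.5)$ are index-two points lying in $\Int(Z_2)$, and the ascending/descending manifolds of such a point are respectively $2$- and $3$-dimensional; the $2$-dimensional ascending manifold generically meets $\Ff$ in isolated points, which can be avoided when choosing the sliding arc only if we first also arrange the sliding arc to avoid these points — a codimension-$1$-in-$\Ff$ condition that is generic for a $1$-dimensional arc on $\Ff$. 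So the arc can be taken within $\Ff$ itself, connecting the saddle to a point already at level $1.5$ (or to the collar region), avoiding both the other critical points of $\Phi_\Ff$ and the finitely many bad points; the trace of the isotopy dragging the saddle along this arc then keeps all other critical data fixed. One must double-check that this arc-dragging does not re-introduce critical points of $\Phi_\Ff$ at unwanted levels — it doesn't, since dragging a critical point along a monotone-in-$\Phi$ arc through a region free of $\Phi$-critical points is realized by an ambient isotopy supported in a thin neighborhood of the arc, which can be taken to move $\Ff$ only near the arc. Once Steps~(1)--(5) are complete, Lemma~\ref{lem:Morse_Ff} follows; the subsequent lemmata of Section~\ref{sec:gen_proof} will then identify the pieces $\Ff\cap Z_i$ and $\Ff\cap H_i$ as trivial disk-tangles and trivial tangles, using Lemma~\ref{lem:one_min} and the characterizations established in Section~\ref{sec:general}.
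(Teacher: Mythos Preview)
Your outline follows essentially the same strategy as the paper --- rearrange critical points by analyzing ascending/descending membranes and doing dimension counts in level sets --- and Steps~(2) and~(3) match the paper's argument exactly. Step~(1) is extraneous for this lemma (it is not needed to rearrange critical points), and Step~(5) is a routine normalization.

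The place where you diverge from the paper is Step~(4), and there you have manufactured an obstacle that is not really there. You worry about dragging a saddle of $\Phi_\Ff$ past an index-two critical point of $\Phi$, and you propose a non-standard workaround using an arc \emph{in $\Ff$}. But the same membrane dimension count that handles index-one and index-three critical points also handles index-two. The ascending (or descending) membrane of a saddle is two-dimensional in $X^4$, hence one-dimensional in each three-dimensional level set; the descending (or ascending) sphere of an index-two critical point of $\Phi$ is a circle $S^1$, also one-dimensional in the level set. Since $1+1<3$, these are generically disjoint, so the saddle can be flowed past the index-two critical point by the standard argument. The paper only names the index-one case (for flowing up) and the index-three case (for flowing down) because those are the cases one meets first from each side; the index-two case is subsumed by the same count. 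Your alternative --- dragging the saddle along an arc inside $\Ff$ --- is not clearly an ambient isotopy that moves only the saddle's critical value, and the claim that ``all other critical data [is] fixed'' would itself need justification. Drop that detour and simply extend the membrane dimension count to $j=2$; then your proof coincides with the paper's.

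One small correction: a point with $\Phi$-value in $(1.5,2.5)$ is not ``inside $Z_3$'' as you wrote; this interval meets $\Int(Z_2)$, which is where the index-two critical points of $\Phi$ live.
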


\begin{proof}
	That the critical points can be rearranged as desired follows from an analysis of their various ascending and descending manifolds.  A detailed analysis of this facet of (embedded) Morse theory can be found in~\cite{Bor}.  Here, we simply make note of the key points.
	
	The ascending (unstable) membrane of a maximum of $\Phi_\Ff$ is one-dimensional; think of a vertical arc emanating from the maximum and terminating in $Y_3$. (Vertical means the intersection with each level set is either a point or empty.)  Generically, such an arc will be disjoint from $\Ff$ and will be disjoint from the descending spheres of the critical points of $\Phi$ (which have index one, two, or three) in each level set.
	Thus, the gradient flow of $\Phi$ can be used to push the maxima up (and the minima down), and we obtain that the minima lie below $\Phi^{-1}(1.5)$ (i.e., in $Z_1$) and that the maxima lie above $\Phi^{-1}(2.5)$ (i.e., in $Z_3$).  Having arranged the extrema in this way, we move on to consider the saddles.
	
	The ascending membranes of the saddles of $\Phi_\Ff$ are two-dimensional, while the the descending spheres of the index one critical points of $\Phi$ are zero-dimensional.  Thus, we can flow the saddles up past the index one critical points of $\Phi$, until they lie in $\Phi^{-1}(1.5)$.  Symmetrically, we can flow saddles down past the index three critical points of $\Phi$ to the same result.
\end{proof}

Let $\Dd_i = \Ff\cap Z_i$ for $i=1,2,3$.  Assume that $\wh\beta$ is a braiding of $\Ll$ of multi-index $\bold v$.

\begin{lemma}\label{lem:Dd_1/3}
	If $\Phi_\Ff$ has $c_1$ minima and $c_3$ maxima, then $\Dd_1$ is a $(c_1,\bold v)$--disk-tangle, and $\Dd_3$ is a $(c_3,\bold v)$--disk-tangle.
\end{lemma}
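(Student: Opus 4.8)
\textbf{Proof strategy for Lemma~\ref{lem:Dd_1/3}.}

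The plan is to apply the characterization of trivial disk-tangles established in Lemma~\ref{lem:one_min} to each of $\Dd_1$ and $\Dd_3$ separately; the argument for $\Dd_3$ is the mirror of the one for $\Dd_1$, so I would write out the case of $\Dd_1$ and then remark that $\Dd_3$ follows by the obvious symmetry (replacing $\Phi$ by $4-\Phi$, which interchanges the roles of $Z_1$ and $Z_3$, of minima and maxima, and of index-one and index-three critical points of $\Phi$, all of which were arranged symmetrically in Lemmas~\ref{lem:trisection_to_Morse} and~\ref{lem:Morse_Ff}). By Corollary~\ref{coro:std_Morse}, $\Phi_1 = \Phi|_{Z_1}$ is a standard Morse function on $Z_1 \cong Z_{g,k_1;\bold p,\bold f}$, which is precisely the hypothesis needed to invoke Lemma~\ref{lem:one_min}.

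First I would verify the hypotheses of Lemma~\ref{lem:one_min} for $\Dd = \Dd_1$. By Lemma~\ref{lem:Morse_Ff}, the only critical points of $\Phi_\Ff$ lying in $Z_1$ are the minima of $\Phi_\Ff$ (the saddles lie in $\Phi^{-1}(1.5) \subset \partial_+ Z_1$ and the maxima lie in $Z_3$), so the restriction $\Phi_{\Dd_1}$ of $\Phi_1$ to $\Dd_1$ has exactly $c_1$ critical points, each of index zero. Next I must check that $\partial\Dd_1 \cap Y_1$ is a $\bold v$--thread, i.e., that it is a vertical $\bold v$--tangle in the spread $Y_1 \cong H_{\bold p,\bold f}$. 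This is exactly where the assumption $\Ll = \wh\beta$ from Lemma~\ref{lem:transversifying} is used: since $\wh\beta$ is a braiding of $\Ll$ of multi-index $\bold v$ with respect to the open-book decomposition $(B,\pi)$, and $Y_1 = Z_1 \cap Y$ is the union of the spreads $Y_1^j$ whose product structure realizes the open-book fibration near the pages $P_1, P_2$, the intersection $\Ll \cap Y_1$ is transverse to each disk fiber and meets each component $Y_1^j$ in $v_j$ arcs with no closed components and no local extrema — precisely a $\bold v$--thread. (One should note here that $\Ff$ contains no closed components in $Z_1$, since any such would contribute a maximum lying in $Z_1$, contradicting Lemma~\ref{lem:Morse_Ff}; and $\partial\Dd_1$ meets $\Sigma$ and the pages only in the bridge points prescribed by the braiding, so $\partial\Dd_1$ itself is $\Tt_1 \cup_{\bold x} \overline{\Tt_2} \cup \beta_1$, whose intersection with $Y_1$ is the thread $\beta_1$.) With these hypotheses in hand, Lemma~\ref{lem:one_min} gives directly that $\Dd_1$ is a $(c_1,\bold v)$--disk-tangle for the ordered partition $\bold v$ of $v = |\Dd_1| - c_1$ determined by the braiding.

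I expect the main obstacle to be bookkeeping rather than substance: one has to be careful that the $\bold v$ produced abstractly by Lemma~\ref{lem:one_min} — the partition recording which component of $\partial_- Z_1$ contains the top endpoint of each vertical disk — really coincides with the multi-index of the fixed braiding $\wh\beta$, and similarly that the $\bold v$ coming out of the $\Dd_3$ argument matches. This is true because in both cases the relevant partition is read off from the same data, namely how the threads $\beta_i$ sit in the spreads $Y_i^j$, which is determined by $\wh\beta$; but making this identification precise requires tracking the orderings of boundary components set up in Remarks~\ref{rmk:ordering1} and~\ref{rmk:ordering2}. A secondary point to handle carefully is the collar region $\nu(Y)$: since $\Phi_\T$ is only defined on $X \setminus \nu(P_1 \cup_B P_2 \cup_B P_3)$, one works with the indented model (cf. Remark~\ref{rmk:no_Morse}) so that $\Dd_1$ is genuinely a neatly embedded disk-tangle in the compressionbody-with-corners $Z_1$, and the Morse-theoretic count above is unaffected.
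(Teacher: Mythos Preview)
Your proposal is correct and follows essentially the same approach as the paper: invoke Corollary~\ref{coro:std_Morse} to see that $\Phi_1$ is standard, verify that $\Ff\cap Y_1 = \wh\beta\cap Y_1$ is a $\bold v$--thread, apply Lemma~\ref{lem:one_min}, and then argue that $\Dd_3$ follows by turning the picture upside down. Your additional discussion of the bookkeeping (matching the abstract partition from Lemma~\ref{lem:one_min} with the given multi-index of $\wh\beta$, and the collar issue from Remark~\ref{rmk:no_Morse}) is more careful than the paper's proof, which takes these points for granted.
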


\begin{proof}
	By Corollary~\ref{coro:std_Morse}, $\Phi_1$ is a standard Morse function on $Z_i$.  By Lemma~\ref{lem:one_min}, since $(\Phi_1)\vert_{\Dd_1}$ has $c_1$ minimal and no other critical points, and since $\Ff\cap Y_1 = \wh\beta\cap Y_1$ is a $\bold v$--thread, this implies that $\Dd_1$ is a $(c,\bold v)$--disk-tangle.  The corresponding result holds for $\Dd_3$, after turning $\Phi_3$ and $(Z_3,\Dd_3)$ upside down.
\end{proof}

Next, we see that the trisection $\T$ can be isotoped to ensure the intersections $\Tt_i = \Ff\cap H_i$ are trivial tangles for $i=1,2,3$.

\begin{lemma}\label{lem:Tt_i}
	After an isotopy of $\T$, we can assume that each $\Tt_i$ is a $(b,\bold v)$--tangle, for some $b\geq 0$.
\end{lemma}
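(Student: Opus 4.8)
The plan is to show that each seam $\Tt_i = \Ff\cap H_i$ lies in $\Phi^{-1}(1.5)$ as a one-manifold, and then to arrange, by an isotopy of the trisection $\T$, that this one-manifold is trivial in each arm $H_i$. First I would observe that, by Lemma~\ref{lem:Morse_Ff}, all of the saddles of $\Phi_\Ff$ have been pushed into $\Phi^{-1}(1.5) = (H_1\cup_\Sigma\overline H_2)\setminus\nu(P_1\cup_B\overline P_2)$, and all of the extrema lie strictly inside $Z_1$ or $Z_3$. Since the arms $H_1$ and $H_2$ sit in $\Phi^{-1}(1.5)$, the intersection $\Tt_1\cup_{(\Sigma,\bold x)}\overline\Tt_2 = \Ff\cap(H_1\cup_\Sigma\overline H_2)$ is precisely the level set $\Ff\cap\Phi^{-1}(1.5)$, which is a properly embedded one-manifold (a ``slice'' of $\Ff$ at the critical level of its saddles). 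The strand types are then dictated by the Morse data: a strand passing through a saddle point is flat (it has a single maximum of $\Phi_\Ff|_{\text{strand}}$ relative to the local product structure on the arm), while a strand with no critical point of $\Phi_\Ff$ is vertical. After a further small perturbation pushing the saddles slightly to one side of $\Sigma$ (compare the proof of Proposition~\ref{prop:band_to_bridge}, and Lemma~3.1 of~\cite{MeiZup_17_Bridge-trisections-of-knotted}), we can assume the flat strands are disjoint from $\Sigma$, so that each $\Tt_i = \Ff\cap H_i$ is a one-manifold with endpoints on $\Sigma = \partial_+H_i$ and on $\partial_-H_i$ only.

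Next I would handle $H_3$: since $\Phi(H_3\setminus\nu(P_3))\subset[1.5,2.5)$ and the maxima of $\Phi_\Ff$ lie in $Z_3$ above level $2.5$, the saddles of $\Phi_\Ff$ have been pushed below $H_3$; combined with the fact that $Z_3$ is a standard Morse cobordism (Corollary~\ref{coro:std_Morse}) turned upside down, $\Ff\cap H_3$ also consists of arcs with at most one critical point each. The point is that the three seams $\Tt_1,\Tt_2,\Tt_3$ all share the same boundary data $(\Sigma,\bold x)$, where $\bold x = \Ff\cap\Sigma$ consists of $2b$ bridge points (endpoints of flat strands, counted with multiplicity two) together with the $v$ points of $\wh\beta\cap\Sigma$ coming from the vertical threads; here $b$ is the number of saddles of $\Phi_\Ff$. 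The multi-index $\bold v$ is inherited from $\wh\beta$ exactly as in Lemma~\ref{lem:Dd_1/3}: the vertical strands of each $\Tt_i$ are cobordisms between the threads $\beta_{i-1}$ and $\beta_i$, so their distribution among the components $\Sigma_{p_j,b_j}$ of $\partial_-H_i$ matches the partition $\bold v$ of $v$ determined by the braiding.

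The remaining content is to make each $\Tt_i$ genuinely \emph{trivial} — i.e., isotopic rel-$\partial$ to a union of vertical and flat strands — rather than merely a one-manifold of the right combinatorial type. This is where an isotopy of $\T$ (equivalently, of $\Ff$) is needed, and it is the analogue for arms of the argument that a trisection diagram's cut systems can be standardized. The plan is to use the spine structure: the arm $H_i$ is a compressionbody with $\partial_+H_i = \Sigma$, and we may choose a defining set of curves $\alpha_i$ for $H_i$ disjoint from $\Tt_i$ after an isotopy (a codimension argument, since $\alpha_i$ is a collection of curves on $\Sigma$ and $\Tt_i$ meets $\Sigma$ in finitely many points, and we can surger away intersections of the cut disks with $\Tt_i$ using the boundary-parallelism of the disk-tangles $\Dd_{i-1}$ or $\Dd_i$ via Proposition~\ref{prop:triv_disks}, just as in the proof of Lemma~\ref{lem:reduce}). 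Once the cut disks for $H_i$ are disjoint from $\Tt_i$, compressing along them exhibits $H_i$ as a product $\partial_-H_i\times I$ (or a three-ball) inside which $\Tt_i$ — having at most one critical point per strand relative to the induced product Morse function — is automatically a trivial tangle, with flat and vertical strands as counted above. The main obstacle is precisely this disjointness step: arranging, by an isotopy of $\Ff$ fixing $Y$ and not disturbing the already-established structure of $\Dd_1$ and $\Dd_3$, that the saddle-level slice $\Tt_i$ can be pushed off a cut system for $H_i$. I expect this to follow from a careful innermost-disk/innermost-arc analysis together with the characterization of trivial disk-tangles in Lemma~\ref{lem:one_min}, which guarantees that the disks of $\Dd_{i\pm1}$ adjacent to $H_i$ are boundary-parallel and hence can be used to guide $\Tt_i$ across the cut disks without creating new critical points. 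With this done, each $\Tt_i$ is a $(b,\bold v)$--tangle for the same $b$ (the number of saddles) and the same $\bold v$, completing the lemma.
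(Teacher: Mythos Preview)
Your proposal rests on a misreading of the geometry of $\Phi$. The entire three-manifold $M = H_1\cup_\Sigma\overline H_2$ sits at the single level $\Phi^{-1}(1.5)$; the core surface $\Sigma$ is not distinguished by $\Phi$ at all. Consequently, the intersection $\Ff\cap\Phi^{-1}(1.5)$ is not a one-manifold sliced by $\Sigma$ into flat and vertical strands; it is a two-complex $L\cup\frak b$ consisting of a neatly embedded one-manifold $L$ in $M$ together with the bands $\frak b$ coming from the (flattened) saddles. The notion of ``flat'' versus ``vertical'' strand in $H_i$ refers to a standard Morse function on the compressionbody $H_i$ (with $\Sigma = \partial_+H_i$ at the bottom), which is entirely separate from $\Phi$. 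So the sentence ``a strand passing through a saddle point is flat'' conflates two unrelated Morse functions, and your count $b = \text{(number of saddles)}$ is wrong: in the end one has $|\frak b| = b - c_2$, with $b$ determined only after $\Sigma$ has been perturbed into bridge position.

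What the paper actually does is work inside the three-manifold $M$ with its Heegaard surface $\Sigma$: first perturb $\Sigma$ so that $L$ is in bridge position (making $\Tt_1$ and $\Tt_2$ trivial tangles), then further perturb so that the bands $\frak b$ are level with cores on $\Sigma$ and each band is dualized by a bridge semi-disk for $\Tt_2$. Pushing $\frak b$ into $H_2$, one obtains $\Tt_3 = (\Tt_2)_\frak b$, and the dualizing disks guarantee this resolution is again trivial (Lemma~3.1 of \cite{MeiZup_17_Bridge-trisections-of-knotted}). Your alternative route---making $\Tt_i$ disjoint from a cut system for $H_i$ via innermost-disk arguments and boundary-parallelism of the $\Dd_{i\pm1}$---does not obviously control the tangle inside the resulting product region (disjointness from cut disks alone does not force each strand to have at most one critical point), and in any case it never addresses the bands, without which $\Tt_3$ cannot even be defined as a tangle distinct from $\Tt_2$.
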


\begin{proof}
	The level set $\Phi^{-1}(1.5)$ is simply $M = (H_1\cup_\Sigma \overline H_2)\setminus\nu(\overline P_1\cup_B P_2)$.
	The intersection $\Ff\cap\Phi^{-1}(1.5)$ is a 2--complex $L\cup\frak b$, where $L$ is a neatly embedded one-manifold $L$, and $\frak b$ is a collection of bands.  Here, we are employing the standard trick of flattening $\Ff$ near each of the saddle points of $\Phi_\Ff$. (See Subsection~\ref{subsec:band_pres} for a precise definition of a band.)
	
	We have a Heegaard splitting $(\Sigma;H_1,H_2)$ that induces a Morse function $\Psi\colon\Phi^{-1}(1.5)\to\R$.  In what follows, we will perturb this splitting (i.e., homotope this Morse function) to improve the arrangement of the 2--complex $L\cup\frak b$.  First, we perturb $\Sigma$ so that it becomes a bridge surface for $L$. At this point, we have arranged that $\Tt_1$ and $\Tt_2$ are $(b',\bold v)$--tangles, for some value $b'$ that will likely be increased by what follows.
	
	Next, we can perturb $\Sigma$ until the bands $\frak b$ can be isotoped along the gradient flow of $\Psi$ so that their cores lie in $\Sigma$. We can further perturb $\Sigma$ until $\frak b\cap\Sigma$ consists solely of the cores of $\frak b$, which are embedded in $\Sigma$; said differently, the bands of $\frak b$ are determined by their cores in $\Sigma$, together with the surface-framing give by the normal direction to $\Sigma$ in $\Psi^{-1}(1.5)$.  Finally, we can further perturb $\Sigma$ until each band is dualized by a bridge semi-disk for $\Tt_2$.  The details behind this approach were given in the proof of Theorem~1.3 (using Figures~10-12) of~\cite{MeiZup_17_Bridge-trisections-of-knotted} and discussed in~\cite{MeiZup_18_Bridge-trisections-of-knotted}.
	
	Finally, we isotope $\Sigma$ so that $\frak b$ is contained in $H_2$; in other words, we push the bands slightly into $H_2$ so as to be disjoint from $\Sigma$. Since each band of $\frak b$ is dualized by a bridge semi-disk for $\Tt_2$, the result $\Tt_3 = (\Tt_2)_\frak b$ of resolving $\Tt_2$ using the bands of $\frak b$ is a new trivial tangle.  The proof of this claim is explained in detail in Lemma~3.1 and Figure~8 of~\cite{MeiZup_17_Bridge-trisections-of-knotted}.  (Though it is not necessary, we can even perturb $\Sigma$ so that $\frak b$ is dualized by a bridge disk at \emph{both} of its endpoints, as in the aforementioned Figure~8.)
	
	Note that all of the perturbations of $\Sigma$ were supported away from $\nu(P_1\cup_B P_2)$, so each of the $\Tt_i$ contained precisely $\bold v$ vertical strands throughout.  In the end, each is a $(b,\bold v)$--tangle for some $b\geq0$.
\end{proof}

Finally, we verify that $\Dd_2$ is a trivial disk-tangle in $Z_2$.

\begin{lemma}\label{lem:Dd_2}
	If $c_2 = b - |\frak b|$, then $\Dd_2$ is a $(c_2,\bold v)$--disk-tangle.
\end{lemma}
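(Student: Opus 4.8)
The plan is to show that $(Z_2,\Dd_2)$ is a trivial disk-tangle by identifying it as the trace of a band attachment and invoking the characterization of trivial disk-tangles already available. Recall from the proof of Lemma~\ref{lem:Tt_i} that $\Tt_3 = (\Tt_2)_\frak b$, where the bands $\frak b$ have been pushed slightly into $H_2$ and each is dualized by a bridge semi-disk for $\Tt_2$. The key observation is that $\Dd_2 = \Ff\cap Z_2$ is, up to isotopy, the union of a product region (coming from the portion of $\Ff$ lying over the interval $[1.5,2.5)$ away from any maxima) together with the trace of the resolution of $\Tt_2$ along $\frak b$, together with small collars. Equivalently, $(Z_2,\Dd_2)$ is the lensed cobordism realizing the passage from $(H_2,\Tt_2)$ to $(H_3,\Tt_3)$ via the 2--handle attachments building $Z_2$ and the band moves $\frak b$.

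First I would set up the parametrization carefully: using $\Phi = \Phi_\T$ and the fact (Lemma~\ref{lem:Morse_Ff}) that all saddles of $\Phi_\Ff$ occur in $\Phi^{-1}(1.5)$ and all maxima occur in $Z_3$, I would observe that $\Phi_\Ff$ restricted to $\Dd_2$ has no critical points at all — the minima are in $Z_1$, the saddles are on the arm $H_1\cup_\Sigma\overline H_2$ which bounds $Z_2$ from below, and the maxima are in $Z_3$. Thus $(\Phi_2)|_{\Dd_2}$ has $c_2 = 0$ interior critical points in the naive Morse-theoretic sense; but this does not immediately apply Lemma~\ref{lem:one_min}, because $\Phi_2$ is not a standard Morse function on $Z_2$ (unlike $\Phi_1$ and $\Phi_3$, cf. Corollary~\ref{coro:std_Morse}), and because the band attachments $\frak b$ interact with the level set structure. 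The right move is instead to appeal directly to the local model: each band of $\frak b$ is dualized by a bridge semi-disk for $\Tt_2$, so by Lemma~3.1 of~\cite{MeiZup_17_Bridge-trisections-of-knotted} (as invoked already in Lemma~\ref{lem:Tt_i} and in the proof of Proposition~\ref{prop:band_to_bridge}), the trace of the band resolution is a trivial disk-tangle on $b - |\frak b| = c_2$ flat patches, while the vertical strands of $\Tt_2$ (which are untouched by $\frak b$, the perturbations having been supported away from $\nu(P_1\cup_B P_2)$) sweep out $\bold v$ vertical patches.

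The cleanest way to organize the argument is to note that $(Z_2,\Dd_2)$, with its boundary bridge-braid structure coming from $\partial Z_2 = H_2\cup Y_2\cup\overline H_3$, has $\partial_-(Z_2,\Dd_2) = \partial_-(H_2,\Tt_2)\sqcup\partial_-(H_3,\Tt_3)$ portions together with the spread $(Y_2,\beta_2)$, and that $\partial\Dd_2 = \Tt_2\cup\beta_2\cup\overline\Tt_3$ is an unlink in standard bridge position (since the $\Tt_i$ are $(b,\bold v)$--tangles by Lemma~\ref{lem:Tt_i} and since resolving a standard bridge splitting along bridge-disk-dualized bands yields a standard bridge splitting of an unlink, exactly as in Proposition~\ref{prop:bridge_to_band}, condition (6)). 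Then by Lemma~\ref{lem:LP} — the unique extension of a standard bridge splitting to a trivial disk-tangle — the pair $(Z_2,\Dd_2)$, which has the correct boundary bridge-braid structure, must be the unique $(c_2,\bold v)$--disk-tangle with that boundary, where the count $c_2 = b - |\frak b|$ is forced by the Euler characteristic bookkeeping (each band used in $\frak b$ caps off one would-be flat patch, reducing the flat count from $b$ to $b-|\frak b|$, while $\bold v$ vertical patches persist). Applying Lemma~\ref{lem:LP} requires knowing that $Z_2$ is built from $(H_1\cup_\Sigma\overline H_2)\setminus\nu(\cdots)$ by 4--dimensional handle attachments in the right way, which is precisely the content of the discussion following Corollary~\ref{coro:std_Morse}; once $\Dd_2$ is seen to be a neatly embedded disk collection with standardly-positioned unlink boundary and no interior critical points of $\Phi_2$ obstructing boundary-parallelism, the uniqueness statement in Lemma~\ref{lem:LP} finishes the job.

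The main obstacle I anticipate is making precise the claim that $\Dd_2$ is boundary-parallel — i.e.\ genuinely a \emph{trivial} disk-tangle rather than merely a disk-tangle with unlink boundary. The subtlety is that $\Phi_2$ is not standard, so one cannot invoke Lemma~\ref{lem:one_min} verbatim; instead one must either (a) show that $Z_2$ can be re-coordinatized so that $\Phi_2$ becomes standard and the band resolutions are pushed into the level set $\Phi^{-1}(2)$, after which Lemma~\ref{lem:one_min} applies directly with $c = c_2$, or (b) argue via Lemma~\ref{lem:LP} as sketched above, which sidesteps the Morse-theoretic difficulty entirely but requires one to first verify that $\partial\Dd_2$ is in \emph{standard} bridge position — and this in turn leans on the fact that each band of $\frak b$ is dualized by a bridge semi-disk, which is exactly the property established in Lemma~\ref{lem:Tt_i}. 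I would carry out route (b), since the requisite facts about standardness are already in hand from the preceding lemmata, and the handle-theoretic description of $Z_2$ as the trace of 2--handle attachments is standard trisection theory; the band-count identity $c_2 = b - |\frak b|$ then drops out of the realizing-surface bookkeeping exactly as in Proposition~\ref{prop:bridge_to_band}.
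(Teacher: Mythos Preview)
Your core approach---invoking Lemma~3.1 of~\cite{MeiZup_17_Bridge-trisections-of-knotted} to conclude that the trace of band resolutions dualized by bridge semi-disks is a trivial disk-tangle, with the vertical strands sweeping out vertical patches---is exactly what the paper does, and is correct. The paper's proof is in fact nothing more than a one-line pointer to that lemma, together with the remark that one may assume each band is dualized at \emph{both} ends.

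However, your ``route (b)'' via Lemma~\ref{lem:LP} contains a genuine gap and should be dropped. Lemma~\ref{lem:LP} asserts the \emph{uniqueness} of the trivial disk-tangle capping off a given standard bridge splitting; it does not assert that \emph{every} collection of neatly embedded disks in $Z_2$ with standardly-positioned unlink boundary is boundary-parallel. Knowing that $\partial\Dd_2$ is in standard bridge position only tells you what the trivial disk-tangle with that boundary \emph{would} look like---it does not by itself force $\Dd_2$ to be that trivial disk-tangle. (Indeed, if it did, Proposition~\ref{prop:triv_disks} and Lemma~\ref{lem:one_min} would be superfluous.) The triviality of $\Dd_2$ must come from somewhere, and that somewhere is precisely the dualizing-disk hypothesis feeding into Lemma~3.1 of~\cite{MeiZup_17_Bridge-trisections-of-knotted}, which is your route (a). So route (b) is not an alternative organization of the same argument; it is circular unless supplemented by route (a), at which point it is redundant.
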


\begin{proof}
	As in the preceding lemma, this follows exactly along the lines of Lemma~3.1 of~\cite{MeiZup_17_Bridge-trisections-of-knotted}, with only slight modification to account for the vertical strands.  This is particularly easy to see if one assumes that $\frak b$ meets dualizing disks at each of its endpoints, as in the aforementioned Figure~8.
\end{proof}

Thus, we arrive at a proof of Theorem~\ref{thm:general}.

\begin{proof}[Proof of Theorem~\ref{thm:general}]
	After performing the isotopies of $\Ff$ and $\T$ outlined in the lemmata above, we have arranged that, for $i=1,2,3$,  the intersection $\Dd_i = \Ff\cap Z_i$ is a $(c_i,\bold v)$--disk-tangle in $Z_i$ (Lemmata~\ref{lem:Dd_1/3} and,~\ref{lem:Dd_2}) and the intersection $\Tt_i = \Ff\cap H_i$ is a $(b,\bold v)$--tangle (Lemma~\ref{lem:Tt_i}).  Thus, $\Ff$ is in $(b,\bold c;\bold v)$--bridge trisected position with respect to $\T$, where $\bold c = (c_1, c_2, c_3)$, and the ordered partition $\bold v$ comes from the multi-index $\bold v$ of the braiding $\wh\beta$ of $\Ll = \partial \Ff$.
\end{proof}

\section{Stabilization operations}
\label{sec:stabilize}

In this section we describe various stabilization and perturbation operations that can be used to relate two bridge trisections of a fixed four-manifold pair.  We encourage the reader to refer back to the discussion of connected sums and boundary connected sums of bridge trisections presented in Section~\ref{sec:class}.

\subsection{Stabilization of four-manifold trisections}
\label{subsec:stabilization}
\ 

First, we'll recall the original stabilization operation of Gay and Kirby~\cite{GayKir_16_Trisecting-4-manifolds}, as developed in~\cite{MeiSchZup_16_Classification-of-trisections-and-the-Generalized}.

\begin{definition}[\textbf{\emph{core stabilization}}]
	Let $\T$ be a $(g,\bold k;\bold p, \bold f)$--trisection for a four-manifold $X$, and let $\omega$ be an arc in $\Int(\Sigma)$.  Fix an $i\in\Z_3$.  Perturb the interior of $\omega$ into $H_{i+1} = Z_i\cap Z_{i+1}$, and let $\Sigma'$ denote the surface obtained by surgering $\Sigma$ along $\omega$.  Then, $\Sigma'$ is the core of a $(g+1,\bold k';\bold p, \bold f)$--trisection $\T'$ for $X$, where $\bold k' = \bold k$, except that $k'_i = k_i+1$, which is called the \emph{core $i$--stabilization} of $\T$.
\end{definition}

The importance of this operation rests in the following result of Gay and Kirby.

\begin{theorem}[\cite{GayKir_16_Trisecting-4-manifolds}]
\label{thm:GK_unique}
	Suppose that $\T$ and $\T'$ are two trisections of a fixed four-manifold $X$, and assume that either $\partial X = \emptyset$ or $\T$ and $\T'$ induce isotopic open-book decomposition on each connected component of $\partial X$.  Then, $\T$ and $\T'$ become isotopic after they are each core stabilized some number of times.
\end{theorem}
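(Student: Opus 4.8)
The statement to be proved is Theorem~\ref{thm:GK_unique}, which is attributed to Gay and Kirby and stated without proof in this excerpt. Since the excerpt explicitly credits this to \cite{GayKir_16_Trisecting-4-manifolds}, the ``proof'' here should really be a sketch of the Cerf-theoretic argument, adapted to the setting where $\partial X$ may be nonempty and disconnected. I would organize it as follows.

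The plan is to reduce the uniqueness statement to a statement about Morse 2--functions (equivalently, generic paths of functions). First I would recall that a trisection $\T$ of $X$ gives rise to a Morse 2--function $G_\T\colon X\to D^2$ whose regular fibers over the three sectors are handlebodies (or compressionbodies, in the bounded case) and whose fold locus projects to a configuration dividing $D^2$ into three regions; conversely such a Morse 2--function with connected fibers and suitably ordered indices yields a trisection. The key input from Gay--Kirby is that any two such Morse 2--functions inducing the same data on $\partial X$ (here: the same open-book on each boundary component, up to isotopy) are connected by a generic homotopy, i.e.\ a path in the space of maps $X\to D^2$ that passes through finitely many standard singularities (births/deaths of folds, fold crossings, swallowtails, and ``eye'' moves). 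One then checks that each such elementary move along the homotopy either leaves the associated trisection unchanged up to isotopy or realizes a single core stabilization (or its inverse). Because a core stabilization increases $g$ and one of the $k_i$, and one can always postpone destabilizations, after performing enough core stabilizations on each of $\T$ and $\T'$ one reaches a common trisection.

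The main steps, in order: (1) Set up the correspondence between trisections of $X$ and (relative) Morse 2--functions, being careful that in the bounded case the relevant notion is that of a relative Morse 2--function with prescribed behavior near $\partial X$ determined by the open-book decompositions $(B^j,\pi^j)$; this is exactly the content relating Definition~\ref{def:Trisection} to the Heegaard-page structure described in Subsection~\ref{subsec:Heegaard} and Lemma~\ref{lemma:k-value}. (2) Invoke the relative version of the uniqueness of generic homotopies (Gay--Kirby, extended to the bounded case by Castro and by Castro--Gay--Pinz\'on-Caicedo \cite{CasGayPin_18_Diagrams-for-relative-trisections}): two relative Morse 2--functions agreeing near $\partial X$ are generically homotopic rel a neighborhood of $\partial X$. (3) Analyze each codimension-one singularity in the homotopy and show its effect on the trisection is trivial or a core $i$--stabilization; the interior moves are identical to the closed case, and the hypothesis that the boundary open-books agree guarantees no moves are forced near $\partial X$. (4) Conclude by the standard ``stabilize both sides'' argument: write the homotopy as a sequence of stabilizations and destabilizations, commute all destabilizations of $\T$ to the end and reinterpret them as stabilizations of $\T'$, yielding a common stabilization.

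The hard part will be step (3) in the relative setting, namely verifying that the analysis of fold crossings and swallowtail/eye moves near the boundary does not introduce new phenomena beyond core stabilization --- in particular that the induced open-book on $\partial X$ is preserved throughout, rather than being altered by a Hopf (de)stabilization, which would correspond to a genuinely different operation (compare the discussion of Harer's moves and Giroux--Goodman in Subsection~\ref{subsec:OBD}). This is why the hypothesis ``$\T$ and $\T'$ induce isotopic open-book decompositions on each component of $\partial X$'' is essential: it is precisely what rules out boundary stabilizations and lets the homotopy be taken rel a collar of $\partial X$. Since all of this is carried out in \cite{GayKir_16_Trisecting-4-manifolds} in the closed case and in \cite{Cas_16_Relative-trisections-of-smooth,CasGayPin_18_Diagrams-for-relative-trisections} for a single boundary component, I would present the argument as a routine adaptation: handle each boundary component separately in a collar, then apply the closed-case Cerf theory to the interior, and note that the disconnectedness of $\partial X$ plays no role beyond bookkeeping since the moves are local.
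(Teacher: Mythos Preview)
The paper does not prove this theorem; it is stated with a citation to \cite{GayKir_16_Trisecting-4-manifolds} and no proof is given. Your proposal correctly recognizes this and offers a reasonable sketch of the Gay--Kirby Cerf-theoretic argument together with its relative extensions, so there is nothing in the paper to compare it against.
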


Performing a core $i$--stabilization is equivalent to forming the (interior) connected sum with a simple trisection of $S^4$.  Let $\T_i$ denote the genus one trisection of $S^4$ with $k_i=1$. See~\cite{MeiSchZup_16_Classification-of-trisections-and-the-Generalized} for details.

\begin{proposition}\label{prop:core_stab}
	$\T'$ is a core $i$--stabilization of $\T$ if and only if $\T' = \T\#\T_i$.
\end{proposition}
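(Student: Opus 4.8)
The plan is to prove the two implications separately, both relying on the fact (recalled in Subsection~\ref{subsec:sum} and Remark~\ref{rmk:sum_glue}) that interior connected sum of trisections is performed by choosing the summation points to lie on the cores, so that $\Sigma = \Sigma'\#\Sigma''$, $H_j = H_j'\natural H_j''$, and $Z_j = Z_j'\natural Z_j''$ for each $j\in\Z_3$. The key observation is that the genus one trisection $\T_i$ of $S^4$ with $k_i = 1$ has, as its core, a torus $\Sigma_i$ which contains a nonseparating curve bounding a disk in each of $H_i$ and $H_{i+1}$ (this curve being the belt/attaching circle responsible for the single $S^1\times S^2$ summand of $\partial Z_i\cong S^1\times S^2$), while bounding disks in $H_{i-1}$ only after a handleslide — in other words, $\T_i$ is the ``model'' for a single core $i$--stabilization.

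First I would prove the reverse implication: if $\T' = \T\#\T_i$, then $\T'$ is a core $i$--stabilization of $\T$. Form the connected sum at a point $x\in\Sigma$; by the description above, $\Sigma' = \Sigma\#\Sigma_i$, which has genus $g+1$, and $Z_j' = Z_j\natural (Z_j)_i$. Since $Z_i$ of $\T_i$ is $\natural^1(S^1\times B^3)$ while the others are $B^4$, boundary-summing has the effect of increasing $k_i$ by one and leaving $k_{i+1}, k_{i+2}$ unchanged; the boundary parameters $\bold p, \bold f$ are unaffected because the summation is at an interior point of the core and $\T_i$ has empty boundary. It then suffices to exhibit an arc $\omega\subset\Int(\Sigma)$ supported near $x$ in the $\Sigma_i$ summand such that surgering $\Sigma$ along $\omega$ (after pushing $\omega$ into $H_{i+1}$) recovers $\T$ — equivalently, such that the genus-one summand $\Sigma_i$, cut along $\omega$, becomes a standardly-embedded sphere cutting off a trivial $B^4$ from each sector. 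This is a direct unwinding of the local model for $\T_i$: the curve dual to the one destabilizing curve of $\Sigma_i$ serves as (a push-in of) the stabilization arc $\omega$. This matches the definition of core $i$--stabilization verbatim.

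Next I would prove the forward implication: if $\T'$ is a core $i$--stabilization of $\T$ along an arc $\omega$, then $\T' = \T\#\T_i$. By definition, $\omega\subset\Int(\Sigma)$ has its interior pushed into $H_{i+1}$, and $\Sigma' = \Sigma^\omega$. Let $\delta\subset\Sigma'$ be the cocore curve of the surgery, i.e., the belt circle of the $1$--handle (in the surface sense) created by the surgery; then $\delta$ bounds a disk in $H_i$ and in $H_{i+1}$ (the two halves of the surgery trace and a pushoff of the co-core, respectively, using that $\omega$ was pushed into $H_{i+1}$). Take $\eta\subset\Sigma'$ a curve dual to $\delta$, and set $\delta' = \partial\nu(\delta\cup\eta)$. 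Arguing as in Remark~\ref{rmk:nonsep}, $\delta'$ is a separating reducing curve for $\T'$ unless $\nu(S_{\delta'})$ exhausts the relevant summand; by Lemma~\ref{lem:reduce}, $\T'$ then admits a separating trisected reducing sphere $S_{\delta'}$, and by Proposition~\ref{prop:reduce}(1) applied to the \emph{underlying} trisection (the surface $\Ff$ playing no role here — this is a statement purely about the four-manifold trisection, so one uses the closed-surface / empty-surface case of the proposition, which is the original result of~\cite{MeiSchZup_16_Classification-of-trisections-and-the-Generalized}), $\T'$ decomposes as $\T''\#\T'''$ where $\T''$ is a trisection of $X$ and $\T'''$ is a trisection of $S^4$ whose core has genus one. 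There is only one such trisection with the curve-slide pattern forced by $\delta, \delta'$, namely $\T_i$ (here the index $i$ is pinned down by which pair $H_i, H_{i+1}$ the curve $\delta$ compresses into before any handleslide). Finally, tracking the parameters shows $\T''$ has the same genus and $\bold k$ as $\T$, and an isotopy argument (using Theorem~\ref{thm:GK_unique}, or more elementarily the fact that destabilizing along $\delta'$ inverts the stabilization along $\omega$) identifies $\T''$ with $\T$.

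The main obstacle I expect is the last sentence of the forward direction: showing that the complementary summand $\T''$ obtained by cutting along $S_{\delta'}$ and capping with a $B^4$ is genuinely \emph{isotopic} to $\T$, not merely of the same combinatorial type. The cleanest route is to observe that the curve $\delta$ is precisely the belt circle produced by the surgery-along-$\omega$ operation, so that the destabilization removing $S_{\delta'}$ is the literal inverse of the stabilization, reconstructing $\Sigma$ from $\Sigma'$ by compressing along $\delta$; one must check that this compression, together with the induced identifications of the $H_j$ and $Z_j$, returns the original trisection data on the nose (up to isotopy). This is essentially the content of the ``stabilization is connected sum with $\T_i$'' principle already used in~\cite{MeiSchZup_16_Classification-of-trisections-and-the-Generalized}, so I would cite that development and supply only the verification that the arc $\omega$ pushed into $H_{i+1}$ (rather than $H_{i-1}$) produces the summand with $k_i' = k_i + 1$, keeping careful track of the cyclic indexing convention $H_j = Z_{j-1}\cap Z_j$.
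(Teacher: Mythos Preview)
The paper does not actually give a proof of this proposition: it is stated as a recalled fact, with the sentence ``See~\cite{MeiSchZup_16_Classification-of-trisections-and-the-Generalized} for details'' immediately preceding it serving as the only justification. Your proposal therefore supplies substantially more than the paper does.

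Your argument is essentially correct and follows the standard line from~\cite{MeiSchZup_16_Classification-of-trisections-and-the-Generalized}. The reverse implication is clean. For the forward implication, your use of the reducing-sphere machinery (Lemma~\ref{lem:reduce} and Proposition~\ref{prop:reduce}) is heavier than necessary: since the stabilization is defined by an explicit local modification (surgery on $\Sigma$ along $\omega$), one can simply check by hand that a regular neighborhood of the new handle in $X$ is a trisected $B^4$ whose complement is the punctured $\T$, giving the connected-sum decomposition directly without invoking a classification of genus-one trisections of $S^4$ or Theorem~\ref{thm:GK_unique}. Your concern in the final paragraph about identifying $\T''$ with $\T$ is well-placed, and the resolution you suggest---that compressing along the belt circle $\delta$ literally inverts the surgery along $\omega$---is exactly the right one; appealing to Theorem~\ref{thm:GK_unique} here would be circular in spirit, since that theorem is what core stabilization is designed to support.

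One small correction: you write that $\delta$ bounds a disk in $H_i$ and in $H_{i+1}$, but the arc $\omega$ is pushed into $H_{i+1} = Z_i\cap Z_{i+1}$, so the belt circle bounds the cocore disk in $H_{i+1}'$ and bounds in $H_i'$ via the obvious meridian disk on the $Z_i$ side; it is in $H_{i+2}' = H_{i-1}'$ that $\delta$ does \emph{not} bound (its dual does), which is what forces the summand to be $\T_i$ rather than $\T_{i\pm 1}$. Your indexing is consistent with this once unpacked, but the phrase ``bounding disks in $H_{i-1}$ only after a handleslide'' in your first paragraph is misleading---$\delta$ simply does not bound in $H_{i-1}$ at all.
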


Next, we recall the stabilization operation for trisections that corresponds to altering the induced open-book decomposition on the boundary by the plumbing of a Hopf band.  Let $\T_{\text{Hopf}}^+$ (respectively, $\T_{\text{Hopf}}^-$) denote the genus one trisection of $B^4$ that induces the open-book decomposition on $S^3$ with binding the positive (respectively, negative) Hopf link.

\begin{definition}[\textbf{\emph{Hopf stabilization}}]\label{def:Hopf}
	Let $\T$ be a $(g,\bold k;\bold p,\bold f)$--trisection for a four-manifold $X$.  Let $\omega\subset (\Sigma\setminus\alpha_i)$ be a neatly embedded arc, which we consider in $P_i$.  Let $\T^\pm$ denote the trisection obtained by plumbing $\T$ to $\T_\text{Hopf}^\pm$ along the projection of $\omega$, as in Figure~\ref{fig:plumbing}.  We call $\T^\pm$ the \emph{positive/negative Hopf $(i,j)$--stabilization} of $\T$ along $\omega$.
\end{definition}

\begin{figure}[h!]
	\centering
	\includegraphics[width=\textwidth]{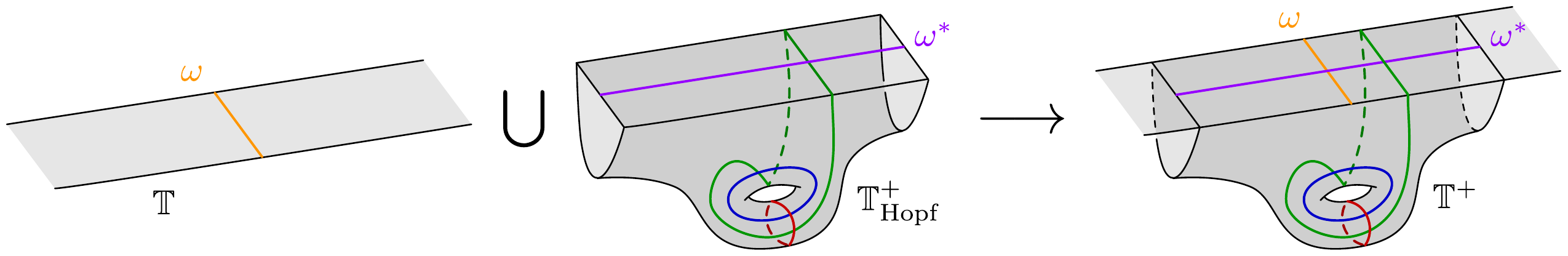}
	\caption{The positive Hopf stabilization $\T^+$ of a trisection $\T$ along an arc $\omega$ in the core of $\T$.}
	\label{fig:plumbing}
\end{figure}

By a \emph{plumbing} of trisections, we mean a plumbing of pages along the projection of arcs to the pages.  Diagrammatically, this is represented by plumbing the relative trisection diagrams along the corresponding arcs in the core surface, as in Figure~\ref{fig:plumbing}. This induces boundary connected sums at the level of the three-dimensional and four-dimensional pieces of the trisections and plumbing at the level of the core surfaces and pages.  Hopf stabilization was first studied in the setting of trisections by Castro~\cite{Cas_16_Relative-trisections-of-smooth} and Castro--Gay--Pinz\'on-Caicedo~\cite{CasGayPin_18_Diagrams-for-relative-trisections}.  We rephrase their main result in the more general setting of the present article.

\begin{proposition}[\cite{CasGayPin_18_Diagrams-for-relative-trisections}, Corollary 17]
	Let $\T$ be a $(g,\bold k;\bold p,\bold f)$--trisection for a four-manifold $X$ inducing an open-book decomposition $(B,\pi)$ on $\partial X$.  Then a positive (resp., negative) Hopf stabilization $\T^\pm$ is a $(g+1,\bold k;\bold p',\bold f')$--trisection of $X$ inducing a positive (resp., negative) Hopf stabilization of $(B,\pi)$, where $\bold f'$ is obtained from $\bold f$ by either increasing or decreasing the value of $f_j$ by one, and $\bold p'$ is obtained from $\bold p$ by either decreasing or increasing the value of $p_j$ by one, according with, in each case, whether or not $\omega$ spans distinct boundary components of $P_i^j$ or not.
\end{proposition}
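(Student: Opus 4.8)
The plan is to reduce the statement to a local model computation: a Hopf $(i,j)$--stabilization changes the trisection only inside a collar of the page $P_i^j$ (equivalently, only inside a neighborhood of the projected arc $\omega$), so it suffices to analyze the plumbing of $\T$ with the model trisection $\T_{\text{Hopf}}^\pm$ of $B^4$ along a single arc. I would first recall that $\T_{\text{Hopf}}^\pm$ is the genus one trisection of $B^4$ whose induced open-book on $S^3$ has binding the positive/negative Hopf link; its page is an annulus, and the monodromy is a single positive/negative Dehn twist about the core. Because $\T_{\text{Hopf}}^\pm$ is a trisection of $B^4$ with connected boundary, plumbing it onto $\T$ along $\omega$ is literally a boundary connected sum at the level of the four-dimensional sectors $Z_i$, the three-dimensional arms $H_i$, and a page-plumbing (not a boundary connected sum) at the level of the core $\Sigma$ and the pages $P_i$; this is exactly the content of the paragraph in the excerpt describing plumbing of trisections. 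From this structural description, the genus of the core increases by exactly one (plumbing an annulus to $\Sigma$ along an arc adds one to the genus count, since the page of $\T_{\text{Hopf}}^\pm$ has genus zero and one extra boundary circle, which glues up), while the $\bold k$--parameters are unchanged because $\T_{\text{Hopf}}^\pm$ has $k_i = 0$ for all $i$ (it is a trisection of $B^4$, which is simply connected, and no new four-dimensional $1$--handles are introduced by the plumbing).

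Second, I would track the effect on the boundary open-book. Since plumbing of trisections induces plumbing of the corresponding open-books (via Harer's operation of Hopf-plumbing, recalled in Subsection~\ref{subsec:OBD}), the induced open-book on $\partial X$ is precisely the positive (resp.\ negative) Hopf stabilization of $(B,\pi)$. This is where the case distinction in the statement enters: a Hopf stabilization of an open-book plumbs an annulus onto the page along an embedded arc $\omega \subset P_i^j$, and the topological type of the new page depends on whether $\omega$ has its two endpoints on the same boundary circle of $P_i^j$ or on two distinct boundary circles. If $\omega$ joins two distinct boundary components, then plumbing the annulus merges those two circles into one and increases the genus by one (for a positive stabilization) — so $f_j$ decreases by one and $p_j$ increases by one; for a negative stabilization the roles of the plumbed-in annulus reverse and we get the opposite. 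If $\omega$ has both endpoints on the same boundary circle, that circle splits into two, so $f_j$ increases by one and $p_j$ stays put (again with the positive/negative distinction governing which of the two possibilities occurs). I would spell this out by a direct surface-with-boundary Euler-characteristic and boundary-component count: $\chi$ of the page drops by one under Hopf stabilization regardless of case, and $|\partial P| = f_j$ changes by $\pm 1$ according to the arc type, which pins down the change in $p_j$ via $\chi(\Sigma_{p_j,f_j}) = 2 - 2p_j - f_j$.

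Third, I would check consistency of the various pieces of the resulting decomposition, i.e.\ verify that after plumbing the new data $(\Sigma', H_i', Z_i')$ still satisfies the axioms of Definition~\ref{def:Trisection} — that each $Z_i'$ is the correct model compressionbody-with-handles $Z_{g+1, k_i; \bold p', \bold f'}$, that $H_i'$ is the correct compressionbody, and that the spreads $Y_i'$ are still products $H_{\bold p', \bold f'}$. All of this follows from the boundary-connected-sum structure of the plumbing together with the explicit description of $\T_{\text{Hopf}}^\pm$, using Lemma~\ref{lemma:k-value} to confirm that the arithmetic $k_i = 2p + f - n + m_i$ is preserved once $p$, $f$, and the number of boundary components are updated as above (the interior complexity $m_i$ is unchanged). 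The main obstacle, and the step deserving the most care, is the bookkeeping in the case analysis for $\bold p'$ and $\bold f'$: one must correctly correlate the orientation convention for positive versus negative Hopf bands with which boundary circle of the plumbed-in annulus gets identified with which arc neighborhood, and verify that ``spanning distinct boundary components versus not'' interacts correctly with the sign. I expect this to be routine once a careful picture is drawn (the figure already referenced as Figure~\ref{fig:plumbing} handles the positive case), but it is the place where a sign or count error would most easily creep in, so I would present it with an explicit local model and a small table of the four cases (positive/negative $\times$ same/distinct boundary components).
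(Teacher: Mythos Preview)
The paper does not supply its own proof of this proposition; it is stated with a citation to Castro--Gay--Pinz\'on-Caicedo and the surrounding text only ``rephrase[s] their main result in the more general setting of the present article.'' So there is nothing in the paper to compare your argument against directly.

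Your overall strategy---reduce to the local plumbing model and track the change in the page topology and the monodromy separately---is the correct one and is essentially how the cited reference proceeds. However, your second paragraph contains a genuine error. You write that ``for a negative stabilization the roles of the plumbed-in annulus reverse and we get the opposite,'' and later that ``the positive/negative distinction govern[s] which of the two possibilities occurs.'' This is false: both the positive and the negative Hopf band have page an annulus, and plumbing either one onto $P_i^j$ along $\omega$ yields the \emph{same} new page. The sign of the Hopf band enters only through the monodromy, as a positive versus negative Dehn twist about the core of the plumbed annulus. The dichotomy governing how $f_j$ and $p_j$ change is therefore purely combinatorial and depends only on whether the two endpoints of $\omega$ lie on one or two boundary circles of $P_i^j$, exactly as the proposition asserts; there are two cases, not four.

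Your Euler-characteristic bookkeeping is otherwise correct, and in fact your observation that $p_j$ is unchanged when $\omega$ has both endpoints on a single boundary component is right: since $\chi(P_i^j)$ always drops by one under Hopf stabilization, the two cases are $(p_j,f_j)\mapsto(p_j+1,f_j-1)$ when $\omega$ spans distinct components and $(p_j,f_j)\mapsto(p_j,f_j+1)$ when it does not. The literal wording of the proposition (``decreasing or increasing the value of $p_j$ by one'') is thus slightly loose in the second case; the intended content is what you computed. Remove the spurious positive/negative case split from your page analysis and retain it only in the monodromy discussion, and your argument is complete.
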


The upshot of this proposition is that, to the extent that open-book decompositions of three-manifolds are related by Hopf stabilization and destabilization, any two trisections of a compact four-manifold can be related by a sequence of Hopf stabilizations and core stabilizations. Giroux and Goodman proved that two open-book decompositions on a fixed three-manifold have a common Hopf stabilization if and only if the associated plane fields are homotopic~\cite{GirGoo_06_On-the-stable-equivalence-of-open}, answering a question of Harer~\cite{Har_82_How-to-construct-all-fibered-knots}. From this, together with Theorem~\ref{thm:GK_unique}, we can state the following.

\begin{corollary}
\label{coro:rel_tri_unique}
	Suppose that $\T$ and $\T'$ are two trisections of a fixed four-manifold $X$. Assume that $\partial X \not= \emptyset$ and that for each component of $\partial X$, the open-book decompositions induced by $\T$ and $\T'$ have associated plane fields that are homotopic.  Then, $\T$ and $\T'$ become isotopic after they are each core stabilized and Hopf stabilized some number of times.
\end{corollary}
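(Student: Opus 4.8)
The plan is to deduce the corollary by combining three previously established inputs: the stable equivalence theorem of Giroux and Goodman for open-book decompositions, the trisection-level Hopf stabilization result of Castro--Gay--Pinz\'on-Caicedo recalled above, and the stable uniqueness theorem of Gay and Kirby (Theorem~\ref{thm:GK_unique}). The strategy is first to Hopf stabilize $\T$ and $\T'$ until they induce the \emph{same} open-book decomposition on each component of $\partial X$, and then to invoke Theorem~\ref{thm:GK_unique} to make them isotopic by core stabilizations.

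Write $\partial X = Y^1\sqcup\cdots\sqcup Y^n$, and for each $j$ let $(B^j,\pi^j)$ and $((B')^j,(\pi')^j)$ be the open-book decompositions induced on $Y^j$ by $\T$ and $\T'$, respectively. First I would fix $j$ and apply Giroux--Goodman: since by hypothesis the plane fields associated to $(B^j,\pi^j)$ and $((B')^j,(\pi')^j)$ are homotopic, these open-books admit a common Hopf stabilization $(\widehat B^j,\widehat\pi^j)$, reached from each by a finite sequence of positive and negative Hopf stabilizations. Next I would realize these stabilizations at the level of trisections: performing the Hopf stabilization of Definition~\ref{def:Hopf} once for each Hopf stabilization in the chosen sequence from $(B^j,\pi^j)$ to $(\widehat B^j,\widehat\pi^j)$ --- choosing the defining arc $\omega$ to lie in the relevant page $P_i^j$, with the appropriate sign, each time --- produces, by the proposition of \cite{CasGayPin_18_Diagrams-for-relative-trisections} quoted above, a trisection of $X$ whose induced open-book on $Y^j$ is $(\widehat B^j,\widehat\pi^j)$. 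Carrying this out for each component $Y^j$ in turn yields a trisection $\widehat\T$ of $X$ inducing $(\widehat B^j,\widehat\pi^j)$ on every $Y^j$; Hopf stabilizations supported in pages over distinct boundary components are disjointly supported, so the order in which the components are treated is immaterial. Running the same procedure on $\T'$, along sequences realizing the passage from $((B')^j,(\pi')^j)$ to $(\widehat B^j,\widehat\pi^j)$, produces $\widehat{\T'}$ inducing the identical open-books $(\widehat B^j,\widehat\pi^j)$ on each component of $\partial X$.

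Now $\widehat\T$ and $\widehat{\T'}$ are trisections of the fixed four-manifold $X$ that induce isotopic open-book decompositions on each connected component of $\partial X$, so Theorem~\ref{thm:GK_unique} applies and, after each is core stabilized finitely many times, $\widehat\T$ and $\widehat{\T'}$ become isotopic. Tracing back, $\T$ has been altered by a sequence of Hopf stabilizations followed by a sequence of core stabilizations, and likewise for $\T'$, after which the two trisections agree up to isotopy --- which is precisely the assertion of the corollary. Since every step is an invocation of a previously established result, the only real care needed is bookkeeping: one checks that the Giroux--Goodman equivalence, which records the homotopy class of the plane field and therefore permits Hopf stabilizations of both signs, is matched by a trisection-level operation (and it is, since Definition~\ref{def:Hopf} and the cited proposition treat positive and negative Hopf stabilization alike), and that the stabilizations needed to synchronize all $n$ boundary open-books can be performed simultaneously, which follows from their disjoint supports. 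This last point is the closest thing to an obstacle, and it is mild.
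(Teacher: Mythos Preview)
Your proof is correct and follows essentially the same approach as the paper: the paper does not give a separate proof for this corollary, but instead derives it in the preceding paragraph from Giroux--Goodman, the Castro--Gay--Pinz\'on-Caicedo Hopf stabilization proposition, and Theorem~\ref{thm:GK_unique}, which is exactly the three-step argument you carry out in detail.
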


Recently, Piergallini and Zuddas showed there is a complete set of moves that suffice to relate any two open-book decompositions on a given three-manifold~\cite{PieZud_18_Special-moves-for-open}.  By giving trisection-theoretic versions of each move, Castro, Islambouli, Miller, and Tomova were able to prove the following strengthened uniqueness theorem for trisected manifolds with boundary~\cite{CasIslMil_19_The-relative-L-invariant-of-a-compact}.

\subsection{Interior perturbation of bridge trisections}
\label{subsec:interior_perturbation}
\ 

Having overviewed stabilization operations for four-manifold trisections, we now discuss the analogous operations for bridge trisections.  To avoid confusion, we will refer to these analogous operations as \emph{perturbation operations}; they will generally correspond to perturbing the bridge trisected surface relative to the core surface.  Throughout, the obvious inverse operation for a perturbation will be referred to as a \emph{deperturbation}.

We begin by recalling the perturbation operation for bridge trisections first introduced in~\cite{MeiZup_17_Bridge-trisections-of-knotted} and invoked in~\cite{MeiZup_18_Bridge-trisections-of-knotted}.  This perturbation operation requires the existence of a flat disk in $\Dd_i$.  To distinguish this operation from the subsequent one, we append the adjective ``Whitney''.

\begin{definition}[\textbf{\emph{Whitney perturbation}}]
	Let $\Ff$ be a neatly embedded surface in a four-manifold $X$ such that $\Ff$ is in $(b,\bold c;\bold v)$--bridge trisected position with respect to a trisection $\T$ of $X$.  Let $D\subset\Dd_i$ be a flat disk, and let $D_*\subset Y_i$ be a disk that has no critical points with respect to the standard Morse function on $Y_i$ and that is isotopic rel-$\partial$ to $D$, via a three-ball $B$.  Let $\Delta$ be a neatly embedded disk in $B$ that intersects $D_*$ in a vertical strand.  Let $\Ff'$ denote the surface obtained by isotoping $\Ff$ via a Whitney move across $\Delta$.  Then, $\Ff'$ is in $(b+1,\bold c';\bold v)$--bridge trisected position with respect to $\T$, where $\bold c'=\bold c$, except that $c_i'=c_i+1$.  This Whitney move is called an \emph{Whitney $i$--perturbation}.
\end{definition}

See Figures~14 and~23 of~\cite{MeiZup_18_Bridge-trisections-of-knotted} for a visualization of a Whitney pertubation.  The usefulness of Whitney perturbations is made clear by the following result, which was proved in~\cite{MeiZup_17_Bridge-trisections-of-knotted} in the case that $\T$ as genus zero (so $X=S^4$) and in~\cite{HugKimMil_18_Isotopies-of-surfaces-in-4-manifolds} in the general case.

\begin{theorem}[\cite{MeiZup_17_Bridge-trisections-of-knotted,HugKimMil_18_Isotopies-of-surfaces-in-4-manifolds}]
\label{thm:HKM}
	Fix a four-manifold $X$ and a trisection $\T$ of $X$.  Let $\Ff,\Ff'\subset X$ be isotopic closed surfaces, and suppose $\T_\Ff$ and $\T_{\Ff'}$ are bridge trisections of $\Ff$ and $\Ff'$ induced by~$\T$.  Then, there is a sequence of interior (Whitney) perturbations and deperturbations relating $\T_\Ff$ and $\T_{\Ff'}$
\end{theorem}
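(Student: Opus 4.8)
The plan is to promote a generic smooth isotopy from $\Ff$ to $\Ff'$ to a tightly controlled ``movie'' with respect to the trisection $\T$ and to recognize each frame-change in that movie as an interior Whitney perturbation, an interior deperturbation, or an ambient isotopy that leaves the induced bridge trisection unchanged. Fix the self-indexing Morse function $\Phi=\Phi_\T$ associated to $\T$ (for closed $X$ there are no boundary subtleties, so $\Phi$ is defined on all of $X$ with the walls $\Phi^{-1}(0)\subset Z_1$, $\Phi^{-1}(1.5)=H_1\cup_\Sigma\overline H_2$, $\Phi^{-1}(4)\subset Z_3$). Choose an ambient isotopy $\{\Psi_t\}_{t\in[0,1]}$ of $X$ with $\Psi_0=\id$ and $\Psi_1(\Ff)=\Ff'$, set $\Ff_t=\Psi_t(\Ff)$, and study the one-parameter family of functions $\Phi|_{\Ff_t}$ on the surface. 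By hypothesis and (the proof of) Theorem~\ref{thm:general}, the two endpoints $\Ff_0=\Ff$ and $\Ff_1=\Ff'$ are already in the normal form of Lemmata~\ref{lem:Dd_1/3}, \ref{lem:Tt_i}, \ref{lem:Dd_2}: minima in $Z_1$, saddles on $\Phi^{-1}(1.5)$, maxima in $Z_3$, each tangle and each disk-tangle trivial, and the boundary a fixed braiding.

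First I would make the family $\{\Phi|_{\Ff_t}\}$ Cerf-generic relative to the stratification of the function space coming from the walls $\Phi^{-1}(0),\Phi^{-1}(1.5),\Phi^{-1}(4)$, the pages $P_i$, and the core $\Sigma$, keeping $t=0,1$ fixed; between consecutive exceptional times the surface can be re-normalized as in Lemma~\ref{lem:Morse_Ff} so that it is genuinely in bridge trisected position and defines a fixed bridge trisection up to ambient isotopy. There is then a finite set of exceptional times $0<t_1<\cdots<t_N<1$ at each of which exactly one of the following occurs: (i) a birth or death of a cancelling pair of critical points of $\Phi|_{\Ff_t}$ --- necessarily a $(\mathrm{min},\mathrm{saddle})$ pair localized in $Z_1$ or a $(\mathrm{max},\mathrm{saddle})$ pair localized in $Z_3$, since an index-$0$/index-$2$ cancellation never occurs on a surface; (ii) two critical values of $\Phi|_{\Ff_t}$ cross; (iii) a critical point crosses one of the walls $\Phi^{-1}(0),\Phi^{-1}(1.5),\Phi^{-1}(4)$; or (iv) $\Ff_t$ acquires a tangency with a page $P_i$ or with $\Sigma$.

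The identification of moves is the heart of the argument. Moves of type (iv) are the ambient isotopies of the spine $(H_1,\Tt_1)\cup(H_2,\Tt_2)\cup(H_3,\Tt_3)$ recorded diagrammatically by interior Reidemeister moves and bridge-sphere/page transpositions; by Corollary~\ref{coro:spine} and the discussion of tri-plane and shadow moves these change the diagram but not the isotopy class of the bridge trisection. Moves of type (ii) and (iii) can be reordered and localized and, using the gradient flow of $\Phi$ exactly as in the proof of Lemma~\ref{lem:Morse_Ff}, pushed back across the relevant wall; one checks that the net effect is either trivial or a band slide, and band slides are again realized within a fixed bridge trisection. The substantive case is (i): the birth of a cancelling $(\mathrm{min},\mathrm{saddle})$ pair in $Z_1$ produces a new flat patch of $\Dd_1$ and a new flat strand of $\Tt_2$ equipped with a dualizing bridge semi-disk supplied by the Whitney disk, and this is precisely an interior Whitney $1$--perturbation; symmetrically a $(\mathrm{max},\mathrm{saddle})$ birth in $Z_3$ is a Whitney $3$--perturbation, and deaths are the corresponding deperturbations. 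Thus the movie decomposes into a finite sequence of Whitney perturbations, deperturbations, and bridge-trisection-preserving isotopies, which is the assertion of the theorem.

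The main obstacle is the book-keeping in making the parametrized picture generic: one must verify that in a generic family the interactions of critical points of $\Phi|_{\Ff_t}$ with the three walls, the pages, and the core occur one at a time in codimension one, and, crucially, that a saddle momentarily crossing the $\Phi^{-1}(1.5)$ wall can always be traded --- via the flow argument --- for a perturbation followed by a deperturbation rather than some genuinely new phenomenon; equivalently, that the codimension-one strata of the space of pairs (embedding of $\Ff$, adapted height function) are exhausted by the moves listed above. In the closed $S^4$ case this is handled in \cite{MeiZup_17_Bridge-trisections-of-knotted} by passing to banded unlink diagrams and invoking a movie-move calculus, and in the general case \cite{HugKimMil_18_Isotopies-of-surfaces-in-4-manifolds} carries out the parametrized Morse-theoretic analysis directly; I would follow the latter, organizing the proof around that stratification and treating each codimension-one stratum in turn. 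Everything else --- the local identification of births and deaths with Whitney (de)perturbations, and the fact that diagrammatic moves do not change the bridge trisection --- is local and routine given Sections~\ref{sec:general}--\ref{sec:shadow}.
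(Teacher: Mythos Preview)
The paper does not actually prove this theorem: it is stated with attribution to \cite{MeiZup_17_Bridge-trisections-of-knotted} (for the genus-zero case $X=S^4$) and \cite{HugKimMil_18_Isotopies-of-surfaces-in-4-manifolds} (for general $\T$), and no proof appears in the present article. So there is nothing in the paper to compare your proposal against directly.

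That said, your sketch is a faithful outline of the strategy carried out in \cite{HugKimMil_18_Isotopies-of-surfaces-in-4-manifolds}: make a one-parameter family of height functions $\Phi|_{\Ff_t}$ Cerf-generic, enumerate the codimension-one strata (births/deaths, crossings, wall-crossings, tangencies with $\Sigma$ and the $H_i$), and identify each with either an isotopy of the spine or a Whitney (de)perturbation. Your acknowledgement that the ``book-keeping'' of the stratification is the real work is exactly right; this is where the cited papers spend their effort, and your proposal correctly defers to them rather than claiming the local analyses are obvious. One small inaccuracy: for closed $X$ the Morse function $\Phi_\T$ is not literally defined on all of $X$ with $\Phi^{-1}(0)\subset Z_1$ and $\Phi^{-1}(4)\subset Z_3$ as you wrote; rather $\Phi$ has genuine index-$0$ and index-$4$ critical points (or one works on $X$ minus two points). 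This is cosmetic and does not affect the argument.
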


Even without the presence of a flat disk, there is still a perturbation operation available.  Despite being called a ``finger'' perturbation, the following perturbation is not an inverse to the Whitney perturbation.  The adjective ``Whitney'' and ``finger'' are simply descriptive of how the surface is isotoped relative to the core to achieve the perturbation.  However, it is true that the inverse to a Whitney perturbation (or a finger perturbation) is a finger deperturbation.

\begin{definition}[\textbf{\emph{finger perturbation}}]
	Let $\Ff$ be a neatly embedded surface in a four-manifold $X$ such that $\Ff$ is in $(b,\bold c;\bold v)$--bridge trisected position with respect to a trisection $\T$ of $X$. Fix a bridge point $x\in\bold x$, and let $N$ be a small neighborhood of $x$, so $N\cap\Ff$ is a small disk. Let $\omega\subset \partial N$ be a trivial arc connecting $\Tt_i$ to $\Sigma$.  Perform a finger-move of $\Ff$ along $\omega$, isotoping a small bit of $\Ff$ toward and through $\Sigma$, as in Figure~\ref{fig:finger}.  Let $\Ff'$ denote the resulting surface.  Then, $\Ff'$ is in $(b+1,\bold c';\bold v)$--bridge position with respect to $\T$, where $\bold c'=\bold c$, except that $c_i'=c_i+1$.  This finger move is called an \emph{finger $i$--perturbation}.
\end{definition}

\begin{figure}[h!]
	\centering
	\includegraphics[width=.8\textwidth]{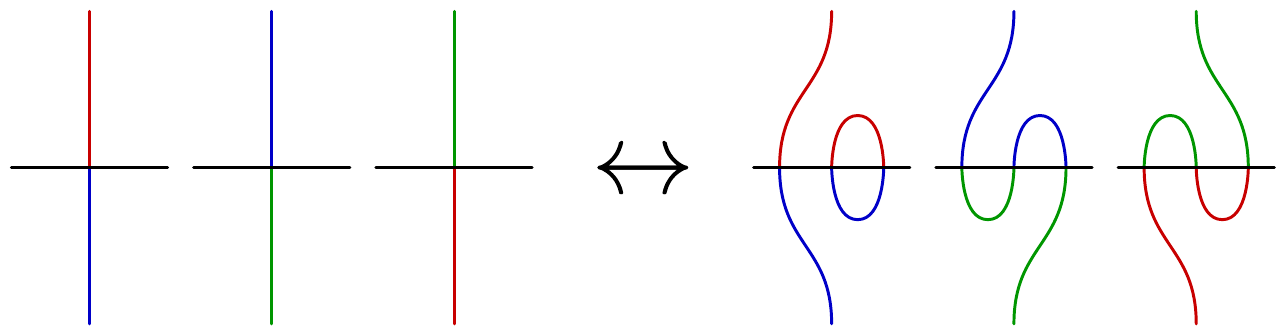}
	\caption{A local picture corresponding to a finger 1--perturbation.}
	\label{fig:finger}
\end{figure}

Note that the disk of the disk-tangle $\Dd_i$ containing the bridge point $x$ is neither required to be flat nor vertical in the definition of a finger perturbation.  However, if this disk \emph{is} flat, then the operation is the simplest form of a Whitney perturbation, corresponding to the case where the vertical strand in $D_*$ is boundary parallel through vertical strands.  The simplicity of the finger perturbation operation is expressed by the following proposition. Let $\T_{S^2}^i$ denote the 2--bridge trisection of the unknotted two-sphere satisfying $c_i = 2$.

\begin{proposition}
	If the bridge trisection $\T_\Ff'$ is obtained from the bridge trisection $\T_\Ff$ via a finger $i$--perturbation, then $\T_\Ff' = \T_\Ff\#\T_{S^2}^i$.
\end{proposition}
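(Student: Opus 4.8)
The plan is to identify a finger $i$--perturbation as a local operation supported near a single bridge point, and to recognize the local model as the connected sum with $\T_{S^2}^i$. First I would set up the local picture: fix the bridge point $x\in\bold x$ and the small ball neighborhood $N$ as in the definition of finger perturbation, so that $N\cap\Ff$ is a trivially embedded disk and $N$ meets each of $\Sigma$, $H_1$, $H_2$, $H_3$, $Z_1$, $Z_2$, $Z_3$ in the standard way dictated by the bridge trisection structure (a ball for each $Z_j$, a disk for each $H_j$, a bigon for $\Sigma$, with $N\cap\Ff$ a single vertical or flat strand pattern through the $H_j$). Since the finger move along $\omega$ is supported in a slightly larger ball $N'\supset N$ that is disjoint from the rest of $\Ff$ and from the critical points of $\Phi_\T$, the modification $\Ff\rightsquigarrow\Ff'$ changes the bridge trisection only inside $N'$.

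Next I would analyze what happens inside $N'$. Before the finger move, $(N', N'\cap\Ff)$ is a $(0,0;1)$ or $(0,0;0)$--bridge trisected trivial disk (a ``$0$--bridge'' piece), depending on the strand type at $x$; after the finger move, the disk acquires one new flat strand in each of $H_i$ (from the push-through at the $i$--th arm) together with the requisite new bridge points, so $(N',N'\cap\Ff')$ is exactly a $2$--bridge trisected unknotted two-sphere with $c_i=2$ — i.e., a copy of $\T_{S^2}^i$ sitting in a ball. More precisely, I would show that the boundary sphere $\partial N'$, with its induced Heegaard-double-with-braiding structure, is the standard one and that $(N',N'\cap\Ff')$ is obtained from $(N,N\cap\Ff)$ by replacing a trivial disk-tangle piece with the piece realizing $\T_{S^2}^i$; this is precisely the local replacement picture underlying the connected-sum-at-a-bridge-point construction. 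I would then invoke Lemma~\ref{lem:conn_sum} (or rather the discussion preceding and the construction around it): performing the finger $i$--perturbation is the same as puncturing $\T$ at $x$, puncturing $\T_{S^2}^i$ at the bridge point of its $0$--bridge summand that is incident only to flat patches, and gluing — which by the definition of $\#$ for bridge trisections yields $\T_\Ff\#\T_{S^2}^i$. The key point that makes the connected sum well-defined here is that the new bridge point created by the finger move (the one at the tip of the finger, pushed through $\Sigma$) is incident to flat patches in all three disk-tangles of the local $\T_{S^2}^i$ piece, so the gluing hypothesis of Lemma~\ref{lem:conn_sum} is met; the original point $x$ need not be incident to flat patches since we take the flat-patch-incident point from the $\T_{S^2}^i$ side.

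The steps in order: (1) fix the local ball $N'$ and record the standard bridge-trisection structure on $(N',N'\cap\Ff)$; (2) describe the finger move inside $N'$ explicitly and verify that $(N',N'\cap\Ff')$ is the standard $2$--bridge trisection of the unknotted $S^2$ with $c_i=2$, checking that the new strands are flat in $H_i$ and that the counts $b\mapsto b+1$, $c_i\mapsto c_i+1$ match; (3) verify that $(N',N'\cap\Ff')$ equals the piece one obtains from $\T_{S^2}^i$ by deleting a neighborhood of its flat-patch-incident $0$--bridge point, and that $(X\setminus\nu(x),\Ff\setminus\nu(x))$ glued to $(N',N'\cap\Ff')$ along $\partial N'$ recovers $(X,\Ff')$ with its perturbed bridge trisection; (4) conclude via Lemma~\ref{lem:conn_sum} that this glued object is exactly $\T_\Ff\#\T_{S^2}^i$. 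I expect the main obstacle to be step~(3): matching orientations, the marked-point/braiding data on $\partial N'$, and the ordered-partition bookkeeping for $\bold v$ so that the identification with $\T_{S^2}^i$ (and hence the connected-sum decomposition) is literally correct rather than merely correct up to the ambiguities that Proposition~\ref{prop:spine} and Corollary~\ref{coro:spine} would let us ignore; in particular one must check that the finger at a \emph{vertical} strand still produces the $\T_{S^2}^i$ summand and not something that requires an extra vertical patch, which is where the remark that the containing disk of $x$ need be neither flat nor vertical does its work. Everything else is a routine local computation analogous to the corresponding picture for closed surfaces in~\cite{MeiZup_18_Bridge-trisections-of-knotted}.
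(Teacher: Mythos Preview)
Your approach is correct and is essentially an expansion of the paper's one-line proof, which simply appeals to the connected-sum discussion of Subsection~\ref{subsec:sum} (in particular Lemma~\ref{lem:conn_sum}). One small correction: in step~(2), the pair $(N',N'\cap\Ff')$ is not $\T_{S^2}^i$ itself but rather $\T_{S^2}^i\setminus\nu(x'')$ for a bridge point $x''$ (and every bridge point of $\T_{S^2}^i$ is incident only to flat patches, since the surface is closed in $S^4$, so there is no special ``$0$--bridge'' point to locate), which is exactly what you correctly describe in step~(3).
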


The proof is an immediate consequence of how bridge trisections behave under connected sum.  Note that a Whitney perturbation corresponds to a connected sum as in the proposition if and only if it is a finger perturbation; in general, a Whitney perturbation cannot be described as the result of a connected sum of bridge trisections.  For example, the unknotted two-sphere admits a $(4,2)$--bridge trisection that is not a connected sum of (nontrivial) bridge trisections, even though it is (Whitney) perturbed.

\subsection{Markov perturbation of bridge trisections}
\label{subsec:Markov_perturbation}
\ 

Let $\T_{D^2}$ denote the 0--bridge trisection of the unknotted disk $D^2$ in $B^4$.

\begin{definition}[\textbf{\emph{Markov perturbation}}]
	Let $\T'$ be a $(b,\bold c;\bold v)$--bridge trisection of a neatly embedded surface $(X',\Ff')$, and let $\T''$ be the 0--bridge trisection of $(B^4, D^2)$. Choose points $y^\varepsilon\in\Tt_i^\varepsilon\cap P_i^\varepsilon$ for $\varepsilon\in\{',''\}$.
	Let $(X,\Ff)=(X',\Ff')\natural (B^4, D^2)$, and let $\T = \T'\natural\T''$. Then $\T$ is a $(b+1,\bold c;\bold v')$--bridge trisection of $(X,\Ff) = (X',\Ff')$, where $\bold v = \bold v'$, except that $v^j = (v^j)'+1$, where $y^1\in \Ll^j$.  The bridge trisection $\T'$ is called the \emph{Markov $i$--pertubation} of $\T$.
\end{definition}

\begin{figure}[h!]
	\centering
	\includegraphics[width=.6\textwidth]{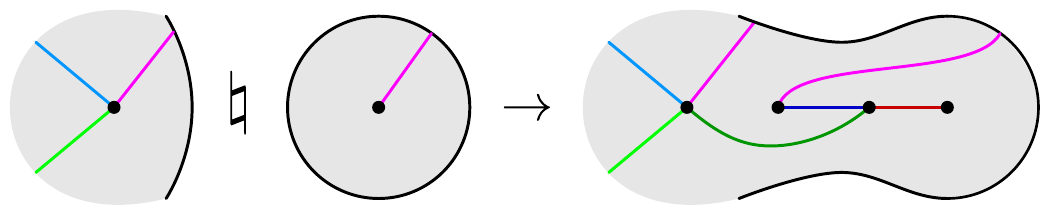}
	\caption{Shadow diagrams depicting the local process of Markov 3--perturbation.}
	\label{fig:markov_shadow}
\end{figure}

In justification of this definition:  That $\T'$ is a new bridge trisection follows from Proposition~\ref{prop:bcs}.  That $\Ff'$ is isotopic to $\Ff$ follows from the fact that we are forming the boundary connected sum with a trivial disk.  That $\Ll'$ is obtained from $\Ll$ via a Markov perturbation follows from our understanding of a Markov perturbation as the trivial connected sum of a braided link with a meridian of a component of the binding -- see Subsection~\ref{subsec:OBD}.  Note that the left-most blue and green arcs are shown in light blue and light green to indicate that they might correspond to flat or vertical strands.  The pink arcs correspond to vertical strands.

The importance of this operation is due to the Generalized Markov Theorem, which states that any two braidings of a given link with respect to a fixed open-book decomposition can be related by an isotopy that preserves the braided structure, except at finitely many points in time at which the braiding is changed by a Markov stabilization or destabilization~\cite{Mar_35_Uber-die-freie-Aquivalenz,Sko_92_Closed-braids-in-3-manifolds,Sun_93_The-Alexander-and-Markov-theorems}.  See Subsection~\ref{subsec:OBD}.

Taken together, the stabilization and perturbation moves described in this section should suffice to relate any two bridge trisections of a fixed four-manifold pair.  

\begin{conjecture}
	Let $\T_1$ and $\T_2$ be bridge trisections of a given surface $(X,\Ff)$ that are diffeomorphic as trisections of $X$. Then, there are diffeomorphic bridge trisections $\T_1'$ and $T_2'$ such that $\T_\varepsilon'$ is obtained from $\T_\varepsilon$ via a sequence of moves, each of which is of one of the following types:
	\begin{enumerate}
		\item core stabilization
		\item Hopf stabilization
		\item relative double twist
		\item interior perturbation/deperturbation
		\item Markov perturbation/deperturbation
	\end{enumerate}
\end{conjecture}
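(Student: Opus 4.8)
The plan is to prove the conjecture in three stages, parallel to the proof of Theorem~\ref{thm:general}: first standardize the underlying four-manifold trisections, then standardize the braidings that the bridge trisections induce on $\partial X$, and finally standardize the surface itself by an isotopy rel-$\partial$.  Write $\T^{(\varepsilon)}$ for the trisection of $X$ underlying the bridge trisection $\T_\varepsilon$, for $\varepsilon\in\{1,2\}$.

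\emph{Stage one: standardizing the underlying trisection.}  Since $\T^{(1)}$ and $\T^{(2)}$ are diffeomorphic as trisections of $X$, they induce the same open-book decompositions on each component of $\partial X$ up to diffeomorphism, hence homotopic plane fields; by Corollary~\ref{coro:rel_tri_unique} together with the refinement of Piergallini--Zuddas~\cite{PieZud_18_Special-moves-for-open} used by Castro--Islambouli--Miller--Tomova~\cite{CasIslMil_19_The-relative-L-invariant-of-a-compact}, the trisections $\T^{(1)}$ and $\T^{(2)}$ become isotopic after a finite sequence of core stabilizations, Hopf stabilizations, and relative double twists applied to each.  Each such move is carried out along an arc in $\Int(\Sigma)$ or in a page $P_i^j$, and one chooses this arc disjoint from $\bold x$ and from the threads and seams of $\Ff$; the surface is then unchanged and remains bridge trisected, so the move promotes to a move of type (1)--(3) on the bridge trisection $\T_\varepsilon$ (equivalently, by Subsections~\ref{subsec:sum}--\ref{subsec:bound_sum}, it is the (boundary) connected sum of $\T_\varepsilon$ with a fixed, trivially bridge-trisected model pair).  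After applying these moves to both sides and conjugating by a diffeomorphism of $X$, we may assume $\T^{(1)}=\T^{(2)}=\T$, so that $\Ff$ is a fixed surface placed in bridge trisected position with respect to $\T$ in two ways which differ by an ambient isotopy of $X$.

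\emph{Stage two: standardizing the boundary braiding.}  The two bridge positions induce braidings $\wh\beta_1$ and $\wh\beta_2$ of $\Ll=\partial\Ff$ about the open-book decomposition of $\partial X$ determined by $\T$.  By the Generalized Markov Theorem (Subsection~\ref{subsec:OBD}), $\wh\beta_1$ and $\wh\beta_2$ are related by a finite sequence of Markov stabilizations and destabilizations.  The lemma required here is a realization statement: if a surface is in bridge trisected position filling a braiding $\wh\beta$ and $\wh\beta^{+}$ is a Markov stabilization of $\wh\beta$, then the surface admits a bridge trisected position filling $\wh\beta^{+}$ obtained from the first by a Markov perturbation, move (5).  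This follows from Subsection~\ref{subsec:Markov_perturbation}: a Markov stabilization is the trivial boundary connected sum of the braiding with a meridian of the relevant binding component, which at the level of filling surfaces is a boundary connected sum with a trivial disk $\T_{D^2}$ supported near $P_i^j$, and the uniqueness of this operation up to the allowed moves follows from Proposition~\ref{prop:bcs} and the triviality of disk-tangles, Proposition~\ref{prop:triv_disks}.  After applying move (5) to both sides we may assume $\wh\beta_1=\wh\beta_2=\wh\beta$, and that the isotopy of $X$ relating the two bridge positions restricts to the identity on $\partial X$.

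\emph{Stage three: standardizing the surface, and the main obstacle.}  We are now in the situation of a relative version of the Hughes--Kim--Miller theorem, Theorem~\ref{thm:HKM}: a surface $\Ff$ in a compact four-manifold, in bridge trisected position with respect to a \emph{fixed} trisection $\T$ in two ways related by an ambient isotopy rel-$\partial$.  I would prove --- by the parametrized embedded-Morse-theory argument of~\cite{HugKimMil_18_Isotopies-of-surfaces-in-4-manifolds}, adapted to the lensed cobordism setting via the Morse function $\Phi_\T$ of Lemma~\ref{lem:trisection_to_Morse} --- that any two such bridge trisections are related by a sequence of interior (Whitney and finger) perturbations and deperturbations, move (4).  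Concretely, one interpolates between the two adapted Morse functions on $\Ff$ produced as in Lemma~\ref{lem:Morse_Ff} by a generic one-parameter family, whose crossings of the discriminant are births and deaths of cancelling pairs of critical points (realized by perturbation and deperturbation), handle slides, and exchanges of critical values (harmless), with the $\bold v$ vertical strands and patches tracked exactly as in the proof of Theorem~\ref{thm:general}.  The genuine difficulty --- which is why the statement is only conjectural --- appears here: along the family the level set $\Phi_\T^{-1}(1.5)$ carries a moving picture of a link together with bands, and for this to encode a bridge trisection the bands must remain dualized by bridge disks for the seam $\Tt_2$ (equivalently $\Tt_3$); when a band momentarily loses this property one re-standardizes by perturbing the core surface $\Sigma$ relative to $\wh\beta$, which changes the bridge parameter $b$, and showing that these corrections assemble --- uniformly over the parameter --- into a finite sequence of moves of types (1)--(5) is precisely the obstruction.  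This already blocks the fixed-trisection case over $B^4$, namely the conjecture of Section~\ref{sec:tri-plane}.  An equivalent line of attack in the $B^4$ setting would instead use that band presentations of isotopic surfaces are related by isotopy, band slides, band swims, and births and deaths of cancelling handle pairs, together with Propositions~\ref{prop:band_to_bridge} and~\ref{prop:bridge_to_band}; there the recalcitrant move is the band swim, whose effect on the dualizing configuration is not evidently a composition of perturbations.
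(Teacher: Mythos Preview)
This statement is a \emph{conjecture} in the paper, not a theorem; the paper offers no proof. Its only commentary is a single sentence: ``To prove this conjecture, it should suffice to carefully adapt the techniques of~\cite{HugKimMil_18_Isotopies-of-surfaces-in-4-manifolds} from the setting of isotopy of closed four-manifold pairs equipped with Morse functions to the setting of isotopy rel-$\partial$ of four-manifold pairs with boundary.'' Your proposal is therefore not to be compared against a proof, but against this hint.

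Your three-stage outline is a sensible elaboration of that hint, and your Stage~3 is exactly what the paper gestures at. You also correctly flag that the argument is incomplete and isolate the obstruction: controlling the dualizing bridge-disk configuration for the bands through a one-parameter family (equivalently, handling the band-swim move). That is an honest and accurate assessment of where the difficulty lies.

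One comment on Stage~1: the hypothesis already grants that the underlying trisections are \emph{diffeomorphic}, so invoking the full Gay--Kirby/CIMT stable-equivalence machinery to relate them is more than is strictly forced by the statement. The reason you still need moves (1)--(3) is subtle: applying the diffeomorphism $\phi$ directly would replace $\Ff$ by $\phi(\Ff)$, which need not be isotopic to $\Ff$, and then the HKM-style argument in Stage~3 would not apply. Your route---stabilize until the underlying trisections become \emph{isotopic} while carrying $\Ff$ along untouched---avoids this, and explains why moves (1)--(3) appear in the conjecture despite the hypothesis. It would be worth saying this explicitly, since as written Stage~1 reads as though you have forgotten the hypothesis.
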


To prove this conjecture, it should suffice to carefully adapt the techniques of~\cite{HugKimMil_18_Isotopies-of-surfaces-in-4-manifolds} from the setting of isotopy of closed four-manifold pairs equipped with Morse functions to the setting of isotopy rel-$\partial$ of four-manifold pairs with boundary.  The following is a diagrammatic analog to this conjecture.

\begin{conjecture}
	Suppose that $\DD^1$ and $\DD^2$ are shadow diagrams for a fixed surface-link $(X,\Ff)$.  Then, $\DD^1$ and $\DD^2$ can be related by a finite sequence of moves, each of which is of one of the following types:
	\begin{enumerate}
		\item core stabilization/destabilization
		\item Hopf stabilization/destabilization
		\item relative double twist
		\item interior perturbation/deperturbation
		\item Markov perturbation/deperturbation
		\item arc and curve slides
		\item isotopy rel-$\partial$
	\end{enumerate}
\end{conjecture}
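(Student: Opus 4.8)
The plan is to reduce the statement to the four-dimensional situation and then run a parametrized Morse theory argument for the surface, in the spirit of the preceding conjecture. First I would build a dictionary between diagrammatic moves and four-dimensional moves. Recall that a shadow diagram $\DD$ determines the spine of a bridge trisection, hence, by Corollary~\ref{coro:spine}, a bridge trisection $\T_\DD$ of $(X,\Ff)$, and that by Proposition~\ref{prop:shadow} any two shadow diagrams for a fixed bridge trisection are slide-equivalent, i.e., related by moves of types (6) and (7). So the first step is a translation lemma: for each of core stabilization, Hopf stabilization, relative double twist, interior perturbation, and Markov perturbation, write down the effect on an augmented shadow diagram and verify that it is realized by the like-named diagrammatic move, up to arc and curve slides. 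Core stabilization becomes the addition of a genus-one summand to $\Sigma$ carrying a dual pair of $\alpha$-curves in two of the three arms, matching the diagrammatic effect of a connected sum with the relevant genus-one trisection of $S^4$ (compare Proposition~\ref{prop:core_stab} and Remark~\ref{rmk:sum_glue}); Hopf stabilization becomes the plumbing of the core surface, pages, and shadow/arc data with a Hopf band along a boundary arc as in Figure~\ref{fig:plumbing}, with the attendant changes to $\bold p$ and $\bold f$; the relative double twist, interior (Whitney/finger) perturbation, and Markov perturbation are already described diagrammatically in Section~\ref{sec:stabilize}, the last two by local replacements of the type in Figure~\ref{fig:finger}, Figure~\ref{fig:markov_shadow}, and Lemma~\ref{lem:punc}. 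Granting this dictionary, it suffices to show that any two bridge trisections of a fixed pair $(X,\Ff)$ are related by a finite sequence of moves (1)--(5) together with isotopy rel-$\partial$.

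The next step is to reduce to a single ambient trisection. Two bridge trisections of $(X,\Ff)$ have underlying trisections $\T^1$ and $\T^2$ of $X$; by Theorem~\ref{thm:GK_unique}, Corollary~\ref{coro:rel_tri_unique}, and the trisection-theoretic version of the Piergallini--Zuddas calculus developed in~\cite{CasIslMil_19_The-relative-L-invariant-of-a-compact} (using also~\cite{PieZud_18_Special-moves-for-open,GirGoo_06_On-the-stable-equivalence-of-open}), after a finite sequence of core stabilizations, Hopf stabilizations, and relative double twists --- moves (1)--(3) --- we may assume $\T^1$ and $\T^2$ are isotopic, hence equal to a common $\T$. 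Each of these four-manifold moves can be performed so as to carry the bridge trisected surface along, by puncturing and plumbing away from $\Ff$ and its bridge data, in the manner of Section~\ref{sec:gluing} and Section~\ref{sec:class}. So we are reduced to a fixed $\T$ and two descriptions of $\Ff$ in bridge trisected position with respect to it.

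With a common $\T$, I would invoke the adapted Morse function $\Phi = \Phi_\T$ of Lemma~\ref{lem:trisection_to_Morse} and the Morse-theoretic description of bridge trisected position from the proof of Theorem~\ref{thm:general}: each bridge trisection of $\Ff$ is recorded by a handle decomposition of $\Ff$ compatible with $\Phi$ (minima in $Z_1$, saddles on $\Phi^{-1}(3/2)$, maxima in $Z_3$), a braiding of $\partial\Ff$ about the induced open book, and a collection of bands leveled and dualized near $\Sigma$. The heart of the argument is then a parametrized-Morse-theory analysis: choose a generic path of neatly embedded surfaces $\Ff_t$, rel-$\partial$, from the first position to the second, together with a generic path of adapted Morse functions, and classify the bifurcations. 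Births and deaths of cancelling critical-point pairs of $\Phi_{\Ff_t}$, handle slides, and critical points crossing the middle level $\Phi^{-1}(3/2)$ or the interfaces between the sectors $Z_i$ each produce one instance of an interior perturbation or deperturbation, move (4), together with isotopies rel-$\partial$; the times at which $\partial\Ff_t$ fails to be braided about the open book are pushed, using the Generalized Markov Theorem in a one-parameter family, to occur only as Markov stabilizations and destabilizations, move (5). This is the manifold-with-boundary, rel-$\partial$ analog of the closed-case argument of~\cite{HugKimMil_18_Isotopies-of-surfaces-in-4-manifolds} (compare Theorem~\ref{thm:HKM}); transporting the resulting sequence of four-dimensional moves back through the dictionary of the first step completes the proof.

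The hard part will be the parametrized singularity analysis of the last step in the presence of boundary. One must control, simultaneously and throughout the one-parameter family, the ambient trisection of $X$ (retaining the identification $\T^1 = \T^2$, or else paying for deviations with further moves (1)--(3)), the open-book decomposition induced on each component of $\partial X$, the braiding of $\partial\Ff_t$ about that open book, and the Morse data of $\Phi_{\Ff_t}$, all while certifying that the finitely many bad times are of exactly the listed elementary types and no others. Making genericity compatible with the rel-$\partial$ constraint --- so that $\partial\Ff_t$ moves only by ambient isotopy of the braiding, punctuated by Markov moves --- is the delicate point, and a rigorous treatment essentially amounts to redoing~\cite{HugKimMil_18_Isotopies-of-surfaces-in-4-manifolds} in the relative setting; a secondary dependency is the correctness of the trisection-ification in~\cite{CasIslMil_19_The-relative-L-invariant-of-a-compact} of the (itself intricate) Piergallini--Zuddas move calculus for open books.
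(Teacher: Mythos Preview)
The statement you are addressing is a \emph{conjecture} in the paper, not a theorem; the paper offers no proof. What the paper does say is precisely the strategy you outline: ``To prove this conjecture, it should suffice to carefully adapt the techniques of~\cite{HugKimMil_18_Isotopies-of-surfaces-in-4-manifolds} from the setting of isotopy of closed four-manifold pairs equipped with Morse functions to the setting of isotopy rel-$\partial$ of four-manifold pairs with boundary.'' Your proposal is a fleshed-out version of exactly this sentence --- dictionary between diagrammatic and four-dimensional moves, reduction to a common ambient trisection via~\cite{CasIslMil_19_The-relative-L-invariant-of-a-compact}, then a one-parameter Cerf-type analysis rel-$\partial$ --- and you correctly identify the genuine obstruction: the parametrized singularity analysis in the presence of boundary, controlling the braiding of $\partial\Ff_t$ throughout the family, has not been carried out and is the substance of the open problem.

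So your approach matches the paper's suggested route, but neither you nor the paper claims a proof. Your write-up is honest about this (``The hard part will be\ldots''), which is appropriate. If you intend to present this as a proof rather than a strategy, that would be a gap: the relative version of~\cite{HugKimMil_18_Isotopies-of-surfaces-in-4-manifolds} does not yet exist in the literature, and the interaction between Markov moves on the boundary and interior Cerf moves is exactly the missing ingredient the conjecture is asking for.
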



\bibliographystyle{amsalpha}
\bibliography{RBT.bib}

\end{document}